\newtheorem{thm}{Theorem}[section]
\newtheorem{cor}[thm]{Corollary}
\newtheorem{prop}[thm]{Proposition}
\newtheorem{lem}[thm]{Lemma}
\newtheorem{claim}[thm]{Claim}
\newtheorem{quest}[thm]{Question}
\newtheorem{conj}[thm]{Conjecture}
\theoremstyle{definition}
\newtheorem{defn}{Definition}
\crefname{defn}{definition}{definitions}
\crefname{claim}{claim}{claims}
\setlist[enumerate]{itemsep=2ex, topsep=2ex} 
\setlist[itemize]{itemsep=2ex, topsep=2ex}
\def\D{\mathcal{D}}
\def\HH{\mathcal{H}}
\def\G{\mathcal{G}}
\def\A{\mathcal{A}}
\def\I{\mathcal{I}}
\def\K{\mathcal{K}}
\def\S{\mathcal{S}}
\def\C{\mathcal{C}}
\def\a{\mathbf{a}}
\def\k{\mathbf{k}}
\def\RR{\mathbb{R}}
\newcommand{\cH}{\mathcal{H}}
\newcommand{\eps}{\varepsilon}
\newcommand{\ex}{\mathrm{ex}}
\newcommand{\R}{\mathbb{R}}
\newcommand{\N}{\mathbb{N}}
\newcommand{\Z}{\mathbb{Z}}
\newcommand{\E}{\mathbb{E}}
\newcommand{\al}{\alpha}
\newcommand{\gam}{\gamma}
\newcommand{\sig}{\sigma}
\newcommand{\ep}{\varepsilon}
\newcommand{\Lam}{\Lambda}
\newcommand{\Om}{\Omega}
\newcommand{\del}{\delta}
\renewcommand{\l}{\left}
\renewcommand{\r}{\right}
\newcommand{\half}{\frac{1}{2}}
\newcommand{\quart}{\frac{1}{4}}
\newcommand{\sm}{\setminus}
\newcommand{\sub}{\subseteq}
\renewcommand{\c}[1]{\mathcal{#1}}
\newcommand{\tr}[1]{\textrm{#1}}
\newcommand{\rec}[1]{\frac{1}{#1}}
\newcommand{\f}[2]{\frac{#1}{#2}}
\newcommand{\floor}[1]{\l\lfloor #1\r\rfloor}
\newcommand{\ceil}[1]{\l\lceil #1\r\rceil}
\DeclareMathOperator{\forest}{forest}
\title{The Random Tur\'an Problem for Theta Graphs}
\author{Gwen McKinley \thanks{Department of Mathematics,
		University of California, San Diego,
		La Jolla, CA. Email \url{gmckinley@ucsd.edu}.}\and 
	Sam Spiro\thanks{Department~of Mathematics, Rutgers University, Piscataway, NJ. Email: \url{sas703@scarletmail.rutgers.edu}. Research supported by the NSF Mathematical Sciences Postdoctoral Research Fellowships Program under Grant DMS-2202730.}}
\date{\today}
\begin{document}
	\maketitle
	\begin{abstract}
		Given a graph $F$, we define $\mathrm{ex}(G_{n,p},F)$ to be the maximum number of edges in an $F$-free subgraph of the random graph $G_{n,p}$.  Very little is known about $\mathrm{ex}(G_{n,p},F)$ when $F$ is bipartite, with essentially tight bounds known only when $F$ is either $C_4,\ C_6,\ C_{10}$, or $K_{s,t}$ with $t$ sufficiently large in terms of $s$, due to work of F\"uredi and of Morris and Saxton.  We extend this work by establishing essentially tight bounds when $F$ is a theta graph with sufficiently many paths.  
		Our main innovation is in proving a balanced supersaturation result for vertices, which differs from the standard approach of proving balanced supersaturation for edges.
	\end{abstract}
	
	\section{Introduction}
	
	Given a graph $F$, we define the \textit{Tur\'an number} or \textit{extremal number} $\ex(n,F)$ to be the maximum number of edges that an $n$-vertex $F$-free graph can have.  If $F$ is not bipartite, then the asymptotic behavior of $\ex(n,F)$ is determined by the Erd\H{o}s-Stone theorem~\cite{erdos1946structure}.  Only sporadic results for $\ex(n,F)$ are known when $F$ is bipartite, and in most cases these bounds are not tight.  
	
	For example, The K{\H{o}}v{\'a}ri-S\'os-Tur\'an theorem~\cite{kHovari1954problem} implies  $\ex(n,K_{s,t})=O(n^{2-1/s})$, and this bound is only known to be tight when $t$ is sufficiently large in terms of $s$; see for example \cite{bukh2021extremal}.   The bound $\ex(n,C_{2b})=O(n^{1+1/b})$ was first proven by Bondy and Simonovits~\cite{bondy1974cycles}.  It was shown by Faudree and Simonovits~\cite{faudree1983class} that this same upper bound continues to hold for theta graphs with paths of length $b$, and it was later shown by Conlon~\cite{conlon2019graphs} that this upper bound is tight for theta graphs which have sufficiently many paths.  A more detailed treatment on Tur\'an numbers of bipartite graphs can be found in the survey by F\"uredi and Simonovits~\cite{furedi2013history}.
	
	In this paper, we study a probabilistic analog of the Tur\'an number.  We define the random graph $G_{n,p}$ to be the $n$-vertex graph obtained by including each possible edge independently and with probability $p$, and we let $\ex(G_{n,p},F)$ denote the maximum number of edges in an $F$-free subgraph of $G_{n,p}$.  
	
	Observe that $\ex(G_{n,1},F)=\ex(n,F)$.  With this in mind, it is natural to ask if the classical bounds on $\ex(n,F)$ mentioned above can be extended to bounds on $\ex(G_{n,p},F)$ for all $p$.  For example, just as in the classical case, we have a complete asymptotic understanding of $\ex(G_{n,p},F)$ when $F$ is not bipartite due to  breakthrough work done independently by Conlon and Gowers~\cite{conlon2016combinatorial} and Schacht~\cite{schacht2016extremal}.  To formally state their result, we define the 2-density of a graph $F$ by \[m_2(F)=\max\left\{\f{e(F')-1}{v(F')-2}:F'\sub F,\ e(F')\ge 2\right\},\] and we write $f(n)\gg g(n)$ to mean $f(n)/g(n)$ tends to infinity as $n$ tends to infinity.  We also recall that a sequence of events $A_n$ occurs \textit{with high probability} or w.h.p.\ if $\Pr[A_n]$ tends to 1 as $n$ tends to infinity.
	\begin{thm}[\cite{conlon2016combinatorial,schacht2016extremal}]\label{thm:nonbipartite}
		For any graph $F$, w.h.p.\
		\[\ex(G_{n,p},F)= \begin{cases}\left(1-\f{1}{\chi(F)-1}+o(1)\right)p{n\choose 2} & p\gg n^{-1/m_2(F)},\\ (1+o(1))p {n\choose 2} & n^{-1/m_2(F)}\gg p\gg n^{-2}.\end{cases}\]
	\end{thm}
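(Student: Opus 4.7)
My plan is to match lower and upper bounds in each of the two regimes. The lower bound in the dense regime comes from fixing a balanced partition of $[n]$ into $\chi(F)-1$ parts and retaining from $G_{n,p}$ only those edges that cross between parts; the resulting graph is $(\chi(F)-1)$-chromatic and hence $F$-free, and Chernoff gives $\bigl(1-\f{1}{\chi(F)-1}+o(1)\bigr)p\binom{n}{2}$ edges w.h.p. In the sparse regime I would use a deletion argument: for every $F'\sub F$ with $e(F')\ge 2$, the assumption $p\ll n^{-1/m_2(F)}$ forces $n^{v(F')}p^{e(F')}=o(pn^2)$, so by Markov's inequality $G_{n,p}$ w.h.p.\ contains only $o(pn^2)$ copies of $F$; deleting one edge per copy leaves $(1+o(1))p\binom{n}{2}$ edges. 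The matching sparse upper bound is immediate from $\ex(G_{n,p},F)\le e(G_{n,p})=(1+o(1))p\binom{n}{2}$ w.h.p.

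The dense-regime upper bound is the heart of the argument, and I would attack it using the hypergraph container method of Balogh--Morris--Samotij and Saxton--Thomason. Let $\cH$ be the hypergraph on vertex set $E(K_n)$ whose hyperedges are the edge sets of copies of $F$. Provided $\cH$ satisfies the codegree conditions required by the container lemma (with parameters tuned to $p\gg n^{-1/m_2(F)}$), one obtains a collection $\cC$ of subgraphs of $K_n$ such that (i) $|\cC|\le \exp(o(pn^2))$, (ii) every $C\in\cC$ has at most $\ex(n,F)+o(n^2)=\bigl(1-\f{1}{\chi(F)-1}+o(1)\bigr)\binom{n}{2}$ edges (the Erd\H{o}s--Stone theorem identifies this Tur\'an density), and (iii) every $F$-free graph on $[n]$ is a subgraph of some $C\in\cC$. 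A union bound over $\cC$ combined with Chernoff's inequality applied to each $\mathrm{Bin}(e(C),p)$ variable then yields the matching upper bound.

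The main obstacle is establishing the balanced-supersaturation hypotheses of the container lemma for $\cH$ --- informally, that copies of $F$ in $K_n$ cannot cluster too tightly onto any small edge set, with the relevant parameter regime controlled by $m_2(F)$. This is essentially the combinatorial core of the Conlon--Gowers and Schacht results, which historically was obtained by different means (their transference and extension principles) before the container framework unified such arguments.
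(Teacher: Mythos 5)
This statement is quoted in the paper as a known theorem of Conlon--Gowers and Schacht; the paper gives no proof of it, so there is no internal argument to compare against. Judged on its own terms, your outline is the now-standard container-method route to this result (it is one of the flagship applications in Balogh--Morris--Samotij and Saxton--Thomason), and the two easy pieces are correct: the $(\chi(F)-1)$-partite lower bound plus Chernoff in the dense regime, and the deletion argument in the sparse regime (where taking $F'=F$ in your displayed inequality already gives that the expected number of copies of $F$ is $o(pn^2)$, so Markov suffices).

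The genuine gap is the one you name yourself but do not close: verifying the container lemma's hypotheses for the hypergraph $\cH$ of copies of $F$ in $K_n$. Concretely, one must prove a supersaturation statement of the form: every $n$-vertex graph with $(1-\f{1}{\chi(F)-1}+\del)\binom{n}{2}$ edges contains $\Omega(n^{v(F)})$ copies of $F$, distributed so that $\delta(\cH,\tau)$ is small for $\tau\approx n^{-1/m_2(F)}$ --- this is exactly where the exponent $m_2(F)$ enters, and without it the container collection is either too large or its members too dense for the union bound to close. A second, smaller point to be careful about: the fingerprints have total size $O(n^{2-1/m_2(F)}\log n)$ after iterating the container lemma, so to conclude $\log|\cC|=o(pn^2)$ for all $p\gg n^{-1/m_2(F)}$ (rather than $p\gg n^{-1/m_2(F)}\log n$) one must run the union bound grouped by fingerprint size, as in the argument the paper itself uses in its Appendix A. Neither issue is fatal --- both are handled in the cited literature --- but as written your proposal reduces the theorem to its hardest ingredient rather than proving it.
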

	Theorem~\ref{thm:nonbipartite} to a large extent solves the random Tur\'an problem for non-bipartie graphs, though many questions still remain in this setting; see the survey by R\"odl and Schacht~\cite{rodl2013extremal} for more on this.
	
	For the rest of this paper we focus on the random Tur\'an problem when $F$ is bipartite, a setting where much less is known.  This lack of information is partially due to the fact that the classical Tur\'an numbers $\ex(n,F)$ are unknown for almost all bipartite graphs.  An additional obstacle is that even if it is known that $\ex(n,F)=\Theta(n^\al)$ for some bipartite graph $F$, it is  typically not the case that $\ex(G_{n,p},F)=\Theta(p n^\al)$ for large $p$; see for example \Cref{fig:C4} which plots $\ex(G_{n,p},C_4)$.  That is, unlike in the non-bipartite case, the extremal constructions for $\ex(G_{n,p},F)$ with $F$ bipartite are far from the intersection of $G_{n,p}$ with an extremal $F$-free graph.

	\begin{figure}[htb]
		\centering
		\includegraphics[width=0.6\textwidth]{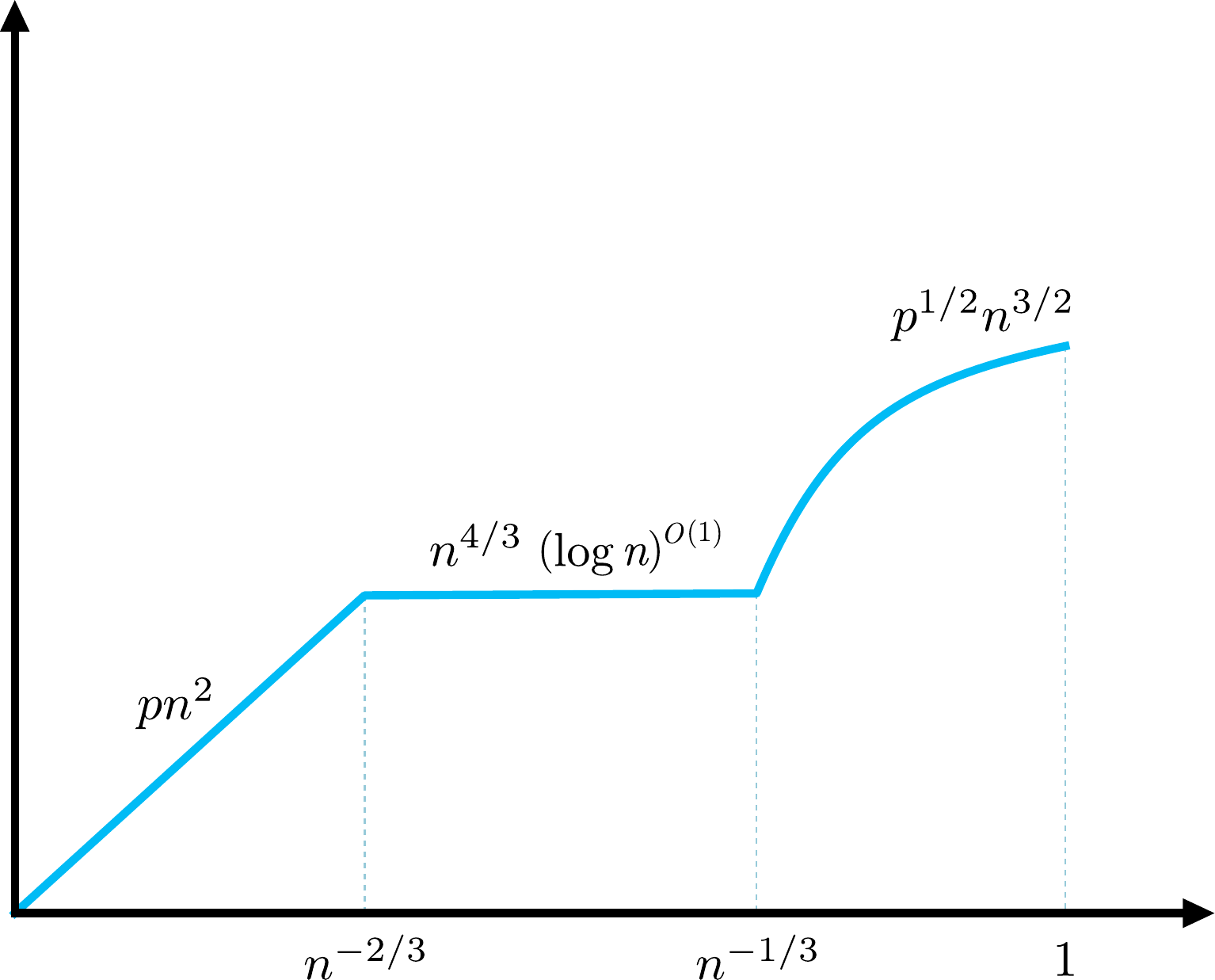}
		\caption{A plot of the value of $\ex(G_{n,p},C_4)$ as a function of $p$, with these bounds holding with high probability  and up to constant factors.}
		\label{fig:C4}
	\end{figure}
	
	Despite these obstacles, what is currently known about the random Tur\'an problem for bipartite graphs suggests that the following analog of \Cref{thm:nonbipartite} could be true; notice that in contrast to the non-bipartite case, where $\ex(G_{n,p},F)$ exhibits a single ``phase transition" around $p = n^{-1/m_2(F)}$, if \Cref{conj:main} is correct, we will see \textit{two} phase transitions in the bipartite case.
	
	\begin{conj}\label{conj:main}
		If $F$ is a graph with $\ex(n,F)=\Theta(n^\al)$ for some $\al\in (1,2]$, then w.h.p.\
		\[\ex(G_{n,p},F)= \begin{cases}\max\{\Theta(p^{\al-1}n^\al),n^{2-1/m_2(F)}(\log n)^{O(1)}\} & p\ge n^{-1/m_2(F)},\\ (1+o(1))p {n\choose 2} & n^{-1/m_2(F)}\gg p\gg n^{-2}.\end{cases}\]
		Equivalently, 
		\[\ex(G_{n,p},F)= \begin{cases}\Theta(p^{\al-1}n^\al) & p\ge n^{\f{2-\al-1/m_2(F)}{\al-1}} (\log n)^{O(1)}\\n^{2-1/m_2(F)}(\log n)^{O(1)} & n^{\f{2-\al-1/m_2(F)}{\al-1}}(\log n)^{O(1)} \ge p\ge n^{-1/m_2(F)},\\ (1+o(1))p {n\choose 2} & n^{-1/m_2(F)}\gg p\gg n^{-2}.\end{cases}\]
	\end{conj}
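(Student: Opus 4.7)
The conjecture has three regimes. The sparse regime $n^{-2} \ll p \ll n^{-1/m_2(F)}$ is the easiest: by the definition of $m_2(F)$, the expected number of copies of $F$ (indeed of any subgraph $F' \subseteq F$ with $e(F') \ge 2$) in $G_{n,p}$ is $o(pn^2)$, so the deletion method --- remove one edge per copy of $F$ --- produces an $F$-free subgraph of $G_{n,p}$ with $(1+o(1))p\binom{n}{2}$ edges w.h.p., matching the trivial upper bound.

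For the dense regime $p \ge n^{-1/m_2(F)}$, the lower bound splits into two constructions. Setting $p_0 := n^{-1/m_2(F)}$ and using the monotone coupling $G_{n,p_0} \sub G_{n,p}$, any $F$-free subgraph of $G_{n,p_0}$ remains $F$-free in $G_{n,p}$; classical arguments at this critical density produce the $\Omega(n^{2-1/m_2(F)}(\log n)^{O(1)})$ bound. The bound $\Omega(p^{\al-1}n^\al)$, which exceeds the naive $pn^\al$ obtained by intersecting $G_{n,p}$ with a fixed extremal $F$-free host, must come from a more refined construction that leverages the typical degree structure of $G_{n,p}$ against the degree structure forced on extremal $F$-free graphs.

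For the matching upper bound, I would apply the hypergraph container method, in the style used by Morris and Saxton for even cycles. The key input is a \emph{balanced supersaturation} theorem for $F$: every $n$-vertex graph with sufficiently many more than $\ex(n,F)$ edges should admit a family $\cH$ of copies of $F$ with $|\cH|$ appropriately large and such that no small set of edges lies in too many members of $\cH$. Standard container machinery then converts this into a sharp bound on the number of $F$-free graphs with a prescribed number of edges, which in turn controls $\ex(G_{n,p}, F)$ via a union bound.

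The main obstacle is proving balanced supersaturation for the relevant $F$. For theta graphs $F = \theta_{a,b}$, the abstract signals that the authors' innovation is to prove supersaturation \emph{for vertices} rather than the standard \emph{for edges}. This is natural because a theta graph is characterized by a pair of apex vertices joined by $a$ internally-disjoint paths of length $b$, so aggregating copies by vertex (or by apex pair) captures most of the combinatorial information and should produce sharper codegree control than edge-based aggregation. The hard technical work will be designing the correct auxiliary hypergraph, verifying the codegree conditions needed by the container theorem in this vertex-based formulation, and checking that the resulting supersaturation feeds cleanly into the container framework to recover the conjectured exponents. Pushing this vertex-based approach beyond theta graphs to arbitrary bipartite $F$ --- and hence the full conjecture --- appears to be the principal remaining challenge.
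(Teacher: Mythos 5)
The statement you are addressing is a \emph{conjecture}: the paper does not prove it in general, and it remains open. What the paper actually establishes is the special case $F=\theta_{a,b}$ with $a$ sufficiently large (Theorem~\ref{thm:main}). Your proposal correctly identifies the standard framework --- a deletion argument for the regime $n^{-2}\ll p\ll n^{-1/m_2(F)}$, monotone coupling for the $n^{2-1/m_2(F)}$ lower bound, and hypergraph containers fed by balanced supersaturation for the upper bound --- and this matches the paper's own discussion of which parts are easy and which are hard. But the proposal is not a proof, and you acknowledge as much.

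The two genuine gaps are exactly the ones the conjecture turns on. First, the lower bound $\Omega(p^{\alpha-1}n^{\alpha})$ is \emph{not} obtained by intersecting $G_{n,p}$ with a fixed extremal graph (that only gives $pn^{\alpha}$); the known constructions (Spiro, via random polynomial graphs, following Conlon) exist only for special families such as powers of rooted trees, and no general construction is known. Saying it ``must come from a more refined construction'' names the difficulty without resolving it. Second, and more seriously, balanced supersaturation for a general bipartite $F$ is the central open obstruction: the existing general results (Corsten--Tran, Jiang--Longbreak) only control codegrees of edge sets spanning \emph{forests}, and the paper's entire technical contribution --- vertex-based supersaturation, asymmetric codegree functions depending on which vertices of $F$ a set projects to, and multiple collections $\c{H}_{s,t}$ --- is what it takes to get past the forest barrier even in the single case of theta graphs with $a\ge a_0(b)$. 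So your outline is a reasonable road map for the conjecture, but every step beyond the sparse regime is either unproved in general or proved only for special $F$.
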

	
	The bound for $n^{-1/m_2(F)}\gg p\gg n^{-2}$ is easy to prove with a deletion argument, as is the lower bound of $\Om(n^{2-1/m_2(F)})$ when $p\ge n^{-1/m_2(F)}$.  Thus the hard part of \Cref{conj:main} is in showing that $\ex(G_{n,p},F)=\Theta(p^{\al-1}n^\al)$ whenever $p$ is large.  
	
	In terms of evidence supporting \Cref{conj:main}, F\"uredi~\cite{F} showed that this conjecture holds when $F=C_4$.  This work was substantially generalized by Morris and Saxton~\cite{morris2016number} who proved $\ex(G_{n,p},K_{s,t})=O(p^{1-1/s}n^{2-1/s})$ w.h.p.\ when $p$ is large, and that $\ex(G_{n,p},C_{2b})=O(p^{1/b}n^{1+1/b})$ w.h.p.\ when $p$ is large.  Moreover, they showed that these bounds are tight provided $\ex(n,K_{s,t})=\Om(n^{2-1/s})$ and $\ex(n,\{C_3,C_4,\ldots,C_{2b}\})=\Om(n^{1+1/b})$, respectively.  The lower bound of Conjecture~\ref{conj:main} was shown to hold for large powers of balanced trees by Spiro~\cite{spiro2022random}.  Under some mild conditions, Jiang and Longbreak~\cite{jiang2022balanced} proved a general upper bound of the form
	\[\ex(G_{n,p},F)=O\big(p^{1-m_2^*(F)(2-\al)}n^\al\big)\textrm{ w.h.p.\ when }p\tr{ is large},\]
	where $m_2^*(F)=\max\left\{\f{e(F')-1}{v(F')-2}:F'\subsetneq F,\ e(F')\ge 2\right\}$.  This bound matches Conjecture~\ref{conj:main} precisely when $m_2^*(F)=1$, which happens when $F=C_{2b}$ (giving a simpler proof of \cite{morris2016number}), but otherwise is strictly weaker than the bound proposed in Conjecture~\ref{conj:main}. As far as we are aware, these are the only known bounds for the random Tur\'an problem for bipartite graphs, though there have been a number of recent results regarding the analogous problem for degenerate hypergraphs, see for example \cite{MY,nie2023random,nie2023tur,NSV,SV-K22}.
	
	In this paper, we contribute to this growing body of literature by studying the random Tur\'an problem for theta graphs.  Recall that a theta graph $\theta_{a,b}$ is a graph which consists of two vertices $u,v$ together with $a$ internally disjoint paths from $u$ to $v$ of length $b$.  For example, $\theta_{3,4}$ is depicted in \Cref{fig:theta}.
	
	\begin{figure}[htb]
		\centering
		\includegraphics[width=0.2\textwidth]{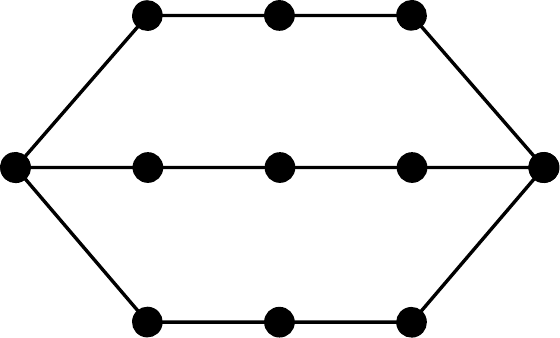}
		\caption{A depiction of the theta graph $\theta_{3,4}$.}
		\label{fig:theta}
	\end{figure}
	
	Observe that $\theta_{2,b}=C_{2b}$ and $\theta_{a,2}=K_{2,a}$.  Given that Morris and Saxton essentially solved the random Tur\'an problem for cycles and complete bipartite graphs, one might hope that their methods can be extended to give bounds for $\ex(G_{n,p},\theta_{a,b})$ in general.
	
	And indeed, by using similar ideas as in \cite{morris2016number}, Corsten and Tran~\cite{corsten2021balanced} implicitly proved
	\begin{equation}\ex(G_{n,p},\theta_{a,b})=O\left(p^{\f{2}{ab}} n^{1+\rec{b}}\right)\textrm{ w.h.p.\ when }p\textrm{ is large},\label{eq:oldBound}\end{equation}
	which matches the general upper bound given by Jiang and Longbreak in this case.  However, Faudree and Simonovits~\cite{faudree1983class} proved $\ex(n,\theta_{a,b})=O(n^{1+1/b})$, so Conjecture~\ref{conj:main} predicts that we should have $\ex(G_{n,p},\theta_{a,b})=O(p^{1/b}n^{1+1/b})$ when $p$ is large, which differs substantially from \eqref{eq:oldBound} when $a$ is large.

	By adding several new ideas (see \Cref{subsec:new}) to the approach of Morris and Saxton, we significantly improve upon the bounds of \eqref{eq:oldBound} and establish essentially tight (and unconditional) bounds for $\ex(G_{n,p},\theta_{a,b})$ when $a$ is large, which agree with what \Cref{conj:main} predicts.
	\begin{thm}\label{thm:main}
		For all $b\ge 2$, there exists $a_0=a_0(b)$ such that for any fixed $a\ge a_0$, w.h.p.
		\[\ex(G_{n,p},\theta_{a,b})=\begin{cases}
			\Theta\left(p^{\rec{b}}n^{1+\rec{b}}\right) & p\ge n^{-\f{b-1}{ab-1}}(\log n)^{2b},\\ 
			n^{2-\f{a(b-1)}{ab-1}}(\log n)^{O(1)} & n^{-\f{b-1}{ab-1}}(\log n)^{2b}\ge p\ge n^{-\f{a(b-1)}{ab-1}},\\ 
			(1+o(1))p{n\choose 2} &  n^{-\f{a(b-1)}{ab-1}}\gg p\gg n^{-2}.
		\end{cases}\]
	\end{thm}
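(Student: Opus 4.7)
The plan is to follow the general template of Morris and Saxton: prove matching lower bounds by deletion-plus-sparsification, and prove the upper bound by combining a balanced supersaturation theorem with the hypergraph container method. The innovation, as signaled in the abstract, is that the balanced supersaturation should be indexed by \emph{vertices} rather than edges, which is what allows the bound to be tight (rather than off by a factor polynomial in $a$, as in \eqref{eq:oldBound}).

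First I would handle the lower bounds. For $n^{-2}\ll p\ll n^{-a(b-1)/(ab-1)}$, the expected number of edges of $G_{n,p}$ is $(1+o(1))p\binom{n}{2}$, while the expected number of copies of $\theta_{a,b}$ is $O(p^{ab}n^{a(b-1)+2})$, which is $o(pn^2)$ in this range; deleting one edge per copy yields the required $\theta_{a,b}$-free subgraph. The same argument at $p=n^{-a(b-1)/(ab-1)}$ produces a $\theta_{a,b}$-free subgraph of $G_{n,p}$ with $\Om(n^{2-a(b-1)/(ab-1)})$ edges, and since $\ex(G_{n,p},\theta_{a,b})$ is stochastically monotone in $p$, this handles the middle regime. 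For the large-$p$ lower bound, I would invoke the existence (due to Conlon, for $a$ sufficiently large) of a $\theta_{a,b}$-free graph with $\Om(n^{1+1/b})$ edges, and combine it with a standard random-construction argument to produce a $\theta_{a,b}$-free subgraph of $G_{n,p}$ of size $\Om(p^{1/b}n^{1+1/b})$.

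The heart of the paper is the upper bound. The plan is to establish a vertex-balanced supersaturation theorem of the following shape: for any graph $G$ on $n$ vertices with $e(G)\ge m\ge Cn^{1+1/b}$, there is a family $\mathcal{H}$ of copies of $\theta_{a,b}$ in $G$ whose cardinality is large as a polynomial in $m$ and $n$, and such that for every $j$ the number of members of $\mathcal{H}$ containing any fixed $j$-vertex set decays geometrically in $j$. One would construct $\mathcal{H}$ by first regularizing to a subgraph where length-$b$ codegrees of vertex pairs fall in a single dyadic bucket, and then, at each high-codegree endpoint pair, randomly selecting a collection of $\theta_{a,b}$ copies whose $a$ internal paths are pairwise vertex-disjoint. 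Feeding $\mathcal{H}$ into the asymmetric hypergraph container theorem produces a family of \emph{containers}, which, iterated down to an appropriate density and combined with a direct union bound on the event that $G_{n,p}$ contains many edges of some container, yields the desired $O(p^{1/b}n^{1+1/b})$ bound for large $p$ and the middle-regime bound as a byproduct; the small-$p$ upper bound is immediate from $e(G_{n,p})=(1+o(1))p\binom{n}{2}$.

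The main obstacle is the proof of vertex-balanced supersaturation. The standard edge-indexed approach is provably too weak when $a$ is large: a single high-codegree endpoint pair spawns many $\theta_{a,b}$ copies that pairwise share an enormous number of edges but only two vertices, forcing any edge-codegree bound for $\mathcal{H}$ to absorb a factor polynomial in $a$ that the conjectural exponent cannot afford. Reindexing by vertex subsets lets one exploit the internally vertex-disjoint path structure of $\theta_{a,b}$ to keep vertex codegrees of $\mathcal{H}$ small while its edge codegrees remain large. Carrying out this selection, controlling its codegree profile, and threading the resulting estimates through the asymmetric container framework (including a verification that the iterated containers satisfy the tail bounds needed for the union bound) is where the bulk of the technical work lies, and is what enables the bound to match \Cref{conj:main} for this family.
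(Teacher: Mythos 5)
Your high-level architecture is the same as the paper's: deletion plus monotonicity for the two lower regimes, a cited extremal construction for the large-$p$ lower bound, and for the upper bound a vertex-indexed balanced supersaturation theorem fed into hypergraph containers. But the entire difficulty of the paper lives inside the step you describe in one sentence (``regularize to a dyadic codegree bucket, then at each high-codegree endpoint pair randomly select copies with pairwise vertex-disjoint paths''), and that sketch is neither what the paper does nor, as stated, a workable plan. The paper's construction is an iterative greedy algorithm (\Cref{prop:main}): one copy of $\theta_{a,b}$ is added at a time, using an expansion lemma (\Cref{prop:expansion}, with a new quantitative bound $|X|\ge\ep\ell^{(b-t)/b}m^{t/b}$ on the number of good starting vertices) to choose the two hubs $x,y$, and then building the $a$ paths one at a time while avoiding ``saturated'' link sets. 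Three ingredients you do not supply are essential there: (i) \emph{asymmetric} codegree functions on subsets of $V(\theta_{a,b})\times V(G)$, which impose different bounds depending on whether the subset contains the two hubs and which path positions it occupies — a bound depending only on $|\chi|$ or on a single dyadic bucket is too weak; (ii) \emph{multiple} collections $\c{H}_{s,t}$ with different codegree targets indexed by how much of $G$ had to be pruned to reach high minimum degree and by the expansion parameter $t$ — without this, one must use worst-case (clique-like) codegree bounds for every $G$ and the result fails; and (iii) the whole $t<b$ regime, where $G$ does not expand and the algorithm must route through concentrated/refined neighborhoods, which your random-selection picture ignores entirely. Your diagnosis of why the edge-indexed approach fails is also not quite the paper's: the obstruction is an accounting mismatch between the number of \emph{vertices} chosen by the algorithm and the number of \emph{edges} appearing in the container codegree condition, which is harmless exactly when $\sigma$ induces a forest and fatal otherwise.

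Two further concrete points. First, the container theorem must be applied to a hypergraph on $E(G)$ (so that independent sets are $\theta_{a,b}$-free \emph{subgraphs}); there is no ``asymmetric container theorem'' that accepts the vertex-indexed hypergraph directly, so you still need the translation from vertex codegrees back to edge codegrees (\Cref{cor:balancedEdges} and the arithmetic of \Cref{prop:codegrees}, which is where the requirement $a\ge a_0(b)$ actually enters). Second, your large-$p$ lower bound cannot be obtained by intersecting a fixed extremal $\theta_{a,b}$-free graph with $G_{n,p}$ — that yields only $\Theta(pn^{1+1/b})\ll p^{1/b}n^{1+1/b}$; the paper instead invokes a random-polynomial-graph construction from prior work of Spiro (itself building on Conlon), and ``a standard random-construction argument'' does not close that gap on its own.
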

	
	As far as we are aware, Theorem~\ref{thm:main} is the first result since Morris and Saxton~\cite{morris2016number} which gives essentially tight bounds on $\ex(G_{n,p},F)$ for any bipartite graph $F$.

	\subsection{Proof Outline}\label{subsec:outline}
	The vast majority of our paper is focused on proving the upper bound of \Cref{thm:main} when $p$ is large, with the other bounds following from previous results together with the monotonicity of $\ex(G_{n,p},\theta_{a,b})$.  Before going into our new ideas, we first recall the proof ideas from Morris and Saxton~\cite{morris2016number} for bounding $\ex(G_{n,p},C_{2b})$, as well as the adaptation of these methods by Corsten and Tran~\cite{corsten2021balanced} to theta graphs.
	
	\subsubsection{Previous Ideas}
	The main approach to upper bounding $\ex(G_{n,p},C_{2b})$ when $p$ is large is to show that $C_{2b}$ exhibits  ``balanced supersaturation'',  which roughly means that if $e(G)\gg \ex(n,C_{2b})$, then one can find a large collection of copies of $C_{2b}$ in $G$ which are ``spread out."  Given such a result, one can derive upper bounds on $\ex(G_{n,p},C_{2b})$ by using what is by now a somewhat standard argument involving hypergraph containers; see \Cref{thm:containers} below.
	
	To prove this balanced supersaturation result, let $G$ be an $n$-vertex graph with $kn^{1+1/b}$ edges.  Our goal is to build a hypergraph $\c{H}$ whose vertex set is $E(G)$, hyperedges are copies of  $C_{2b}$ in $G$, and is such that
	\[\deg_{\c{H}}(\sig)\le \f{k^{2b}n^2}{\del kn^{1+1/b}(\del k^{b/(b-1)})^{|\sig|-1}},\]
	for all $\sig\sub E(G)$, where $\del$ is some suitably small constant.  If we can construct such a collection with $|\c{H}|\approx k^{2b}n^2$, then this together with \Cref{thm:containers} will give our desired result.
	
	To construct such an $\c{H}$, we iteratively build copies of $C_{2b}$ to add to $\c{H}$ as follows.  Given our current collection $\c{H}$, we ``clean up'' $G$ by deleting edges $e$ with $d_{\c{H}}(e)\ge \f{k^{2b}n^2}{\del kn^{1+1/b}}$ (since we will not be able to use these edges in any copy of $C_{2b}$ to add to $\c{H}$), and by iteratively deleting vertices with low degree.  We then pick a vertex $x$ that will have some number $t(x)\le b$ associated to it (which roughly measures how well the graph expands near $x$) to use in our cycle.  We then run an algorithm which starts with a set $\chi=\{x\}$, and then iteratively adds new vertices to $\chi$ until we build a cycle $C$, with the exact algorithm we use depending on the value of $t(x)$.  A key point is that at each step of our algorithm, we always  have significantly more than $\del k^{b/(b-1)}$ choices for each new vertex to add.  Because we have so many choices, one of the cycles $C$ that we can create will be such that $C\notin \c{H}$ and such that $\{C\}\cup \c{H}$ continues to satisfy our desired codegree condtions.  By applying this algorithm repeatedly, we end up constructing a large collection $\c{H}$ of cycles which satisfies our codegree conditions, proving our desired balanced superaturation result.
	
	For theta graphs, Corsten and Tran~\cite{corsten2021balanced} used an argument similar to the one outlined above, but their approach only gave effective bounds on the codegrees $\deg_{\c{H}}(\sig)$ when $\sig$ is a set of edges inducing a forest\footnote{The general result of Jiang and Longbreak~\cite{jiang2022balanced} similarly only gives effective bounds for forests, and as such it seems like moving beyond the forest case is the main difficulty in general for upper bounding $\ex(G_{n,p},F)$.}.  At a high level, the fundamental issue with the approach outlined above is that the algorithm proceeds by selecting \textit{vertices} one at a time, but the codegree bounds of \Cref{thm:containers} are a function of the number of \textit{edges} $|\sig|$.  For $\sig$ which are forests, this distinction turns out to be irrelevant because $\sig$ has at least as many vertices as edges, but this property fails substantially for general subgraphs of theta graphs.   
	
	\subsubsection{New Ideas}\label{subsec:new}  As in previous works, our proof relies on showing that theta graphs exhibit balanced supersaturation.  In particular, we use the following three main ideas in order to get around the fundamental issues outlined above:
	
	\textbf{Vertex Supersaturation}.  The first key idea is that instead of viewing $\c{H}$ as a hypergraph on $E(G)$, we essentially view it as a hypergraph on $V(G)$.  To be more precise, we consider hypergraphs $\c{H}$ on $V(\theta_{a,b})\times V(G)$ in such a way that hyperedges of $\c{H}$ correspond to a unique labelled theta graph in $G$.  Balanced supersaturation for this hypergraph $\c{H}$ then easily translates to balanced supersaturation for the corresponding hypergraph on $E(G)$, which we ultimately need in order to use hypergraph containers.
	
	\textbf{Asymmetric Codegree Bounds}.  The second idea is that the codegree bounds we enforce on $\chi\sub V(\theta_{a,b})\times V(G)$ will not depend solely on the number of vertices $|\chi|$, but also on the vertices of $\theta_{a,b}$ which the set $\chi$ corresponds to.  For example, if $u,v\in V(\theta_{a,b})$ are the two high-degree vertices of $\theta_{a,b}$ (i.e.\ the vertices of degree at least 3), then the codegree bounds we enforce on the set $\chi=\{(u,x),(v,y)\}$ will be higher than those we would put on $\chi=\{(w,x),(w',y)\}$ for some other $w,w'\in V(\theta_{a,b})$ since, in some sense, $u,v$ are the most important vertices of $\theta_{a,b}$.  A similar idea was implicitly used by Morris and Saxton when proving balanced supersaturation results for $K_{s,t}$ (where the vertices in the smaller part are ``more important'' than those in the larger part).
	
	\textbf{Multiple Collections}. The final idea is that we do not build a single collection $\c{H}$, but instead build multiple collections $\c{H}_1,\c{H}_2,\ldots$ and impose different codegree bounds on each of these.  
	
	As a somewhat concrete example of why we do this, say we knew our graph $G$ was a random graph with $kn^{1+1/b}$ edges.  In this case, it would be possible to build an $\c{H}$ such that for every $x,y\in V(G)$, there are at most roughly $k^{ab}$ theta graphs in our collection which use $x,y$ as the two high degree vertices; and this is a strong bound on the codegree of this pair.  In contrast, if $G$ was a clique with $kn^{1+1/b}$ edges together with some isolated vertices, then it would be impossible to impose such a codegree bound for all $x,y$.  Thus if we only worked with a single collection $\c{H}$, we would have to pessimistically use the weaker codegree bounds that work for a clique when $x,y$ correspond to the high degree vertices, and similarly we would have to consider the worst possible choice of $G$ when determining the codegree bounds for any given set $\chi\sub V(G)$.  Doing this would ultimately give bounds that are too weak.  By building multiple collections, we can impose the ``correct'' codegree bounds regardless of the structure of $G$.
	
	\textbf{Other Ideas}. In addition to these three main ideas, we use a slightly different algorithm to construct our theta graphs compared to those of \cite{corsten2021balanced,morris2016number}.  Previous algorithms worked (roughly) by first specifying a vertex $x\in V(G)$ to play the role of one of the high degree vertices of $\theta_{a,b}$, then choosing a path in $G$ of length $b$ (which specifies the other vertex $y$ playing the role of a high degree vertex in $\theta_{a,b}$), and from there choosing the remaining $a-1$ paths from $x$ to $y$ one at a time.  Instead, our algorithm chooses the two high degree vertices $x,y$ at the start and then builds all of the $a$ paths from $x$ to $y$ one at a time.  This somewhat more symmetric argument allows us to overcome various technical issues that arose with previous approaches, and is crucial for our present argument to go through.
	
	\subsection{Organization and Notation}
	The rest of this paper is organized as follows. In \Cref{sec:expansion} we prove several auxiliary results that will be used in our main proof, and in \Cref{sec:prelim} we establish the main definitions used in this paper.  In \Cref{sec:main_proof}, which is the real heart of our argument, we establish our balanced supersaturation result for vertices.  We then translate this result into balanced supersaturation for edges in \Cref{sec:edgeSupersaturation} before completing the proof of \Cref{thm:main} in \Cref{sec:finish} by invoking a result that follows from a standard containers type argument.   A few open problems are given in \Cref{sec:conclusion}.
	
	Throughout the paper we adopt the following conventions.  We always use $u,v,w$ to denote vertices of $\theta_{a,b}$ and $x,y,z$ to denote vertices of a (larger) graph $G$.  Further, we will almost always let $u,v$ denote the two vertices of $\theta_{a,b}$ with degree at least 3 (when $a\ge 3$), and we will informally call these the ``vertices of high degree" in $\theta_{a,b}$.  We write $v(G)=|V(G)|$.  Whenever we write asymptotic notation such as $O(f)$, our implicit constants will always depend on $a,b$, and we will occasionally emphasize this point by writing, for example, $O_{a,b}(f)$.  For a hypergraph $\c{H}$ and a set of vertices $\sig\sub V(\c{H})$, its \textit{degree} or \textit{codegree} $\deg_{\c{H}}(\sig)$ is the number of hyperedges of $\c{H}$ containing $\sig$.
	
	\section{Auxiliary Results}\label{sec:expansion}
	Here we establish two results that will be crucial for our proof.  
	\subsection{Expansion}
	One of the key technical lemmas of Morris and Saxtion says that in a graph with sufficiently large minimum degree, there exists a vertex $x$ which is the endpoint of many ``nice'' paths of some length $t$.  Analogously, we will rely heavily on the following.
	
	\begin{prop}\label{prop:expansion}
		For all integers $b\ge 2$, there exists some $\ep>0$ such that the following holds.  If $G$ is an $m$-vertex graph with minimum degree $\ell m^{1/b}$ and $\ell\ge \ep^{-1}$, and if $\c{F}$ is a set of forests, then there exists an integer $2\le t\le b$ and a set of vertices $X$ such that the following properties hold:
		\begin{itemize}
			\item[(a)] For each $x\in X$, there exists a pair $(\c{B},\c{Q})$ such that $\c{B}=(B_0,\ldots,B_t)$ is a tuple of (not necessarily disjoint) vertex sets of $G$ with $B_0=\{x\}$, and $\c{Q}$ is a set of paths $x z_1\cdots z_t$ with $z_i\in B_i$ for all $i$.
			\item[(b)] We have $|B_{t-1}|,|B_t|\ge \ep  \ell^{(b-t+1)/(b-1)}m^{(t-1)/b}$.
			\item[(c)] We have $|\c{Q}|\ge \ep \ell^tm^{t/b}$.
			\item[(d)] For every $1\le i\le t$ and $z\in B_{i-1}$, we have $|N(z)\cap B_{i}|\ge \ep \ell m^{1/b}$, and for every $z\in B_t$, we have $|N(z)\cap B_{t-1}|\ge \ep \ell^{b/(b-1)}$.
			\item[(e)] For every $y\in B_t$, we have $|\c{Q}[x\to y]|\ge \ep \ell^{(t-1)b/(b-1)}$, where $\c{Q}[x\to y]$ denotes the set of paths of $\c{Q}$ starting at $x$ and ending at $y$. 
			\item[(f)] For any  $y\in B_t$ and non-empty set of vertices $S\sub V(G)\sm \{x,y\}$, there are at most $\ep^{-1}\ell^{(t-1-|S|)b/(b-1)}$ paths in $\c{Q}[x\to y]$ containing $S$.
			\item[(g)] If $\c{F}$ is such that for every path $x_1\cdots x_r$ of $G$ with $r\le b$ which does not contain an element of $\c{F}$ as a subgraph, there are at most $\ep \ell m^{1/b}$ vertices $x_{r+1}\in N_G(x_r)$ such that the path $x_1\cdots x_{r+1}$ contains an element of $\c{F}$ as a subgraph;  then no path of $\c{Q}$ contains an element of $\c{F}$ as a subgraph. \label{part:g}
			\item[(h)] We have  $|X|\ge \ep \ell^{(b-t)/b}m^{t/b}$.
		\end{itemize}
	\end{prop}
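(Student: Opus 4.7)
The plan is to adapt the standard Morris--Saxton BFS-with-cleanup strategy, decorated to produce all of (a)--(h) in a single construction. First, I would peel $G$ by iteratively deleting vertices of degree less than $\f{1}{2}\ell m^{1/b}$ in the current subgraph; since the initial minimum degree is $\ell m^{1/b}$, this loses only a constant fraction of vertices and leaves a subgraph $G'$ of size $\Om(m)$ with minimum degree of the same order. All subsequent BFS is performed inside $G'$.

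For each $x \in V(G')$, I would build a candidate pair $(\c{B},\c{Q})$ by taking $B_0 = \{x\}$ and iteratively forming $B_i$ from forward neighborhoods, interleaved with cleanup passes that remove any vertex with fewer than $\ep \ell m^{1/b}$ forward neighbors into the next layer, or that lies on a path completing a forest in $\c{F}$ (the hypothesis in (g) ensures only a negligible fraction of each layer is lost this way). The stopping index $t_x \in \{2,\dots,b\}$ is the first $t$ at which $|B_t|$ reaches the target $\ep \ell^{(b-t+1)/(b-1)} m^{(t-1)/b}$ demanded by (b), or $t=b$ otherwise. A final backward cleanup removes from $B_t$ any vertex with fewer than $\ep \ell^{b/(b-1)}$ neighbors in $B_{t-1}$; an edge count between $B_{t-1}$ and $B_t$ shows that this threshold scales consistently with the target $|B_t|$ and loses only a constant fraction of the layer. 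Conditions (a), (b), (d), (g) follow directly, and (c) is the greedy product $(\ep \ell m^{1/b})^t$ supplied by the forward-degree condition in (d).

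Condition (h) follows by pigeonhole on the $b-1$ possible values of $t_x$: some common $t$ is shared by at least $|V(G')|/(b-1) = \Om(m)$ vertices, which comfortably exceeds the required bound $\ep \ell^{(b-t)/b} m^{t/b} \le m$ in the regime $\ell \gg 1$. I take $X$ to be this pigeonholed class.

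The main technical obstacle lies in simultaneously enforcing (e) and (f). For (e), obtaining the lower bound $\ep \ell^{(t-1)b/(b-1)}$ on the number of paths from $x$ to any fixed $y \in B_t$ requires a further backward sweep: inductively prune from each $B_i$ the vertices with too few path-predecessors, with a Cauchy--Schwarz-style averaging at each step showing that only a small fraction is removed. For (f), the codegree bound on paths through a specified interior set $S$ is obtained by decomposing each such path into $|S|+1$ bridging segments between consecutive elements of $\{x\}\cup S \cup \{y\}$ and bounding each segment using the degree conditions in (d); the factors compose to give the claimed $\ep^{-1} \ell^{(t-1-|S|)b/(b-1)}$. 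The subtle bookkeeping is in scheduling the forward, backward, forbidden-forest, and codegree cleanups so that each one preserves a positive fraction of the previous structure, while also maintaining the target size in (b) exactly at the stopping step $t$.
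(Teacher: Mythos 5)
There is a genuine gap, and it sits exactly where you flag the ``main technical obstacle.'' Conditions (e) and (f) are not consequences of the degree lower bounds in (d): (f) requires an \emph{upper} bound on the number of path-segments of $\c{Q}$ between any two fixed vertices in layers $i$ and $j$ (of the form $|\c{Q}_{i,j}[u\to v]|\le \ell^{(j-i-1)b/(b-1)}$), and composing the lower bounds from (d) along ``bridging segments'' gives no such thing. In the paper's proof these upper bounds are forced by deleting violating paths and showing that few paths are deleted; the reason few are deleted is that otherwise some intermediate vertex $u$ would admit a \emph{concentrated} $s$-neighborhood with $s<t$, i.e.\ $t(u)<t(x)$, contradicting the minimality of the expansion-stopping parameter $t$ locally around $x$. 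Your construction never defines $t$ this way. You stop at the first $t$ where $|B_t|$ reaches the \emph{lower}-bound target of (b), whereas the correct $t$ is (essentially) the first index at which the $t$-th neighborhood can be made \emph{small}, $|B_t|\le \ell^{(b-t)/(b-1)}m^{t/b}$, while keeping all forward degrees $\ge\ep\ell m^{1/b}$. Without that upper bound on $|B_t|$ you also cannot get (e): the bound $|\c{Q}[x\to y]|\ge \ep\ell^{(t-1)b/(b-1)}$ comes from dividing $|\c{Q}|\ge\ep\ell^t m^{t/b}$ by $|B_t|\le \ell^{(b-t)/(b-1)}m^{t/b}$ and then pruning poor endpoints; a ``Cauchy--Schwarz-style averaging'' with no control on $|B_t|$ from above gives nothing when $G$ is far from random-like (which is precisely the case the proposition must handle).

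The pigeonhole argument for (h) has the same blind spot and, if it worked, would resolve an open question the paper explicitly poses (whether one can take $|X|\ge\ep m$). The difficulty is that the minimality of $t$ must hold not just at $x$ but at every vertex of $\bigcup_i B_i$: if some $u\in B_i$ has $t(u)<t$, the contradiction that powers the path-concentration bounds evaporates. Pigeonholing on the value of $t_x$ does not prevent a vertex in the class for $t$ from having neighbors whose own stopping time is smaller. The paper handles this with a separate argument (the sets $Y_0,Y_1,\ldots$ of vertices that are close, in an iterated-neighborhood sense, to vertices with $t(y)<t$, shown to be small by a bipartite-cleaning and random-sparsification argument), and it is exactly this step that degrades the bound on $|X|$ from $\Om(m)$ down to $\ep\ell^{(b-t)/b}m^{t/b}$. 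To repair your proposal you would need to (i) redefine $t$ as the minimal index admitting a concentrated (small) $t$-th neighborhood, (ii) prove the segment-codegree upper bounds via the ``otherwise $t(u)<t$'' contradiction, and (iii) replace the pigeonhole with an argument guaranteeing a large set $X$ on which $t$ is a \emph{local} minimum.
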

	
	Morris and Saxton essentially proved this same result but with the last condition replaced by $|X|>0$ as opposed to $|X|\ge \ep \ell^{(b-t)/b} m^{t/b}$.  Our two proofs will be essentially identical outside of this improved quantitative bound\footnote{It's possible that \Cref{prop:expansion} holds with the even stronger quantitative bound $|X|\ge \ep m$. 
		If true, this would significantly simplify our proof of \Cref{thm:main}; see \Cref{quest:largeX} for more on this.}, and as such we defer many of the redundant details of the proof to \Cref{append:Expansion}. 
	
	For our proof, we fix a sequence of rapidly decreasing constants \[1\ge \ep_b\ge \cdots \ge \ep_2\ge \ep_1>0\] which depend only on $b$.  The exact values of these constants are not particularly important, other than that they are sufficiently small with respect to 1 and with respect to each other.  In particular, we demand $\ep_t\ge 16(b+1)\ep_{t-1}$ for all $t$.  For the rest of the subsection we will fix some $m$-vertex graph $G$ with minimum degree $\ell m^{1/b}$ with $\ell$ (and hence $m$) sufficiently large in terms of the $\ep_t$ constants.
	
	\begin{defn}
		For $x\in V(G)$, we say that a tuple $\c{A}=(A_0,A_1,\ldots,A_t)$ of (not necessarily disjoint) subsets of $V(G)$ is a \textit{concentrated $t$-neighborhood of $x$} if $A_0=\{x\}$, $|A_t|\le \ell^{(b-t)/(b-1)}m^{t/b}$, and $|N(y)\cap A_i|\ge \ep_t \ell m^{1/b}$ for all $y\in A_{i-1}$.
		
		We define $t(x)$ to be the minimal $t\ge 2$ such that there exists a concentrated $t$-neighborhood of $x$ in $G$.  Note that $t(x)\le b$ for all $x$ since we can iterativly take $A_i=\bigcup_{y\in A_{i-1}} N(y)$.
	\end{defn}

	Morris and Saxton implicitly proved that for any vertex $x$ with $t(x)=\min_{y\in V(G)} t(y)$, there exist sets $(\c{B},\c{Q})$ as in Proposition~\ref{prop:expansion}, and in particular, at least one such vertex exists.  The only place where $t(x)=\min_{y\in V(G)} t(y)$ is used in their argument is in showing that there exists a tuple $\c{A}=(A_0,\ldots,A_t)$ with $t=t(x)$, $A_0=\{x\}$, $|A_t|\le \ell^{(b-t)/(b-1)}m^{t/b},\ |N(y)\cap A_i|\ge \ep_t \ell m^{1/b}$ for all $y\in A_{i-1}$, and (crucially) every $y\in \bigcup_{i=0}^t A_i$ has $t(y)\ge t$; in other words, for their argument to go through we only need that $t(x)$ achieves a \textit{local} minimum value among vertices $y$ near $x$, and it is not strictly necessary for $t(x)$ to achieve a \textit{global} minimum value.  
	
	Motivated by this, our main goal is to show that tuples with essentially these same properties noted above exist for many vertices $x$.  Specifically, we prove the following.
	
	\begin{lem}\label{lem:XSet}
		There exists some integer $2\le t\le b$ and some set $X\sub V(G)$ of size at least $\half (4b)^{t-b}\ell^{(b-t)/(b-1)}m^{t/b}$ such that $t(x)=t$ for every $x\in X$, and such that for every $x\in X$, there exists a tuple of sets $\c{A}=(A_0,\ldots,A_t)$ such that $A_0=\{x\}$, $|A_t|\le \ell^{(b-t)/(b-1)}m^{t/b}$, $|N(y)\cap A_i|\ge \half \ep_t\ell m^{1/b}$ for all $y\in A_{i-1}$,  and every $y\in \bigcup_{i=0}^t A_i$ has $t(y)\ge t$. 
	\end{lem}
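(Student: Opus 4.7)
The plan is to combine a pigeonhole argument on the partition $V(G) = \bigsqcup_{s=2}^b V_s$, where $V_s := \{y \in V(G) : t(y) = s\}$, with a pruning argument that refines the concentrated $t$-neighborhoods to avoid the ``low-$t$'' set $V_{<t} := \bigcup_{s<t}V_s$.  Writing $\tau(s) := (4b)^{s-b}\ell^{(b-s)/(b-1)}m^{s/b}$, the target is to find $t$ and $X \sub V_t$ of size at least $\half\tau(t)$.

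For Step~1, I use a pigeonhole argument to select $t$.  The values $\tau(s)$ are geometrically increasing in $s$ with common ratio $4bm^{1/b}/\ell^{1/(b-1)} \gg 1$ (valid when $m \gg \ell^{b/(b-1)}$), so $\sum_s \tau(s) \le (1+o(1))\tau(b) = (1+o(1))m$, and hence $\sum_s |V_s| = m$ forces some $|V_t| \ge c_0\tau(t)$ for a constant $c_0 > \half$ depending only on $b$.  Choosing $t$ to be the smallest such index, the minimality of $t$ combined with the geometric domination yields $|V_{<t}| \le C\tau(t-1) \le C\ell^{1/(b-1)}\tau(t)/(4bm^{1/b})$, a tiny fraction of $\tau(t)$.

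For Step~2, given any $x \in V_t$, the definition $t(x) = t$ supplies a concentrated $t$-neighborhood $(A_0,\ldots,A_t)$.  I prune by setting $A_i' := A_i \setminus V_{<t}$, so that $A_0' = \{x\}$ (since $x \in V_t$) and every vertex in $\bigcup A_i'$ has $t$-value at least $t$.  The halved degree condition $|N(y)\cap A_i'| \ge \half\ep_t\ell m^{1/b}$ for $y \in A_{i-1}'$ is guaranteed as long as $|N(y)\cap V_{<t}| \le \half\ep_t\ell m^{1/b}$.  Defining $W$ to consist of ``bad'' vertices where this fails (plus any vertices added by iterating the bad-set definition in a cascade), the set $X := V_t \setminus W$ with the pruned tuples $(A_0',\ldots,A_t')$ as witnesses is the candidate output; the bound $|X| \ge \half\tau(t)$ will follow once $|W| \le (c_0 - \half)\tau(t)$.

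The main obstacle is controlling $|W|$.  A single round of edge-counting yields $|W| = O(|V_{<t}| m^{1-1/b}/(\ep_t\ell))$, which may exceed $\tau(t)$ when $V_{<t}$ is concentrated on a few vertices of very high degree in $G$, forcing a cascading iteration of the bad-set definition.  Each iteration can inflate $|W|$ by another factor of roughly $m^{1-1/b}/\ell$, so a naive geometric bound on the fixed point blows up.  Bounding this fixed point below a small constant multiple of $\tau(t)$ is the technical crux, and I expect it to rely on the structural constraint that each $y \in V_{<t}$ itself has a small concentrated $t'$-neighborhood for some $t' < t$, which limits how concentrated the edges between $V_{<t}$ and its complement can be.  The factor $(4b)^{t-b}$ in the target size is consistent with this interpretation, with one factor of $4b$ absorbed per level of the cascade.
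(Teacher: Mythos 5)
Your overall skeleton matches the paper's: choose $t$ by a pigeonhole/minimality argument against the increasing function $\Lam(t')=(4b)^{t'-b}\ell^{(b-t')/(b-1)}m^{t'/b}$, then prune each concentrated $t$-neighborhood by removing a cascade of ``bad'' sets ($Y_0$ = low-$t$ vertices, $Y_{i+1}$ = vertices with many neighbors in $Y_i$), and absorb the degree loss into the factor $\half\ep_t$. But the proof has a genuine gap exactly where you flag it: you never establish that the cascade stays small, and your proposed mechanism for doing so is not the one that works. A naive edge-count gives $|Y_{i+1}|=O(|Y_i|\,m^{1-1/b}/\ell)$, which, as you note, blows up; and the fix is \emph{not} that vertices of $V_{<t}$ have small concentrated neighborhoods, nor is a factor of $4b$ ``absorbed per level.'' The paper proves the uniform bound $|Y_i|\le\Lam(t-1)$ for \emph{every} $i$, by induction, and the driver is the constraint $t(x)\ge t$ for $x\in Y_{i+1}$ (since $Y_{i+1}$ is disjoint from $Y_0$), not any property of the low-$t$ vertices beyond the base case $|Y_0|\le\Lam(t-1)$ supplied by minimality of $t$.

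Concretely, the inductive step argues by contradiction: if $|Y_{i+1}|>|Y_i|$, one extracts a bipartite subgraph $G'$ between $S\sub Y_i$ and $T\sub Y_{i+1}$ in which every $y\in T$ has degree $\ge\tfrac14\al\ell m^{1/b}$ (with $\al=\ep_t/(2(b+1))$) and every $y\in S$ has degree $\ge\tfrac14\al\ell m^{1/b}|Y_i|^{-1}|Y_{i+1}|$. Starting from $x\in T$ and taking iterated $G'$-neighborhoods $A_0,\ldots,A_{t-1}$, the sets alternate between $S$ and $T$; when $A_{t-1}\sub S\sub Y_i$ the inductive hypothesis $|Y_i|\le\Lam(t-1)$ makes $(A_0,\ldots,A_{t-1})$ a concentrated $(t-1)$-neighborhood of $x$, and when $A_{t-1}\sub T$ one replaces it by a random subset of $Y_{i+1}$ of density $|Y_i|/|Y_{i+1}|$ and uses Chernoff to keep the degree condition while forcing $|A_{t-1}'|\le 2\Lam(t-1)$. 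Either way $t(x)\le t-1$, contradicting $x\notin Y_0$. This argument--in particular the bipartite extraction and the random sparsification to handle the parity of $t-1$--is the technical heart of the lemma and is absent from your write-up; without it the claimed bound $|W|\le(c_0-\half)\Lam(t)$ (via $|\bigcup_{i=0}^b Y_i|\le(b+1)\Lam(t-1)\le\half\Lam(t)$) is unsupported.
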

	
	This differs ever so slightly from the condition that Morris and Saxton worked with since we only guarantee  $|N(y)\cap A_i|\ge \half \ep_t\ell m^{1/b}$ as opposed to $|N(y)\cap A_i|\ge \ep_t\ell m^{1/b}$.  By slightly adjusting the constants of Morris and Saxton, their same proof still carries over word for word for any vertex $x$ as in Lemma~\ref{lem:XSet}; see \Cref{append:Expansion} for details.  Thus to prove Proposition~\ref{prop:expansion}, it suffices to prove Lemma~\ref{lem:XSet}, which will be our goal for the rest of this subsection.
	
	For any integer $2\le t'\le b$, define
	\[\Lam(t')=(4b)^{t'-b}\ell^{(b-t')/(b-1)}m^{t'/b}.\]
	Note that $\ell m^{1/b}\le m$ (since $\ell m^{1/b}$ is the minimum degree of an $m$-vertex graph), i.e. $\ell^{1/(b-1)}\le m^{1/b}$, and thus $\Lam(t')$ is an increasing function in $t'$. From now on we let $t$ be the smallest integer such that there are at least $\Lam(t)$ vertices with $t(x)\le t$.  Note that $t=b$ satisfies these conditions, so such a (smallest) integer exists.
	
	Let $Y_0$ denote the set of vertices $x$ with $t(x)<t$.  Iteratively define $Y_i$ to be the set of vertices $x\notin \bigcup_{j=0}^{i-1} Y_j$ which have at least $\al\ell m^{1/b}$ neighbors in $Y_{i-1}$, where $\al:=\rec{2(b+1)} \ep_t$.  Note that every $x\in Y_i$ with $i\ge 1$ has $t(x)\ge t$ since $x\notin Y_0$.
	
	To motivate these definitions, we observe that in proving Lemma~\ref{lem:XSet} with $t$ as stated, we can not include any vertex of $Y_0$ in any of the $A_i$ sets.  While we are allowed to include vertices of $Y_1$ in these sets, these vertices are ``dangerous'' since a large number of their neighbors lie in $Y_0$, and similarly it is somewhat dangerous to include $Y_2$ since a large number of their neighbors are in $Y_1$, and so on.  We thus want to show that these $Y_i$ sets are all relatively small, which is accomplished by the following lemma.
	\begin{lem}
		If $t=2$ then $Y_i=\emptyset$ for all $i\ge 0$, and otherwise $|Y_i|\le \Lam(t-1)$ for all $i\ge 0$.
	\end{lem}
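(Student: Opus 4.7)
The plan is to split into two cases based on $t$. If $t=2$, then $Y_0=\emptyset$ trivially, since $t(x)\geq 2$ for every vertex $x$ by definition; inductively, each $Y_i$ for $i\geq 1$ is then also empty, because no vertex can have $\al\ell m^{1/b}>0$ neighbors in the empty set $Y_{i-1}$.

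For $t\geq 3$, I would induct on $i\geq 0$. The base case $i=0$ is immediate from the minimality of $t$: since $t$ is the smallest integer with at least $\Lam(t)$ vertices satisfying $t(x)\leq t$, there are strictly fewer than $\Lam(t-1)$ vertices with $t(x)\leq t-1$, i.e., $|Y_0|<\Lam(t-1)$.

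For the inductive step, assume $|Y_j|\leq\Lam(t-1)$ for all $j<i$. I would argue by contradiction: if $|Y_i|>\Lam(t-1)$, then I aim to show that every $x\in Y_i$ admits a concentrated $(t-1)$-neighborhood, which forces $t(x)\leq t-1$ and hence $x\in Y_0$, contradicting the disjointness of the $Y_j$'s. To build such a concentrated $(t-1)$-neighborhood of $x\in Y_i$, I would proceed layer by layer, starting with $A_0=\{x\}$ and taking $A_j\subseteq Y_{i-j}$ for $j=1,\ldots,\min\{i,t-1\}$. The defining property of $Y_{i-j+1}$ (every vertex there has at least $\al\ell m^{1/b}$ neighbors in $Y_{i-j}$), combined with $\al=\ep_t/(2(b+1))\geq 8\ep_{t-1}$, ensures the required minimum-degree condition $|N(y)\cap A_j|\geq\ep_{t-1}\ell m^{1/b}$ with room to spare for later refinements. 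If $i\geq t-1$, this process terminates at $A_{t-1}\subseteq Y_{i-t+1}$ with $|A_{t-1}|\leq\Lam(t-1)\leq\ell^{(b-t+1)/(b-1)}m^{(t-1)/b}$, since $(4b)^{t-1-b}\leq 1$ for $t\leq b$, so we get a valid concentrated $(t-1)$-neighborhood and we are done.

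The main obstacle is the case $i<t-1$, where the chain of $Y_j$'s is exhausted at depth $i$ with $A_i\subseteq Y_0$ before the construction reaches depth $t-1$, and $t-1-i$ further layers $A_{i+1},\ldots,A_{t-1}$ must be built using the concentrated $t(y)$-neighborhoods guaranteed by $t(y)\leq t-1$ for $y\in Y_0$. Since different $y\in A_i$ may have different values $t(y)\in\{2,\ldots,t-1\}$, their concentrated neighborhoods come with different $\ep$-constants in the minimum-degree bounds, so the plan is to first pigeonhole $A_i$ onto a common value of $t(y)$, losing at most a factor of $b$, and then glue the neighborhoods together carefully. Controlling the final-layer size $|A_{t-1}|\leq\ell^{(b-t+1)/(b-1)}m^{(t-1)/b}$ requires combining the bound $|A_i|\leq|Y_0|<\Lam(t-1)$ with the final-layer size bounds from the individual concentrated neighborhoods, and the large gap $\ep_t\geq 16(b+1)\ep_{t-1}$ between consecutive constants is precisely what is designed to absorb these losses.
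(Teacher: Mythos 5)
There is a genuine gap in the inductive step, precisely in the case you flag as ``the main obstacle,'' and the proposed fix does not work. When $i<t-1$ (which includes the first nontrivial case $i=1$ whenever $t\ge 3$), your chain $A_j\sub Y_{i-j}$ bottoms out at $A_i\sub Y_0$, and you propose to continue by gluing the concentrated $t(y)$-neighborhoods of the vertices $y\in A_i$. This fails for three reasons. First, the final layer of the glued object is a union over all $y\in A_i$ of the final layers $A^y_{t-1-i}$, and even in the best case $t(y)=t-1-i$ this union has size up to $|A_i|\cdot \ell^{(b-t+1+i)/(b-1)}m^{(t-1-i)/b}$, which exceeds the required bound $\ell^{(b-t+1)/(b-1)}m^{(t-1)/b}$ by a factor of roughly $\ell^{(b-t+1+i)/(b-1)}m^{(t-1-i)/b}/(4b)^{b-t+1}\gg 1$; the definition of a concentrated neighborhood controls only its \emph{last} layer, so there is no cancellation available. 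Second, if $t(y)>t-1-i$ you must truncate $y$'s concentrated neighborhood at an intermediate layer, whose size is completely uncontrolled, and if $t(y)<t-1-i$ the neighborhood is too shallow to reach depth $t-1$. Third, pigeonholing $A_i$ onto a common value of $t(y)$ shrinks $A_i$ globally but does not preserve the per-vertex condition $|N(y)\cap A_i|\ge \ep_{t-1}\ell m^{1/b}$ for each $y\in A_{i-1}$. Note also that your contradiction hypothesis $|Y_i|>\Lam(t-1)$ is never actually used in your construction, which is a sign that the argument is not engaging with the right quantities.

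The paper's proof takes a different route that avoids descending to $Y_0$ altogether. It reduces the inductive step to showing $|Y_{i+1}|\le |Y_i|$, assumes for contradiction $|Y_{i+1}|>|Y_i|$, and extracts a cleaned bipartite graph $G'$ between $S\sub Y_i$ and $T\sub Y_{i+1}$ in which vertices of $T$ have degree $\ge\frac14\al\ell m^{1/b}$ into $S$ and vertices of $S$ have degree $\ge\frac14\al\ell m^{1/b}|Y_i|^{-1}|Y_{i+1}|$ into $T$. The concentrated $(t-1)$-neighborhood of a vertex $x\in T$ is then built by alternating between $S$ and $T$ only: when $t$ is even the last layer lands in $S\sub Y_i$, which is small by induction, and when $t$ is odd it lands in $Y_{i+1}$ and is made small by a random sparsification (keeping each vertex with probability $|Y_i|/|Y_{i+1}|$ and applying Markov and Chernoff). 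This is the mechanism that controls the final-layer size, and it is absent from your proposal. Your argument for $t=2$, the base case, and the regime $i\ge t-1$ are fine, but the core of the lemma is the case you have not closed.
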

	For this proof, we note that by choosing $\ep$ sufficiently small in Proposition~\ref{prop:expansion}, we may assume $m\ge \ell^{b/(b-1)}\ge \ep^{-b/(b-1)}$ is sufficiently large compared to all of the constants $\ep_{t'}$.
	\begin{proof}
		If $t=2$ then $Y_0=\emptyset$, and hence inductively we have $Y_i=\emptyset$ for all $i$.  From now on we assume $t>2$. We prove the result by induction on $i$, the base case $|Y_0|\le \Lam(t-1)$ being immediate from the definition of $t$ and $Y_0$.  Say we have proven $|Y_i|\le \Lam(t-1)$ for some $i\ge 0$. The key technical observation we need is the following.
		\begin{claim}
			There exists a non-empty bipartite graph $G'\sub G$ with bipartition $S\cup T$ such that $S\sub Y_i$, $T\sub Y_{i+1}$, and such that $d_{G'}(y)\ge \rec{4} \al\ell m^{1/b}$ for $y\in T$ and $d_{G'}(y)\ge \rec{4} \al\ell m^{1/b} |Y_i|^{-1}|Y_{i+1}|$ for $y\in S$.
		\end{claim}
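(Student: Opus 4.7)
The plan is to start from the natural bipartite graph $G_0$ with parts $Y_i$ and $Y_{i+1}$, taking all edges of $G$ between them, and then iteratively prune vertices whose degree is below the claimed thresholds. The key input is the defining property of $Y_{i+1}$: every $y\in Y_{i+1}$ has at least $\al\ell m^{1/b}$ neighbors in $Y_i$. This immediately gives the edge count
\[e(G_0)\ \ge\ \al\ell m^{1/b}\,|Y_{i+1}|,\]
and in particular the average degree of a vertex in $Y_i$ (in $G_0$) is at least $\al\ell m^{1/b}\,|Y_{i+1}|/|Y_i|$.

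Now I would run the following cleanup procedure on $G_0$: as long as there exists either a vertex $y\in T$ with $d_{G'}(y)<\tfrac14\al\ell m^{1/b}$ or a vertex $y\in S$ with $d_{G'}(y)<\tfrac14\al\ell m^{1/b}\,|Y_i|^{-1}|Y_{i+1}|$, delete such a vertex (and its incident edges). The resulting graph $G'$ automatically satisfies the required minimum-degree bounds, so the only thing to verify is that $G'$ is non-empty.

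The count is routine: the total number of edges lost to deletions from $T$ is strictly less than $|Y_{i+1}|\cdot \tfrac14\al\ell m^{1/b}$, and the total lost to deletions from $S$ is strictly less than $|Y_i|\cdot \tfrac14\al\ell m^{1/b}\,|Y_i|^{-1}|Y_{i+1}|=\tfrac14\al\ell m^{1/b}\,|Y_{i+1}|$. Together these account for at most $\tfrac12\al\ell m^{1/b}\,|Y_{i+1}|\le \tfrac12 e(G_0)$ edges, so at least half of the edges of $G_0$ survive, and $G'$ is non-empty as desired.

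I do not anticipate any serious obstacle here; the only thing to be careful about is that one really needs the defining lower bound on $d_{G_0}(y)$ for $y\in Y_{i+1}$ to hold \emph{before} any pruning, so that $e(G_0)$ can be bounded below by $\al\ell m^{1/b}\,|Y_{i+1}|$ rather than the smaller quantity $\al\ell m^{1/b}\,|Y_{i+1}|/2$ (say). If for some reason one wanted to weaken the thresholds, the same argument with $\tfrac14$ replaced by any constant strictly less than $\tfrac12$ would still produce a non-empty $G'$, which is comforting robustness but not needed here.
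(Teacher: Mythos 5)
Your proposal is correct and follows essentially the same route as the paper: start from the bipartite graph of all $G$-edges between $Y_i$ and $Y_{i+1}$, lower-bound its edge count by $\al\ell m^{1/b}|Y_{i+1}|$ using the defining property of $Y_{i+1}$, iteratively delete vertices violating the two degree thresholds, and observe that the deletions destroy at most $\half\al\ell m^{1/b}|Y_{i+1}|$ edges, so a non-empty graph remains. The bookkeeping matches the paper's almost line for line, so there is nothing to flag.
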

		\begin{proof}
			Let $G^*\sub G$ be the graph on $Y_i\cup Y_{i+1}$ obtained after deleting every edge within $Y_i$ and within $Y_{i+1}$.  Note that by definition, each vertex of $Y_{i+1}$ has at least $\al\ell m^{1/b}$ neighbors in $Y_i$ (which is disjoint from $Y_{i+1}$), so $e(G^*)\ge \al\ell m^{1/b}|Y_{i+1}|$.  Define $G'$ by iteratively deleting every vertex which violates the degree conditions of the claim.  Note that the number of edges deleted in this process is at most
			\[ \rec{4} \al\ell m^{1/b}\cdot |Y_{i+1}|+ \rec{4} \al\ell m^{1/b} |Y_i|^{-1}|Y_{i+1}|\cdot |Y_i|=\half \al\ell m^{1/b}|Y_{i+1}|<e(G^*).\]
			In particular, $G'$ is non-empty, and it satisfies all of the other properties by construction.
		\end{proof}
		Returning to our induction, we wish to show that $|Y_{i+1}|\leq \Lambda(t-1)$; our inductive hypothesis gives $|Y_i|\leq \Lambda(t-1)$, so it suffices to prove that $|Y_{i+1}|\leq |Y_i|$. Assume for contradiction that $|Y_{i+1}|> |Y_i|$. Let $x$ be any vertex of $T$ (which exists since $G'$ is non-empty), and let $A_0,\ldots,A_{t-1}$ be defined by $A_0=\{x\}$ and $A_j=\bigcup_{y\in A_{j-1}}N_{G'}(y)$.  Note that $A_j\sub S$ if and only if $j$ is odd since $G'$ is bipartite. Also note that for all $y\in A_{j-1}$ we have \[|N_G(y)\cap A_j|\ge |N_{G'}(y)\cap A_j|\ge \rec{4}\al\ell m^{1/b}\ge \ep_{t-1} \ell m^{1/b},\]
		where this last step used $\al=\rec{2(b+1)}\ep_t$ and that $\ep_{t-1}$ is sufficiently small compared to $\ep_{t}$.
		In particular, if $t>2$ is even, then $(A_0,\ldots,A_{t-1})$ is a concentrated $(t-1)$-neighborhood of $x$ since \[|A_{t-1}|\le |S|\le |Y_i|\le \Lam(t-1)\le \ell^{(b-t+1)/(b-1)}m^{(t-1)/b}.\]  This implies $t(x)<t$, a contradiction to $x\in T\sub Y_{i+1}$ since $Y_{i+1}$ is disjoint from $Y_0$.
		
		Thus we may assume $t>2$ is odd.  Define the random set $A'_{t-1}\sub Y_{i+1}$ by including each vertex of $Y_{i+1}$ independently and with probability $p=|Y_i||Y_{i+1}|^{-1}$, which is well defined since we assumed $|Y_{i+1}|> |Y_i|$. Observe that $|A'_{t-1}|$ is a binomial random variable with $|Y_{i+1}|$ trials and probability of success $p$.  Since $\E[|A'_{t-1}|]=p|Y_{i+1}|=|Y_i|\le \Lam(t-1)$, by Markov's inequality we have $\Pr[A'_{t-1}\ge 2\Lam(t-1)]\le 1/2$.  Thus for $m$ sufficiently large, we conclude that the event $|A'_{t-1}|< 2\Lam(t-1)\le \ell^{(b-t+1)/(b-1)}m^{(t-1)/b}$ occurs with probability at least 1/2.

		Similarly for each $y\in A_{t-2}\sub S$, the random variable $|N_G(y)\cap A_{t-1}'|$ is binomial with success probability $p$ and number of trials $d_{G'}(y)\ge \rec{4} \al\ell m^{1/b} p^{-1}$.  Thus by the multiplicative Chernoff inequality, we have for any $y\in A_{t-2}$,
		\[\Pr[|N_G(y)\cap A_{t-1}'|\le \rec{8}\al\ell m^{1/b}]\le e^{-\al\ell m^{1/b}/32},\]
		and for $m$ sufficiently large this probability is at most $.1 m^{-1}$. By a union bound over $y\in A_{t-2}$, we see that with probability at least $.9$, every vertex $y\in A_{t-2}$ satisfies $|N_G(y)\cap A_{t-1}'|\ge \rec{8} \al\ell m^{1/b}\ge \ep_{t-1} \ell m^{1/b}$.
		
		In total we conclude that there exists some choice of $A'_{t-1}\sub Y_{i+1}$ such that both $|A'_{t-1}|\le \ell^{(b-t+1)/(b-1)}m^{(t-1)/b}$ and $|N_G(y)\cap A_{t-1}'|\ge \ep_{t-1} \ell m^{1/b}$ for all $y\in A_{t-2}$ (since in particular, this holds with positive probability for a random subset of $Y_{i+1}$).  Thus $(A_0,\ldots,A_{t-2},A_{t-1}')$ is a concentrated $(t-1)$-neighborhood of $x$.  This implies $t(x)<t$, which again is a contradiction.  We conclude $|Y_{i+1}|\le |Y_i|$, and hence $|Y_{i+1}|\le \Lam(t-1)$ by the inductive hypothesis.
	\end{proof}
	We are now ready to prove Lemma~\ref{lem:XSet}.  
	
	\begin{proof}[Proof of Lemma~\ref{lem:XSet}]
		Let $X$ be the set of vertices $x$ with $t(x)=t$ and $x\notin \bigcup_{i=1}^{b} Y_i$.
		
		We claim that $|X|\ge \half \Lam(t)$.  Indeed, by definition of $t$,  there are at least $\Lam(t)$ vertices with $t(x)\le t$.  Every vertex with $t(x)\le t$ is either in $X$ or $\bigcup_{i=0}^b Y_i$, so by the previous lemma,
		\[|X|\ge \Lam(t)-|\bigcup_{i=0}^b Y_i|\ge \Lam(t)-(b+1)\Lam(t-1)\ge \half \Lam(t).\]
		
		It remains to find the tuple of sets $\c{A}$ guaranteed by Lemma~\ref{lem:XSet} for each $x\in X$.  For each $x\in X$, by definition of $t(x)=t$, there exists a tuple $(A'_0,A'_1,\ldots,A'_t)$ with $A'_0=\{x\}$,  $|A'_t|\le \ell^{(b-t)/(b-1)}m^{t/b}$, and $|N(y)\cap A_i'|\ge \ep_t\ell m^{1/b}$ for all $y\in A_{i-1}'$.  Define $A_i=A'_i\sm \bigcup_{j=0}^{b-i} Y_j$.  Note that with this we have $A_0=\{x\}$, $|A_t|\le \ell^{(b-t)/(b-1)}m^{t/b}$, and no $y\in A_i$ has $t(y)<t$ because we removed $Y_{0}$ from each $A_i$.  
		
		It remains to show that each $y\in A_{i-1}$ has many neighbors in $A_i$.  Since each $y\in A_{i-1}$ does not belong to any $Y_j$ with $1\le j\le b-i+1$, by definition $y$ has at most $(b+1)\al\ell m^{1/b}$ neighbors in $\bigcup_{j=0}^{b-i} Y_j$.   This implies \[|N(y)\cap A_i|=|N(y)\cap (A_i'\sm\bigcup_{j=0}^{b-i} Y_j)| \ge |N(y)\cap A_i'|-(b+1)\al\ell m^{1/b}\ge (\ep_t-(b+1)\al)\ell m^{1/b}= \half \ep_t \ell m^{1/b},\]
		proving the result.
	\end{proof}
	
	\subsection{Minimum Degrees}
	A very minor step in the proof of Morris and Saxton calls for deleting vertices of low degree in $G$.  In their setting this is fine, as this does not significantly decrease the number of edges in $G$.  However, because the focus of our approach is on balanced supersaturation for vertices rather than for edges, we will need to be more careful with this step.
	
	Towards this end, we use the reduction lemma stated below, which guarantees a subgraph $G'\sub G$ of large minimum degree, where the degree condition is stronger the more vertices are removed from $G$.  In particular, the tradeoff is roughly what one would expect if $G$ was a clique $G'$ together with some number of isolated vertices.
	
	As a small technical convenience, we will prove this lemma in the more general setting of multigraphs with loops.  Here, the degree of a vertex $v$ is the number of edges incident to $v$ (so each loop contributes 1 to its degree).
	
	\begin{lem}\label{lem:reduction}
		Let $G$ be an $n$-vertex multigraph with loops. For all $b\ge 1$, there exists a subgraph $G'\sub G$ with $v(G')>0$ and minimum degree at least \[2^{-b}\left(\frac{v(G')}{n}\right)^{1/b}\f{e(G)}{ v(G')}.\]
	\end{lem}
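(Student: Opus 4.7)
The plan is a standard iterative minimum-degree deletion argument, set up to derive a contradiction from the assumption that no such $G'$ exists. Define the threshold $f(m):=2^{-b}(m/n)^{1/b}\cdot e(G)/m = 2^{-b}e(G)\,n^{-1/b}\, m^{1/b-1}$. If the lemma fails, then for every non-empty subgraph $H\sub G$ the minimum degree of $H$ is strictly less than $f(v(H))$. We may therefore build a nested chain $G=G_n\supset G_{n-1}\supset\cdots\supset G_0=\emptyset$, where each $G_{i-1}$ is obtained from $G_i$ by deleting a single vertex whose $G_i$-degree is strictly less than $f(i)$. Since deleting a vertex of degree $d$ removes exactly $d$ edges (loops at that vertex contributing to $d$ in the multigraph sense), telescoping over $i$ yields $e(G)<\sum_{i=1}^{n} f(i)$.

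The second step is to bound this sum. We have
\[\sum_{i=1}^{n}f(i) = 2^{-b}e(G)\,n^{-1/b}\sum_{i=1}^{n} i^{\,1/b-1}.\]
Since $1/b-1\le 0$ for $b\ge 1$, the function $x^{1/b-1}$ is non-increasing on $[1,\infty)$, so the Riemann-sum estimate $\sum_{i=1}^{n} i^{\,1/b-1}\le 1+\int_1^n x^{1/b-1}\,dx\le b\, n^{1/b}$ holds for all $b\ge 1$ (with equality in the degenerate case $b=1$). Plugging in gives $\sum_{i=1}^{n}f(i)\le b\cdot 2^{-b}\cdot e(G)\le \tfrac{1}{2}e(G)$, where the last inequality uses that $b/2^b$ is maximized at $b\in\{1,2\}$ with value $1/2$. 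Combined with $e(G)<\sum_{i=1}^{n}f(i)$, this forces $e(G)<e(G)/2$, which is a contradiction provided $e(G)>0$. The case $e(G)=0$ is trivial: the right-hand side of the lemma collapses to $0$, so we can take $G'=G$.

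I do not expect a serious obstacle here; the argument is a ``pay as you go'' bound where the threshold $f(i)$ is calibrated so that the cumulative cost of deletions strictly exceeds what is available. The only points requiring care are (i) confirming that the multigraph-with-loops convention is compatible with the deletion step, which follows because the definition of degree given in the lemma is precisely the number of edges removed when a vertex is deleted; and (ii) verifying the elementary inequality $b\cdot 2^{-b}\le 1/2$ for integers $b\ge 1$, which is immediate from monotonicity of $b\cdot 2^{-b}$ for $b\ge 2$ together with the boundary values at $b=1,2$.
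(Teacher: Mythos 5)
Your proof is correct, and it reaches the conclusion by a genuinely different bookkeeping than the paper's. The paper also argues by contradiction via iterative deletion of low-degree vertices, but organizes the process dyadically: it constructs subgraphs $G_0\supseteq G_1\supseteq\cdots$ where $G_{r+1}$ is obtained from $G_r$ by batch-deleting all vertices of degree below the fixed threshold $2^{(b-1)r-1}e(G)/n$, shows inductively that $v(G_r)\le 2^{-br}n$ while $e(G_r)\ge 2^{-r}e(G)$, and derives the contradiction at $r=\log_2 n$ (a subgraph on fewer than one vertex with positive edge count). You instead delete a single minimum-degree vertex at a time against a threshold $f(i)$ that varies with the current order $i$, telescope the edge losses to $e(G)<\sum_{i=1}^n f(i)$, and control the sum by the integral of $x^{1/b-1}$; this makes transparent that the constant $2^{-b}$ is exactly what is needed to absorb the factor $b$ arising from $\int x^{1/b-1}\,dx = b\,x^{1/b}$. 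Both arguments are calibrated to the same threshold and yield the same constant. Two minor remarks: (i) your inequality $b\cdot 2^{-b}\le \tfrac12$ uses that $b$ is an integer (over real $b\ge 1$ the maximum of $b\,2^{-b}$ is about $0.53$), but your contradiction only needs $b\,2^{-b}<1$, so nothing is at stake; (ii) your check that deleting a vertex removes exactly $\deg(v)$ edges matches the loop convention the paper fixes just before the lemma, so the telescoping identity is valid.
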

	\begin{proof}
		Write $e = e(G)$. The result is trivial if $e=0$, so assume $e>0$.  Assume for contradiction that no such subgraph $G'$ exists.  We claim that for all non-negative integers $r$ there exist $G_r\sub G$ with at most $2^{-br}n$ vertices and at least $2^{-r}e$ edges.  The result holds with $G_0=G$, so inductively assume the result has been proven through some $r$.   Let $G_{r+1}\sub G_r$ be the graph obtained after iteratively deleting vertices of degree less than $2^{(b-1)r-1}(e/n)$ from $G_r$.  Note that \[e(G_{r+1})\ge e(G_r)-2^{(b-1)r-1}(e/n)\cdot v(G_r)\ge 2^{-r}e-2^{(b-1)r-1}(e/n)\cdot 2^{-br}n\ge 2^{-r-1}e.\]  If $v(G_{r+1})\ge 2^{-b(r+1)}n>0$, then $2^{r}\ge \half (\f{n}{v(G_{r+1})})^{1/b}$.  This implies that  $G_{r+1}$ has minimum degree at least \[2^{(b-1)r-1}(e/n)\ge \left(\f{n}{v(G_{r+1})}\right)^{\f{b-1}{b}}2^{-b}(e/n)=2^{-b}\left(\frac{v(G_{r+1})}{n}\right)^{1/b}\f{e(G)}{ v(G_{r+1})},\]
		where this first inequality implicitly uses $b-1\ge 0$.  This contradicts our assumption that no such subgraph of $G$ exists, so we must have $v(G_{r+1})< 2^{-b(r+1)}n$.
		Thus $G_{r+1}$ satisfies the desired conditions, proving the claim.  Taking $r=\log_2(n)$, the claim implies there exists a subgraph on less than 1 vertex with at least $e/n>0$ edges, which is impossible. 
	\end{proof}

	The only reason we proved \Cref{lem:reduction} in the more general setting of multigraphs with loops is to prove the following technical result.
	
	\begin{lem}\label{lem:reductionSet}
		Let $B$ be a set, and let $f$ be any function from $B$ to $\Z_{\ge 0}$. For all $b>1$, there exists a subset $B'\sub B$ such that \[\min_{y'\in B'} f(y')\ge 2^{-b}\left(\frac{|B'|}{|B|}\right)^{1/b}\f{\sum_{y\in B} f(y)}{ |B'|}.\]
	\end{lem}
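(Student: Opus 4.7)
The plan is to derive this set-function statement as a direct corollary of \Cref{lem:reduction} by encoding the pair $(B,f)$ as a multigraph with loops. Since \Cref{lem:reduction} already handles the case of loops and its output is a minimum-degree statement, the only real work is packaging the reduction correctly.

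Concretely, I would first build a multigraph $G$ on vertex set $B$ by attaching exactly $f(y)$ loops at each vertex $y \in B$ and including no other edges. Under the convention stated just before \Cref{lem:reduction} that each loop contributes $1$ to its vertex's degree, this gives $\deg_G(y) = f(y)$ for every $y \in B$ and $e(G) = \sum_{y \in B} f(y)$, while $v(G) = |B|$. Next I would apply \Cref{lem:reduction} to this $G$ to obtain a subgraph $G' \sub G$ with $v(G') > 0$ and minimum degree at least
\[2^{-b}\left(\f{v(G')}{|B|}\right)^{1/b} \f{\sum_{y \in B} f(y)}{v(G')}.\]
Finally I would set $B' := V(G')$. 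For each $y' \in B'$, since $G'$ is a subgraph of $G$ we have $f(y') = \deg_G(y') \ge \deg_{G'}(y')$, so $\min_{y' \in B'} f(y')$ is bounded below by the minimum degree of $G'$, which yields exactly the desired inequality.

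There is essentially no real obstacle here: the only point worth double-checking is the loop-degree convention, since the lower bound would be off by a factor of $2$ if loops contributed $2$ to the degree (as is more standard). The statement of \Cref{lem:reduction} explicitly fixes the convention that loops count once, so the encoding $\deg_G(y) = f(y)$ is exact and no additional constant is lost in the reduction.
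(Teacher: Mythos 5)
Your proposal is correct and is exactly the paper's argument: encode $f$ as loop multiplicities, apply \Cref{lem:reduction}, and take $B'$ to be the vertex set of the resulting subgraph. Your extra care about the loop-degree convention is a nice sanity check but does not change the substance.
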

	\begin{proof}
		Define an auxiliary graph $G$ on $B$ where each vertex $y$ has $f(y)$ loops (and these are the only edges in $G$).  Applying Lemma~\ref{lem:reduction} gives the result.
	\end{proof}
	Roughly speaking, this lemma will be applied with $B$ the set of vertices that are at distance $b$ from some vertex $x$ and with $f(y)$ the number of paths of length $b$ from $x$ to $y$. This will allow us to choose vertices $x$ and $y$ connected by many paths of length $b$, which we can use to construct copies of $\theta_{a,b}$ where $x$ and $y$ are the two high-degree vertices.

	\section{Preliminaries}\label{sec:prelim}
	
	\subsection{Key Definitions}
	
	As noted in the proof outline  Subsection~\ref{subsec:outline}, we wish to consider hypergraphs on $V(\theta_{a,b})\times V(G)$.  To aid with this, we make use of the following definitions throughout the paper; see \Cref{fig:valid} for an example.
	
	\begin{defn}
		Given a graph $G$ and a set $\chi\sub V(\theta_{a,b})\times V(G)$, we define the \textit{projection sets} $\chi_\theta=\{w:\exists z,(w,z)\in \chi\}$ and $\chi_G=\{z:\exists w,(w,z)\in \chi\}$.  We say that a set $\chi\sub V(\theta_{a,b})\times V(G)$ is \textit{valid} if
		\begin{enumerate}
			\item $|\chi|=|\chi_\theta|=|\chi_G|$ (equivalently, each vertex of $\theta_{a,b}$ and $G$ appears at most once in a pair of $\chi$), and 
			\item  If $(w,z),(w',z')\in \chi$ with $ww'\in E(\theta_{a,b})$, then $zz'\in E(G)$.  
		\end{enumerate}
		We let $\mathbb{V}$ denote the set of valid subsets of $V(\theta_{a,b})\times V(G)$.
	\end{defn}
	
	\begin{figure}[h]
		\centering
		\mbox{}\\\vspace{1ex}
		\includegraphics[width=0.68\textwidth]{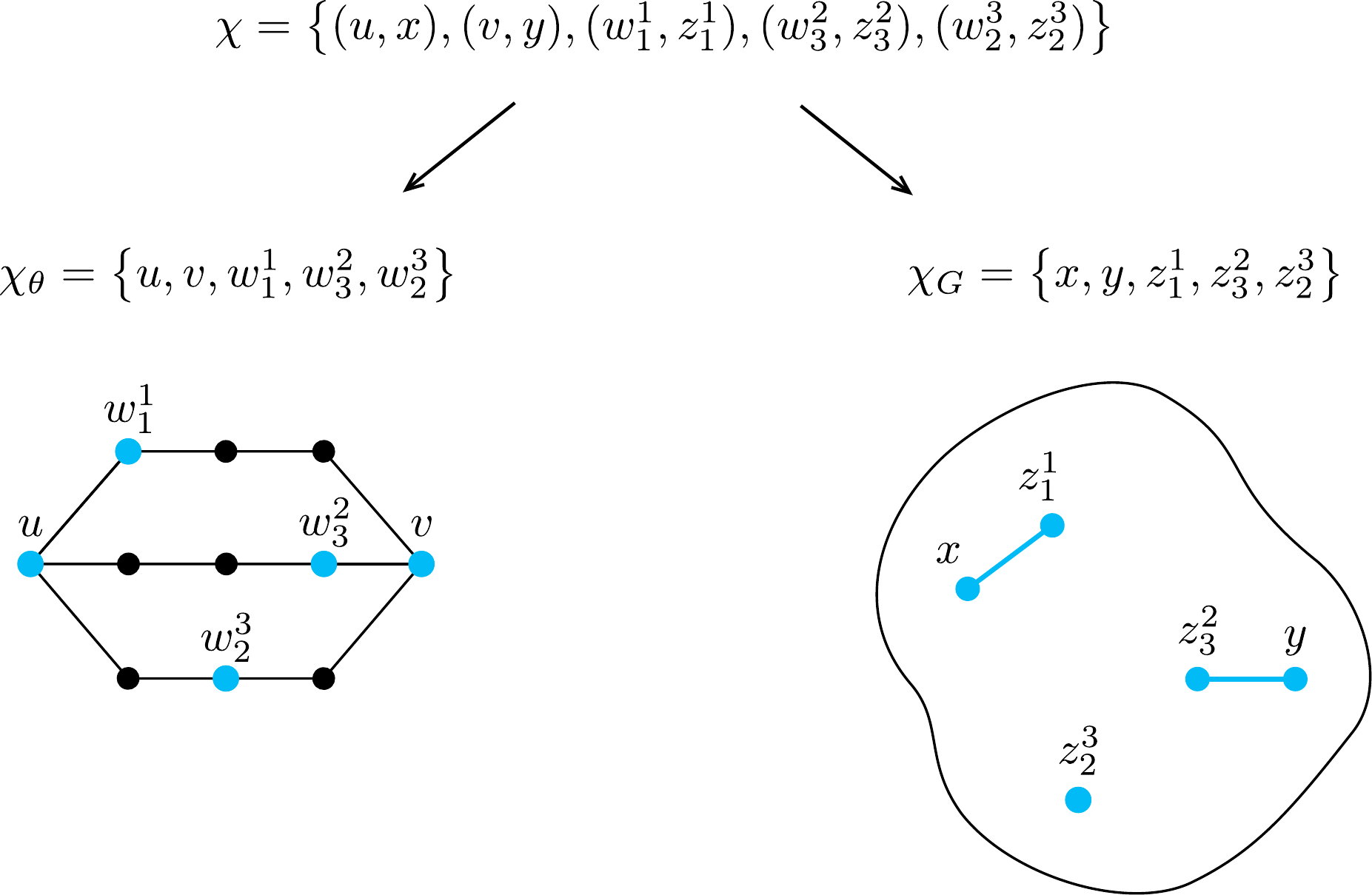}
		\caption{A set $\chi\sub V(\theta_{a,b})\times V(G)$ together with its projections $\chi_\theta\sub V(\theta_{a,b})$ and $\chi_G\sub V(G)$.  The set $\chi$ is valid if and only if all of the vertices depicted are distinct and $xz_1^1,\ z_3^2y\in E(G)$.}
		\label{fig:valid}
	\end{figure}
	
	Note that if $\chi$ is a valid set with $|\chi|=v(\theta_{a,b})$, then by definition this means the map $\phi_\chi:V(\theta_{a,b})\to V(G)$ which sends $w\in V(\theta_{a,b})$ to the unique vertex $z\in V(G)$ with $(w,z)\in \chi$ is an injective homomorphism.  In particular, the vertices $\chi_G$ induce a graph containing a copy of $\theta_{a,b}$ as a subgraph.  Since our ultimate goal is to find a large colleciton of such subgraphs which are spread out, we make the following definitions.
	\begin{defn}
		We say that a hypergraph $\c{H}$ is a \textit{$G$-hypergraph} if its vertex set is $V(\theta_{a,b})\times V(G)$ and all of its hyperedges are valid sets $h$ with $|h|=v(\theta_{a,b})$.  We say that functions of the form $D:2^{V(\theta_{a,b})}\to \N\cup \{\infty\}$ are \textit{codegree functions}, and for such a $D$ we say that $\c{H}$ is \textit{$D$-good} if $\deg_{\c{H}}(\chi)\le D(\chi_\theta)$ for all $\chi\in \mathbb{V}$.
	\end{defn}
	
	Note that being $D$-good means that no valid set $\chi$ is contained in too many hyperedges, with the exact degree condition depending only on $D$ and the projection $\chi_\theta\sub V(\theta_{a,b})$ (which allows us, for example, to impose stronger conditions if $\chi$ contains vertices corresponding to the high degree vertices of $V(\theta_{a,b})$).

	The main technical work of this paper is in constructing $G$-hypergraphs which have many hyperedges and which are $D$-good for some $D$ which is sufficiently small to apply \Cref{thm:containers}.  To do this, we will consider several $D$ functions simultaneously (which will ultimately be combined into a unified codegree bound in the next two sections). The first and simplest function we consider is the following.

	\begin{defn}\label{def:Dforest}
		Let $\del>0$, and let $k,n$ be positive integers. We define a codegree function $D_{\forest}$ as follows: if $\nu\sub V(\theta_{a,b})$ induces a forest on $e\ge 1$ edges, then
		\[D_{\forest}(\nu)=\ceil{\f{k^{ab}n^2}{\del kn^{1+1/b}\cdot (\del k^{b/(b-1)})^{e-1}}},\]
		and otherwise $D_{\forest}(\nu)=\infty$.
	\end{defn}
	Having $D_{\forest}(\nu)$ evaluate to infinity is not necessary, and we  do this only to emphasize that this function essentially ignores sets $\nu$ which do not induce a forest with at least one edge.
	
	Essentially, the main technical result of Morris and Saxton and Corsten and Tran says that one can construct large $G$-hypergraphs which are $D_{\forest}$-good. To go beyond this, we will show that one can construct collections which are $D$-good for functions $D$ which are finite on sets $\nu\sub V(\theta_{a,b})$ that contain cycles (and more precisely, sets that contain the two vertices of $\theta_{a,b}$ of large degree).
	
	The specific functions $D$ we need are somewhat complex.  In all of these functions, the denominator of $D(\nu)$ roughly corresponds to the number of choices our algorithm has to build $\nu$, with the terms to the left of the ``\,$\cdot$\," typically counting the number of choices for the two high degree vertices of $\theta_{a,b}$.  The parameter $s$ will be chosen roughly such that the graph $G'$ obtained from \Cref{lem:reduction} has $2^{-s}n$ vertices.
	
	With all of this established, we define the remainder of our codegree functions. 
	
	\begin{defn}\label{def:Dsb}
		Let $\del>0$, and let $k,n$ be positive integers. For $a\ge 3$, let $u,v$ denote the two vertices of $\theta_{a,b}$ of degree larger than 2.  For each integer $0\le s\le 3\log n$, we define a codegree function $D_{s,b}$  as follows:  if $u,v\in \nu$, then 
		\[D_{s,b}(\nu)=\ceil{\f{k^{ab}n^2}{2^{-s}n^2\cdot \delta^{|\nu|}(2^{2s/3} k^b)^{(|\nu|-2)/(b-1)}}},\]
		and otherwise $D_{s,b}(\nu)=\infty$.
	\end{defn}

	The definition above will be used when the $t$ value from Proposition~\ref{prop:expansion} equals $b$.  The $t<b$ case is somewhat more complicated.  Again this is because the denominator roughly represents the number of choices we have for our algorithm at any step, and just as in \cite{corsten2021balanced,morris2016number}, the $t<b$ case of the algorithm is somewhat more complicated.
	
	\begin{defn}\label{def:Dst}
		Let $\del>0$, and let $k,n$ be positive integers.  For $0\le s\le 3\log n$ and $2\le t<b$, we define a codegree function $D_{s,t}$ as follows:  write the paths of $\theta_{a,b}$ as $uw_1^j\cdots w_{b-1}^j v$ for $1\le j\le a$, and define
		\[F_t=\{w_i^j: t\le i<b,\ i-t\tr{ even}\},\hspace{2em} f=|\nu\cap F_t|.\]
		If $u,v\in \nu$, then
		\[D_{s,t}(\nu)=\ceil{\f{k^{ab}n^2 }{2^{-2s} k^{(2b-2t+1)/(b-1)}n^{(2t-1)/b}\cdot \delta^{|\nu|}(2^{2s/3}kn^{1/b})^f(2^{2s/3}k^{b/(b-1)})^{|\nu|-f-2}}},\]
		and otherwise $D_{s,t}(\nu)=\infty$.
	\end{defn}
	
	The intuition for this codegree function is as follows: in the $t<b$ case with $s=0$, our algorithm first selects $u,v$, which it will be able to do in about $k^{(2b-2t+1)/(b-1)}n^{(2t-1)/b}$ ways (which is essentially the product of the bounds from \Cref{prop:expansion}(c) and (h)).  When choosing $w_i^j$, the number of choices will turn out to be about $kn^{1/b}$ if $w_i^j\in F_t$ and about $k^{b/(b-1)}$ if $w_i^j\notin F_t$.  Thus the denominator represents the number of choices our algorithm has for building $\nu$.
	
	One last codegree function is needed for the $t<b$ case.
	
	\begin{defn}\label{def:Dt}
		Let $\del>0$, and let $k,n$ be positive integers.  For $2\le t<b$, we define a codegree function $D_{t}$ as follows: define $F_t$ as above and let $g=|\nu\cap F_t|$ if $b-t$ is even and $g=|\nu\cap F_t|-1$ otherwise.  If $u,v,w_{b-1}^j\in \nu$ for some $j$, then
		\[D_{t}(\nu)=\ceil{\f{k^{ab}n^2 }{kn^{1+1/b}\cdot (kn^{1/b})^g(k^{b/(b-1)})^{|\nu|-g-3}\delta^{|\nu|}}},\]
		and otherwise $D_{t}(\nu)=\infty$.  For notational convenience, we also define $D_b$ by $D_b(\nu)=\infty$ for all $\nu\sub V(\theta_{a,b})$.
	\end{defn}
	The motivation for this definition is that the number of choices for $u,v,w_{b-1}^j$ is at least the number of choices for just $v,w_{b-1}^j$, which is about $kn^{1+1/b}$, i.e.\ the number of total edges in the graph $G$.  As before, the number of choices for every other $w_{i}^j$ vertex will depend on whether it is in $F_t$ or not.  The definition of $g$ reflects the fact that if $b-t$ is odd, then $w_{b-1}^j\in F_t$, but the number of choices for the first vertex of this form is already accounted for by the $kn^{1+1/b}$ term, so we can not include an extra factor of $kn^{1/b}$ for this vertex.   Note that with this codegree function, we omit counting the number of choices for $u$ in the denominator.  As such, $D_t$ will typically be much weaker (i.e.\ larger) than $D_{s,t}$, though it will do better when $\nu$ has few vertices and $k$ is small.
	
	\subsection{Saturated Sets}
	When building our $G$-hypergraph $\c{H}$, we need to be careful to avoid constructing theta graphs which contain a subset that has very large codegree in $\c{H}$.  To aid with this, we introduce the following.
	\begin{defn}
		Let $\c{H}$ be a $G$-hypergraph and $D$ a codegree function $D:2^{V(\theta_{a,b})}\to \N\cup \infty$.  We define the set of \textit{saturated sets}
		\[\c{F}(\c{H},D)=\{\chi\in \mathbb{V}: \deg(\chi)\ge D(\chi_\theta)\}.\]
		
		Given a valid set $\chi$ and $\nu\sub V(\theta_{a,b})\sm \chi_\theta$, define the \textit{link set} $\c{J}_{\c{H},D}(\chi;\nu)$ to be the set of $\gam\in \mathbb{V}$ with $\gam_\theta=\nu$ such that $\chi\cup \gam\in\c{F}(\c{H},D)$. If $\nu=\{w\}$ we will sometimes denote this set simply by $\c{J}_{\c{H},D}(\chi;w)$
	\end{defn}
	
	The intuition for the link set comes from our goal of  algorithmically trying to iteratively add a new theta graph to some $\c{H}$ such that $\c{H}$ continues to be $D$-good even after adding the theta graph.  If during the algorithm we have already designated some $\chi\in \mathbb{V}$ to be used in our new theta graph, and if our algorithm is about to choose some $\gam$ to add to $\chi$ such that $\gam_\theta=\nu$, then the algorithm can not choose any $\gam\in \c{J}_{\c{H},D}(\chi';\nu)$ for any $\chi'\sub \chi$, as otherwise the degree of $\chi'\cup \gam$ would be strictly larger than what $D$ dictates.  As an aside, our definition of link sets differs slightly from Morris and Saxton, who essentially defined the links to be  $\bigcup_{\chi'\sub \chi} \c{J}_{\c{H},D}(\chi';\nu)$.
	
	Because the link sets represent the number of ``bad'' choices our algorithm has, we will want to show that these sets are relatively small.  This is accomplished by the following lemma.
	
	\begin{lem}\label{lem:link}
		Let $\c{H}$ be a $G$-hypergraph which is $D$-good for some $D:2^{V(\theta_{a,b})}\to \N\cup \infty$.  If $D(\chi_\theta\cup \nu)=\infty$ then $\c{J}_{\c{H},D}(\chi;\nu)=\emptyset$, and otherwise
		\[|\c{J}_{\c{H},D}(\chi;\nu)|\le 2^{v(\theta_{a,b})}\frac{D(\chi_\theta)}{D(\chi_\theta\cup \nu)}.\]
	\end{lem}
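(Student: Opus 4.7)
The plan is to prove the bound by a standard double counting argument. First I would dispense with the infinite case: if $D(\chi_\theta \cup \nu) = \infty$ then, for any $\gam \in \c{J}_{\c{H},D}(\chi;\nu)$, membership in $\c{F}(\c{H},D)$ would force $\deg_\c{H}(\chi \cup \gam) \ge \infty$. Since $\c{H}$ lives on the finite vertex set $V(\theta_{a,b}) \times V(G)$, all codegrees are finite, so $\c{J}_{\c{H},D}(\chi;\nu)$ must be empty.

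For the main bound, I would double count pairs $(\gam, h)$ with $\gam \in \c{J}_{\c{H},D}(\chi;\nu)$ and $h \in \c{H}$ satisfying $\chi \cup \gam \subseteq h$. On the one hand, by the defining property of $\c{J}_{\c{H},D}(\chi;\nu)$, each $\gam$ in the link set satisfies $\chi \cup \gam \in \mathbb{V}$ and $\deg_\c{H}(\chi \cup \gam) \ge D(\chi_\theta \cup \nu)$, so the number of such pairs is at least $|\c{J}_{\c{H},D}(\chi;\nu)| \cdot D(\chi_\theta \cup \nu)$. On the other hand, the $D$-goodness of $\c{H}$ bounds the number of hyperedges $h \supseteq \chi$ by $D(\chi_\theta)$, and for each such $h$ the hyperedge is a valid set of size $v(\theta_{a,b})$, which forces $h_\theta = V(\theta_{a,b})$ and means there is a unique function $w \mapsto z_w$ with $(w, z_w) \in h$; hence the unique candidate for $\gam \subseteq h$ with $\gam_\theta = \nu$ is $\{(w, z_w) : w \in \nu\}$. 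So each $h$ contributes at most one pair, and the total is at most $D(\chi_\theta)$.

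Combining these two counts gives $|\c{J}_{\c{H},D}(\chi;\nu)| \le D(\chi_\theta)/D(\chi_\theta \cup \nu)$, which is even stronger than the claimed bound; the factor $2^{v(\theta_{a,b})}$ appears to be slack that the authors include for convenience in applications. The only subtlety to keep track of carefully is validity: one must verify that $\chi \cup \gam \in \mathbb{V}$ whenever $\gam \in \c{J}_{\c{H},D}(\chi;\nu)$, which follows immediately from the definition of $\c{F}(\c{H},D) \subseteq \mathbb{V}$, and this automatically rules out $\gam$'s whose $G$-coordinates collide with those of $\chi$. Given how directly the double-counting argument yields the inequality, I do not expect any significant obstacle here; the lemma really is a clean consequence of the $D$-good condition combined with the observation that a valid hyperedge of full size determines its $\nu$-restriction uniquely.
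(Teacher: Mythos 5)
Your proof is correct and uses essentially the same double-counting argument as the paper; the only difference is that by restricting to $\gamma$ with $\gamma_\theta=\nu$ and noting that a valid hyperedge of size $v(\theta_{a,b})$ contains a \emph{unique} such $\gamma$, you obtain the sharper bound without the factor $2^{v(\theta_{a,b})}$, whereas the paper sums $\deg(\chi\cup\gamma)$ over all $\gamma$ of size $|\nu|$ and absorbs the resulting multiplicity $\binom{v(\theta_{a,b})}{|\nu|}$ into that factor. Your handling of the infinite case and of validity (so that $D$-goodness applies to $\chi$) is also correct.
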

	\begin{proof}
		If $D(\chi_\theta\cup \nu)=\infty$, then every $\gam\in \mathbb{V}$ with $\gam_\theta=\nu$ trivially has $\deg(\chi\cup \gamma)<D(\chi_\theta\cup \gamma_\theta)=\infty$, so no such $\gam$ satisfies $\chi\cup \gam\in \c{F}(\c{H},D)$ and we conclude $\c{J}_{\c{H},D}(\chi;\nu)=\emptyset$.  From now on we assume $D(\chi_\theta\cup \nu)<\infty$.  Note that	\[  \sum_{\gam \in \c{J}_{\c{H},D}(\chi;\nu)} \deg(\chi\cup \gam)\le \sum_{\gam:\ |\gam|=|\nu|} \deg(\chi\cup \gam)\le 2^{v(\theta_{a,b})}\deg(\chi)\le 2^{v(\theta_{a,b})} D(\chi_\theta),\]
		where the second inequality used that each hyperedge $h$ containing $\chi$ is counted at most $2^{v(\theta_{a,b})}$ times by the sum over $\gam$, and the last inequality used that $\cH$ is $D$-good.  On the other hand,
		\[\sum_{\gam\in \c{J}_{\c{H},D}(\chi;\nu)} \deg(\chi\cup \gam)\ge \sum_{\gam\in \c{J}_{\c{H},D}(\chi;\nu)} D(\chi_\theta\cup \gam_\theta)=|\c{J}_{\c{H},D}(\chi;\nu)|D(\chi_\theta\cup \nu).\]
		Rearranging these two inequalities gives
		\[|\c{J}_{\c{H},D}(\chi;\nu)|\le 2^{v(\theta_{a,b})} \frac{D(\chi_\theta)}{D(\chi_\theta\cup \nu)},\]
		completing the proof.

	\end{proof}

	Because all of our codegree functions involve the ceiling of a real-valued function, the following result, which allows us to ignore the ceilings, will be slightly more convenient to use compared to \Cref{lem:link}.
	
	\begin{cor}\label{cor:link}
		Let $\c{H}$ be a $G$-hypergraph which is $D$-good for some $D:2^{V(\theta_{a,b})}\to \N\cup \infty$, and suppose that $D(\nu)=\ceil{D'(\nu)}$ for some $D':2^{V(\theta_{a,b})}\to \R_{>0}$.  If $D(\chi_\theta\cup \nu)=\infty$ then $\c{J}_{\c{H},D}(\chi;\nu)=\emptyset$.  If $D(\chi_\theta\cup \nu)\ne \infty$ and $\chi\notin \c{F}(\cH,D)$, then
		\[|\c{J}_{\c{H},D}(\chi;\nu)|\le 2^{v(\theta_{a,b})+1}\frac{D'(\chi_\theta)}{D'(\chi_\theta\cup \nu)}.\]
		Moreover, this bound continues to hold for $D=D_{\forest}$ even if $\chi\in \c{F}(\cH,D)$ provided $\del$ is sufficiently small.
	\end{cor}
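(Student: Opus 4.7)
The plan is to derive the corollary from \Cref{lem:link} by carefully accounting for the ceilings in the relation $D=\ceil{D'}$. The first claim ($D(\chi_\theta\cup\nu)=\infty$ forcing $\c{J}_{\cH,D}(\chi;\nu)=\emptyset$) is the corresponding claim of \Cref{lem:link} verbatim.

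For the main bound under the hypothesis $\chi\notin\c{F}(\cH,D)$, I plan to rerun the double-counting argument in the proof of \Cref{lem:link} with the real-valued $D'$ in place of $D$. The key input is that $\chi\notin\c{F}$ forces $\deg_\cH(\chi)<D(\chi_\theta)=\ceil{D'(\chi_\theta)}$, and since $\deg_\cH$ is integer-valued this gives $\deg_\cH(\chi)\le \ceil{D'(\chi_\theta)}-1\le D'(\chi_\theta)$; on the denominator side, $D(\chi_\theta\cup\nu)\ge D'(\chi_\theta\cup\nu)$ is immediate from the definition of the ceiling. Substituting these into the double-counting chain of \Cref{lem:link} yields
\[
|\c{J}_{\cH,D}(\chi;\nu)|\le 2^{v(\theta_{a,b})}\,\frac{D'(\chi_\theta)}{D'(\chi_\theta\cup\nu)},
\]
which is sharper than the claimed bound by a factor of $2$; this slack will be used in the moreover case.

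For the ``moreover'' clause, the hypothesis $\chi\in\c{F}(\cH,D_\forest)$ together with the $D_\forest$-goodness of $\cH$ pins $\deg_\cH(\chi)=D_\forest(\chi_\theta)=\ceil{D'_\forest(\chi_\theta)}$. The same double-counting then goes through once we bound $\ceil{D'_\forest(\chi_\theta)}\le 2\,D'_\forest(\chi_\theta)$, which holds whenever $D'_\forest(\chi_\theta)\ge 1$ and is exactly the source of the extra factor of $2$ in the stated constant $2^{v(\theta_{a,b})+1}$. So it suffices to verify $D'_\forest(\chi_\theta)\ge 1$ for every $\chi_\theta$ that induces a forest on $e\ge 1$ edges. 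Rewriting the formula from \Cref{def:Dforest} as
\[
D'_\forest(\chi_\theta)=\delta^{-e}\,k^{\,ab-1-\frac{b(e-1)}{b-1}}\,n^{1-1/b},
\]
one sees that the exponent of $k$ is minimized in the spanning-forest case $e=a(b-1)+1$, where it equals $-1$. Combined with $k\le n^{1-1/b}$ (which follows from $e(G)\le\binom{n}{2}$), even this worst case yields $D'_\forest(\chi_\theta)\ge \delta^{-a(b-1)-1}\ge 1$ for any $\delta\le 1$; all smaller values of $e$ make the exponent of $k$ more favorable and can only help.

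The only step with any real content is the verification that $D'_\forest(\chi_\theta)\ge 1$, and even that reduces to a single extremal configuration of $\chi_\theta$. Every other step is either a mechanical manipulation of ceilings or a direct reuse of the double-counting from \Cref{lem:link}, so I do not anticipate any serious obstacle.
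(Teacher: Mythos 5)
Your proof is correct and takes essentially the same route as the paper: both derive the bound from the double-counting in \Cref{lem:link} together with $D(\chi_\theta\cup\nu)\ge D'(\chi_\theta\cup\nu)$ and a factor-of-two loss from the ceiling in the numerator. Your bookkeeping is marginally cleaner — using $\chi\notin\c{F}(\cH,D)$ to get $\deg_{\cH}(\chi)\le\ceil{D'(\chi_\theta)}-1\le D'(\chi_\theta)$ avoids the paper's case split on $D(\chi_\theta)\ge 2$ versus $D(\chi_\theta)=1$ — and your explicit check that $D'_{\forest}(\chi_\theta)\ge 1$ (worst case $e=a(b-1)+1$, exponent of $k$ equal to $-1$, combined with $k\le n^{1-1/b}$) correctly supplies the computation the paper only asserts when it says the bound holds for $D_{\forest}$ for $\del$ sufficiently small.
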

	\begin{proof}
		Note that trivially $D(\chi_\theta\cup \nu)\ge D'(\chi_\theta\cup \nu)$ and that $D(\chi_\theta)\le 2 D'(\chi_\theta)$ provided $D(\chi_\theta)\ge 2$.  Thus in this case, the result follows immediately from \Cref{lem:link}, and in particular, this situation always holds for $D_{\forest}$ provided $\del$ is sufficiently small. 
		
		It remains to consider the case that $D(\chi_\theta)=1$ and $\chi\notin \c{F}(\cH,D)$.  These two conditions imply $\deg(\chi)<D(\chi_\theta)=1$, so there exists no hyperedge of $\cH$ containing $\chi$.  Thus there exists no $\gam$ such that $\chi\cup \gam\in \c{F}(\c{H},D)$, i.e. such that $\deg(\chi\cup \gam)\ge D(\chi_\theta\cup \gam_\theta)\ge 1$.  We conclude that the link set is empty in this case, and hence the result trivially holds.  
	\end{proof}
	When applying this claim it will always be immediate\footnote{In terms of the notation for the next section, we will only apply \Cref{cor:link} with $D\ne D_{\forest}$ when $\chi$ is a subset of an $(s,t)$-compatible set, which by definition will not be in $\c{F}(\cH,D)$.} that $\chi\notin \c{F}(\cH,D)$, and for simplicity we will omit saying this explicitly.

	\section{Balanced Supersaturation for Vertices}\label{sec:main_proof}
	
	In this section, we prove our main technical theorem: a balanced supersaturation result for vertices.
	
	\begin{thm}\label{thm:balancedVertex}
		For all $a\ge 6$ and $b\ge 3$, there exist constants $\del>0, k_0 > 0$ such that the following holds for all $n\in\mathbb{N}$ and $k\geq k_0$.  If $G$ is an $n$-vertex graph with $kn^{1+1/b}$ edges, then there exists an integer $2\le t\le b$ and a $G$-hypergraph $\c{H}'_t$ with $|\c{H}'_t|\ge b^{-1} \del k^{ab}n^2$ which is $D'_t$-good, where $D'_t$ is defined by
		\[D'_t(\nu):=\begin{cases}
			D_{\forest}(\nu)& \tr{if }\nu \tr{ induces a forest},\\ 
			\min\{D_t(\nu),20(D_{0,t}(\nu)+\ceil{\log n})\} & \tr{otherwise}.
		\end{cases}\]
	\end{thm}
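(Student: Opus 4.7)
The plan is to build $\cH'_t$ as a union of $O(\log n)$ collections $\cH_s$ indexed by a ``compression scale'' $s\in\{0,1,\ldots,3\lceil\log n\rceil\}$, with the value of $t$ determined after the fact by pigeonhole. For each $s$, first apply Lemma~\ref{lem:reduction} to extract a subgraph $G_s\sub G$ with $v(G_s)\le 2^{-s}n$ and minimum degree at least $2^{-b}(v(G_s)/n)^{1/b}\cdot e(G)/v(G_s)$, then feed $G_s$ into Proposition~\ref{prop:expansion} to obtain an integer $t(s)\in\{2,\ldots,b\}$, a vertex set $X_s$, nested neighborhoods $B_0,\ldots,B_{t(s)}$, and a path collection $\c{Q}_s$ satisfying (a)--(h). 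Since $t(s)$ takes only $b-1$ values, pigeonhole selects a common $t$ realized for a set $S$ of indices with $|S|\ge 3\lceil\log n\rceil/(b-1)$; fix this $t$ as the $t$ in the theorem.

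For each $s\in S$, build $\cH_s$ greedily. Initialize $\cH_s=\emptyset$ and iterate the following step: pick $x\in X_s$ (playing $u$); use Lemma~\ref{lem:reductionSet} applied to $y\mapsto|\c{Q}_s[x\to y]|$ to pick $y\in B_{t}$ (playing $v$) connected to $x$ by many paths; then for $j=1,\ldots,a$, build a path $P_j$ of length $b$ from $x$ to $y$ by selecting interior vertices one at a time from the $B_i$'s, internally disjoint from $P_1,\ldots,P_{j-1}$. At each micro-selection of a vertex to play a role $w\in V(\theta_{a,b})$, the ``forbidden'' vertices form the union over the already-placed partial set $\chi$ of the link sets $\c{J}_{\cH_s,D}(\chi;\{w\})$ for $D\in\{D_{\forest},D_{s,t}\}$, together with the corresponding link set for $D_t$ in the global $\cH'_t$ accumulated so far. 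Corollary~\ref{cor:link} bounds each such link set by $O(D'(\chi_\theta)/D'(\chi_\theta\cup\{w\}))$, while Proposition~\ref{prop:expansion}(b)--(f) gives a matching lower bound on the total candidate set; taking $\delta$ small and $k\ge k_0$ large enough makes the forbidden choices a strict minority, so the algorithm runs until it has produced at least $\delta k^{ab}n^2/|S|$ distinct theta graphs.

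Setting $\cH'_t=\bigcup_{s\in S}\cH_s$ then yields $|\cH'_t|\ge\delta k^{ab}n^2\ge b^{-1}\delta k^{ab}n^2$. By tracking $D_t$-saturation \emph{globally} across all scales during the construction, $\cH'_t$ is $D_t$-good; the $D_{\forest}$-bound survives the union over $|S|$ collections after absorbing $|S|$ into the constant $\delta$. For the other half of the $\min$, $D_{s,t}$-goodness of each $\cH_s$ gives $\deg_{\cH'_t}(\chi)\le\sum_{s\in S}D_{s,t}(\chi_\theta)$; a direct calculation using the explicit form of $D_{s,t}$ shows that, as a function of $s$, the summand is controlled by a geometric sequence whose total is at most $20(D_{0,t}(\chi_\theta)+\lceil\log n\rceil)$, the additive $\log n$ term absorbing the case where the sequence is nearly constant or slowly increasing in $s$ (which is where the hypothesis $a\ge 6$ enters, through the exponent counts in the denominator of $D_{s,t}$).

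The hardest step is the micro-step analysis inside the greedy construction: showing that, at each micro-selection, the combined forbidden link sets coming simultaneously from $D_{\forest}$, $D_{s,t}$, and the global $D_t$ form only a small fraction of the candidate set. This requires matching the asymmetric per-vertex factors in the denominator of $D_{s,t}$ (namely $kn^{1/b}$ when $w\in F_t$ versus $k^{b/(b-1)}$ otherwise) to the lower bounds from Proposition~\ref{prop:expansion}(b)--(f) vertex by vertex, and treating the $t=b$ case (in which $D_t\equiv\infty$ and only $D_{s,b}$ is active) and the $t<b$ case (in which the extra function $D_t$ and the set $F_t$ interact delicately) in parallel. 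Organizing this case analysis, and verifying that the geometric-sum estimate for $\sum_s D_{s,t}$ indeed gives the $20(D_{0,t}+\lceil\log n\rceil)$ bound for every relevant $\nu$, is where the bulk of the technical work will lie.
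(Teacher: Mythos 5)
There is a genuine gap: you treat the scale $s$ as a parameter you fix \emph{in advance}, extracting for each $s$ a subgraph $G_s$ with $v(G_s)\le 2^{-s}n$ and then building a separate large collection $\cH_s$. But \Cref{lem:reduction} does not let you prescribe the vertex count of the subgraph it returns -- it produces \emph{one} subgraph $G'$ whose size is dictated by the structure of $G$, and the corresponding $s$ is an \emph{output}, not an input. Worse, for an arbitrary $G$ there is no reason that a $D_{s,t}$-good collection of size $\del k^{ab}n^2/|S|$ exists for \emph{every} $s$ in your set $S$: the whole point of the multiple-collections idea is that the denominator $2^{-s}n^2(\cdots)$ of $D_{s,t}$ matches the number of choices the algorithm actually has only for the value of $s$ determined by the graph at that moment (e.g.\ if $G$ is quasirandom, the reduction step gives $r=0$ and the path-count dichotomy gives small $r'$, so only small $s$ can ever be populated -- the count of new theta graphs through a fixed pair $(x,y)$ is $\Theta(2^sk^{ab})$ and would never exceed the codegree cap for a large prescribed $s$). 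The correct architecture is a single global greedy iteration (the paper's \Cref{prop:main}): at each step one re-prunes the current graph, applies \Cref{lem:reduction} and \Cref{prop:expansion} afresh, reads off the pair $(s,t)$ from the resulting $m=v(G')$, $|B'|$, and $t$-value, and adds \emph{one} new hyperedge to that particular $\cH_{s,t}$, maintaining all codegree invariants simultaneously; only after the union reaches size $\del k^{ab}n^2$ does one pigeonhole on $t$ (this is where the $b^{-1}$ comes from -- your version makes the pigeonhole on $t$ before the construction, which does not interact correctly with the fact that $t$ also changes from step to step).

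Two smaller but still fatal points. First, you cannot ``absorb $|S|$ into the constant $\del$'' to recover $D_{\forest}$-goodness of the union, since $|S|=\Theta(\log n)$ is not a constant; $D_{\forest}$-goodness must be maintained as an invariant of the \emph{entire} union $\bigcup_{s,t}\cH_{s,t}$ throughout the construction (this is exactly why the paper prunes the saturated edges of $G$ with respect to $\bigcup_{s,t}\cH_{s,t}$ at every step). Second, the geometric-sum estimate $\sum_s D_{s,t}(\nu)\le 20(D_{0,t}(\nu)+\ceil{\log n})$ is the right final step and matches the paper, but its convergence comes from $|\nu|\ge 2b$ (when $\nu$ contains a cycle) and $b\ge 3$, not from $a\ge 6$; the hypothesis $a\ge 6$ is needed later, in the $t<b$ counting, to make the exponent of $m$ nonnegative when converting the product of per-vertex choice counts into a bound beating $D_{r,t}(\{u,v\})$.
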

	We note that there is no need to consider $b=2$ since the case $\theta_{a,2}=K_{2,a}$ is already dealt with by Morris and Saxton~\cite{morris2016number}.

	\Cref{thm:balancedVertex} will follow quickly from the following technical result, which roughly says that given a collection of much fewer than $k^{ab} n^2$ copies of $\theta_{a,b}$ satisfying certain codegree conditions, we can find an additional copy of $\theta_{a,b}$ to add to the collection while maintaining the desired codegrees.
	
	\begin{prop}\label{prop:main}
		For all $a\ge 6$ and $b\ge 3$, there exist constants $\del>0, k_0 > 0$ such that the following holds for all $n\in\mathbb{N}$ and $k\geq k_0$. Let $G$ be an $n$-vertex graph on $kn^{1+1/b}$ edges with $k\ge k_0$, and let $\{\c{H}_{s,t}\}_{s,t}$ be a set of $G$-hypergraphs such that $\c{H}_{s,t}$ is $D_{s,t}$-good for each $0\le s\le 3\log n$ and $2\le t\le b$, and $\bigcup_s \c{H}_{s,t}$ is $D_t$-good for each $t<b$, and $\bigcup_{s,t}\c{H}_{s,t}$ is $D_{\forest}$-good.

		If $|\bigcup_{s,t} \c{H}_{s,t}|\le \del k^{ab}n^2$, then there exists some valid set $h\in \mathbb{V}$ of size $v(\theta_{a,b})$  and some $s',t'$ such that $h\notin  \bigcup_{s,t} \c{H}_{s,t}$, and such that if we define $\c{H}'_{s',t'}=\c{H}_{s',t'}\cup \{h\}$ and $\c{H}'_{s,t}=\c{H}_{s,t}$ for all $(s,t)\ne (s',t')$, then $\c{H}_{s,t}'$ is $D_{s,t}$-good for each $0\le s\le 3\log n$ and $2\le t\le b$, and $\bigcup_s \c{H}_{s,t}'$ is $D_t$-good for each $t<b$, and $\bigcup_{s,t}\c{H}_{s,t}'$ is $D_{\forest}$-good.
	\end{prop}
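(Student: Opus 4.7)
The plan is to run a greedy algorithm that constructs a new valid set $h\in\mathbb{V}$ of size $v(\theta_{a,b})$ corresponding to a labelled embedded copy of $\theta_{a,b}$ in $G$, chosen to avoid creating any saturated subset in the existing collections with respect to $D_{\forest}$, $D_{s',t'}$, or $D_{t'}$, for suitable $(s',t')$ determined by the algorithm. Having produced such an $h$, adding it to $\c{H}_{s',t'}$ automatically preserves all the $D$-good conditions demanded by the proposition.

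\textbf{Cleanup and reduction.} First I would delete from $G$ every edge $\{z_1,z_2\}$ such that for some labelling by an edge of $\theta_{a,b}$ the resulting valid pair is already saturated with respect to $D_{\forest}$. A double-count against the hypothesis $|\bigcup_{s,t}\c{H}_{s,t}|\le\del k^{ab}n^2$ shows this removes only a small fraction of the edges, leaving a subgraph $G_0$ with at least, say, $\tfrac{1}{2}kn^{1+1/b}$ edges. Applying Lemma~\ref{lem:reduction} to $G_0$ yields a subgraph $G'$ on $m=2^{-s'}n$ vertices (for some integer $0\le s'\le 3\log n$) with minimum degree $\ell m^{1/b}$ where $\ell=\Theta(2^{s'}k)$. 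Finally, I apply Proposition~\ref{prop:expansion} to $G'$, with $\c{F}$ taken to be the set of paths whose use in the algorithm would cause the growing valid set to become saturated; Corollary~\ref{cor:link} provides the bound needed for hypothesis (g). This produces an integer $t'\in\{2,\ldots,b\}$ and a set $X\subseteq V(G')$ of at least $\eps\ell^{(b-t')/b}m^{t'/b}$ anchor vertices.

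\textbf{Building theta graphs.} For each $x\in X$, I would apply Lemma~\ref{lem:reductionSet} to $B_{t'}$, weighted by the path-count function $y\mapsto|\c{Q}[x\to y]|$, to select a $y\in B_{t'}$ that has many length-$t'$ paths to $x$ and also enjoys high degree in $G_0$ via Proposition~\ref{prop:expansion}(d). Treating $x$ and $y$ as the two high-degree vertices of $\theta_{a,b}$, the algorithm then constructs the $a$ internally disjoint paths of length $b$ from $x$ to $y$ one at a time: when $t'=b$ these paths come directly from $\c{Q}[x\to y]$, while when $t'<b$ each length-$t'$ path is extended to length $b$ by alternating ``expansion'' steps with $\Omega(k^{b/(b-1)})$ choices and ``high-degree'' steps with $\Omega(kn^{1/b})$ choices, the latter corresponding exactly to positions $w_i^j\in F_{t'}$. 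At each step, the number of choices that would create a saturated subset is controlled by Corollary~\ref{cor:link}, and the codegree functions $D_{\forest},D_{s',t'},D_{t'}$ have been calibrated so that these link-set bounds are each at most a $\del$-fraction of the number of good choices (with the $\del$ factor coming from the $\del^{|\nu|}$ terms in the definitions), leaving plenty of valid extensions throughout.

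\textbf{Counting and the main obstacle.} Multiplying the per-step counts ($|X|$ choices for $x$, the weighted choices for $y$, and the edge-by-edge extensions for each of the $a$ paths) gives at least $ck^{ab}n^2$ labelled copies of $\theta_{a,b}$ for some constant $c=c(a,b)>0$, each of which can be safely added to $\c{H}_{s',t'}$ without violating any $D$-goodness condition. Choosing $\del<c/2$ ensures at least one such $h$ lies outside $\bigcup_{s,t}\c{H}_{s,t}$, completing the proof. The main obstacle is verifying, via case analysis on $\nu\subseteq V(\theta_{a,b})$, that the per-step choice count genuinely dominates the link-set bounds coming from \emph{all three} codegree functions simultaneously; the most delicate case is $t'<b$, where the alternating $F_{t'}$-structure of the path extensions must line up exactly with the exponents appearing in $D_{s',t'}$ and $D_{t'}$, and where maintaining a separate collection $\c{H}_{s',t'}$ indexed by $s'$ is essential so that $D_{s',t'}$ can take advantage of the $2^{-s'}$-scale of the cleaned subgraph $G'$.
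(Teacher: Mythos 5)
Your overall architecture is the paper's: prune saturated edges, pass to a high-minimum-degree subgraph via \Cref{lem:reduction}, invoke \Cref{prop:expansion} with the saturated forests as the forbidden family, fix the two high-degree images $x,y$, build the $a$ paths one at a time while bounding the bad choices by \Cref{cor:link}, and win by counting. However, there is a genuine calibration gap in how you choose the collection index $s'$. You fix $s'$ already at the reduction step, as the scale $m=2^{-s'}n$ of $G'$. This is the right choice in the $t'<b$ case (where $D_{s,t}$ carries a factor $2^{-2s}$ precisely to absorb the bounds $|X|,|B_t|\gtrsim \ep\,\ell^{\cdots}m^{\cdots}$), but it fails when $t'=b$. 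There, after applying \Cref{lem:reductionSet} to $B_b$, the chosen $y$ lives in a subset $B'$ whose size is only guaranteed to be $2^{-r'}m$ for some $r'\ge 0$ that is \emph{not} determined by $G'$, and the averaging bound on $\deg_{\c{H}}(\{(u,x),(v,y)\})$ is of order $\del k^{ab}n^2/(|X|\,|B'|)\approx \del\,2^{2r+r'}k^{ab}$ (writing $r$ for your $s'$). Since $D_{s,b}(\{u,v\})\approx 2^{s}\del^{-2}k^{ab}$, taking $s'=r$ makes the starting pair $\{(u,x),(v,y)\}$ potentially already saturated in $\c{H}_{r,b}$ whenever $r+r'$ is large, so the copy you construct cannot be added without destroying $D_{r,b}$-goodness. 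The fix, which is the point of indexing by $s$ at all in this case, is to let the target collection depend on the output of \Cref{lem:reductionSet} as well, taking $s'=2r+r'$; then both the codegree of the starting pair and the number $\Om\big((2^{2s'/3}k^b)^a\big)\ge\Om(2^{s'}k^{ab})$ of compatible completions scale with the same power of $2^{s'}$.

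A secondary point you should make explicit: controlling link sets along the way does not by itself guarantee that the \emph{initial} pair $\{(u,x),(v,y)\}$ is unsaturated, nor that some constructed copy is new. For both you need to choose $x\in X$ minimizing $\deg_{\c{H}}(\{(u,x)\})$ and then $y$ minimizing $\deg_{\c{H}}(\{(u,x),(v,y)\})$, so that an averaging bound against $|\c{H}|\le\del k^{ab}n^2$ caps the codegree of the starting pair below $D_{s',t'}(\{u,v\})$ and below the number of compatible completions through that fixed pair. A purely global count of labelled copies versus $|\c{H}|$, as in your last paragraph, does not suffice, because copies rooted at an already-saturated pair $(x,y)$ cannot be added even if they are not yet hyperedges.
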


	Before proving \Cref{prop:main}, we show how it may be repeatedly applied to obtain our main supersaturation theorem.
	
	\begin{proof}[Proof of \Cref{thm:balancedVertex}]
		Initially start with collections $\{\c{H}_{s,t}\}_{s,t}$ where $\c{H}_{s,t}=\emptyset$ for all $s,t$.  By repeatedly applying \Cref{prop:main}, we obtain collections satisfying all of the codegree conditions and with $|\bigcup_{s,t} \c{H}_{s,t}|\ge \del k^{ab}n^2$.  In particular, there exists some $2\le t\le b$ such that $\c{H}_t':=\bigcup_{s} \c{H}_{s,t}$ contains at least $b^{-1}\del k^{ab}n^2$ hyperedges.
		
		By Proposition~\ref{prop:main}, we have for all $\chi\in \mathbb{V}$ that
		\[\deg_{\c{H}'_t}(\chi)\le \deg_{\bigcup_{s,t}\c{H}_{s,t}}(\chi)\le D_{\forest}(\chi_\theta),\]
		and similarly
		\[\deg_{\c{H}'_t}(\chi)\le D_t(\chi_\theta).\]
		To complete the proof, we only have to show $\deg_{\c{H}'_t}(\chi)\le 20(D_{0,t}(\nu)+\floor{\log n})$ for all $\chi$ such that $\chi_\theta$ contains a cycle.  We first consider the case $t=b$.  Here Proposition~\ref{prop:main} gives
		\[\deg_{\c{H}'_b}(\chi)\le \sum_{s=0}^{3\log n} \deg_{\c{H}_{s,b}}(\chi)\le \sum_{s=0}^{3\log n} D_{s,b}(\nu)\le 3\ceil{\log n}+ 4 D_{0,b}(\nu)\sum_{s=0}^{3\log n} 2^{\left(1-\f{2(|\chi_\theta|-2)}{3(b-1)}\right)s},\]
		where this last step used that either $D_{s,b}(\nu)=1$, or (by \Cref{def:Dsb}) $D_{s,b}(\nu)$ differs from $D_{0,b}(\nu)$ by at most a multiplicative factor of $4\cdot 2^{\left(1-\f{2(|\chi_\theta|-2)}{3(b-1)}\right)s}$ (where the factor of 4 comes from the two ceiling functions involving $D_{s,b}$ and $D_{0,b}$).  Since $\chi_\theta$ contains a cycle, we have $|\chi_\theta|\ge 2b$, so the sum above is at most $\sum_{s=0}^\infty 2^{-s/3}\le 5$.   We conclude that $\deg_{\c{H}_b'}(\chi)\le D'_b(\chi_\theta)$ for all valid $\chi$.  
		
		When $t<b$, essentially the same reasoning gives that if $\chi_\theta$ contains a cycle then
		\[\deg_{\c{H}'_t}(\chi)\le 3\ceil{\log n}+4 D_{0,t}(\nu)\sum_{s=0}^\infty 2^{\left(2-(2b-2)\f{2}{3}\right)s},\]
		and since $b\ge 3$ this latter sum is at most $\sum_{s=0}^\infty 2^{-2s/3}\le 5$.  This gives the result.
	\end{proof}

	The rest of this section is dedicated to proving \Cref{prop:main}. Let $G$ be the graph in the hypothesis of Proposition~\ref{prop:main} and $\{\c{H}_{s,t}\}_{s,t}$ the corresponding collections.
	
	The basic idea of the proof is to algorithmically construct many copies of $\theta_{a,b}$, and to show that at least one of them is not already contained in $\c{H}_{s',t'}$ for some appropriate $s',t'$, and such that our codegree conditions continue to be satisfied.

	We will begin by pruning the graph $G$ so that all of its remaining edges and vertices are ``well-behaved" (\Cref{sec:setup}). We then give the algorithmic construction of copies of $\theta_{a,b}$ and show that a new copy may be added to some $\c{H}_{s,t}$ (\Cref{sec:alg}), completing the proof.  Throughout the argument we fix some $\del$ depending only on $a,b$ which is sufficiently small for our arguments to go through.
	
	\subsection{Pruning}\label{sec:setup}

	Let $G_0\sub G$ be the graph obtained by deleting edges of $G$ that are already ``saturated" by hyperedges of $\bigcup_{s,t}\c{H}_{s,t}$, that is, those edges $e\in E(G)$ for which there exists $\chi\in \mathbb{V}$ with $\chi_G=e$ and  $$\deg_{\bigcup_{s,t} \c{H}_{s,t}}(\chi)\, \ge\, D_{{\forest}}\big(\chi_\theta\big) = \left\lceil \frac{k^{ab}n^2}{\delta kn^{1+1/b}}\right\rceil.$$ 
	This will ensure that any new theta graph constructed using only edges of $G_0$ will not violate our edge-codegree bounds when added to $\bigcup_{s,t}\c{H}_{s,t}$.  We bound the number of these saturated edges by double-counting elements of $\bigcup_{s,t}\c{H}_{s,t}\,$: \begin{align*}
		\left(\text{\# edges $e$ with $\chi\in \mathbb{V},\ \chi_G=e,\ \deg_{\bigcup_{s,t} \c{H}_{s,t}}(\chi) =\left\lceil \tfrac{k^{ab}n^2}{\delta kn^{1+1/b}}\right\rceil$}\right)\cdot \left\lceil \frac{k^{ab}n^2}{\delta kn^{1+1/b}}\right\rceil
		&\le e(\theta_{a,b})\cdot|\textstyle\bigcup_{s,t} \c{H}_{s,t}|.
	\end{align*}
	Rearranging slightly, the number of such edges is at most
	\[
	e(\theta_{a,b})\,|\textstyle\bigcup_{s,t} \c{H}_{s,t}|\bigg/\left\lceil \dfrac{k^{ab}n^2}{\delta kn^{1+1/b}}\right\rceil
	\le\, {\delta^2 e(\theta_{a,b})}\cdot  kn^{1+1/b},
	\]
	and if $\del$ is sufficiently small this is at most $\half k n^{1+1/b}=\half e(G)$, which implies $e(G_0)\ge \half k n^{1+1/b}$.  Notice that no remaining edges are ``saturated,'' i.e.\ we have \begin{equation}\deg_{\bigcup_{s,t}\c{H}_{s,t}}(\chi)\le \left\lceil \frac{k^{ab}n^2}{\delta kn^{1+1/b}}\right\rceil - 1 \tr{ for all }\chi\in \mathbb{V}\tr{ such that }\chi_G\in E(G_0).\label{eq:G0}\end{equation}
	
	Now we further prune the graph by eliminating low-degree vertices: let $G'\sub G_0$ be the subgraph of high minimum degree guaranteed by Lemma~\ref{lem:reduction}. Although $G'$ may have substantially fewer vertices than $G$ (meaning our algorithm will have fewer choices at various steps), in this case it will compensate by having a substantially larger minimum degree.

	More concretely, let $m = v(G')$, and let $\ell$ be the real number such that $G'$ has minimum degree $\ell m^{1/b}$.
	By Lemma~\ref{lem:reduction} we have
	\[\ell m^{1/b}\ge 2^{-b-1}k n^{1+1/b}m^{-1}(n/m)^{-1/b}\implies \ell\ge 2^{-b-1}(n/m)k.\]
	We let $r$ be the unique integer such that 
	\begin{equation}
		2^{-r}n\le m< 2^{-r+1}n, \label{eq:m}
	\end{equation}
	and we note that the previous inequality implies
	\begin{equation}\label{eq:ell}
		\ell \ge 4^{-b} 2^rk.
	\end{equation}
	In total this implies the minimum degree $\ell m^{1/b}$ of $G'$ is at least $\Om(kn^{1/b})$, which is the average degree of $G$, and that the minimum degree of $G'$ is much larger compared to that of $G$ if $m=v(G')$ is much smaller than $n=v(G)$.  Before moving on, we note
	\begin{equation}\ell^{b/(b-1)}\le \ell m^{1/b}.\label{eq:minDegree}\end{equation}
	Indeed, since $\ell m^{1/b}$ is the minimum degree of the $m$-vertex graph $G'$, we must have $\ell m^{1/b}\le m$, and rearranging shows this is equivalent to \eqref{eq:minDegree}.

	\subsection{The Algorithm}\label{sec:alg}
	
	We are now ready to begin finding copies on $\theta_{a,b}$ in $G'$. Our strategy is roughly as follows: first, we identify which collection $\c{H}_{s,t}$ we wish to add a copy to (based on the expansion properties of $G'$ detailed in \Cref{prop:expansion}).   After this, we carefully choose vertices $x$ and $y$ to serve as the two high-degree vertices for the copies we will add. We then show that $x$ and $y$ are not already contained in too many copies in $\c{H}_{s,t}$, and algorithmically construct a large number of theta graphs in $G'$ that \emph{do} contain $x$ and $y$; this allows us to conclude that we have found at least one new copy not already contained in $\c{H}_{s,t}$. Crucially, along the way, we ensure that at no step of the algorithm are our codegree conditions violated, ensuring that the copy added to $\c{H}_{s,t}$ is ``good." 
	
	Before delving into the meat of the proof, we introduce some notation which is more compact.  Define \[\c{H}=\bigcup_{s,t} \c{H}_{s,t},\hspace{2em} \c{H}_t=\bigcup_s \c{H}_{s,t}.\]  Also define \[\c{F}_{\forest}=\c{F}(\c{H},D_{\forest}),\hspace{2em} \c{F}_t=\c{F}(\c{H}_t,D_t),\hspace{2em} \c{F}_{s,t}=\c{F}(\c{H}_{s,t},D_{s,t}).\]  When applying \Cref{cor:link}, we adopt the shorthand
	\[\c{J}_{\forest}=\c{J}_{\c{H},D_{\forest}},\hspace{2em} \c{J}_t=\c{J}_{\c{H}_t,D_t},\hspace{2em} \c{J}_{s,t}=\c{J}_{\c{H}_{s,t},D_{s,t}}.\]
	
	We say that a set $\chi$ is \textit{$(s,t)$-compatible} if $\chi\in \mathbb{V}$ and if no subset of $\chi$ lies in $\c{F}_{\forest}\cup \c{F}_t\cup \c{F}_{s,t}$.  Crucially, we observe that proving the proposition is equivalent to showing that for some $s,t$, there exists an $(s,t)$-compatible set $h$ with $|h|=v(\theta_{a,b})$ such that $h\notin \c{H}$ (since, for example, no subset of $h$ being in $\c{F}_{\forest}$ implies $\c{H}\cup \{h\}$ is $D_{\forest}$-good).

	Before moving on, we make a small but important observation.
	\begin{claim}\label{cl:oneEdge}
		If $\chi$ is a valid set and $\nu\sub V(\theta_{a,b})\sm \chi_\theta$ is such that $\chi_\theta\cup \nu$ induces at most one edge in $\theta_{a,b}$, then $\c{J}_{\forest}(\chi;\nu)=\emptyset$.
	\end{claim}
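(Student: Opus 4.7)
The plan is to split into two cases based on the number of edges $e$ induced by $\chi_\theta\cup\nu$ in $\theta_{a,b}$. Both cases will reduce to an application of the infrastructure already in place: \Cref{def:Dforest}, the first clause of \Cref{cor:link}, and the ``no saturated edge remains'' property \eqref{eq:G0} that was obtained during the pruning step.

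First I would handle the degenerate case $e=0$. By \Cref{def:Dforest}, the $D_{\forest}$ value is only finite when the induced subgraph is a forest on \emph{at least one} edge, so $D_{\forest}(\chi_\theta\cup\nu)=\infty$. Since $\c{H}$ is $D_{\forest}$-good by hypothesis, the first clause of \Cref{cor:link} then immediately gives $\c{J}_{\forest}(\chi;\nu)=\emptyset$, with no further work required.

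The substantive case is $e=1$. Let $w_1w_2$ be the unique induced edge of $\chi_\theta\cup\nu$, and for any $\gamma\in\c{J}_{\forest}(\chi;\nu)$ let $(w_i,z_i)$ be the element of $\chi\cup\gamma$ lying above $w_i$. Validity of $\chi\cup\gamma$ together with $w_1w_2\in E(\theta_{a,b})$ forces $z_1z_2\in E(G)$. Because the algorithm constructs its valid sets using only vertices of $G'\subseteq G_0$, we may further take $z_1z_2\in E(G_0)$. Now set $\chi_0:=\{(w_1,z_1),(w_2,z_2)\}\subseteq \chi\cup\gamma$; then $\chi_0$ is valid with $\chi_{0,G}=\{z_1,z_2\}\in E(G_0)$, so \eqref{eq:G0} applied to $\chi_0$ yields $\deg_{\c{H}}(\chi_0)\le \lceil k^{ab}n^2/(\delta kn^{1+1/b})\rceil-1$. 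Since $\chi_{0,\theta}=\{w_1,w_2\}$ also induces exactly one edge, $D_{\forest}(\chi_{0,\theta})=D_{\forest}(\chi_\theta\cup\nu)=\lceil k^{ab}n^2/(\delta kn^{1+1/b})\rceil$, and monotonicity of codegrees under inclusion gives
\[ \deg_{\c{H}}(\chi\cup\gamma)\;\le\;\deg_{\c{H}}(\chi_0)\;<\;D_{\forest}(\chi_\theta\cup\nu), \]
so $\chi\cup\gamma\notin \c{F}_{\forest}$, contradicting $\gamma\in\c{J}_{\forest}(\chi;\nu)$.

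The one subtle step is the one-edge case, and specifically the invocation $z_1z_2\in E(G_0)$. On its face the claim is stated for arbitrary valid $\chi$ and $\gamma$; the argument really leans on the fact that (as will be arranged in the algorithm of \Cref{sec:alg}) the sets we ever feed into this claim project to subgraphs of $G_0$, so that the strict inequality baked into the pruning step is actually available. Once that ambient assumption is in hand, the rest is just the observation that $D_{\forest}$ depends only on the induced-edge count, so the ``whole'' inequality $\deg_{\c{H}}(\chi\cup\gamma)<D_{\forest}(\chi_\theta\cup\nu)$ follows from the ``local'' one at the pair $\chi_0$.
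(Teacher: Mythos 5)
Your proof is correct and takes essentially the same route as the paper's: the zero-edge case via $D_{\forest}=\infty$ and the first clause of \Cref{cor:link}, and the one-edge case by passing to the pair above the unique induced edge, invoking \eqref{eq:G0}, and using monotonicity of codegrees under inclusion. The subtlety you flag about needing $z_1z_2\in E(G_0)$ is genuine but is the same implicit ambient assumption the paper makes, since the claim is only ever applied to sets whose projections use edges of $G'\subseteq G_0$.
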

	\begin{proof}
		If $\chi_\theta\cup \nu$ induces 0 edges then $D_{\forest}(\chi_\theta\cup \nu)=\infty$ and the result follows from \Cref{cor:link}.  Thus we can assume $\chi_\theta\cup \nu$ induces exactly one edge $ww'$.  If there exists some $\gam\in \c{J}_{\forest}(\chi;\nu)$, then there exist pairs $(w,z),(w',z')\in \chi\cup \gam$ (since $(\chi\cup \gam)_\theta=\chi_\theta\cup\nu$).  In this case we have
		\[\deg_{\cH}(\chi\cup \gam)\le \deg_{\cH}(\{(w,z),(w',z')\})<D_{\forest}(\{w,w'\})=D_{\forest}(\chi_\theta\cup \gam_\theta),\]
		where the first inequality used that we are looking at the codegree of a smaller set, the second inequality follows from \eqref{eq:G0} which says $G'\sub G_0$ does not contain any edges $zz'$ with $\deg_{\cH}(\{(w,z),(w',z')\})\ge D_{\forest}(\{w,w'\})$, and the equality used that $\chi_\theta\cup \gam_\theta=\chi_\theta\cup \nu$ induces exactly one edge. This inequality implies $\chi\cup \gam\notin \c{F}(\cH,D_{\forest})$, contradicting the assumption $\gam\in \c{J}_{\forest}(\chi,\nu)$.  We conclude that this link set is indeed empty.
	\end{proof}

	\subsubsection{The Setup}
	We wish to apply Proposition~\ref{prop:expansion} to the ``pruned" graph $G'$.  For this we need to specify a set of forests to avoid.  Intuitively we wish to use the set $\c{F}_{\forest}$, but this is a collection of subsets of $V(\theta_{a,b})\times V(G)$, not of subgraphs of $G'$.  To get around this minor technically,  for $\chi\in \mathbb{V}$ we define the ``projection graph" $H_\chi$ by
	\[V(H_\chi)=\chi_G,\hspace{2em} E(H_\chi)=\{zz':\exists ww'\in E(\theta_{a,b}),\ (w,z),(w',z')\in \chi\}.\]
	Note that by definition of $\chi$ being valid, $H_\chi$ is a subgraph of $G$ which is isomorphic to the subgraph of $\theta_{a,b}$ induced by $\chi_\theta$.  We let $\c{F}'_{\forest}=\{H_\chi:\chi\in \c{F}_{\forest}\}$.   Since $D_{\forest}(\nu)=\infty$ for $\nu$ which do not induce forests, every element of $\c{F}_{\forest}'$ is a forest.  To apply Proposition~\ref{prop:expansion} with this set, it remains to verify the following, which gives the hypotheses of Proposition~\ref{prop:expansion}(g).
	
	\begin{claim}\label{cl:forest}
		Let $\ep>0$ be as in Proposition~\ref{prop:expansion}.  If $\del>0$ is sufficiently small, then for every path $x_1\cdots x_p$ of $G'$ with $p\le b$ which does not contain an element of $\c{F}_{\forest}'$ as a subgraph, the number of vertices $x_{p+1}\in N_{G'}(x_p)$ such that some subgraph of the path $x_1\cdots x_{p+1}$ is in $\c{F}_{\forest}'$ is at most $\ep \ell m^{1/b}$. 
	\end{claim}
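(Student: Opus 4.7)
The plan is to charge each ``bad'' vertex $x_{p+1} \in N_{G'}(x_p)$ to a pair $(\chi^0, w')$, where $\chi^0 \in \mathbb{V}$ has $\chi^0_G \subseteq \{x_1, \ldots, x_p\}$ and $w' \in V(\theta_{a,b}) \setminus \chi^0_\theta$, and to bound its contribution using \Cref{cor:link} together with \eqref{eq:G0}. Concretely, given any bad $x_{p+1}$, the hypothesis that $x_1 \cdots x_p$ is $\c{F}_\forest'$-free forces any witnessing $\chi \in \c{F}_\forest$ with $H_\chi \subseteq x_1 \cdots x_{p+1}$ to use the edge $x_p x_{p+1}$. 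Letting $(w', x_{p+1})$ be the unique pair in $\chi$ with $G$-coordinate $x_{p+1}$ and setting $\chi^0 := \chi \setminus \{(w', x_{p+1})\}$, we obtain $(w', x_{p+1}) \in \c{J}_\forest(\chi^0; \{w'\})$. Since there are only $O_{a,b}(1)$ pairs $(\chi^0, w')$ of this shape, it suffices to bound, for each, the number of $z \in N_{G'}(x_p)$ with $(w', z) \in \c{J}_\forest(\chi^0; \{w'\})$.

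The key structural observation to establish is that $w'$ has exactly one neighbor in $\chi^0_\theta$ within $\theta_{a,b}$, namely the unique $w \in V(\theta_{a,b})$ with $(w, x_p) \in \chi^0$. This is because $x_p$ is the only neighbor of $x_{p+1}$ in the path, so every $\theta_{a,b}$-neighbor $w''$ of $w'$ lying in $\chi^0_\theta$ must have $(w'', x_p) \in \chi^0$, and by validity only one such pair exists. Writing $e$ and $e'$ for the number of edges induced in $\theta_{a,b}$ by $\chi^0_\theta$ and $\chi^0_\theta \cup \{w'\}$, this yields $e' - e = 1$.

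With this in hand, I would split on $e$. When $e \ge 1$, the function $D_\forest$ is finite on both $\chi^0_\theta$ and $\chi^0_\theta \cup \{w'\}$, and \Cref{cor:link} (together with its ``moreover'' clause for $D_\forest$) gives
\[|\c{J}_\forest(\chi^0; \{w'\})| \le 2^{v(\theta_{a,b})+1} (\del k^{b/(b-1)})^{e'-e} = 2^{v(\theta_{a,b})+1}\del k^{b/(b-1)}.\]
When $e = 0$, the set $\chi^0_\theta \cup \{w'\}$ induces only the edge $ww'$, so $D_\forest(\chi^0_\theta \cup \{w'\}) = D_\forest(\{w, w'\})$; for any $z \in N_{G'}(x_p)$ with $(w', z) \in \c{J}_\forest(\chi^0; \{w'\})$, the valid pair $\{(w, x_p), (w', z)\}$ has $G$-projection in $E(G_0)$ yet codegree at least $D_\forest(\{w, w'\})$ by monotonicity, contradicting \eqref{eq:G0}, so this case contributes $0$. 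Summing over the $O_{a,b}(1)$ pairs and using $\ell m^{1/b} \ge \ell^{b/(b-1)} \ge \Omega_b(k^{b/(b-1)})$ (from \eqref{eq:minDegree} and \eqref{eq:ell}) yields the desired bound $\ep \ell m^{1/b}$ for $\del$ chosen small enough. The main obstacle is the structural observation $e' - e = 1$: without it, the factor $(\del k^{b/(b-1)})^{e'-e}$ from the link-set bound could grow super-linearly in $k$ and would exceed $\ell m^{1/b}$, rendering the whole approach useless.
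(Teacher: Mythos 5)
Your proposal is correct and follows essentially the same route as the paper: charging each bad $x_{p+1}$ to one of $O_{a,b}(1)$ pairs $(\chi^0,w')$, using that $x_{p+1}$ has degree at most one in the witnessing forest so the induced edge count rises by at most one, bounding the link sets via \Cref{cor:link}, and disposing of the edgeless base case via \eqref{eq:G0} (this is exactly the paper's \Cref{cl:oneEdge}). The only nitpick is that $w'$ has \emph{at most} one, not necessarily exactly one, neighbour in $\chi^0_\theta$ (the case $e'=e$ can occur but is even easier, as the relevant link set is then empty or bounded by a constant), so your argument goes through unchanged.
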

	\begin{proof}
		Fix any path $x_1\cdots x_p$ as above; we wish to bound then. We introduce the following notation which will only be used in the proof of this claim: we say that a pair $(\chi,w)$ with $\chi\in \mathbb{V}$ and $w\in V(\theta_{a,b})$ is \textit{good} if $\chi_G\sub \{x_1,\ldots,x_p\}$ and $w$ is adjacent to at most one vertex of $\chi_\theta$.  We claim that if $x_{p+1}\in N_{G'}(x_p)$ is such that some subgraph of the path $x_1\cdots x_{p+1}$ is in $\c{F}'_{\forest}$, then $\{(w,x_{p+1})\}\in \c{J}_{\forest}(\chi;w)$ for some good pair $(\chi,w)$.
		
		Indeed, say the subgraph of $x_1\cdots x_{p+1}$ in $\c{F}'_{\forest}$ was $H_\gam$ for some $\gam\in \mathbb{V}$.  Because $H_\gam$ is not a subgraph of $x_1\cdots x_p$, we must have $x_{p+1}\in V(H_\gamma)$, and thus we have $(w,x_{p+1})\in \gam$ for some $w\in V(\theta_{a,b})$. If $\chi:=\gam\sm \{(w,x_{p+1})\}$, then $H_\gam$ being a subgraph of $x_1\cdots x_{p+1}$ implies $\chi_G\sub \{x_1,\ldots,x_p\}$ and that $w$ is adjacent to at most one vertex of $\chi_\theta$ (as otherwise $x_{p+1}$ would have degree greater than 1 in $H_{\gam}$, contradicting this being a subgraph of $x_1\cdots x_{p+1}$).  In total we find $\gam=\chi \cup \{(w,x_{p+1})\}$ for some good pair $(\chi,w)$. Moreover, by definition of $H_\gam\in \c{F}'_{\forest}$, we find \[\chi\cup\{(w,x_{p+1})\}=\gam\in \c{F}_{\forest}=\c{F}(\c{H},D_{\forest}),\]
		so by definition $\{(w,x_{p+1})\}\in \c{J}_{\forest}(\chi;w)$ as desired.
		
		With this we see that the number of choices for $x_{p+1}$ is at most the number of elements of $\c{J}_{\forest}(\chi;w)$ for all possible good pairs $(\chi,w)$.  To count the number of such elements, fix some good pair $(\chi,w)$.   If $\chi_\theta$ induces no edges, then since $w$ is adjacent to at most one vertex of $\chi_\theta$ by definition of $(\chi,w)$ being a good pair, $\chi_\theta\cup \{w\}$ induces at most one edge.   \Cref{cl:oneEdge} then implies $\c{J}_{\forest}(\chi;w)=\emptyset$.

		Now assume $\chi_\theta$ induces at least one edge.  By definition of $D_{\forest}$ and \Cref{cor:link}, we have for any good pair $(\chi,w)$ that
		
		\[|\c{J}_{\forest}(\chi;w)|\le 2^{v(\theta_{a,b})+1}\del k^{b/(b-1)}\le 2^{v(\theta_{a,b})+1} \del 16^b \ell m^{1/b},\]
		where the first inequality used that $\chi_\theta\cup \{w\}$ induces at most one more edge than $\chi_\theta$, and the last inequality used \eqref{eq:ell} and \eqref{eq:minDegree}.   As the total number of good pairs is at most $v(\theta_{a,b})2^p=O_{a,b}(1)$, the result follows by taking $\del$ sufficiently small.
	\end{proof}
	
	With this claim and the fact that $\ell$ is at least a sufficiently large constant due to \eqref{eq:ell}, we can apply Proposition~\ref{prop:expansion} to $G'$ and $\c{F}_{\forest}'$, and we let $t,X$ be the integer and set guaranteed by this proposition.  We recall that $u,v\in V(\theta_{a,b})$ are  the high degree vertices of $\theta_{a,b}$.  
	
	Let $x\in X$ be a vertex such that $(u,x)$ is in the fewest hyperedges of $\c{H}$, that is, a vertex with $\deg_{\c{H}}(\{(u,x)\})=\min_{y\in X}\deg_{\c{H}}(\{(u,y)\})$ (this will help us ensure we can find a \emph{new} copy of $\theta_{a,b}$ containing $x$).  Let $(\c{B},\c{Q})$ with $\c{B}=(B_1,\ldots,B_t)$ be the pair for $x$ as guaranteed by Proposition~\ref{prop:expansion}(a).
	
	We now split our analysis into several cases based on the value of $t$. The overarching strategy is the same in both cases, but the details are somewhat simpler in the case $t=b$ (where $G'$ has nice ``random-like" expansion near $x$).

	\subsubsection{Case 1: $t=b$}
	Our strategy is to build copies of $\theta_{a,b}$ in $G'$ which use $x$ as one of the high degree vertices $u$. To choose the vertex $y$ which plays the role of the other high-degree vertex $v$, we would like to ensure there are many paths in $\c{Q}$ connecting $y$ to $x$ (which we will then use to build our theta graphs). And indeed, this holds for many vertices in $B_b$: for $y\in B_b$, let $f(y)$ denote the number of paths of $\c{Q}$ which $y$ is an endpoint of. By \Cref{lem:reductionSet}, there is some $B'\subseteq B_b$ so that
	\begin{equation}\label{eq:fbound_initial}
		\min_{y\in B'} f(y)\ge 2^{-b}\left(\frac{|B'|}{|B_b|}\right)^{1/b}\f{\sum_{y\in B_b} f(y)}{ |B'|}.
	\end{equation}
	Notice that we face a trade-off: we may have a large set of vertices $B'$, each of which is the endpoint of approximately the average number of paths, or a smaller set $B'$ where $f(y)$ is much larger than average. We can obtain a strong balanced supersaturation result either way, but to do so, we must keep track  of this trade-off.  To this end, let $r'$ be the unique integer such that \begin{equation}2^{-r'}m\le |B'|<2^{-r'+1}m.\label{eq:r'}\end{equation}  Since $|B_b|\le m$ and $\sum_{y\in B_b}f(y)=|\c{Q}|\ge \ep \ell^b m$ by \Cref{prop:expansion}(c),  \eqref{eq:fbound_initial} can be relaxed to
	\begin{equation}\label{eq:fbound}		
		\min_{y\in B'} f(y)\ge 2^{-b}\f{\ep \ell^b m}{|B_b|^{1/b}|B'|^{1-1/b}}\ge \ep 4^{-b}\cdot 2^{(1-1/b)r'} \ell^b.
	\end{equation}

	Recall from \eqref{eq:m} that $2^{-r}n\le m< 2^{-r+1}n$, and let \[s=2r+r'.\] Roughly speaking, 
	if $s$ is small, then $m$ and/or $|B'|$ are large, which is what we would expect to happen if $G$ was a random graph (as opposed to $G$ being, e.g.,\ a clique with isolated vertices, wherein both of these quantities would be small). We now aim to show that we can add a new theta graph to the collection $\c{H}_{s,b}$. 	Let $y\in B'$ be such that $(v,y)$ is in the fewest number of hyperedges with $(u,x)$ in $\c{H}$.  As before, this will help ensure we find a \textit{new} hyperedge, since $x$ and $y$ are not already contained in too many elements of $\c{H}$.  
	\begin{claim}\label{cl:case1bd}
		The set $\chi:=\{(u,x),(v,y)\}$ satisfies
		\[\deg_{\c{H}}(\chi)\le \del \ep^{-1}2^s k^{ab},\]
		and is $(s,b)$-compatible provided $\del$ is sufficiently small.
	\end{claim}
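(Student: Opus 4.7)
My plan is to establish the codegree bound by a two-step averaging argument exploiting the minimality conditions used to select $x$ and $y$, and then verify $(s,b)$-compatibility by inspecting each subset of $\chi$ against the three defining codegree functions.

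For the codegree bound, the key structural fact is that any valid set of size $v(\theta_{a,b})$ contains exactly one pair with first coordinate $u$ and (conditional on containing $(u,x)$) exactly one pair with first coordinate $v$. Thus $\sum_{z \in X} \deg_{\cH}(\{(u,z)\}) \le |\cH|$, so the minimality of $x$ in $X$ gives $\deg_{\cH}(\{(u,x)\}) \le |\cH|/|X|$. Similarly $\sum_{z \in B'} \deg_{\cH}(\{(u,x),(v,z)\}) \le \deg_{\cH}(\{(u,x)\})$, and the minimality of $y$ in $B'$ gives $\deg_{\cH}(\chi) \le |\cH|/(|X|\cdot|B'|)$. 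Substituting $|\cH|\le \del k^{ab}n^2$, the bound $|X|\ge \ep m$ from \Cref{prop:expansion}(h) specialized to $t=b$, the bound $|B'|\ge 2^{-r'}m$ from \eqref{eq:r'}, and $n\le 2^{r}m$ from \eqref{eq:m}, yields
\[
\deg_{\cH}(\chi)\ \le\ \frac{\del k^{ab} n^2}{\ep m\cdot 2^{-r'} m}\ \le\ \del \ep^{-1} 2^{2r+r'} k^{ab}\ =\ \del \ep^{-1} 2^{s} k^{ab},
\]
by the definition $s=2r+r'$.

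For $(s,b)$-compatibility, I would check each of the four subsets of $\chi$. For $\emptyset$ and the two singletons, the projection $\nu\subseteq\{u,v\}$ has $|\nu|\le 1$, so it fails to induce an edge of $\theta_{a,b}$, fails to contain both $u,v$, and fails to contain any $w_{b-1}^j$; hence \Cref{def:Dforest}, \Cref{def:Dt}, and \Cref{def:Dsb} all evaluate to $\infty$. For $\chi$ itself, $\{u,v\}$ still induces no edges (as $u,v$ are at distance $b\ge 2$ in $\theta_{a,b}$) and contains no $w_{b-1}^j$, so $D_{\forest}(\{u,v\})=D_b(\{u,v\})=\infty$. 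The only nontrivial case is $\chi\notin\c{F}_{s,b}$: by \Cref{def:Dsb} one has $D_{s,b}(\{u,v\})\ge 2^{s}k^{ab}/\del^2$, while from the bound above $\deg_{\c{H}_{s,b}}(\chi)\le \deg_{\cH}(\chi)\le \del\ep^{-1}2^{s}k^{ab}$. Strict inequality holds provided $\del^3<\ep$, which is ensured by choosing $\del$ small enough relative to the constant $\ep$ from \Cref{prop:expansion}. The only small conceptual point is that $\chi\in\mathbb{V}$ requires $x\ne y$, which is immediate since $y\in B_b$ is an endpoint of genuine paths in $\c{Q}$ starting at $x$. There is no real obstacle here; the content is entirely in lining up the exponents so that the two averaging steps produce precisely $2^{2r}\cdot 2^{r'}=2^{s}$.
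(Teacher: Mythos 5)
Your proposal is correct and follows essentially the same route as the paper's proof: the same two-step averaging over $X$ and then over $B'$ (using that each hyperedge contains exactly one pair with first coordinate $u$ and one with first coordinate $v$), the same substitution of $|X|\ge \ep m$, $|B'|\ge 2^{-r'}m$, $m\ge 2^{-r}n$ to get $\del\ep^{-1}2^s k^{ab}$, and the same case check for compatibility, including the observation that $x\notin B_b$ forces $x\ne y$ and that only the $D_{s,b}$ condition on the full set $\chi$ is nontrivial. No gaps.
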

	\begin{proof}
		First, recall that $x$ is such that $(u,x)$ is contained in the fewest number of hyperedges in $\c{H}$ among all vertices in the set $X$, which means
		\[
		\deg_{\c{H}}(\{(u,x)\})\le \f{|\c{H}|}{|X|}\le \f{\del k^{ab} n^2}{\ep m},
		\]
		where the second inequality used $|X|\ge \ep m$ by \Cref{prop:expansion}(b) when $t=b$ and the hypothesis  $|\c{H}|\le \del k^{ab}n^2$ of \Cref{prop:main}.
		Similarly, as $y\in B'$ is such that $(v,y)$ is in the fewest number of hyperedges with $(u,x)$ in $\c{H}$, we have
		\[
		\deg_{\c{H}}(\chi)\le \f{\del k^{ab}n^2}{\varepsilon m|B'|}\le  \f{\del k^{ab}n^2}{\ep 2^{-2r-r'}n^2}=\del \ep^{-1}2^s k^{ab},
		\]
		where the second inequality used \eqref{eq:m} and \eqref{eq:r'}.
		
		It remains to show $\chi$ is $(s,b)$-compatible.  First we show $\chi$ is a valid set.  Since $u,v$ are distinct non-adjacent vertices of $\theta_{a,b}$, we only need to check $x\ne y$.  And indeed, we can not have $x\in B_b$, since by Proposition~\ref{prop:expansion}(e), every element of $B_b$ is the endpoint of a positive number of paths of length $b$ from $x$ (and since these are paths, $x$ can not serve as both endpoints).  Since $y\in B'\sub B_b$, we conclude $x\ne y$ and that $\chi$ is valid.
		
		It remains to check that every subset of $\chi$ satisfies the desired codegree conditions.  Note that for any $\chi'\sub \chi$ of size 1, we have $D_{\forest}(\chi'_\theta)=D_{b}(\chi'_\theta)=D_{s,b}(\chi'_\theta)=\infty$, and as such $\chi'$ will not belong to $\c{F}_{\forest}\cup \c{F}_b\cup \c{F}_{s,b}$.  Similarly $D_{\forest}(\chi_\theta)=D_{b}(\chi_\theta)=\infty$, so it only remains to verify that
		\[\deg_{\c{H}_{s,b}}(\chi)<D_{s,b}(\chi_\theta)=\ceil{\f{k^{ab}n^2}{2^{-s}n^2 \del^2}}=\ceil{2^s \del^{-2} k^{ab}}.\]
		Since $\deg_{\c{H}_{s,b}}(\chi)\le \deg_{\c{H}}(\chi)$, this bound follows from the first part of the claim, completing our proof.
	\end{proof}
	
	We now wish to construct many ``good" copies of $\theta_{a,b}$ in $G'$ with $x$ and $y$ as the two high-degree vertices 
	(i.e.\ more than the bound in \Cref{cl:case1bd}). 
	To do this, we iteratively pick paths $P_1,\ldots,P_a$ in $\c{Q}$ that end in $y$, and we take our copy of $\theta_{a,b}$ to be the union of these paths. 
	We must ensure that the paths chosen are such that $P_1\cup \cdots \cup P_a$ is $(s,b)$-compatible, and in particular that they do not intersect each other and that no subset of their vertices is already saturated.  For this claim, we recall that the paths of $\theta_{a,b}$ are denoted by $uw_1^j\cdots w_{b-1}^jv$.
	\begin{claim}\label{cl:pathsCase1}
		Let $1\leq j\leq a$, and let $P_1,\dots, P_{j-1}$ be a collection of paths in $\c{Q}$ ending in $y$, and for each path $P_{j'}$, write $P_{j'} = x z_1^{j'} \cdots z_{b-1}^{j'}y$. Suppose that the set 
		$$\chi := \{(u,x),(v,y)\}\cup \bigcup_{j'<j} \left\{(w_1^{j'},z_1^{j'}),\dots, (w_{b-1}^{j'},z_{b-1}^{j'})\right\}$$
		is $(s,b)$-compatible. Then there are  at least $\ep 4^{-2b^2} \cdot 2^{2s/3}k^b$ choices of a path $P_j = xz_1^j \cdots z_{b-1}^jy$ in $\c{Q}$ so that
		$$\chi_j:=\chi \cup \left\{ (w_1^j,z_1^j),\dots, (w_{b-1}^j,z_{b-1}^j)\right\}$$
		
		is $(s,b)$-compatible. 
	\end{claim}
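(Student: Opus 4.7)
By \eqref{eq:fbound}, the total number of candidate paths is $f(y) = |\c{Q}[x\to y]| \ge \ep 4^{-b}\cdot 2^{(1-1/b)r'}\ell^b$. Combining this with \eqref{eq:ell} and $s = 2r + r'$, a brief exponent check (using $b\ge 3$, so that $rb + (1-1/b)r'\ge (4r+2r')/3 = 2s/3$) gives $f(y) \ge \ep 4^{-b^2-b}\cdot 2^{2s/3}k^b$. Thus it suffices to show that at most half of these paths fail to extend $\chi$ to an $(s,b)$-compatible $\chi_j$.

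A path $P_j = x z_1^j \cdots z_{b-1}^j y \in \c{Q}[x\to y]$ fails in one of two ways: \emph{(a)} some internal vertex $z_i^j$ lies in $\chi_G \sm \{x,y\}$, violating validity of $\chi_j$; or \emph{(b)} for some $\chi' \sub \chi$ and nonempty $\nu \sub \{w_1^j, \ldots, w_{b-1}^j\}$, the set $\chi' \cup \gam_\nu$ (where $\gam_\nu := \{(w, z_w^j) : w \in \nu\}$) lies in $\c{F}_{\forest} \cup \c{F}_{s,b}$; note $\c{F}_b = \emptyset$ since $D_b \equiv \infty$, so these two families suffice. Type (a) is controlled via \Cref{prop:expansion}(f) with $|S|=1$: each of the $O_{a,b}(1)$ vertices of $\chi_G \sm \{x,y\}$ lies on at most $\ep^{-1}\ell^{(b-2)b/(b-1)}$ paths of $\c{Q}[x\to y]$, which is $o(f(y))$ since $\ell$ is large.

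For type (b), we union bound over the $O_{a,b}(1)$ choices of $(\chi',\nu)$. For fixed $(\chi',\nu)$, a bad $\gam_\nu$ lies in $\c{J}_{s,b}(\chi';\nu) \cup \c{J}_{\forest}(\chi';\nu)$ and pins down $|\nu|$ specific internal vertices of $P_j$, restricting the path count to at most $\ep^{-1}\ell^{(b-1-|\nu|)b/(b-1)}$ by \Cref{prop:expansion}(f). The link-set bounds come from \Cref{cor:link}: for $\c{J}_{s,b}$, only $\chi' \supseteq \{(u,x),(v,y)\}$ is relevant (otherwise $D_{s,b}$ is infinite), giving $|\c{J}_{s,b}(\chi';\nu)| = O_{a,b}(\delta^{|\nu|}(2^{2s/3}k^b)^{|\nu|/(b-1)})$; for $\c{J}_{\forest}$, \Cref{cor:link} applies directly when $D_{\forest}(\chi'_\theta)<\infty$, while \Cref{cl:oneEdge} forces the link set to be empty whenever $\chi'_\theta\cup\nu$ induces at most one edge, and the remaining degenerate cases (where $\chi'_\theta$ itself has no edges yet $\chi'_\theta\cup\nu$ has at least two) are handled by dropping isolated $\theta$-vertices from $\chi'$ without changing the induced edge set so that \Cref{cor:link} becomes applicable on the reduced subset.

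Multiplying the link-set bounds by the path-count bound and comparing to $f(y)$, using $\ell\ge 4^{-b}\cdot 2^r k$ together with the upper bound $\ell\le O_{a,b}(2^{r(b+1)/b}k)$ (from min-degree $\le$ average degree in $G'$) and $s=2r+r'$, each $(\chi',\nu)$-ratio takes the form $O_{a,b}(\delta^{|\nu|})\cdot 2^{E(r,r')}$, where $E(r,r')$ is a linear form in $r$ and $r'$ whose coefficients $|\nu|(4-3b)/(3(b-1))$ and $2|\nu|/(3(b-1))-(b-1)/b$ are nonpositive for $b\ge 3$ and $1\le|\nu|\le b-1$. Summing over the $O_{a,b}(1)$ choices of $(\chi',\nu)$ and taking $\delta$ sufficiently small, type (b) contributes at most $f(y)/2$ bad paths, so at least $f(y)/2\ge \ep 4^{-2b^2}\cdot 2^{2s/3}k^b$ good paths remain. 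The main obstacle is the exponent bookkeeping, which is tight at the boundary $b=3$, together with the careful treatment of degenerate forest cases via \Cref{cl:oneEdge} and the reduction described above.
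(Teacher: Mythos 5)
Your overall architecture matches the paper's proof: lower-bound $|\c{Q}[x\to y]|$ via \eqref{eq:fbound} and \eqref{eq:ell}, remove paths hitting $\chi_G\sm\{x,y\}$ via \Cref{prop:expansion}(f) with $|S|=1$, and union-bound the link sets $\c{J}_{s,b}(\chi';\nu)$ and $\c{J}_{\forest}(\chi';\nu)$ against the path-count bound from \Cref{prop:expansion}(f). However, there is a genuine gap in your treatment of the degenerate forest case, i.e.\ when $\chi'_\theta$ induces no edges but $\chi'_\theta\cup\nu$ induces at least two. You propose to ``drop isolated $\theta$-vertices from $\chi'$ without changing the induced edge set so that \Cref{cor:link} becomes applicable.'' This does not work. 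Take $\chi'=\{(u,x),(v,y)\}$ and $\nu=\{w_1^j,w_{b-1}^j\}$ with $b\ge 3$: then $\chi'_\theta=\{u,v\}$ induces no edges (so $D_{\forest}(\chi'_\theta)=\infty$ and \Cref{cor:link} gives nothing), yet $\chi'_\theta\cup\nu$ induces the two edges $uw_1^j$ and $w_{b-1}^jv$, both incident to $\chi'_\theta$. Neither $u$ nor $v$ is isolated in $\chi'_\theta\cup\nu$, so no vertex can be dropped while preserving the induced edge set, and the degenerate situation persists; if you drop them anyway, the induced edge set collapses and the link set you bound is not the one you need. In general, all edges of $\chi'_\theta\cup\nu$ may run between $\chi'_\theta$ and $\nu$, so no reduction to a subset with $D_{\forest}(\chi''_\theta)<\infty$ is available.

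The paper closes this case with a structural, not a counting, argument: when $\chi'_\theta$ induces no edges, every edge induced by $\chi'_\theta\cup\nu$ lies on the single path $W_j=uw_1^j\cdots w_{b-1}^jv$, so for any $\gam\in\c{J}_{\forest}(\chi';\nu)$ appearing in $\chi_j$ the set $(\chi'\cup\gam)\cap(W_j,P_j)$ would itself lie in $\c{F}_{\forest}$, forcing the projection graph $H_{(\chi'\cup\gam)\cap(W_j,P_j)}\in\c{F}'_{\forest}$ to be a subgraph of $P_j\in\c{Q}$ --- contradicting \Cref{prop:expansion}(g), which guarantees no path of $\c{Q}$ contains an element of $\c{F}'_{\forest}$. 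Hence there are \emph{no} bad choices at all in this case, rather than few. You need this step (it is the reason condition (g) is built into \Cref{prop:expansion} in the first place); without it the union bound cannot be completed. The remaining exponent bookkeeping in your proposal is essentially the paper's computation and is fine.
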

	\begin{figure}[H]
		\centering
		\includegraphics[width=0.52\textwidth]{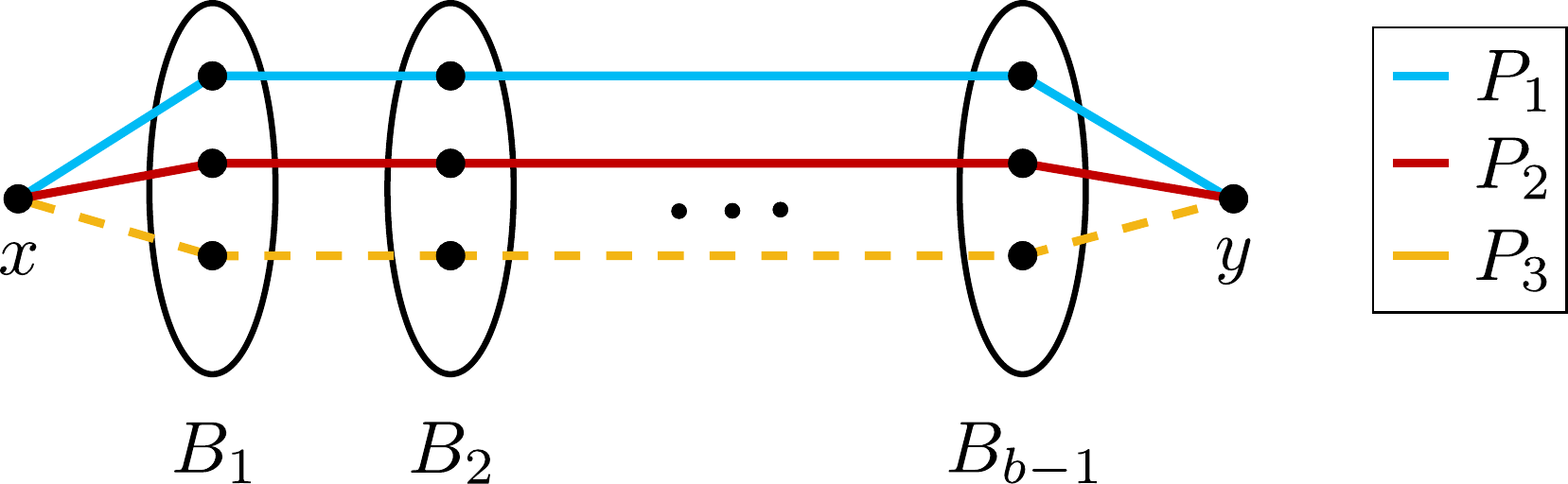}
		\caption{Given paths $P_1$ and $P_2$, the algorithm next picks a path $P_3$ from $y$ to $x$ in $\c{Q}$ while maintaining that the relevant sets are compatible.}
		\label{fig:EqualsB}
	\end{figure}

	\begin{proof}
		Let $\c{Q}(y)$ denote the set of paths in $\c{Q}$ ending in $y$. Since $y\in B'$, we have \begin{equation}|\c{Q}(y)|\,\stackrel{\eqref{eq:fbound}}{\ge}\,\ep 4^{-b}\cdot 2^{(1-1/b)r'} \ell^b\,\stackrel{\eqref{eq:ell}}{\ge}\, \ep 4^{-b}\cdot 4^{-b^2}\cdot 2^{(1-1/b)r'+br}\, k^b\ge 2\ep 4^{-2b^2}\cdot 2^{2s/3} k^b,\label{eq:Qy}\end{equation}
		where this last step used $b\ge 3$ and $s=2r+r'$.

		Our goal now is to show that among these paths, there are few ``bad choices" that must be avoided, i.e.\ few choices so that $\chi_j$ is not $(s,b)$-compatible.  To show this, \Cref{prop:expansion}(f) will be crucial, which we recall says that for any non-empty set $S$ of vertices in $G'$ not containing $x,y$, there are at most $\varepsilon^{-1}\ell^{(b-1-|S|)b/(b-1)}$ paths in $\c{Q}(y)$ which contain $S$. 
		
		We first show that almost all choices of $P_j$ make $\chi_j$ valid.  Because $\chi$ was already valid, this is equivalent to choosing a $P_j$ which contains none of the vertices of $\bigcup_{j'<j} P_{j'}$ other than $x$ and $y$.  By \Cref{prop:expansion}(f) with $|S| = 1$,  the number of paths in $\c{Q}$ containing a given vertex from $\bigcup_{j'<j} P_{j'}$ is at most $\varepsilon^{-1}\ell^{(b-2)b/(b-1)}$. Therefore, the number of $P_j\in \c{Q}(y)$ containing any of the vertices in $\bigcup_{j'<j} P_{j'}$ (other than $x$ and $y$) is at most a constant times $\ell^{(b-2)b/(b-1)}$, which 
		for $k_0$ sufficiently large (which makes $\ell$ sufficiently large) will be at most $\frac{1}{4} |\c{Q}(y)| =  \Omega(\ell^b)$. Thus at least three quarters of the paths $P_j\in \c{Q}(y)$ will make $\chi_j$ valid.

		To show that $\chi_j$ is $(s,b)$-compatible for most choices of paths in $\c{Q}(y)$, it remains to bound the number of ``bad" sets $\gam$ that must be avoided when choosing $P_j$.  To this end, for each integer $1\le p\le b-1$ define

		\[         \widetilde{\c{J}}_p:=\bigcup_{\substack{\chi'\subseteq {\chi},\\ \nu\subseteq \{w_1^j,\dots, w_{b-1}^j\}:\ |\nu| = p}}\c{J}_{s,b}(\chi';\nu)
		\ \ \cup\ 
		\bigcup_{\substack{\chi'\subseteq {\chi},\\ \nu\subseteq \{w_1^j,\dots, w_{b-1}^j\}:\ |\nu| = p}}\c{J}_{{\forest}}(\chi';\nu)
		\]
		Note that $\chi_j$ is $(s,b)$-compatible if and only if it is valid and does not contain any set $\gam\in \widetilde{\c{J}}_p$ for any value of $p$ (here we implicitly use that $\c{F}(\c{H}_b,D_b)=\emptyset$ since $D_b$ is always equal to $\infty$, so we can ignore $\c{J}_b(\chi';\nu)$ when checking for compatibility).

		We may use \Cref{cor:link} to bound the size of each link set above: consider $\chi'\subseteq \chi$ and $\nu\subseteq \{w_1^j,\dots, w_{b-1}^{j}\}$ with $|\nu|=p$. If $(u,x)\not\in \chi'$ or $(v,y)\not\in \chi'$, then $\c{J}_{s,b}(\chi';\nu) = \emptyset$ by \Cref{cor:link}. Otherwise, \Cref{cor:link} gives
		
		\begin{equation}
			\left|\c{J}_{s,b}(\chi';\nu)\right|		
			\le  2^{v(\theta_{a,b})+1}\delta^{p}\left(2^{2s/3}k^b\right)^{p/(b-1)}.\label{eq:Jsb}
		\end{equation}
		Similarly, if $\chi_\theta'\cup \nu$ does not induce a forest on at least one edge, then $\c{J}_{{\forest}}(\chi';\nu)=\emptyset$. If both $\chi_\theta'\cup \nu$ and $\chi_\theta'$ induce a forest on at least one edge, then \Cref{cor:link} gives
		\begin{equation}
			|\c{J}_{{\forest}}(\chi';\nu)|\le 
			2^{v(\theta_{a,b})+1}\delta^p k^{pb/(b-1)}.\label{eq:Jforest}
		\end{equation}
		It remains to deal with the case that $\chi_\theta'$ does not induce a forest on at least one edge but $\chi_\theta'\cup \nu$ does.  Analogous to the proof of \Cref{cl:oneEdge}, in this setting \Cref{cor:link} gives no meaningful bound, but we can show that  for any choice of $P_j$ in $\c{Q}(y)$, no element of the link set $\c{J}_{{\forest}}(\chi';\nu)$ can appear in $\chi_j$ (and therefore there are no ``bad options" that must be avoided in choosing $P_j$ from $\c{Q}$). To this end, consider any set $\gam\in \c{J}_{{\forest}}(\chi';\nu)$. By definition, this means $\gam_\theta = \nu$ and 
		\begin{equation}\label{eq:annoyingBound}
			\deg_{\c{H}}(\chi'\cup \gam) \ge D_{\forest}(\chi_\theta'\cup \nu).
		\end{equation}
		
		Since $\chi_\theta'$ does not induce any edges but $\chi'_\theta\cup \nu$ does, all of these induced edges of $\theta_{a,b}$ must be contained in the path $W_j:=uw_1^j\cdots w_{b-1}^j v$.  Since $D_{\forest}$ only depends on the number of edges induced, we have
		\begin{equation}\label{eq:annoyingBound2}
			D_{\forest}(\chi_\theta'\cup \nu) = D_{\forest}\left((\chi_\theta'\cup \nu)\cap W_j\right)..
		\end{equation}
		On the other hand, if for each $P_j\in \c{Q}(y)$ we let \[(W_j, P_j):= \left\{(u,x), (w_1^j,z_1^j),\dots, (w_{b-1}^j,z_{b-1}^j), (v,y)\right\},\] 
		then we have
		\begin{equation}\label{eq:annoyingBound3}
			\deg_\c{H}\big((\chi'\cup \gam)\cap (W_j,P_j)\big) \ge 
			\deg_\c{H}(\chi'\cup \gam), 
		\end{equation}
		since taking smaller sets can only cause $\deg_{\cH}$ to increase. Putting this all together, we obtain
		\begin{equation}
			\label{eq:annoyingBound4}
			\deg_\c{H}\big((\chi'\cup \gam)\cap (W_j,P_j)\big)
			\,\stackrel{\eqref{eq:annoyingBound3},\eqref{eq:annoyingBound},\eqref{eq:annoyingBound2}}{\ge}\,
			D_{\forest}\left((\chi_\theta'\cup \nu)\cap W_j\right).
		\end{equation}
		Notice that $(\chi_\theta'\cup \nu)\cap W_i=((\chi'\cup \gam)\cap (W_j,P_j))_\theta$. Thus \eqref{eq:annoyingBound4} says that $(\chi'\cup \gam)\cap (W_j,P_j)\in \c{F}_{\forest}$.  Using the notation introduced just before \Cref{cl:forest}, this means that the subgraph $H':=H_{(\chi'\cup \gam)\cap (W_j,P_j)}\sub P_j$ is an element of $\c{F}'_{\forest}$.  Recall that by \Cref{prop:expansion}, no path in $\c{Q}$ contains an element of $\c{F}'_{\forest}$ as a subgraph. Since $H'$ is a subgraph of $P_j\in \c{Q}$, we conclude that there is no choice of $P_j\in \c{Q}(y)$ such that $\chi_j$ contains $\gam\in \c{J}_{{\forest}}(\chi';\nu)$ in this case.

		Putting it all together, and writing $\c{P}_\text{possible} := \{ \gam\subseteq (W_j,P_j) : P_j\in \c{Q}(y)\}$, we obtain

		{\footnotesize \begin{align}
				|\widetilde{\c{J}}_p\cap \c{P}_\text{possible}| 
				&\stackrel{\eqref{eq:Jsb},\eqref{eq:Jforest}}{\le} 2^{v(\theta_{a,b})+1}\delta^pk^{pb/(b-1)}\left[ \left(2^{2s/3}\right)^{p/(b-1)} + 1\right]\cdot \text{\footnotesize$\left|\{(\chi',\nu)\ :\ \chi'\subseteq\chi, \nu\subseteq \{w_1^j,\dots, w_{b-1}^{j}\}, |\nu| = p\}\right|$}
				\notag\\				
				&\le  2^{2v(\theta_{a,b})+2b}\cdot \delta^p\cdot \left(2^{2s/3} k^b\right)^{p/(b-1)}.\label{eq:aBound}
		\end{align}}

		By \Cref{prop:expansion}(f), the number of $P_j$ which contain the projection $\gam_{G}$ of a given $\gam$ (of size $p$) is at most $\varepsilon^{-1}\ell^{(b-1-p)b/(b-1)}$. So combining this with \eqref{eq:aBound}, the number of $P_j$ which contain $\gam_{G}$ for any $\gam\in \widetilde{\c{J}}_p\cap \c{P}_\text{possible}$ is at most
		\[\left(\varepsilon^{-1}\ell^{(b-1-p)b/(b-1)}\right)\cdot 2^{2v(\theta_{a,b})+2b}\cdot \delta^p\cdot \left(2^{2s/3} k^b\right)^{p/(b-1)}.\]
		
		Summing over all values of $p$ from 1 to $b-1$ and simplifying slightly, the number of $P_j\in \c{Q}(y)$ that contain a ``bad" set $\gam$ of any size is at most
		
		\begin{equation}\label{eq:badBoundy}
			\max_{1\leq p\leq b-1}C\ell^{b}\c\cdot \left(2^{2s/3} (k/\ell)^b\right)^{p/(b-1)},
		\end{equation}
		where $C = \left(\varepsilon^{-1}\delta(b-1)2^{2v(\theta_{a,b})+2b}\right).$
		By \eqref{eq:ell}, and since $s = 2r+r'$ and $b\geq 3$, we have
		\[
		2^{2s/3} (k/\ell)^b \le 2^{2s/3} (4^b2^{-r})^b \le 4^{b^2}2^{2r'/3}.
		\]
		This gives 
		\[
		\eqref{eq:badBoundy} \le C'2^{2r'/3} \ell^b,
		\]
		where $C' = \left(\varepsilon^{-1}\delta(b-1)2^{2v(\theta_{a,b})+2b+2b^2}\right)$. By taking $\delta$ sufficiently small, we can assume $C'\le \frac{1}{4} \ep 4^{-b}$.  So, after taking into account that at most one quarter of the choices $P_j\in \c{Q}(y)$ have $\chi_j$ not valid, we find that the number of choices for $P_j\in \c{Q}(y)$ such that $\chi_j$ is $(s,b)$-compatible is at least
		\[\frac{3}{4}|\c{Q}(y)| - C'2^{2r'/3} \ell^b \ge \frac{1}{2}|\c{Q}(y)|\ge  \ep 4^{-2b^2}\cdot 2^{2s/3}k^b,\]
		where both inequalities used \eqref{eq:Qy}.  This gives the desired result.
	\end{proof}
	
	With \Cref{cl:pathsCase1} established, we are now nearly ready to finish Case 1.
	
	\begin{claim}\label{cor:manyGoodCase1}
		The number of $(s,b)$-compatible sets of size $v(\theta_{a,b})$ containing $(u,x)$ and $(v,y)$  is at least $$(\ep 4^{-2b^2})^a2^{s}k^{ab}.$$
	\end{claim}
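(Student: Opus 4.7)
The plan is to prove this by iteratively applying Claim \ref{cl:pathsCase1}, starting with the base case given by Claim \ref{cl:case1bd}. By Claim \ref{cl:case1bd}, the set $\chi_0 := \{(u,x),(v,y)\}$ is itself $(s,b)$-compatible. We then build $(s,b)$-compatible sets of size $v(\theta_{a,b}) = 2 + a(b-1)$ by successively choosing paths $P_1, P_2, \ldots, P_a \in \c{Q}(y)$: given any $(s,b)$-compatible $\chi_{j-1}$ obtained after having chosen the first $j-1$ paths, Claim \ref{cl:pathsCase1} guarantees at least $\varepsilon 4^{-2b^2} \cdot 2^{2s/3} k^b$ choices of a path $P_j$ such that the extended set $\chi_j$ remains $(s,b)$-compatible.

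Multiplying across the $a$ iterations, the number of ordered sequences $(P_1,\ldots,P_a)$ that yield an $(s,b)$-compatible set $\chi_a$ of size $v(\theta_{a,b})$ is at least
\[
\bigl(\varepsilon 4^{-2b^2}\bigr)^a\cdot 2^{2sa/3}\cdot k^{ab}.
\]
The key bookkeeping step is to argue that distinct ordered sequences produce distinct sets $\chi_a$. This follows because each pair in $\chi_a$ has the form $(w_i^j, z_i^j)$ where the first coordinate explicitly records \emph{which} path of $\theta_{a,b}$ the vertex belongs to: given the resulting set $\chi_a$, the $j$-th path is recovered as $x z_1^j \cdots z_{b-1}^j y$, so the ordered tuple $(P_1,\ldots,P_a)$ is determined by $\chi_a$. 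Hence each $(s,b)$-compatible set of size $v(\theta_{a,b})$ containing $(u,x),(v,y)$ is counted exactly once.

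Finally, since $a \ge 6 \ge 2$ and $s \ge 0$, we have $2sa/3 \ge s$, so
\[
\bigl(\varepsilon 4^{-2b^2}\bigr)^a\cdot 2^{2sa/3}\cdot k^{ab} \;\ge\; \bigl(\varepsilon 4^{-2b^2}\bigr)^a \cdot 2^s \cdot k^{ab},
\]
as desired. The argument is essentially a routine iteration; the only subtle point is the labeling observation ensuring that no overcounting arises from permuting the path indices (which would otherwise cost an $a!$ factor), but this is built into the definition of valid sets in $V(\theta_{a,b}) \times V(G)$.
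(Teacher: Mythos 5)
Your proof is correct and follows essentially the same route as the paper: iterate Claim \ref{cl:pathsCase1} starting from the base case of Claim \ref{cl:case1bd}, multiply the $a$ lower bounds, and use $2^{2sa/3}\ge 2^s$. The explicit injectivity remark (that distinct ordered path sequences yield distinct sets because the first coordinates $w_i^j$ record the path index) is a detail the paper leaves implicit, and it is a correct and welcome addition.
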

	
	\begin{proof}
		This result will follow directly by an iterative application of \Cref{cl:pathsCase1}. As a base step, we take $\chi_0=\{(u,x),(v,y)\}$, which is $(s,b)$-compatible by \Cref{cl:case1bd}. With this we may apply \Cref{cl:pathsCase1} to obtain at least $\ep 4^{-2b^2} \cdot 2^{2s/3}k^b$ choices of a path $P_1$ in $G'$ such that the corresponding set $\chi_1$ is $(s,b)$-compatible. Iterating up to $j=a$, we obtain at least $$\left(\ep 4^{-2b^2} \cdot 2^{2s/3}k^b\right)^a \ge (\ep 4^{-2b^2})^a 2^sk^{ab}$$ distinct collections $P_1,\dots, P_a$ such that the corresponding sets $\chi_a$ are $(s,b)$-compatible. This completes the proof.
	\end{proof}
	
	Now we are ready to finish Case 1.  By \Cref{cl:case1bd}, the number of hyperedges in $\c{H}_{s,b}$ containing $(u,x)$ and $(v,y)$ is at most $${\del \ep^{-1}2^s k^{ab}}.$$
	By \Cref{cor:manyGoodCase1}, the number of $(s,b)$-compatible sets of size $v(\theta_{a,b})$ containing $(u,x)$ and $(v,y)$ is at least $$(\ep 4^{-2b^2})^a2^{s}k^{ab}.$$
	Therefore, provided $\delta$ is sufficiently small, there must be at least one $(s,b)$-compatible set $h$ of size $v(\theta_{a,b})$ that is not already in $\c{H}_{s,b}$. This $h$ may be added to $\c{H}_{s,b}$, completing the proof of \Cref{prop:main} when $t=b$.
	
	\subsubsection{Case 2: $t<b$ and $b-t$ Even}
	Parts of this proof are nearly identical to the previous case, and as such we omit some of the redundant details.
	
	Recall that $r$ is the unique integer such that $2^{-r}n\le m<2^{-r+1}n$.  Our goal in this case is to show that we can add a new theta graph to $\c{H}_{r,t}$. 	Let $y\in B_{t}$ be such that $(v,y)$ is in as few hyperedges in $\c{H}$ with $(u,x)$ as possible.  
	\begin{claim}\label{cl:tStart}
		The set $\chi=\{(u,x),(v,y)\}$ satisfies
		\[\deg_{\c{H}}(\chi)\le\f{\del \ep^{-2} 4^{2b}  k^{ab}n^2}{2^{-2r}k^{(2b-2t+1)/(b-1)}n^{(2t-1)/b}}\]
		and is $(r,t)$-compatible if $\del$ is sufficiently small.
	\end{claim}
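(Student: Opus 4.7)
The plan is to adapt the proof of \Cref{cl:case1bd} (Case 1) to the present setting $t<b$, splitting the work into three parts: a validity check on $\chi$, a reduction of $(r,t)$-compatibility to a single codegree inequality, and the averaging argument that establishes the codegree bound itself.

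For validity, $u$ and $v$ are distinct and non-adjacent in $\theta_{a,b}$, so the only substantive condition is $x\ne y$. This holds because $y\in B_t$ is an endpoint of some path of length $t\ge 2$ starting at $x$, guaranteed by \Cref{prop:expansion}(e) (which ensures $|\c{Q}[x\to y]|\ge \ep \ell^{(t-1)b/(b-1)}>0$), so $x\ne y$ as vertices of $G'$.

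For $(r,t)$-compatibility, I would first observe that every singleton $\chi'\subset \chi$ satisfies $D_{\forest}(\chi'_\theta)=D_t(\chi'_\theta)=D_{r,t}(\chi'_\theta)=\infty$ by inspection of Definitions~\ref{def:Dforest}, \ref{def:Dt} and~\ref{def:Dst}, so singletons impose no constraint. Moreover $D_{\forest}(\{u,v\})=\infty$ because $\{u,v\}$ induces no edges, and $D_t(\{u,v\})=\infty$ because $\{u,v\}$ contains no $w_{b-1}^j$. Hence only $\deg_{\c{H}_{r,t}}(\chi)<D_{r,t}(\{u,v\})$ need be checked; since $\deg_{\c{H}_{r,t}}(\chi)\le \deg_{\c{H}}(\chi)$, this will follow from the main codegree bound provided $\del^3<\ep^2 4^{-2b}$, absorbing the ceiling factors.

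To establish the codegree bound, I would chain two averaging steps. From the choice of $x$ minimizing $\deg_{\c{H}}(\{(u,x')\})$ over $x'\in X$, we get $\deg_{\c{H}}(\{(u,x)\})\le |\c{H}|/|X|\le \del k^{ab}n^2/|X|$. From the choice of $y\in B_t$ minimizing $\deg_{\c{H}}(\{(u,x),(v,y')\})$, using that each hyperedge containing $(u,x)$ contributes to at most one $y'\in B_t$ in the sum $\sum_{y'\in B_t}\deg_{\c{H}}(\{(u,x),(v,y')\})$, we obtain $\deg_{\c{H}}(\chi)\le \deg_{\c{H}}(\{(u,x)\})/|B_t| \le \del k^{ab}n^2/(|X|\cdot|B_t|)$. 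Then \Cref{prop:expansion}(b), (h) give $|X|\cdot|B_t|\ge \ep^2 \ell^{\beta}m^{(2t-1)/b}$, where $\beta:=(b-t)/b+(b-t+1)/(b-1)$.

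The main obstacle will be translating this $(\ell,m)$-bound into the $(k,n,r)$-form of the claim. A direct substitution via $\ell\ge 4^{-b}2^r k$ from \eqref{eq:ell} alone is not sharp enough, because the exponent $\beta$ is strictly smaller than $\gamma:=(2b-2t+1)/(b-1)$ appearing in the target denominator (one checks $\gamma-\beta=(b-t)/(b(b-1))>0$). To close this gap I would instead use the sharper inequality $\ell m\ge 2^{-b}kn$ (implicit in the proof of \eqref{eq:ell} from \Cref{lem:reduction}) together with $m\ge 2^{-r}n$ from \eqref{eq:m}; the $2^{-2r}$ factor in the target's denominator then provides exactly the algebraic slack needed to absorb the remaining discrepancy between $\beta$ and $\gamma$. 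Careful bookkeeping of the resulting exponents in $k$, $n$, and $2^r$ should yield the desired constant $\del\ep^{-2}4^{2b}$ and complete the proof.
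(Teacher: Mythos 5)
Your overall structure --- validity via \Cref{prop:expansion}(e), reduction of $(r,t)$-compatibility to the single check $\deg_{\c{H}_{r,t}}(\chi)<D_{r,t}(\{u,v\})$, and the double minimization over $X$ and then over $B_t$ giving $\deg_{\c{H}}(\chi)\le |\c{H}|/(|X|\cdot|B_t|)$ --- is exactly the paper's argument. The gap is in the last step. You correctly observe that the literal statement of \Cref{prop:expansion}(h) gives only $|X|\ge\ep\ell^{(b-t)/b}m^{t/b}$, so that $|X|\cdot|B_t|\ge\ep^2\ell^{\beta}m^{(2t-1)/b}$ with $\beta=\gam-\f{b-t}{b(b-1)}<\gam$; but your proposed repair does not close this gap. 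Every lower bound at your disposal ($\ell m\ge 2^{-b-1}kn$, $\ell\ge 4^{-b}2^rk$, $m\ge 2^{-r}n$) carries exactly one power of $k$ per power of $\ell$ and none per power of $m$, so any factorization $\ell^\beta m^{(2t-1)/b}=(\ell m)^{a_1}\ell^{a_2}m^{a_3}$ produces a bound of the form $\mathrm{const}\cdot 2^{r(a_2-a_3)}k^{\beta}n^{(2t-1)/b}$: the exponent of $k$ is stuck at $\beta$. The missing factor $k^{\gam-\beta}=k^{(b-t)/(b(b-1))}$ would therefore have to come from powers of $2^{r}$, and at $r=0$ (e.g.\ when $G$ already has minimum degree $\approx kn^{1/b}$, so that $G'=G$, $m=n$, $\ell=\Theta(k)$) there is no such slack while $k^{(b-t)/(b(b-1))}$ is unbounded in $k$. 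So the assertion that ``the $2^{-2r}$ factor provides exactly the algebraic slack needed'' fails.

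The correct resolution is that $X$ in fact satisfies the stronger bound $|X|\ge\ep\ell^{(b-t)/(b-1)}m^{t/b}$: this is what \Cref{lem:XSet} provides (namely $|X|\ge\half(4b)^{t-b}\ell^{(b-t)/(b-1)}m^{t/b}$), and it is the bound the paper's proof of this claim actually invokes; the exponent $(b-t)/b$ in the displayed statement of (h) is a weakening of what is proved. With it one gets $|X|\cdot|B_t|\ge\ep^2\ell^{(2b-2t+1)/(b-1)}m^{(2t-1)/b}$, i.e.\ $\beta=\gam$, and the crude substitutions $\ell\ge 4^{-b}2^rk$ and $m\ge 2^{-r}n$ immediately yield the stated bound (using $(2b-2t+1)/(b-1)<2$ to get the constant $4^{2b}$ and $(2b-2t+1)/(b-1)-(2t-1)/b>-2$ to get the $2^{-2r}$). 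The validity and compatibility parts of your proposal are correct and match the paper.
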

	\begin{proof}
		Recall that $x$ is such that $(u,x)$ is contained in the fewest number of hyperedges in $\c{H}$ among all vertices in the set $X$.  This together with the definition of $y$ implies that the number of hyperedges containing both $(u,x)$ and $(v,y)$ is at most
		\[
		\frac{|\c{H}|}{|X|\cdot |B_t|}\le \f{\del k^{ab} n^2 }{\ep^2 \ell^{(2b-2t+1)/(b-1)}m^{(2t-1)/b}},
		\]
		where this last step used $|\c{H}|\le \del k^{ab} n^2$ and that $|B_{t}|\ge \ep \ell^{(b-t+1)/(b-1)}m^{(t-1)/b}$ and $|X|\ge \ep \ell^{(b-t)/(b-1)}m^{t/b}$  by Proposition~\ref{prop:expansion}(b) and (h).  Using $m\ge 2^{-r}n$ and $\ell\ge 4^{-b}2^r k$ from \eqref{eq:m} and \eqref{eq:ell} gives the first result.
		
		As in the $t=b$ case, we have $x\notin B_t$ by Proposition~\ref{prop:expansion}(e), so $y\ne x$ and the set $\chi$ is valid.  Any $\chi'\subsetneq \chi$ trivially fails to be in $\c{F}_{\forest}\cup \c{F}_t\cup \c{F}_{r,t}$, and to show $\chi$ is not in this set it suffices to show
		\[\deg_{\c{H}_{r,t}}(\chi)<D_{r,t}(\chi_\theta)=\ceil{\f{\del^{-2} k^{ab}n^2}{2^{-2r} k^{(2b-2t+1)/(b-1)}n^{(2t-1)/b}}},\]
		and this follows by the first result.  We conclude that $\chi$ is $(r,t)$-compatible.
		
	\end{proof}
	Now that we have selected our two high degree vertices $x,y$ of our theta graph, we build the rest of the theta graph as follows. First, we work our way out from $y$ by selecting neighbors $z_{b-1}^j\in B_{t-1}$ of $y$, then neighbors $z_{b-2}^j\in B_t$ of each $z_{b-1}^j$, and so on, until we have chosen vertices $z_t^j\in B_t$. Then, once we have chosen the vertices $z_t^j$, we select paths from the set $\c{Q}$ connecting the vertices $z_t^j$ to $x$.  
	
	To do the first part, we use the following claim. Here we recall that the paths of $\theta_{a,b}$  are denoted $uw_1^j\cdots w_{b-1}^jv$, and for this claim we adopt the convention that $w_b^j:=v$ and $z_b^j:=y$.  We also recall that $F_t\sub V(\theta_{a,b})$ is defined to be the set of $w_i^j$ with $t\le i< b$ and $i-t$ even.  In particular, $w_{b-1}^j\notin F_t$ when $b-t$ is even.
	\begin{claim}\label{cl:tMiddleLayer}
		Let $t\le i\le b-1$ and $1\le j\le a$  be integers, and let $\chi$ be an $(r,t)$-compatible set consisting of the pairs $(u,x),(v,y)$, and $(w_{i'}^{j'},z_{i'}^{j'})$ for all $i',j'$ with either $i'>i$ or with $i'=i$ and $j'<j$. 
		\begin{itemize}
			\item  If $i-t$ is odd and $z_{i+1}^j\in B_{t}$, then there exist at least $\half \ep \ell^{b/(b-1)}$ choices $z_{i}^j\in B_{t-1}\cap N_{G'}(z_{i+1}^j)$ such that $\chi\cup \{(w_i^j,z_i^j)\}$ is $(r,t)$-compatible.   
			\item If $i-t$ is even and  $z_{i+1}^j\in B_{t-1}$, then there exist at least $\half \ep \ell m^{1/b}$ choices $z_i^j\in B_t\cap N_{G'}(z_{i+1}^j)$ such that $\chi\cup \{(w_i^j,z_i^j)\}$ is $(r,t)$-compatible. 
		\end{itemize}
	\end{claim}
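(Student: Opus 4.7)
The plan is to lower-bound the number of neighbors of $z_{i+1}^j$ in the appropriate expansion set using \Cref{prop:expansion}(d), and then subtract the ``bad'' choices of $z_i^j$ that would either fail the validity condition or cause some subset of $\chi\cup\{(w_i^j,z_i^j)\}$ to become saturated. \Cref{prop:expansion}(d) gives the starting totals: in the odd subcase, $|N(z_{i+1}^j)\cap B_{t-1}|\ge \ep\ell^{b/(b-1)}$; in the even subcase, $|N(z_{i+1}^j)\cap B_t|\ge \ep\ell m^{1/b}$.

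For validity: by the definition of $\chi$, the only neighbor of $w_i^j$ in $\theta_{a,b}$ that lies in $\chi_\theta$ is $w_{i+1}^j$ (using the convention $w_b^j=v$), since $w_{i-1}^j$ has not yet been selected. Therefore any $z_i^j\in N_{G'}(z_{i+1}^j)$ automatically yields a valid $\chi\cup\{(w_i^j,z_i^j)\}$ provided $z_i^j\notin \chi_G$, which rules out at most $v(\theta_{a,b})=O_{a,b}(1)$ vertices.

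For saturation: the set $\chi\cup\{(w_i^j,z_i^j)\}$ contains a saturated subset iff $\{(w_i^j,z_i^j)\}$ lies in $\c{J}_{\forest}(\chi';w_i^j)\cup \c{J}_t(\chi';w_i^j)\cup \c{J}_{r,t}(\chi';w_i^j)$ for some $\chi'\subseteq\chi$. Since $\chi$ is $(r,t)$-compatible, \Cref{cor:link} applies to each link set. The key observation is that adding $w_i^j$ to $\chi'_\theta$ creates at most one new edge (namely $w_i^j w_{i+1}^j$, when $w_{i+1}^j\in\chi'_\theta$). Reading off the denominators in \Cref{def:Dforest}, \Cref{def:Dst}, and \Cref{def:Dt} then gives: the $D_{\forest}$-ratio is at most $O_{a,b}(\del k^{b/(b-1)})$, with \Cref{cl:oneEdge} handling the degenerate case where $\chi'_\theta$ induces no edge; the $D_{r,t}$-ratio is at most $O_{a,b}(\del\cdot 2^{2r/3}k^{b/(b-1)})$ when $w_i^j\notin F_t$ and $O_{a,b}(\del\cdot 2^{2r/3}kn^{1/b})$ when $w_i^j\in F_t$; and similarly the $D_t$-ratio is at most $O_{a,b}(\del k^{b/(b-1)})$ or $O_{a,b}(\del kn^{1/b})$ in the two cases.

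It remains to convert all of these into the units $\ell^{b/(b-1)}$ or $\ell m^{1/b}$. Using $\ell\ge 4^{-b}2^r k$ from \eqref{eq:ell}, one gets $2^{2r/3}k^{b/(b-1)}\le O_b(2^{r(2/3-b/(b-1))}\ell^{b/(b-1)})\le O_b(\ell^{b/(b-1)})$, since $2/3-b/(b-1)\le 0$ for all $b\ge 2$. Using also $m\ge 2^{-r}n$ from \eqref{eq:m}, one gets $2^{2r/3}kn^{1/b}\le O_b(2^{r(2/3-(b-1)/b)}\ell m^{1/b})\le O_b(\ell m^{1/b})$, where the exponent of $2^r$ is non-positive precisely because $b\ge 3$. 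Summing the $O_{a,b}(1)$ link-set bounds over $\chi'\subseteq\chi$ and the three codegree functions, and then choosing $\del$ sufficiently small, the total bad count is at most $\half\ep\ell^{b/(b-1)}$ or $\half\ep\ell m^{1/b}$ respectively, proving the claim. The main obstacle is the careful bookkeeping across the three codegree functions and tracking the exponents of $2^r$; the hypothesis $b\ge 3$ in \Cref{thm:balancedVertex} is exactly what is needed to close the $D_{r,t}$ bound in the even subcase.
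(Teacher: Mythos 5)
Your overall strategy matches the paper's: start from the neighbor counts in \Cref{prop:expansion}(d), discard the $O_{a,b}(1)$ vertices already in $\chi_G$, and bound the bad choices by summing link sets over $\chi'\sub\chi$ via \Cref{cor:link}, converting $2^{2r/3}k^{b/(b-1)}$ and $2^{2r/3}kn^{1/b}$ into $O(\ell^{b/(b-1)})$ and $O(\ell m^{1/b})$ using \eqref{eq:ell} and \eqref{eq:m}. Your arithmetic for those conversions (including the role of $b\ge 3$ in the even subcase) is correct and is exactly what the paper does, if more explicitly.

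However, there is a genuine gap in your treatment of $\c{J}_t(\chi';w_i^j)$. You claim \Cref{cor:link} applies uniformly and that the ratio can be ``read off the denominators'' of \Cref{def:Dt}, but $D_t(\nu)$ is finite only when $\nu$ contains some $w_{b-1}^{j'}$. In the case $i=b-1$ with $\chi'=\{(u,x),(v,y)\}$ (a legitimate subset of $\chi$, since when $i=b-1$ we have $\chi_\theta\sub\{u,v,w_{b-1}^1,\ldots,w_{b-1}^{j-1}\}$), one has $D_t(\chi'_\theta)=\infty$ while $D_t(\chi'_\theta\cup\{w_{b-1}^j\})$ is finite, so \Cref{cor:link} gives no bound at all --- the ``ratio of denominators'' is meaningless because the numerator side is infinite. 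The paper handles this case by a separate argument: it notes that $D_t(\{u,v,w_{b-1}^j\})$ coincides with $D_{\forest}(\{v,w_{b-1}^j\})$ (since $|\nu|=3$ and $g=0$), and then invokes the edge-pruning step \eqref{eq:G0}, which guarantees that no single edge of $G'$ is saturated with respect to $D_{\forest}$; hence $\deg_{\c{H}_t}(\chi'\cup\{(w_{b-1}^j,z_{b-1}^j)\})<D_t(\chi'_\theta\cup\{w_{b-1}^j\})$ for \emph{every} choice of $z_{b-1}^j$, and this link set is empty. Without this (or an equivalent) argument, your proof does not control the number of bad choices in the first application of the claim (selecting $z_{b-1}^j$ from $N_{G'}(y)$), so the claim is not established as written.
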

	\begin{figure}[H]
		\centering
		\includegraphics[width=0.27\textwidth]{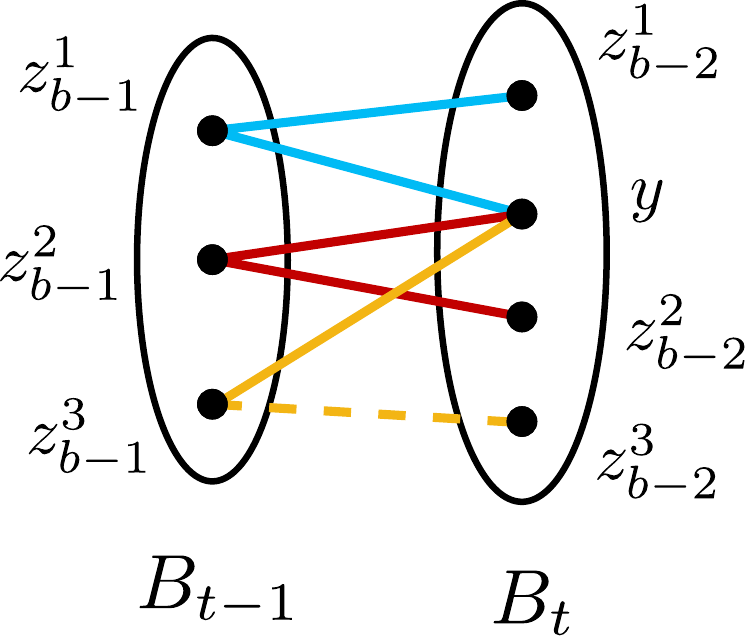}
		\vspace*{1.3ex}\caption{When $a=3$, after picking $z_{b-1}^1,z_{b-1}^2,z_{b-1}^3,z_{b-2}^1,z_{b-2}^2$ (in that order), the algorithm next selects $z_{b-2}^3$.}
		\label{fig:Bt}
	\end{figure}
	
	\begin{proof}
		Observe that if $z_{i}^{j}\in N_{G'}(z_{i+1}^j)$ is a vertex such that $\chi\cup \{(w_{i}^j,z_{i}^{j})\}$ is not $(r,t)$-compatible, then either $z_{i}^{j}\in \chi_G$ (which can only hold for $O(1)$ vertices), or there exists some $\chi'\sub \chi$ with
		\[\{(w_{i}^{j},z_{i}^{j})\}\in  \c{J}_{\forest}(\chi';w_{i}^{j})\cup  \c{J}_{r,t}(\chi';w_{i}^{j})\cup \c{J}_{t}(\chi';w_{i}^{j}).\]
		Thus it suffices to show that each of these sets are small for each $\chi'\subseteq \chi$. 
		
		First consider $\c{J}_{\forest}(\chi';w_{i}^{j})$.  If $\chi'_\theta\cup w_{i}^{j}$ induces at most one edge, then this link set is empty by \Cref{cl:oneEdge}.  If this is not the case, then $\chi'_\theta$ must induce at least one edge since $w_{i}^{j}$ only has one edge incident to $\chi'_\theta$ (this implicitly uses $i\ge t\ge 2$, as otherwise $w_i^j$ would also be adjacent to $u$).  By \Cref{cor:link}, we find \begin{equation}|\c{J}_{\forest}(\chi';w_{i}^{j})|\le 2^{v(\theta_{a,b})+1}\del k^{b/(b-1)}.\label{eq:forestT}\end{equation}
		
		Next consider $\c{J}_{r,t}(\chi';w_{i}^{j})$, which we recall is based off of the codegree function defined in \Cref{def:Dst}.  If $\{u,v\}\not\sub \chi'_\theta$, then this link set is empty by \Cref{cor:link}, so we may assume $\{u,v\}\sub \chi'_\theta$.  Then \Cref{cor:link} gives \[|\c{J}_{r,t}(\chi';w_{i}^{j})|\le 2^{v(\theta_{a,b})+1}\del 2^{2r/3} k^{b/(b-1)}\hspace{.4em}\tr{ if }i-t\tr{ is odd},\] since if $i-t$ is odd, adding $w_{i}^j\notin F_t$ to $\chi'_\theta$ keeps the parameter $f=|\nu\cap F_t|$ in \Cref{def:Dst} the same while increasing $|\nu|$.  Similarly, \[|\c{J}_{r,t}(\chi';w_{i}^{j})|\le 2^{v(\theta_{a,b})+1}\del 2^{2r/3}kn^{1/b}\hspace{.4em}\tr{ if }i-t\tr{ is even},\] since $f$ and $\nu$ both increase by 1.
		
		Finally consider $\c{J}_{t}(\chi';w_{i}^{j})$, which we recall is based off of the codegree function defined in \Cref{def:Dt}. Again we may assume $\{u,v\}\sub \chi'_\theta$.  If $w_{b-1}^{j'}\in \chi'_\theta$ for some $j'$, then the argument and final bound is exactly the same as in the case for $\c{J}_{r,t}$ (with $g$ taking the role of $f$ in exactly the same way as before).  We next consider the subcase $w_{b-1}^{j'}\notin \chi'_\theta$ for all $j'<j$.  If $i\ne b-1$, then $\chi'_\theta\cup \{w_i^j\}$ contains no vertex of the form $w_{b-1}^{j'}$, so $D_t(\chi'_\theta\cup \{w_i^j\})=\infty$, and hence the link set is empty by \Cref{lem:link}.  If $i=b-1$, then $\chi'_\theta\sub\{u,v,w_{b-1}^1,\ldots,w_{b-1}^{a}\}$ by the hypothesis of the claim, so our assumption $w_{b-1}^{j'}\notin \chi'_\theta$ implies $\chi'=\{(u,x),(v,y)\}$.  Thus \[\deg_{\c{H}_t}(\chi'\cup \{(w_{b-1}^{j},z_{b-1}^{j})\})\le \deg_{\c{H}}(\{(v,y),(w_{b-1}^{j},z_{b-1}^{j})\})<D_{\forest}(\{v,w_{b-1}^{j}\})=D_t(\chi'_\theta\cup \{w_{b-1}^{j}\}),\]
		where the first inequality used that we are looking at the codegree of a smaller set in a larger hypergraph, the second inequality used \eqref{eq:G0} (i.e.\ that every edge in $G'$ has codegree smaller than that given by $D_{\forest}$), and the equality used $|\chi'_\theta\cup \{w_{b-1}^{j}\}|=3$ and $g=|(\chi'_\theta\cup \{w_{b-1}^{j}\})\cap F_t|=0$ in the definition of $D_t$ for $b-t$ even.  This implies $\c{J}_{t}(\chi';w_{b-1}^{j})=\emptyset$.
		
		By summing up the sizes of all of these sets over all possible choices of $\chi'\sub \chi$ (as well as the number of choices $z_{i}^{j}\in \chi_G$), we find  when $i-t$ is odd that the number of $z_{i}^{j}$ which can not be selected is at most \[O_{a,b}(\del 2^{2r/3} k^{b/(b-1)})=O_{a,b}(\del \ell^{b/(b-1)}),\] with the last step using $\ell\ge 4^{-b}2^r k$. By Proposition~\ref{prop:expansion}(d),  $z_{i+1}^j\in B_{t}$ has at least $\ep \ell^{b/(b-1)}$ neighbors in $B_{t-1}$, and  for $\del$ sufficiently small this is at least twice the number of forbidden choices.  Essentially the same reasoning holds for the $i-t$ even case after noting $k^{b/(b-1)}\le k n^{1/b}$ when applying \eqref{eq:forestT}. We conclude the result.
		
	\end{proof}
	
	By starting with the two high-degree vertices $(u,x)$ and $(v,y)$, and iteratively applying \Cref{cl:tMiddleLayer}, we can find many $(r,t)$-compatible sets $\chi$ with $\chi_\theta=\{u,v\}\cup \bigcup_{i\ge t}\{w_i^1,\ldots,w_i^a\}$.  To get the remaining vertices corresponding to $w_i^j$ with $i<t$, we use the same strategy as in the $t=b$ case of choosing paths from $\c{Q}$.

	\begin{claim}\label{cl:tFinish}
		Let $P_1,\dots, P_{j-1}$ be a collection of paths in $\c{Q}$ ending in $B_t$, and for each path $P_{j'}$, write $P_{j'} = x z_1^{j'} \cdots z_{t}^{j'}$. Suppose  that the set 
		$$\chi := \{(u,x),(v,y)\}\cup \bigcup_{i\ge t}\{(w_i^1,z_i^1),\ldots,(w_i^a,z_i^a)\} \cup  \bigcup_{j'<j} \left\{(w_1^{j'},z_1^{j'}),\dots, (w_{t}^{j'},z_{t}^{j'})\right\}$$
		is $(r,t)$-compatible. 
		Then there are  at least $\half \ep \ell^{(t-1)b/(b-1)}$ choices of a path $P_j = xz_1^j \cdots z_{t}^j$ in $\c{Q}$ so that 
		$$\chi_j:=\chi \cup \left\{ (w_1^j,z_1^j),\dots, (w_{t}^j,z_{t}^j)\right\}$$
		
		is $(r,t)$-compatible. 
	\end{claim}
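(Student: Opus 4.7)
The strategy parallels the proof of \Cref{cl:pathsCase1}: I would start with the family $\c{Q}[x\to z_t^j]$ of paths from $x$ to $z_t^j$ in $\c{Q}$, and subtract off the paths that are ``bad''---either because they share an interior vertex with $\chi_G$ (making $\chi_j$ invalid), or because they contain the projection $\gam_G$ of some $\gam$ in one of the link sets $\c{J}_\forest(\chi';\nu)$, $\c{J}_{r,t}(\chi';\nu)$, $\c{J}_t(\chi';\nu)$ with $\chi'\subseteq \chi$ and $\nu\subseteq \{w_1^j,\ldots,w_{t-1}^j\}$ (making $\chi_j$ not compatible). By \Cref{prop:expansion}(e), $|\c{Q}[x\to z_t^j]|\ge \ep \ell^{(t-1)b/(b-1)}$. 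The number of paths that share an interior vertex with $\chi_G$ is bounded via \Cref{prop:expansion}(f) applied with $|S|=1$ by $O_{a,b}(\ell^{(t-2)b/(b-1)})$, which is negligible compared to the main term for $\ell$ (and hence $k_0$) sufficiently large.

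For the link-set contributions, I would fix $1\le p\le t-1$ and estimate $|\c{J}_\cdot(\chi';\nu)|$ for each of the three codegree functions using \Cref{cor:link}. The key observation is that every vertex of $\nu\subseteq\{w_1^j,\ldots,w_{t-1}^j\}$ lies \emph{outside} $F_t$ (since vertices of $F_t$ have index $\ge t$), so adding $\nu$ to $\chi'_\theta$ leaves the parameters $f$ and $g$ in \Cref{def:Dst,def:Dt} unchanged. Consequently, \Cref{cor:link} yields bounds of the form $|\c{J}_{r,t}(\chi';\nu)|\le 2^{v(\theta_{a,b})+1}\del^p(2^{2r/3}k^{b/(b-1)})^p$ and $|\c{J}_{\forest}(\chi';\nu)|,\,|\c{J}_t(\chi';\nu)|\le 2^{v(\theta_{a,b})+1}\del^p k^{pb/(b-1)}$ (whenever the relevant $\chi'_\theta$ induces at least one edge in the forest case, and whenever $\{u,v,w_{b-1}^{j'}\}\subseteq \chi'_\theta$ for some $j'$ in the $D_t$ case---all of which is automatic from the structure of $\chi$). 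Since by Proposition~\ref{prop:expansion}(f) the number of paths in $\c{Q}[x\to z_t^j]$ containing a given $p$-element set $\gam_G$ is at most $\ep^{-1}\ell^{(t-1-p)b/(b-1)}$, each link-set contribution to bad paths is bounded by a constant multiple of $\del^p \ep^{-1}(2^{2r/3}k^{b/(b-1)}/\ell^{b/(b-1)})^p \ell^{(t-1)b/(b-1)}$. Using $\ell\ge 4^{-b}2^r k$ from \eqref{eq:ell}, the ratio $2^{2r/3}k^{b/(b-1)}/\ell^{b/(b-1)}$ is bounded by an $a,b$-dependent constant (since $b/(b-1)>2/3$ for $b\ge 3$), so summing over all $p$, $\chi'$, and $\nu$ gives a total bad count of $O_{a,b}(\del \ep^{-1})\cdot \ell^{(t-1)b/(b-1)}$.

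The one case where \Cref{cor:link} gives no useful bound is when $\chi'_\theta$ induces no edges but $\chi'_\theta\cup \nu$ does, for the forest link set. Here I would use exactly the trick from \Cref{cl:pathsCase1}: any such $\gam\in \c{J}_\forest(\chi';\nu)$ can be detected only through new edges of $\theta_{a,b}$ lying inside the path $uw_1^j\cdots w_t^j$, so the projection graph $H'=H_{(\chi'\cup\gam)\cap(W_j,P_j)}$ is a subgraph of $P_j$ itself. But $P_j\in \c{Q}$ contains no element of $\c{F}'_\forest$ as a subgraph by \Cref{prop:expansion}(g) combined with \Cref{cl:forest}, so in fact no $P_j\in \c{Q}[x\to z_t^j]$ can be bad for this reason. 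This is the main subtle point; once it is handled, choosing $\del$ sufficiently small makes the total bad count at most $\tfrac{1}{2}\ep \ell^{(t-1)b/(b-1)}$, leaving at least $\tfrac{1}{2}\ep \ell^{(t-1)b/(b-1)}$ valid choices of $P_j$ and completing the proof.
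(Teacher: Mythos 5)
Your proposal is correct and follows essentially the same route as the paper, which itself only sketches this claim by reference to \Cref{cl:pathsCase1}: start from $\c{Q}[x\to z_t^j]$ (size $\ge \ep\ell^{(t-1)b/(b-1)}$ by \Cref{prop:expansion}(e)), discard collisions and link-set elements via \Cref{prop:expansion}(f) and \Cref{cor:link}, use that $\nu\cap F_t=\emptyset$ so $f$ and $g$ are unchanged, and dispose of the forest case with trivial $\chi'_\theta$ via \Cref{prop:expansion}(g). In fact you supply more detail than the paper does, including the correct resolution of the case where $\chi'_\theta$ induces no edges, so nothing is missing.
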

	\begin{figure}[H]
		\centering
		\includegraphics[width=0.46\textwidth]{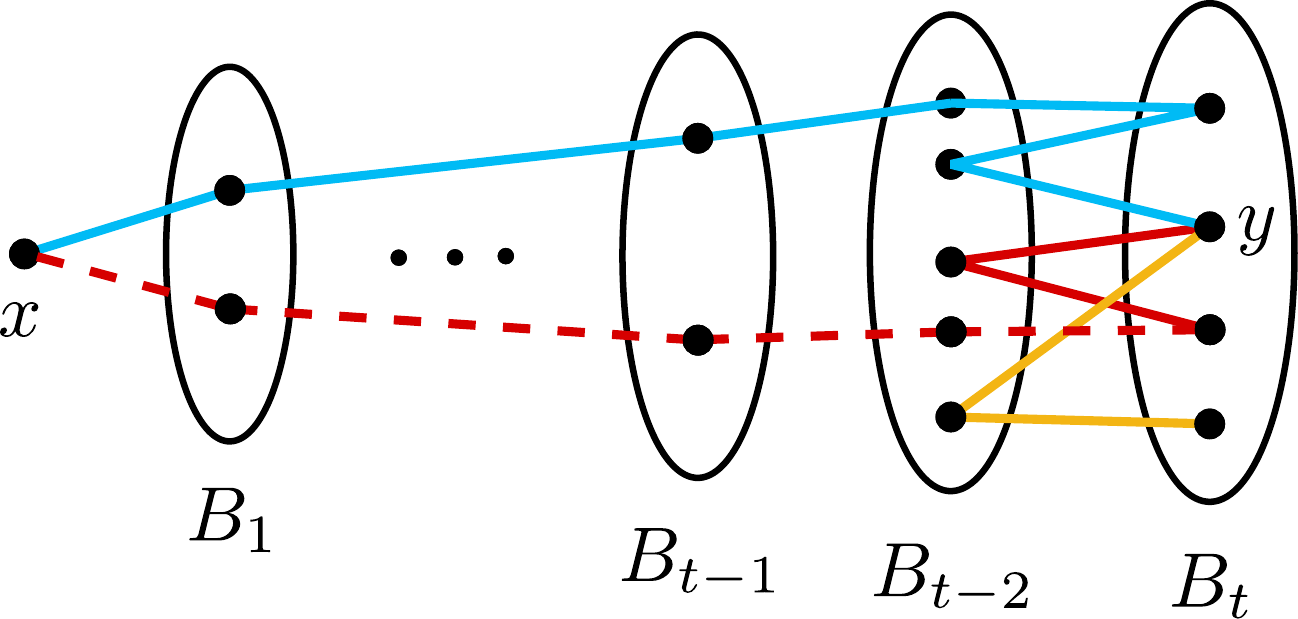}
		\caption{After picking all of the vertices $z_i^j$ with $i\ge t$, the algorithm next picks a path $P_1$ from $z_t^1$ to $x$, then a path $P_2$ from $z_t^2$ to $x$, and so on.}
		\label{fig:LessB}
	\end{figure}
	\begin{proof}[Sketch of Proof]
		The argument is almost identical to that of Claim~\ref{cl:pathsCase1} so we only sketch the details (with our notation defined analogously as before).  By Proposition~\ref{prop:expansion}(c) we have that there are at least $\ep \ell^{(t-1)b/(b-1)}$ paths in $\c{Q}$ from $z_t^{j}$ to $x$.  Using \Cref{prop:expansion}(f) we find that very few of these paths contain any of the other vertices of $\chi_G$ besides $x$ and $y$.  
		
		By using \Cref{cor:link}, we find that each of the sets $\c{J}_{r,t}(\chi';\nu),\ \c{J}_t(\chi';\nu)$, and $\c{J}_{\forest}(\chi';\nu)$ after intersecting with $\c{P}_{\text{possible}}$ are all of size $O((\del 2^{2r/3} k^{b/(b-1)})^p)$ whenever $\chi'\sub \chi$ and $\nu\sub \{w_1^j,\ldots,w_{t-1}^j\}$ with $|\nu|=p$ (here we use that $\nu\cap F_t=\emptyset$ for any such $\nu$, so $f,g$ in the definitions of $D_{r,t},D_t$ do not change when going from $\chi'_\theta$ to $\chi'_\theta\cup \nu$).  From here essentially the same computations as before go through.
	\end{proof}
	Combining the previous three claims, we find that our algorithm produces a large number of theta graphs. 
	\begin{claim}\label{manyCase2}
		The number of $(r,t)$-compatible sets of size $v(\theta_{a,b})$ containing $(u,x)$ and $(v,y)$  is at least $$\Om\left(2^{2r}k^{ab-\f{2b-2t+1}{b-1}}n^{2-\f{2t-1}{b}}\right).$$
	\end{claim}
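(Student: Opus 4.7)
The plan is to produce the required count of $(r,t)$-compatible sets by iterating the previous two claims, starting from the $(r,t)$-compatible seed $\chi_0 := \{(u,x),(v,y)\}$ provided by \Cref{cl:tStart}. First, I would apply \Cref{cl:tMiddleLayer} in the order prescribed by its hypothesis, namely for each $i$ from $b-1$ down to $t$ and for each $j$ from $1$ to $a$, to build the ``outer'' vertices $z_i^j$. Because $b-t$ is even, for each $j$ the indices $i \in \{t,\dots,b-1\}$ split evenly between those with $i-t$ odd (each contributing at least $\tfrac{1}{2}\ep\ell^{b/(b-1)}$ choices) and those with $i-t$ even (each contributing at least $\tfrac{1}{2}\ep\ell m^{1/b}$ choices). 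Then, for each $j \in \{1,\dots,a\}$ in turn, I would apply \Cref{cl:tFinish} to pick a path $P_j \in \c{Q}$ from $x$ to $z_t^j$, contributing at least $\tfrac{1}{2}\ep\ell^{(t-1)b/(b-1)}$ choices. Since each added pair $(w_i^j, z_i^j)$ has a distinct first coordinate in $V(\theta_{a,b})$, distinct sequences of choices produce distinct $(r,t)$-compatible sets, each of the full size $v(\theta_{a,b}) = 2 + a(b-1)$.

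Multiplying the counts and collecting the $\ell$-factors from the odd-index and path stages gives a total of at least
\[
\Omega\!\left(\ell^{\frac{ab(b+t-2)}{2(b-1)}} \cdot \bigl(\ell m^{1/b}\bigr)^{a(b-t)/2}\right)
\]
compatible sets. I would then substitute the bounds $\ell \ge \Omega(2^r k)$ from \eqref{eq:ell}, and $\ell m^{1/b} \ge \Omega(kn^{1/b})$ (which follows from \eqref{eq:ell} combined with $m \le n$, or alternatively because $\ell m^{1/b}$ is the minimum degree of $G'$ while $G_0 \supseteq G'$ has average degree $\Omega(kn^{1/b})$). This yields a lower bound of the shape $\Omega(2^{rR} k^K n^N)$ with $R = \tfrac{ab(b+t-2)}{2(b-1)}$, $K = \tfrac{a(2b^2-3b+t)}{2(b-1)}$, and $N = \tfrac{a(b-t)}{2b}$.

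The main obstacle will be the final algebraic comparison with the target $2^{2r} k^{ab - (2b-2t+1)/(b-1)} n^{(2b-2t+1)/b}$. The bound $R \ge 2$ holds easily under the hypotheses $a \ge 6$, $b \ge 3$, $t \ge 2$. However, setting $\beta := (a-4)(b-t) - 2$ (which is strictly positive, since $a-4 \ge 2$ and the hypothesis that $b-t$ is a positive even integer forces $b-t \ge 2$), one finds that $K$ falls short of the target's $k$-exponent by exactly $\beta/[2(b-1)]$, while $N$ exceeds the target's $n$-exponent by exactly $\beta/(2b)$. The cleanest way to close this gap is to invoke the trivial a priori bound $k \le n^{(b-1)/b}/2$ (coming from $e(G) \le {n \choose 2}$), which gives
\[
\frac{n^{\beta/(2b)}}{k^{\beta/[2(b-1)]}} \;\ge\; 2^{\beta/[2(b-1)]} \;\ge\; 1,
\]
precisely balancing the deficit in $k$ against the surplus in $n$ and yielding the stated lower bound. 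I expect this algebraic balancing — and in particular the recognition that the mismatch must be absorbed using $k \le n^{(b-1)/b}$ rather than by tightening the expansion bounds — to be the delicate step in writing out the proof.
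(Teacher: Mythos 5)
Your proposal is correct and follows essentially the same route as the paper: iterate Claims \ref{cl:tMiddleLayer} and \ref{cl:tFinish} starting from the seed $\{(u,x),(v,y)\}$, multiply the counts, and absorb the exponent mismatch $\beta=(a-4)(b-t)-2>0$ using the constraint that degrees cannot exceed the number of vertices. The paper performs this same balancing in the variables $\ell,m$ (via $m^{1/b}\ge \ell^{1/(b-1)}$, i.e.\ \eqref{eq:minDegree}) before substituting $\ell=\Om(2^rk)$ and $m=\Om(2^{-r}n)$, whereas you substitute first and balance with $k\le n^{(b-1)/b}$ --- the same inequality in different coordinates.
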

	\begin{proof}
		The result follows by iteratively applying \Cref{cl:tMiddleLayer,cl:tFinish}. Starting with $\chi_0:=\{(u,x),(v,y)\}$, which is $(r,t)$-compatible by \Cref{cl:tStart}, we repeatedly apply \Cref{cl:tMiddleLayer} to build paths $z_t^j\cdots z_{b-1}^j y$ (for $1\leq j\leq a$); we then finish by repeatedly applying \Cref{cl:tFinish} to select paths $x z_1^j\cdots z_{t}^j y$. In total, we find that the number of $(r,t)$-compatible sets $h$ of size $v(\theta_{a,b})$ with $(u,x),(v,y)\in h$ is at least
		\begin{equation}\left( \left(\half \ep \ell m^{1/b}\right)^{(b-t)/2}\cdot \left(\half \ep \ell^{b/(b-1)}\right)^{(b-t)/2}\cdot \left(\half \ep \ell^{(t-1)b/(b-1)}\right)\right)^a,\label{eq:choices}\end{equation}
		where the first two terms use that for each path $uz_1^j\cdots z_b^jv$ we get a factor of $\half \ep \ell m^{1/b}$ for each vertex in position $i\in \{t,t+2,\ldots,b-2\}$ and a factor of $\half \ep \ell^{b/(b-1)}$ for each $i\in \{t+1,t+3,\ldots,b-1\}$ by Claim~\ref{cl:tMiddleLayer}, and the last term uses Claim~\ref{cl:tFinish}.  The expression above is equal to some positive constant depending only on $a,b,\ep$ times
		\begin{align*}\ell^{ab-\f{a(b-t)}{2(b-1)}}m^{\f{a(b-t)}{2b}}&= \ell^{ab-\f{a(b-t)}{2(b-1)}} m^{\f{(a-4)(b-t)-2}{2b}}\cdot m^{\f{4b-4t+2}{2b}}\\&\ge \ell^{ab-\f{a(b-t)}{2(b-1)}+\f{(a-4)(b-t)-2}{2(b-1)}}\cdot   m^{\f{2b-2t+1}{b}}=\ell^{ab-\f{2b-2t+1}{b-1}}\cdot m^{2-\f{2t-1}{b}},\end{align*}
		where the inequality used \eqref{eq:minDegree}, i.e.\ $m^{1/b}\ge \ell^{1/(b-1)}$, and implicitly that $a\ge 6$ so that the exponent of $m$ is positive.  Finally, using $\ell=\Om(2^r k)$ and $m=\Om(2^{-r} n)$ crudely gives
		\[\ell^{ab-\f{2b-2t+1}{b-1}}\cdot m^{2-\f{2t-1}{b}}=\Om\left(2^{(ab-2)r}k^{ab-\f{2b-2t+1}{b-1}}\cdot 2^{-(2-\f{2t-1}{b})r}n^{2-\f{2t-1}{b}}\right)=\Om\left(2^{2r}k^{ab-\f{2b-2t+1}{b-1}}n^{2-\f{2t-1}{b}}\right),\]
		where this last step used $ab\ge 6$.  
	\end{proof}
	We are now ready to finish Case 2. If $\delta$ is sufficiently small in terms of $a,b,\ep$, the number of theta graphs guaranteed by \Cref{manyCase2} exceeds the codegree bound in \Cref{cl:tStart}; thus there exists some $(r,t)$-valid set $h$ obtained through our algorithm which is not already a hyperedge of $\c{H}$.  Adding such an $h$ to $\c{H}_{r,t}$ gives the result in this case.
	
	\subsubsection{Case 3: $t<b$ and $b-t$ Odd}
	This case is nearly identical to the previous one, and as such we only sketch the proof.
	
	Again our goal is to add a new hyperedge to $\c{H}_{r,t}$.  To start, we pick $y\in B_{t-1}\setminus \{x\}$ such that $(v,y)$ is in as few hyperedges with $(u,x)$ as possible.  Here we emphasize that, in the previous case, we picked $y\in B_t$ and hence immediately obtained $y\ne x$ (since each element of $B_t$ is the endpoint of a path with $x$), but here we have to be slightly more careful and explicitly enforce $y\ne x$. However, since no hyperedge of $\cH$ contains both $(u,x)$ and $(v,x)$ (since every hyperedge is a valid set), and since $|B_{t-1}\sm\{x\}|\ge \half \ep \ell^{(b-t+1)/(b-1)}m^{(t-1)/b}$ by Proposition~\ref{prop:expansion}(b), we find that $\deg_{\c{H}}(\{(u,x),(v,y)\})$ is at most twice the bound from 
	Claim~\ref{cl:tStart}, and the rest of the proof showing that this set is $(r,t)$-compatible goes through in exactly the same way as in Claim~\ref{cl:tStart}.
	
	From here we apply \Cref{cl:tMiddleLayer} exactly as written (since $w_i^j\in F_t$ depends only on the parity of $i-t$ and not of $b-t$); the proof of \Cref{cl:tMiddleLayer} also remains word for word the same, with the only minor exception being that we have $g = g(\nu):=|\nu\cap F_t|-1$ (which again implies $g=0$ when $i=b-1$ and $\chi'_\theta=\{u,v\}$).
	
	Finally, we choose paths in $\c{Q}$ going from each of the $z_t^j$ vertices to $x$, and again the statement and proof of Claim~\ref{cl:tFinish} remain exactly the same.  With this, the total number of choices for the algorithm to produce an $(r,t)$-compatible set is 
	\[\left( \left(\half \ep \ell m^{1/b}\right)^{(b-t+1)/2}\cdot \left(\half \ep \ell^{b/(b-1)}\right)^{(b-t-1)/2}\cdot \left(\half \ep \ell^{(t-1)b/(b-1)}\right)\right)^a,\]
	since in this setting we get a factor of $\half \ep \ell m^{1/b}$ for each vertex in position $i\in \{t,t+2,\ldots,b-1\}$, of which there are $(b-t+1)/2$.  This quantity is at least as large as \eqref{eq:choices}, so we conclude that for $\del$ sufficiently small the number of choices is more than the  number of hyperedges containing $(u,x),(v,y)$ in $\c{H}$. With this we conclude the result.

	\section{Balanced Supersaturation for Edges}\label{sec:edgeSupersaturation}
	
	In the previous section we showed that $\theta_{a,b}$ exhibits balanced supersaturation for vertices in terms of the (complicated) codegree function $D'_t$.  We begin by simplifying this function.
	
	\begin{prop}\label{prop:codegrees}
		For all $a\ge 100$ and $b\ge 3$, let $\del>0$ and $D'_t$ be as in \Cref{thm:balancedVertex}.  There exist constants $C',k_0>0$ such that if $n^{1-1/b}\ge k\ge k_0$ and $\nu\sub V(\theta_{a,b})$ induces $e$ edges, where $1\le e\le e(\theta_{a,b})-1$, then

		\[D'_t(\nu)\le  \f{C'k^{ab} n^{2}}{kn^{1+1/b} \big(\min\big\{k^{b/(b-1)},kn^{\frac{b-1}{b(ab-1)}}\big\}\big)^{e-1}}. \]

	\end{prop}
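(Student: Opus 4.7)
I will compare $D_\forest$, $D_{0,t}$, and $D_t$ to the target $B := C'k^{ab}n^2/\big(kn^{1+1/b}M^{e-1}\big)$, where $M = \min\{k^{b/(b-1)},\ kn^{(b-1)/(b(ab-1))}\}$. The main combinatorial input is that every cycle in $\theta_{a,b}$ passes through both high-degree vertices $u,v$ and uses two complete internally-disjoint $u$-$v$ paths; hence any $\nu$ inducing a non-forest contains $u,v$ (so $D_{0,t}(\nu)$ is finite) and some $w_{b-1}^j$ (so $D_t(\nu)$ is finite when $t<b$). Moreover, the degree bounds $\deg(u),\deg(v)\le a$ and $\deg(w_i^j)\le 2$ give $e\le |\nu|+a-2$, and ruling out $\nu=V(\theta_{a,b})$ (since $e\le e(\theta_{a,b})-1$) strengthens this to $|\nu|\ge e-a+3$; additionally $|\nu|\ge 2b$ since every cycle in $\theta_{a,b}$ has length at least $2b$.

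If $\nu$ induces a forest, then $D'_t(\nu)=D_\forest(\nu)$ is at most $2\delta^{-e}k^{ab}n^2/\big(kn^{1+1/b}(k^{b/(b-1)})^{e-1}\big)$ after removing the ceiling (valid for $k\ge k_0$); since $k^{b/(b-1)}\ge M$ and $e\le ab$, this is at most $B$ for $C'$ large. Otherwise, since $D'_t(\nu) = \min\{D_t(\nu),\ 20(D_{0,t}(\nu)+\lceil\log n\rceil)\}$, we use whichever codegree function is smaller. When $M=k^{b/(b-1)}$ (equivalently $k\le n^{(b-1)^2/(b(ab-1))}$), bound by $20(D_{0,t}(\nu)+\lceil\log n\rceil)$: substituting $|\nu|-2\ge\max(2b-2, e-a+1)$ into the formula for $D_{0,t}$, the required inequality reduces in the extremal case ($t=b$, $|\nu|=e-a+3$) to exactly $k^{(ab-1)/(b-1)}\le n^{(b-1)/b}$, matching the subcase hypothesis, and is strictly weaker in all other cases; the additive $\lceil\log n\rceil$ is dominated by $B$ for $k\ge k_0$. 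When $M=kn^{(b-1)/(b(ab-1))}$, the argument splits further: for $t<b$, an exponent comparison shows the range of $k$ on which $D_t(\nu)\le B$ and the range on which $D_{0,t}(\nu)\le B$ together cover the subcase; for $t=b$, only $D_{0,b}$ is available and the required inequality reduces to $k^{(ab-e-b)/(b-1)}\le n^{(b-1)(ab-e)/(b(ab-1))}$, which holds throughout this subcase (and is trivial when $e\ge ab-b$, since the left-hand exponent is then nonpositive), again crucially using $|\nu|\ge e-a+3$.

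The principal difficulty is the tight exponent matching in the $t=b$ case: the required inequality has essentially no slack at the boundary $k=n^{(b-1)^2/(b(ab-1))}$, and holds only because the hypothesis $e<e(\theta_{a,b})$ forces $|\nu|\ge e-a+3$ rather than merely $|\nu|\ge e-a+2$. This also illuminates the appearance of the exponent $(b-1)/(b(ab-1))$ in $M$: it is the unique value for which $D_{0,b}$ and $D_t$ interpolate correctly between the forest and cycle regimes. Secondary challenges are the careful splitting in the $t<b$, large-$k$ subcase (where neither $D_t$ nor $D_{0,t}$ alone suffices uniformly in $k$), bookkeeping the $a,b$-dependent constants absorbed into $C'$, and ensuring the additive $\lceil\log n\rceil$ is dominated.
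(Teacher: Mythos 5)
Your forest case and your observation that any non-forest $\nu$ must contain $u,v$ (and, via two complete paths, some $w_{b-1}^j$) match the paper. But the core of your non-forest argument has a genuine gap: you control $|\nu|$ in terms of $e$ only through $|\nu|\ge\max(2b,\,e-a+3)$, and this is far too weak for intermediate values of $e$. The inequality you actually need, namely $kn^{1+1/b}M^{e-1}\le C\,\delta^{|\nu|}n^2(k^{b/(b-1)})^{|\nu|-2}$ with $M=\min\{k^{b/(b-1)},kn^{(b-1)/(b(ab-1))}\}$, is \emph{false} when one substitutes $|\nu|-2=e-a+1$ for, say, $e=pb$ with $3\le p\le a-2$ and $k$ near the top of the allowed range $n^{(b-1)/b}$: e.g.\ for $p=3$ one is led to require $n^{b-1}\le C n^{(b-1)(a-3)/(ab-1)}$, which fails badly. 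The proposition is still true because the configurations realizing $|\nu|=e-a+3$ only exist when $e$ is within about $b$ of $ab$ (they are $V(\theta_{a,b})$ minus a single segment of one path); for a union of $p$ complete paths one has $|\nu|-2=p(b-1)=\tfrac{b-1}{b}e$, which is much larger than $e-a+1$. So your ``extremal case'' is not the extremal case, and the bound you substitute does not certify the claim.

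The correct relationship is $|\nu|-2\ge\tfrac{b-1}{b}\,e$ for every cycle-containing $\nu$, and the paper extracts it via a reduction lemma (its Lemma~5.5): if $\nu$ contains a vertex $w\ne u,v$ of induced degree at most one, deleting $w$ loses at most one induced edge while the codegree functions satisfy $D(\nu)\le 2\delta^{-1}k^{-b/(b-1)}D(\nu\setminus\{w\})$, and since $k^{b/(b-1)}\ge M$ the target bound is preserved; iterating reduces to $\nu$ a union of $p\ge 2$ complete paths, where the arithmetic closes (for $t=b$ it becomes $n^{(b-1)(pb-1)/(b(ab-1))}\le Cn^{(b-1)/b}$, using $pb-1\le ab-1$). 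You would need this reduction (or an equivalent sharp lower bound on $|\nu|$ in terms of $e$) to repair your argument. Separately, your $t<b$ discussion is too coarse: the denominators of $D_{0,t}$ and $D_t$ depend on $f=|\nu\cap F_t|$ and $g$, not just on $|\nu|$, and the paper needs a further case analysis in the parameter $h=2t+f-b-2$ (splitting on $p\ge 5$ versus $2\le p\le 4$, and on whether $h$ or $g$ is at least $b/2-1$) to see that the two functions jointly cover all $k$; ``an exponent comparison shows the ranges cover the subcase'' is precisely the part that requires the hypothesis $a\ge 100$ and is not automatic.
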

	Note that $n^{1-1/b}\ge k$ always holds if we are considering $n$-vertex graphs $G$ with $kn^{1+1/b}$ edges.  We defer the proof of  \Cref{prop:codegrees} for the moment and show that together with \Cref{thm:balancedVertex}, it  implies a balanced supersaturation result for edges which we will use to complete the proof of \Cref{thm:main}; see \Cref{thm:containers} below.
	
	\begin{cor}\label{cor:balancedEdges}
		For all $a\ge 100$ and $b\ge 3$, there exist constants $C,k_0>0$ such that the following holds for all $n\in \N$ and $k\ge k_0$.  If $G$ is an $n$-vertex graph with $kn^{1+1/b}$ edges, then there exists a hypergraph $\cH$ on $E(G)$ whose hyperedges are copies of $\theta_{a,b}$ and is such that $|\cH|\ge C^{-1} k^{ab}n^2$ and such that for every $\sig\sub E(G)$ with $1\le |\sig|\le e(\theta_{a,b})-1$, we have
		
		\[\deg_{\cH}(\sig)\le  \f{Ck^{ab} n^{2}}{kn^{1+1/b} \big(\min\big\{k^{b/(b-1)},kn^{\frac{b-1}{b(ab-1)}}\big\}\big)^{|\sig|-1}}. \]
		
	\end{cor}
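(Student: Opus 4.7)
The plan is to invoke \Cref{thm:balancedVertex} to obtain a $D'_t$-good $G$-hypergraph $\cH'_t$ with $|\cH'_t| \ge b^{-1}\delta k^{ab}n^2$, and then project it onto a hypergraph $\cH$ on $E(G)$. Each hyperedge $h \in \cH'_t$ is a valid set of size $v(\theta_{a,b})$, so it encodes an injective homomorphism $\phi_h : V(\theta_{a,b}) \to V(G)$ whose image in $G$ is a copy of $\theta_{a,b}$; I take $\sigma_h := \{\phi_h(w)\phi_h(w') : ww' \in E(\theta_{a,b})\}$ as a hyperedge of $\cH$, counted with multiplicity. This gives $|\cH| = |\cH'_t| \ge b^{-1}\delta k^{ab}n^2$ as required, and it remains only to control codegrees.

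To bound $\deg_\cH(\sigma)$ for $\sigma \subseteq E(G)$ with $1 \le |\sigma| \le e(\theta_{a,b})-1$, let $V_\sigma$ be the set of endpoints of edges in $\sigma$. Any hyperedge $\sigma_h \in \cH$ containing $\sigma$ determines an injective map $\lambda_h := \phi_h^{-1}|_{V_\sigma} : V_\sigma \to V(\theta_{a,b})$ sending each edge of $\sigma$ to an edge of $\theta_{a,b}$. I partition the $\sigma_h$ containing $\sigma$ by the value of $\lambda_h$: each injective map $\lambda$ as above defines a valid set $\chi_\lambda := \{(\lambda(z),z) : z \in V_\sigma\}$, and the hyperedges giving rise to this $\lambda$ are exactly those $h \in \cH'_t$ containing $\chi_\lambda$, of which there are at most $\deg_{\cH'_t}(\chi_\lambda) \le D'_t((\chi_\lambda)_\theta)$. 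Since the number of such $\lambda$ is at most $v(\theta_{a,b})^{|V_\sigma|} = O_{a,b}(1)$,
\[\deg_\cH(\sigma) \le O_{a,b}(1) \cdot \max_\lambda D'_t((\chi_\lambda)_\theta).\]

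The final step is to apply \Cref{prop:codegrees} to bound each $D'_t((\chi_\lambda)_\theta)$. Writing $\nu = \lambda(V_\sigma)$ and letting $e$ be the number of edges that $\nu$ induces in $\theta_{a,b}$, we have $e \ge |\sigma|$ because $\lambda$ maps each edge of $\sigma$ to such an edge. Provided $\nu \ne V(\theta_{a,b})$, so $e \le e(\theta_{a,b})-1$, \Cref{prop:codegrees} applies and gives the desired bound with exponent $e$; since $\min\{k^{b/(b-1)}, kn^{(b-1)/(b(ab-1))}\} \ge 1$ for $k \ge k_0$, this bound is decreasing in $e$, so we may weaken it by replacing $e$ with $|\sigma|$. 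In the boundary case where $|V_\sigma| = v(\theta_{a,b})$ and $\nu = V(\theta_{a,b})$, the set $\chi_\lambda$ has size equal to that of a hyperedge of $\cH'_t$, so $\deg_{\cH'_t}(\chi_\lambda) \le 1$ and this is easily absorbed into the claimed bound (noting that the right-hand side is always $\Omega(1)$ in the range of $k$ considered). The substantive work has already been done in \Cref{thm:balancedVertex} and \Cref{prop:codegrees}; the only new ingredient here is the bookkeeping that translates a vertex-codegree bound into an edge-codegree bound, so I do not anticipate any genuine obstacle.
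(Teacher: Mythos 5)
Your proposal is correct and follows essentially the same route as the paper: project the $D'_t$-good vertex-hypergraph of \Cref{thm:balancedVertex} down to $E(G)$, bound $\deg_{\cH}(\sig)$ by a sum of vertex-codegrees over the $O_{a,b}(1)$ liftings of $\sig$ to $V(\theta_{a,b})\times V(G)$, and finish with \Cref{prop:codegrees} together with monotonicity in the number of induced edges (handling the full-lift boundary case trivially). The only cosmetic difference is that you keep hyperedges with multiplicity to get $|\cH|=|\cH'_t|$, whereas the paper passes to the underlying simple hypergraph and absorbs the resulting factor of at most $v(\theta_{a,b})!$ into the constant $C$; either choice works.
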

	
	\begin{proof}

		Let $\cH'_t$ be the $D'_t$-good $G$-hypergraph on $V(\theta_{a,b})\times V(G)$ guaranteed by \Cref{thm:balancedVertex}. We would like to translate $\cH'_t$ into a hypergraph $\cH$ on $E(G)$ satisfying the codegree bounds above.
		
		This will be conceptually straightforward, but a little tedious. In essence, the hyperedges of $\cH'_t$ correspond to theta graphs in $G$, and we will define $\cH$ to be the hypergraph corresponding to these theta graphs.  However, we must deal with two small issues with this translation: (1) a single theta graph in $G$ may appear isomorphically several times in $\cH'_t$, and (2) the codegree bound $D_t'(\nu)$ depends on the number of edges \textit{induced} by $\nu$, whereas the bound in \Cref{cor:balancedEdges} depends only on $|\sigma|$ for an \textit{arbitrary} set of edges $\sigma$, even if the vertices used by $\sigma$ induce additional edges. Neither of these issues is a real obstacle (in particular, the second can only improve the codegrees), but we will need some additional notation in order to address them.

		For each valid set $\chi$, we will define the corresponding set of edges induced in $G$ (excluding ``extraneous" edges that do not play a role in the isomorphic copy of $\theta_{a,b}$) as follows:
		\[E_\chi=\{zz':(w,z),(w',z')\in \chi,\ ww'\in E(\theta_{a,b})\}.\]
		In particular, $E_h\sub E(G)$  is a copy of $\theta_{a,b}$ in $G$ 
		for every hyperedge $h\in \cH'_t$ (since every hyperedge $h\in \cH'_t$ is a valid set of size $v(\theta_{a,b})$). Define $\cH$ to be the hypergraph with hyperedge set $\{E_h:h\in \cH'_t\}$.  Observe that $|\cH|\ge \rec{v(\theta_{a,b})!} |\cH'_t|=\Om(k^{ab}n^2)$, so it remains to check the codegree conditions -- that is, to bound $\deg_{\cH}(\sig)$ for each set of edges $\sigma$ in $G$.

		Fix a set of edges $\sig\sub E(G)$.  We need to get an understanding of which valid sets ``correspond'' to $\sig$.  To this end, let $\sig_v\sub V(G)$ be the set of vertices used by the edges $\sig$, and let $\mathcal{X}$ be the set of all valid $\chi\in V(\theta_{a,b})\times V(G)$ with $\chi_G=\sig_v$ and $\sig\sub E_\chi$. 
		See \Cref{fig:Echi} for an example.
		
		\begin{figure}[H]
			\centering
			\includegraphics[width=0.8\textwidth]{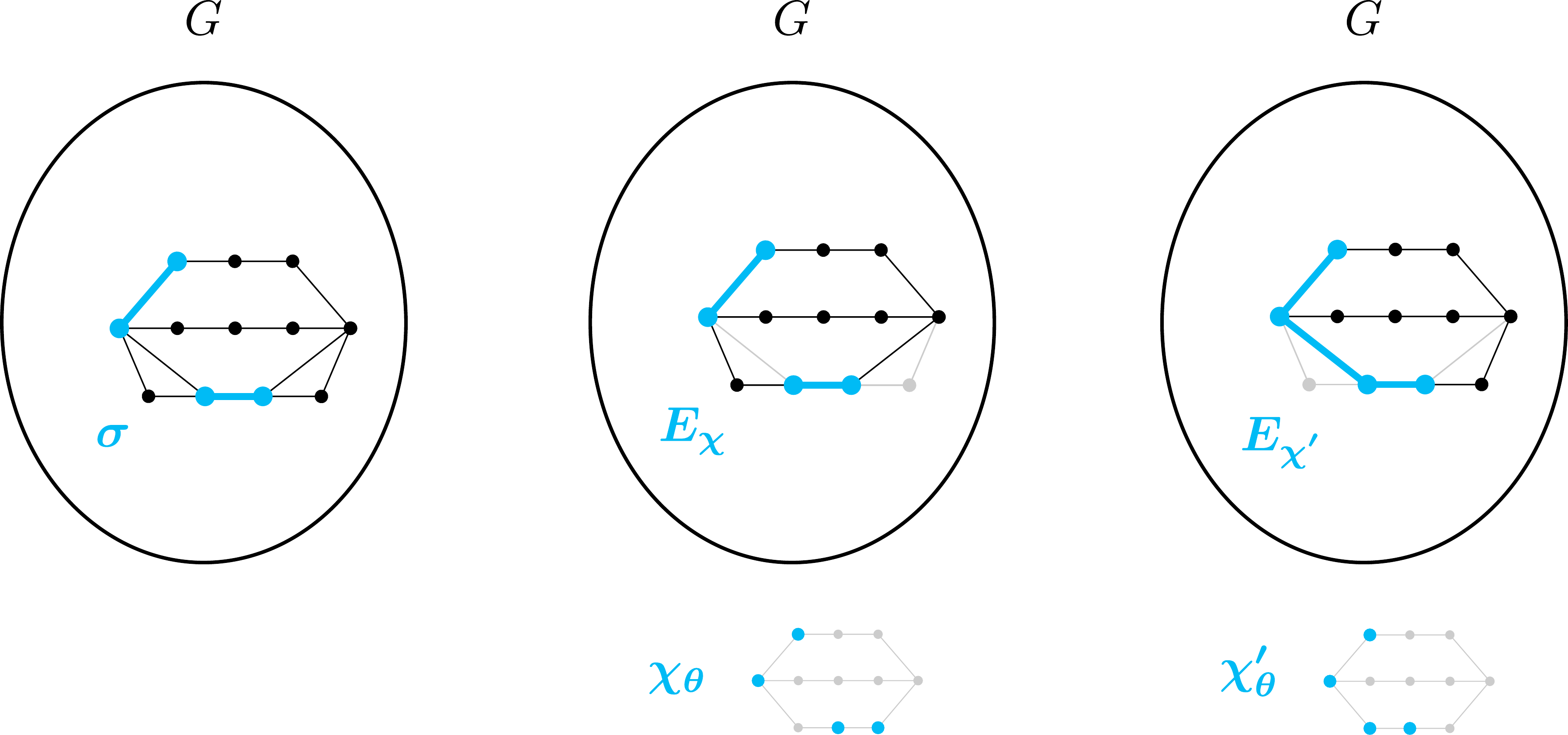}
			\caption{A pair of edges $\sig\sub E(G)$, together with two valid sets $\chi,\chi'$ in $\mathcal{X}$, i.e.\ valid sets having $\chi_G,\chi'_G=\sig_v$ and $\sig\sub E_\chi,E_{\chi'}$.  Note that $E_{\chi'}\ne \sig$ because $\chi'_\theta$ induces three edges in $\theta_{a,b}$.}
			\label{fig:Echi}
		\end{figure}
		
		With this notation, we can convert the codegree bounds in $H_t'$ to a bound on $\deg_{\cH}(\sig)$ as follows.
		\begin{claim}
			We have
			\[\deg_\cH(\sig)\le \sum_{\chi\in \mathcal{X}} \deg_{\cH'_t}(\chi).\] 
		\end{claim}
		\begin{proof}
			
			We would like to show that each hyperedge $h\in\cH$ counted by $\deg_\cH(\sigma)$ corresponds to at least one hyperedge $h' \in \cH_t'$ counted by $\deg_{\cH'_t}(\chi)$ for some $\chi\in\mathcal{X}$. We first observe that if $h\in \cH$, then by definition of $\cH$, there exists some $h'\in \c{H}'_t$ with $h=E_{h'}$. We wish to show that this $h'$ contains a set $\chi\in\mathcal{X}$, so that it will be counted by $\deg_{\cH'_t}(\chi)$.

			To this end, we observe that if the edge set $\sig$ is contained in $ h=E_{h'}$, then the corresponding vertex set $\sig_v$ is contained in $h'_G$; so there exists some $\chi\sub h'$ such that $\chi_G = \sig_v$. We claim that $\sig\sub E_\chi$ as well.  To see this, note that if       
			$zz'\in \sig\sub E_{h'}$, then $(w,z),(w',z')\in h'$ for some $ww'\in E(\theta_{a,b})$ by the definition of $E_{h'}$. Since  $z,z'\in \sig_v=\chi_G$ and $\chi\sub h'$, we have $(w,z),(w',z')\in \chi$ as well, giving $z z'\in E_\chi$. So by definition, $\chi\in \mathcal{X}$ as desired.
		\end{proof}
		To finish, it remains to bound the sum above. First, notice that there are only a constant number of terms: each element of $\c{X}$ is uniquely identified by $\chi_\theta$ (since $\chi_G=\sig_v$ for each $\chi \in \c{X}$), and hence $|\mathcal{X}|\le 2^{v(\theta_{a,b})}$.  
		Since $\c{H}'_t$ is $D'_t$-good, we have $\deg_{\cH'_t}(\chi)\le D'_t(\chi_\theta)$, and hence
		\[\deg_\cH(\sig)\le \sum_{\chi\in \mathcal{X}} \deg_{\cH'_t}(\chi)\le \sum_{\chi \in \mathcal{X}} D'_t(\chi_\theta)\le 2^{v(\theta_{a,b})}\cdot \frac{C'k^{ab}n^2}{kn^{1+1/b}(\min\{k^{b/(b-1)},kn^{(b-1)/(ab-1)}\})^{|\sig|-1}},\]
		where this last step used that each $\chi\in \mathcal{X}$ induces $|E_\chi|\ge |\sig|$ edges, together with the bound on $D_t'$ given by \Cref{prop:codegrees}.  This gives the desired result by taking $C=2^{v(\theta_{a,b})}C'$.		
	\end{proof}
	
	\subsection{Proof of \Cref{prop:codegrees}} The rest of this section is dedicated to proving \Cref{prop:codegrees}, which we emphasize will consist entirely of (moderately involved) arithmetic and case analysis. We will abuse notation slightly by identifying a vertex set $\nu\sub V(\theta_{a,b})$ with its induced subgraph in $\theta_{a,b}$.  For example, we may say $\nu$ contains a cycle to mean its induced subgraph contains a cycle.  Unless stated otherwise, $e$ will refer to the number of edges that $\nu$ induces in $\theta_{a,b}$.  We let $\del>0$ be the constant guaranteed by \Cref{thm:balancedVertex}, and throughout we assume $n^{1-1/b}\ge k$ and $b\geq 3$.  We recall that our goal is to show that there exists a constant $C'>0$ such that for $k$ sufficiently large and for all  $\nu\sub V(\theta_{a,b})$ inducing $e$ edges for $1\le e\le e(\theta_{a,b})-1$, we have
	\[D'_t(\nu)\le  \f{C'k^{ab} n^{2}}{kn^{1+1/b} \big(\min\big\{k^{b/(b-1)},kn^{\frac{b-1}{b(ab-1)}}\big\}\big)^{e-1}}, \]
	where the definition of $D'_t(\nu)$ will be recalled below. We begin with an easy case.
	
	\begin{lem}\label{lem:mOne}
		For any codegree function $D$, if $\nu\sub V(\theta_{a,b})$ is such that $D(\nu)=1$ and $\nu$ induces $e\ge 1$ edges, then \[D(\nu)\le \frac{k^{ab}n^2}{k n^{1+1/b}\big(kn^{\frac{b-1}{b(ab-1)}}\big)^{e-1}}.\]
	\end{lem}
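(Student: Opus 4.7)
The plan is straightforward: since $D(\nu)=1$, it suffices to show that the proposed upper bound is at least $1$. Rearranging the inequality
\[\frac{k^{ab}n^2}{kn^{1+1/b}\bigl(kn^{\frac{b-1}{b(ab-1)}}\bigr)^{e-1}}\ge 1\]
we must verify
\[k^{ab-e}\cdot n^{2-1-1/b - \frac{(b-1)(e-1)}{b(ab-1)}}\ge 1.\]

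The computational heart of the proof is simplifying the exponent of $n$. I would combine the terms over the common denominator $b(ab-1)$: the numerator becomes $(b-1)(ab-1) - (b-1)(e-1) = (b-1)(ab-e)$. Thus the desired inequality collapses to
\[\left(k\cdot n^{\frac{b-1}{b(ab-1)}}\right)^{ab-e}\ge 1.\]

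From here the conclusion is immediate: since $e\le e(\theta_{a,b}) = ab$, the exponent $ab-e$ is nonnegative, and since $k\ge k_0\ge 1$ (which we may assume by enlarging $k_0$) and $n\ge 1$, the base is at least $1$. There are no real obstacles; this lemma is just the observation that the claimed codegree bound degenerates to something trivially satisfied once $D(\nu) = 1$, and its purpose in the sequel is presumably to let us dispose of the ``easy'' boundary case in the analysis of \Cref{prop:codegrees} before engaging the more delicate case analysis where $D(\nu)$ is given by the genuine formulas of \Cref{def:Dforest,def:Dsb,def:Dst,def:Dt}.
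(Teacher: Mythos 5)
Your proof is correct and is exactly the paper's argument spelled out in full: the paper's entire proof is ``This follows immediately from $e\le ab$,'' and your computation showing the right-hand side equals $\bigl(kn^{\frac{b-1}{b(ab-1)}}\bigr)^{ab-e}\ge 1$ is precisely the verification behind that remark.
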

	\begin{proof}
		This follows immediately from $e\le ab$.
	\end{proof}
	
	We remind the reader that
	\[D'_t(\nu):=\begin{cases}
		D_{\forest}(\nu)& \nu \tr{ induces a forest},\\ 
		\min\{D_t(\nu),20(D_{0,t}(\nu)+\ceil{\log n})\} & \tr{otherwise}.
	\end{cases}\]
	For ease of reading, we recall each of the functions mentioned above before they are used.  First, we recall
	\[D_{\forest}(\nu)=\ceil{\f{k^{ab}n^2}{\del kn^{1+1/b}\cdot (\del k^{b/(b-1)})^{e-1}}}\]
	whenever $\nu\sub V(\theta_{a,b})$ induces a forest with $e\ge 1$ edges.
	\begin{lem}\label{lem:mForest}
		If $a\ge 3$, $2\le t\le b$, and $\nu\sub V(\theta_{a,b})$ induces a forest on $e\ge 1$ edges, then 
		\[D'_t(\nu)\le \frac{2 \del^{-ab} k^{ab}n^2}{k n^{1+1/b}\big(\min\big\{k^{b/(b-1)},kn^{\frac{b-1}{b(ab-1)}}\big\}\big)^{e-1}}.\]
	\end{lem}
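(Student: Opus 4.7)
Since $\nu$ induces a forest with $e \ge 1$ edges, by the definition of $D'_t$ we have $D'_t(\nu) = D_{\forest}(\nu)$, so the statement reduces to bounding $D_{\forest}(\nu)$ directly. The strategy is to split on whether the real-valued expression inside the ceiling in the definition of $D_{\forest}$ is at least $1$ or not; the two cases correspond to the two arguments of the minimum in the denominator of the target bound.

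First I would handle the ``generic'' case where $\frac{k^{ab}n^2}{\del k n^{1+1/b}(\del k^{b/(b-1)})^{e-1}} \ge 1$. Here $\lceil x \rceil \le 2x$, so
\[D_{\forest}(\nu) \;\le\; \frac{2 k^{ab}n^2}{\del^{e} k n^{1+1/b}(k^{b/(b-1)})^{e-1}}.\]
Since $e \le e(\theta_{a,b}) = ab$ and $\del < 1$, we have $\del^{-e} \le \del^{-ab}$. Moreover $\min\{k^{b/(b-1)},\, kn^{(b-1)/(b(ab-1))}\} \le k^{b/(b-1)}$, so replacing $k^{b/(b-1)}$ by the minimum in the denominator only increases the right-hand side. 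This yields the desired bound in this case.

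In the complementary case the expression inside the ceiling is $<1$, so $D_{\forest}(\nu) = 1$, and Lemma~\ref{lem:mOne} directly gives
\[D_{\forest}(\nu) \;\le\; \frac{k^{ab}n^2}{kn^{1+1/b}\big(kn^{(b-1)/(b(ab-1))}\big)^{e-1}},\]
which is at most the target upper bound, again because the minimum is bounded above by $kn^{(b-1)/(b(ab-1))}$.

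There is no substantive obstacle here; the whole argument is bookkeeping around the ceiling function and the two arms of the minimum. The only place requiring mild care is confirming that $\del^{-e} \le \del^{-ab}$ (using $e \le e(\theta_{a,b}) - 1 < ab$ and $\del \in (0,1)$), and remembering that $\min\{A,B\} \le A$, so replacing $k^{b/(b-1)}$ in the denominator of $D_{\forest}$ by the minimum weakens rather than strengthens the bound. Together these two cases dispose of every $\nu$ inducing a forest with at least one edge, as required.
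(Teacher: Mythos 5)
Your proposal is correct and follows essentially the same route as the paper: split on whether $D_{\forest}(\nu)=1$ (handled by Lemma~\ref{lem:mOne}) or not (where the ceiling costs a factor of $2$), absorb the powers of $\del$ using $e\le ab$, and use $\min\{k^{b/(b-1)},kn^{(b-1)/(b(ab-1))\}}\le k^{b/(b-1)}$. No gaps.
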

	\begin{proof}
		If $D'_t(\nu)=D_{\forest}(\nu)=1$ then the result follows from \Cref{lem:mOne}, and otherwise
		\[D'_t(\nu)=D_{\forest}(\nu)\le \f{2k^{ab}n^2}{\del kn^{1+1/b}\cdot (\del k^{b/(b-1)})^{e-1}},\]
		where the factor of 2 comes from the ceiling function and the assumption $D_{\forest}(\nu)>1$.  The result follows since $e-1\le ab-1$.
	\end{proof}

	It remains to prove the result when $\nu$ contains a cycle.  To help with the case analysis, we show that it suffices to prove the result when $\nu$ consists of paths of length $b$, i.e.\ when $\nu$ contains no leaves or isolated vertices.
	
	\begin{lem}\label{lem:cycleReduction}
		Let $D$ be a codegree function such that if $\nu\sub V(\theta_{a,b})$ contains a cycle, then either $D(\nu)=1$ or $D(\nu)\le  2 \del^{-1} k^{-b/(b-1)}D(\nu\sm \{w\})$ for every $w\in \nu$.
		
		If $C\ge 1$ is a constant such that for every $\nu$ which consists of paths of length $b$, we have
		\[D(\nu)\le \frac{C k^{ab}n^2}{k n^{1+1/b}\big(\min\big\{k^{b/(b-1)},kn^{\frac{b-1}{b(ab-1)}}\big\}\big)^{e-1}},\]
		then for every $k\ge 2\del^{-1}$ and $\nu$ which contains a cycle, we have
		
		\[D(\nu)\le \frac{C(2\del^{-1})^{|\nu|} k^{ab}n^2}{k n^{1+1/b}\big(\min\big\{k^{b/(b-1)},kn^{\frac{b-1}{b(ab-1)}}\big\}\big)^{e-1}}.\]
	\end{lem}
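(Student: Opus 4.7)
The plan is to reduce from a cycle-containing $\nu$ to a canonical ``paths-of-length-$b$'' subset $\nu_0\subseteq\nu$, apply the assumed bound on $\nu_0$, and account for the deleted vertices using the hypothesized multiplicative behavior of $D$. Since any cycle in $\theta_{a,b}$ (with $a\ge 3$) must pass through both high-degree vertices $u,v$ along two of the $a$ paths between them, a cycle-containing $\nu$ must contain $\{u,v\}$ together with the full interior of at least two paths. Writing $S=\{j\in[a]:\{w_1^j,\ldots,w_{b-1}^j\}\subseteq\nu\}$, I set
\[\nu_0:=\{u,v\}\cup\bigcup_{j\in S}\{w_1^j,\ldots,w_{b-1}^j\},\]
so that $|S|\ge 2$, $\nu_0$ still contains a cycle and consists of paths of length $b$, and $R:=\nu\setminus\nu_0$ is exactly the set of interior vertices lying on ``partial'' paths.

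Next I plan to strip $R$ off one vertex at a time along a chain $\nu_0\subsetneq\nu_1\subsetneq\cdots\subsetneq\nu_{|R|}=\nu$; every $\nu_i$ contains $\nu_0$ and hence a cycle. If $D(\nu)\le 1$ I invoke \Cref{lem:mOne} directly. Otherwise $D(\nu)\ge 2$, and since $k\ge 2\del^{-1}$ forces $2\del^{-1}k^{-b/(b-1)}\le 1$, the hypothesized inequality shows that $D$ is monotone nonincreasing as sets grow along the chain, so $D(\nu_i)\ge D(\nu)\ge 2$ throughout and the nontrivial branch of the hypothesis applies at every step. Iterating yields
\[D(\nu)\le\bigl(2\del^{-1}k^{-b/(b-1)}\bigr)^{|R|}D(\nu_0),\]
into which I plug the assumed bound on the paths-of-length-$b$ set $\nu_0$, namely $D(\nu_0)\le\frac{C k^{ab}n^2}{kn^{1+1/b}(\min\{k^{b/(b-1)},kn^{(b-1)/(b(ab-1))}\})^{e_0-1}}$, where $e_0$ is the number of edges induced by $\nu_0$.

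The combinatorial heart of the argument is the inequality $e-e_0\le|R|$: the number of edges lost from $\nu$ to $\nu_0$ is at most the number of vertices removed. This is exactly what lets the $k^{-|R|b/(b-1)}$ savings absorb the $|R|$ ``missing'' factors of $\min\{\ldots\}\le k^{b/(b-1)}$ in the denominator, converting the exponent $e_0-1$ into $e-1$. I plan to verify it by analyzing each partial path separately: if path $j$ contributes interior vertices at positions $\{i_1<\cdots<i_{q_j}\}\subseteq\{1,\ldots,b-1\}$ forming $c_j$ maximal runs of consecutive indices, then its induced edge count is $[i_1=1]+(q_j-c_j)+[i_{q_j}=b-1]$; since $q_j<b-1$ (the path is partial), no single chunk can span the entire interval $\{1,\ldots,b-1\}$, so $[i_1=1]+[i_{q_j}=b-1]\le c_j$ and the count is $\le q_j$. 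Summing over partial paths gives $e-e_0\le|R|$, and combining with $(2\del^{-1})^{|R|}\le(2\del^{-1})^{|\nu|}$ produces the claimed bound. This partial-path edge-counting step is the only delicate part of the argument; everything else is routine algebra and bookkeeping.
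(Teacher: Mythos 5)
Your proof is correct and runs on the same engine as the paper's: repeatedly apply the hypothesized inequality $D(\nu)\le 2\del^{-1}k^{-b/(b-1)}D(\nu\sm\{w\})$ to peel vertices down to a union of full length-$b$ paths, absorb each lost edge with one factor of $\min\{k^{b/(b-1)},kn^{\frac{b-1}{b(ab-1)}}\}\le k^{b/(b-1)}$, and handle the $D=1$ branch via \Cref{lem:mOne} exactly as the paper does. The only (cosmetic) difference is bookkeeping: the paper phrases this as an induction on $|\nu|$ that always deletes a vertex of degree at most one in the induced subgraph, so the bound ``at most one edge lost per removed vertex'' is automatic and your global run-counting verification of $e-e_0\le|R|$ is not needed.
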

	\begin{proof}
		Assume the hypotheses hold for $D$.  We prove by induction on $|\nu|$ that any $\nu\sub V(\theta_{a,b})$ containing a cycle satisfies the desired inequality.   For this proof, we recall that $u,v$ always denote the two high degree vertices of $\theta_{a,b}$.
		
		Say we have proved the result up to some set $\nu$ which induces $e$ edges.  If $D(\nu)=1$ then the inequality follows from Lemma~\ref{lem:mOne}, so we may assume $D(\nu)>1$.   If $\nu$ consists of paths of length $b$ then the result follows by hypothesis. Otherwise, there exists some vertex $w\in \nu\sm \{u,v\}$ which is adjacent to at most one other vertex in $\nu$.  Thus $\nu\sm \{w\}$ induces a graph containing a cycle with at least $e-2$ edges.  By our hypothesis on $D$, we find
		\begin{align*}D(\nu)&\le 2 \del^{-1} k^{-b/(b-1)}\cdot D(\nu\sm \{w\})\\
			&\le 2 \del^{-1} k^{-b/(b-1)}\cdot \frac{C(2\del^{-1})^{|\nu|-1} k^{ab}n^2}{k n^{1+1/b}\big(\min\big\{k^{b/(b-1)},kn^{\frac{b-1}{b(ab-1)}}\big\}\big)^{e-2}}\\&\le \frac{C(2\del^{-1})^{|\nu|} k^{ab}n^2}{k n^{1+1/b}\big(\min\big\{k^{b/(b-1)},kn^{\frac{b-1}{b(ab-1)}}\big\}\big)^{e-1}},\end{align*} 
		where the second inequality used that our inductive hypothesis applies to $\nu\sm\{w\}$ (since $\nu\sm\{w\}$ contains a cycle and $D(\nu\sm\{w\})\ge (\del/2)k^{b/(b-1)} D(\nu)>1$).  This gives the desired result.
	\end{proof}
	We will show that essentially all of our remaining codegree functions are of the form described in \Cref{lem:cycleReduction}.  First, we recall
	\[D_{0,b}(\nu)=\ceil{\f{k^{ab}n^2}{n^2 (k^{b/(b-1)})^{|\nu|-2}\del^{|\nu|}}}\]
	whenever $\nu$ contains the two high degree vertices $u,v$, and $D_{0,b}(\nu)=\infty$ otherwise.  Note that if $\nu$ contains a cycle, then $D_{0,b}(\nu\sm\{w\})=\infty$ if $w\in \{u,v\}$, and otherwise if $D_{0,b}(\nu)> 1$ we have $D_{0,b}(\nu)\le 2\del^{-1} k^{-b/(b-1)}D_{0,b}(\nu\sm \{w\})$, where the factor of 2 comes from the ceiling function.  Thus $D_{0,b}$ satisfies the conditions of Lemma~\ref{lem:cycleReduction}, and using this we prove the following.
	\begin{lem}\label{lem:mb}
		There exists a constant $C>0$ such that if $k$ is sufficiently large in terms of $a,b,\del$ and if $\nu\sub V(\theta_{a,b})$ induces $e$ edges where $1\le e\le e(\theta_{a,b})-1$, then 
		\[D'_b(\nu)\le \frac{C k^{ab}n^2}{k n^{1+1/b}\big(\min\big\{k^{b/(b-1)},kn^{\frac{b-1}{b(ab-1)}}\big\}\big)^{e-1}}.\]
	\end{lem}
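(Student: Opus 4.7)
The plan is to split into the forest and cycle cases. The forest case ($\nu$ induces a forest on $e\ge 1$ edges) is already handled: $D'_b(\nu)=D_{\forest}(\nu)$, so \Cref{lem:mForest} applies directly. For the cycle case, since $D_b\equiv\infty$, the definition of $D'_b$ gives $D'_b(\nu)=20(D_{0,b}(\nu)+\lceil\log n\rceil)$, and it suffices to bound each of $D_{0,b}(\nu)$ and $\lceil\log n\rceil$ by the target expression (possibly after adjusting $C$).

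For the $D_{0,b}$ piece the strategy is to apply \Cref{lem:cycleReduction} with $D=D_{0,b}$. First I would verify the hypothesis of that lemma: whenever $\nu$ contains a cycle we have $u,v\in\nu$, and for any $w\in\nu$ either $w\notin\{u,v\}$ (in which case the defining formula for $D_{0,b}$, together with the ceiling, yields $D_{0,b}(\nu)\le 2\delta^{-1}k^{-b/(b-1)}D_{0,b}(\nu\setminus\{w\})$) or $w\in\{u,v\}$ (in which case $D_{0,b}(\nu\setminus\{w\})=\infty$ and the inequality is trivial). Thus the reduction of \Cref{lem:cycleReduction} applies, and it remains to prove the target bound for $\nu$ consisting of $c$ complete paths of length $b$, so that $|\nu|=c(b-1)+2$ and $e=cb$ with $2\le c\le a-1$ (the last inequality because $e\le e(\theta_{a,b})-1=ab-1$ forces $c<a$).

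For such a path-like $\nu$, I would compute directly from \Cref{def:Dsb} that $D_{0,b}(\nu)\le 2\delta^{-|\nu|}k^{b(a-c)}$, and then check the desired inequality by splitting on whether $k^{b/(b-1)}\le kn^{(b-1)/(b(ab-1))}$ (Case A: $k\le n^{(b-1)^2/(b(ab-1))}$) or not (Case B). In both cases, after canceling powers of $k$, the inequality reduces to checking that the exponent $(b-1)/b-(cb-1)(b-1)/(b(ab-1))$ on $n$ is nonnegative, which is equivalent to $c\le a$. Since $c\le a-1$ we actually get a strictly positive exponent on $n$, which is what I need: it handles all sufficiently large $n$, and for bounded $n$ the inequality can be absorbed into $C$ using that $k$ is bounded below by a constant.

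Finally, for the additive $\lceil\log n\rceil$ term, I would simply verify $\log n\le C'k^{ab-1}n^{1-1/b}/(\min\{k^{b/(b-1)},kn^{(b-1)/(b(ab-1))}\})^{e-1}$ by substituting the worst case $e=ab-1$; a short calculation in each of Cases A and B shows the right-hand side is at least a constant multiple of $n^{(b-1)/(ab-1)}$, which easily dominates $\log n$. The main obstacle is the arithmetic bookkeeping: carefully tracking exponents in both cases to confirm the exponent of $n$ is nonnegative (strictly positive when $c\le a-1$) and keeping constants under control. Everything else is routine manipulation of the defining formulas.
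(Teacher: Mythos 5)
Your proposal is correct and follows essentially the same route as the paper: forest case via \Cref{lem:mForest}, the additive $\lceil\log n\rceil$ term absorbed by comparing against $kn^{\frac{b-1}{b(ab-1)}}$ (note your exponent $\frac{b-1}{ab-1}$ should be $\frac{b-1}{b(ab-1)}$, which is immaterial), and the cycle case reduced via \Cref{lem:cycleReduction} to unions of complete paths followed by exponent bookkeeping. The only cosmetic difference is that the paper bounds the minimum by $kn^{\frac{b-1}{b(ab-1)}}$ throughout rather than splitting into your Cases A and B, but both computations close.
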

	\begin{proof}
		First, if $\nu$ induces a forest then the result follows from \Cref{lem:mForest}, so we may assume $\nu$ contains a cycle. 
		
		Now consider the case $D_b' (\nu) \leq  40\lceil\log n\rceil$. In particular, it suffices here to show that   
		\[40\lceil\log n\rceil \le \f{C k^{ab} n^2}{k n^{1+1/b} \cdot \big(kn^{\frac{b-1}{b(ab-1)}}\big)^{e-1}}.\]	
		And indeed, since $e\leq ab-1$, this inequality is satisfied if 
		\[40\lceil\log n\rceil \le C kn^{\frac{b-1}{b(ab-1)}},\]
		which holds for all $n$ and $k$, provided $C$ is sufficiently large.

		Thus we may assume that $40\lceil\log n\rceil\le D_b' (\nu)$; in particular, since $D'_b (\nu)\leq  20(D_{0,b}(\nu)+\lceil\log n\rceil)$, this implies that   $\lceil\log n\rceil\le D_{0,b}(\nu)$, and as such $D_b'(\nu)\le 40 D_{0,b}(\nu)$.  Possibly by adjusting the constant $C$, it now suffices to show $D_{0,b}(\nu)$ satisfies the inequality of the lemma.  By Lemmas~\ref{lem:mOne} and \ref{lem:cycleReduction}, it suffices to show this holds for $\nu$ consisting of $p\ge 2$ paths of length $b$.  In this case $|\nu|=p(b-1)+2$ and $e=pb$, so it suffices to show
		\[\f{k^{ab}n^2}{n^2 (k^{b/(b-1)})^{p(b-1)}\del^{|\nu|}}\le \frac{C k^{ab}n^2}{kn^{1+1/b}\big(\min\big\{k^{b/(b-1)},kn^{\frac{b-1}{b(ab-1)}}\big\}\big)^{pb-1}}\]
		for some constant $C$, where implicitly we used that the ceiling function in $D_{0,b}(\nu)$ can be ignored by increasing $C$ by a factor of 2.  Using $\min\big\{k^{b/(b-1)},kn^{\frac{b-1}{b(ab-1)}}\big\}\le kn^{\frac{b-1}{b(ab-1)}}$ and rearranging the above gives that it suffices to show
		\[\big(kn^{\frac{b-1}{b(ab-1)}}\big)^{pb-1}\le C \del^{|\nu|} k^{pb-1} n^{1-1/b},\]
		which holds for any $C\ge \del^{-|\nu|}$ since $pb-1\le ab-1$.  We conclude the result.
	\end{proof}
	
	It remains to deal with the case $t<b$.  To start, we recall that we write the paths of $\theta_{a,b}$ as $uw_1^j\cdots w_{b-1}^j v$ for $1\le j\le a$, and that we define $F_t=\{w_i^j:t\le i<b,\ i-t\tr{ is even}\}$.  We recall that if $u,v\in \nu$ then
	\[D_{0,t}(\nu)=\ceil{\f{k^{ab}n^2 }{ k^{(2b-2t+1)/(b-1)}n^{(2t-1)/b}\cdot (kn^{1/b})^f(k^{b/(b-1)})^{|\nu|-f-2}\delta^{|\nu|}}}\]
	where $f=|\nu\cap F_t|$, with $D_{0,t}(\nu)=\infty$ otherwise.  Similarly if $u,v,w_{b-1}^j\in \nu$ for some $j$ then
	\[D_{t}(\nu)=\ceil{\f{k^{ab}n^2 }{kn^{1+1/b}\cdot (kn^{1/b})^g(k^{b/(b-1)})^{|\nu|-g-3}\delta^{|\nu|}}},\]
	where $g=|\nu\cap F_t|$ if $b-t$ is even and $g=|\nu\cap F_t|-1$ otherwise, with $D_t(\nu)=\infty$ otherwise.  Note that both of these codegree functions satisfy the conditions of Lemma~\ref{lem:cycleReduction} since we assumed $n^{1-1/b}\ge k$. From now on we will assume we work with $t<b$ and define $f,g$ as in the above codegree functions. It will be useful to note that if $\nu$ consists of $p$ paths of length $b$, then by definition
	\begin{equation}f=p\ceil{(b-t)/2}\label{eq:f}\end{equation}
	and
	\begin{equation}g=(p-2)\ceil{(b-t)/2}+b-t,\label{eq:g}\end{equation}
	where this last equality follows from $g=f$ if $b-t$ is even and otherwise $g=f-1=(p-1)\ceil{(b-t)/2}+\floor{(b-t)/2}$.

	\begin{lem}\label{lem:D0t}
		Let $\nu$ consist of $p\ge 2$ complete paths and define $h=2t+f-b-2$.  There exists a constant $C>0$ such that 
		\[D_{0,t}(\nu)\le \frac{C k^{ab}n^2}{kn^{1+1/b}\big(\min\big\{k^{b/(b-1)},kn^{\frac{b-1}{b(ab-1)}}\big\}\big)^{e-1}}\]
		provided either
		\[k\le n^{\f{b-1}{b}\cdot \f{h}{(p-1)b+h}},\] or \[k^{b-1-h}\ge n^{\f{b-1}{b}\cdot \f{(b-1)(pb-1)-h(ab-1)}{ab-1}}.\]   
	\end{lem}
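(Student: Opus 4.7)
The proof is a direct computation and case analysis that matches each of the two hypotheses to one of the two candidate values of the minimum. First I would dispose of easy reductions: if $D_{0,t}(\nu) = 1$, then Lemma~\ref{lem:mOne} already gives the desired bound (with the minimum replaced by the larger of the two quantities), so we may assume $D_{0,t}(\nu) > 1$. Then the ceiling in Definition~\ref{def:Dst} contributes at most a factor of two, which I absorb into $C$. Similarly the factor $\delta^{|\nu|}$ in the denominator is bounded in terms of $a,b,\delta$ alone, so I absorb it too. For $\nu$ consisting of $p \ge 2$ complete paths of length $b$, I substitute $|\nu| = p(b-1) + 2$, $e = pb$, and (by \eqref{eq:f}) $f = p\lceil (b-t)/2\rceil$.

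The key observation is that the right-hand side of the desired inequality is monotone decreasing in the value of the minimum $M := \min\{k^{b/(b-1)},\,kn^{(b-1)/(b(ab-1))}\}$. So if I can prove the bound with $M$ replaced by \emph{either} $M_1 := k^{b/(b-1)}$ or $M_2 := kn^{(b-1)/(b(ab-1))}$, the bound for the actual (smaller) $M$ follows. My plan is to show that hypothesis~(i) gives the bound with the choice $M_1$, and hypothesis~(ii) gives the bound with the choice $M_2$.

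For hypothesis~(i) with $M = M_1$: after clearing denominators and using the identity $2t + f = h + b + 2$ (from the definition of $h$), the required inequality collapses to
\[k^{(p-1)b + h} \,\le\, n^{h(b-1)/b},\]
which rearranges to $k \le n^{(b-1)h/(b((p-1)b + h))}$, precisely hypothesis~(i). For hypothesis~(ii) with $M = M_2$: an analogous exponent comparison yields
\[k^{\,b-1-h} \,\ge\, n^{\frac{b-1}{b}\cdot\frac{(b-1)(pb-1)-h(ab-1)}{ab-1}},\]
which is precisely hypothesis~(ii).

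The main obstacle is bookkeeping the exponents of $k$ and $n$ through the substitution, though thanks to the identity $2t + f = h + b + 2$ the exponents in both cases collapse cleanly in terms of $h$, $p$, $b$, and $a$. A secondary care point is sign: rearranging $k^A \le n^B$ into a condition on $k$ alone requires tracking the sign of $A$, but in the regimes where the hypotheses are meaningful — roughly $h > 0$ for (i) and $b - 1 - h > 0$ for (ii) — both conversions are straightforward, and the corner cases (e.g.\ $h \le 0$ making (i) unsatisfiable, or $b-1-h \le 0$ making (ii) trivial) can be verified directly.
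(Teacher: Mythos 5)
Your proposal is correct and follows essentially the same route as the paper's proof: dispose of the $D_{0,t}(\nu)=1$ case via Lemma~\ref{lem:mOne}, absorb the ceiling and $\delta^{|\nu|}$ into the constant, rewrite the denominator using $2t+f=h+b+2$ so that a factor $kn^{1+1/b}$ splits off, and then bound the minimum by each of its two arguments in turn to recover exactly hypotheses (i) and (ii) as exponent comparisons. The exponent arithmetic you report matches the paper's, so no further changes are needed.
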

	
	\begin{proof}
		If $D_{0,t}(\nu)=1$ then the result holds by Lemma~\ref{lem:mOne}, so from now on we assume $D_{0,t}(\nu)>1$. We can rewrite the denominator of $D_{0,t}(\nu)$ as
		\begin{align*}\del^{|\nu|}k^{(2b-2t+1)/(b-1)}n^{(2t-1)/b}(kn^{1/b})^f(k^{b/(b-1)})^{|\nu|-f-2}=\del^{|\nu|}k^{2b/(b-1)}(k^{-1/(b-1)}n^{1/b})^{2t+f-1}(k^{b/(b-1)})^{|\nu|-2}\\=\del^{|\nu|}kn^{1+1/b}\cdot (k^{-1/(b-1)}n^{1/b})^{2t+f-b-2}(k^{b/(b-1)})^{|\nu|-2}.\end{align*}
		Using this and $D_{0,t}(\nu)>1$, we see that to show the desired result holds with $C=2 \del^{-v(\theta_{a,b})}$, it suffices to show 
		\begin{equation}(k^{-1/(b-1)}n^{1/b})^{h}(k^{b/(b-1)})^{|\nu|-2}\ge \big(\min\big\{k^{b/(b-1)},kn^{\frac{b-1}{b(ab-1)}}\big\}\big)^{e-1}.\label{eq:mt}\end{equation}
		Using that the minimum in \eqref{eq:mt} is at most $k^{b/(b-1)}$ and rearranging, we see that it suffices to have
		\[(k^{-1/(b-1)}n^{1/b})^{h}\ge 
		(k^{b/(b-1)})^{(e-1)-(|\nu|-2)}=(k^{b/(b-1)})^{pb-1-p(b-1)}=k^{(p-1)b/(b-1)},\] i.e. $k^{((p-1)b+h)/(b-1)}\le n^{h/b}$, which gives the first result.
		
		If we instead use that the minimum in \eqref{eq:mt} is at most $kn^{\frac{b-1}{b(ab-1)}}$, then we see that this inequality will be satisfied provided
		\begin{equation*}(k^{-1/(b-1)}n^{1/b})^hk^{pb}\ge k^{pb-1}n^{\f{(b-1)(pb-1)}{b(ab-1)}},\label{eq:D0t}\end{equation*}
		which is equivalent to
		\[k^{\f{b-1-h}{b-1}}\ge n^{\f{(b-1)(pb-1)}{b(ab-1)}-\f{h}{b}}=n^{\f{(b-1)(pb-1)-h(ab-1)}{b(ab-1)}}.\] This gives the last part of the lemma. 
	\end{proof} 
	
	\begin{lem}\label{lem:Dt}
		Let $\nu$ consist of $p\ge 2$ complete paths. There exists a constant $C>0$ such that 
		\[D_{t}(\nu)\le \frac{C k^{ab}n^2}{kn^{1+1/b}(k^{b/(b-1)})^{e-1}}\] provided 
		\[k\le n^{\f{b-1}{b}\cdot \f{g}{pb+g}}.\]
	\end{lem}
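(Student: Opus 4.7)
The plan is to mirror the proof of \Cref{lem:D0t} very closely, since the structure of $D_t$ is formally similar to $D_{0,t}$. First I would dispatch the trivial case $D_t(\nu)=1$ by appealing to \Cref{lem:mOne}, which immediately gives a bound of the required shape (even with the smaller $k^{b/(b-1)}$ in the denominator). From now on assume $D_t(\nu)>1$, so that the ceiling in the definition of $D_t$ costs us only a factor of $2$, which can be absorbed into $C$ together with $\delta^{-|\nu|}$.

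Next I would substitute $|\nu|=p(b-1)+2$ and $e=pb$ (valid since $\nu$ consists of $p$ complete paths, each of which automatically supplies the required $u,v,w_{b-1}^{j}$ for $D_t$ to be finite). Rewriting the denominator of $D_t(\nu)$ using the identity
\[
(kn^{1/b})^{g}(k^{b/(b-1)})^{|\nu|-g-3}\;=\;(k^{b/(b-1)})^{|\nu|-3}\cdot (k^{-1/(b-1)}n^{1/b})^{g},
\]
and noting $|\nu|-3=p(b-1)-1$, the desired inequality reduces (after cancelling $kn^{1+1/b}$ and $k^{ab}n^{2}$ and pulling out the constant $\delta^{|\nu|}$) to showing
\[
(k^{-1/(b-1)}n^{1/b})^{g}(k^{b/(b-1)})^{p(b-1)-1}\;\ge\;(k^{b/(b-1)})^{pb-1}.
\]

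The exponent on the right-hand side minus the matching exponent on the left gives $(k^{b/(b-1)})^{p+g}$, so the inequality is equivalent to
\[
k^{g}n^{g/b}\;\ge\;k^{(p+g)b/(b-1)},
\]
i.e.\ $k^{(pb+g)/(b-1)}\le n^{g/b}$. Solving for $k$, this is precisely the hypothesis $k\le n^{\frac{b-1}{b}\cdot \frac{g}{pb+g}}$.

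I do not expect any real obstacle here: the entire argument is a repeat of the algebra in \Cref{lem:D0t}, with the roles of $f$ and $g$ swapped and one less occurrence of a ``high-degree" factor (reflecting that $D_t$ pays $kn^{1+1/b}$ in place of $k^{(2b-2t+1)/(b-1)}n^{(2t-1)/b}$ for selecting both high-degree endpoints). The only thing to keep track of is the bookkeeping of exponents; in particular, one needs to remember that $g$ is defined piecewise depending on the parity of $b-t$, but both cases are handled uniformly by the identity $|\nu|-g-3 = p(b-1)-g-1$. The constant $C$ can be taken to be $2\delta^{-v(\theta_{a,b})}$, exactly as in \Cref{lem:D0t}.
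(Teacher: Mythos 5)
Your proposal is correct and follows essentially the same route as the paper: dispose of the $D_t(\nu)=1$ case via \Cref{lem:mOne}, substitute $|\nu|=p(b-1)+2$ and $e=pb$, and reduce the claim to the exponent inequality $k^{(pb+g)/(b-1)}\le n^{g/b}$, which is exactly the stated hypothesis. The algebra checks out (the exponent gap of $p+g$ relative to the $(kn^{1/b})^g(k^{b/(b-1)})^{|\nu|-g-3}$ form matches the paper's gap of $p$ after moving the $(k^{-1/(b-1)}n^{1/b})^g$ factor), so there is nothing to add.
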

	\begin{proof}
		By Lemma~\ref{lem:mOne} we can assume $D_t(\nu)>1$, so the result holds for some $C\ge 2 \del^{-|\nu|}$ provided
		\[(kn^{1/b})^g(k^{b/(b-1)})^{|\nu|-g-3}\ge (k^{b/(b-1)})^{e-1},\]
		and rearranging this gives
		
		\[(k^{-1/(b-1)}n^{1/b})^{g}\ge (k^{b/(b-1)})^{e-1-|\nu|+3}=(k^{b/(b-1)})^{pb-1-p(b-1)+1}=k^{pb/(b-1)},\]
		and rearranging gives the desired result.
	\end{proof}
	
	With these two results we can solve the cycle case for $t<b$ provided $a$ is sufficiently large.
	
	\begin{lem}\label{lem:mt}
		If $b>t\ge 2$ and $a\ge 100,b\ge 3$, then there exists a constant $C>0$ such that if $k$ is sufficiently large in terms of $a,b,\del$ and if $\nu\sub V(\theta_{a,b})$ induces $e$ edges where $1\le e\le e(\theta_{a,b})-1$, then 
		\[D_{t}'(\nu)\le \frac{C k^{ab}n^2}{kn^{1+1/b}(k^{b/(b-1)})^{e-1}}.\]
	\end{lem}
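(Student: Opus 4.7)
Following the structural template of \Cref{lem:mb}, I would begin by splitting on whether $\nu$ induces a forest or contains a cycle. In the forest case, $D'_t(\nu) = D_{\forest}(\nu)$ already has the desired form after absorbing $\del$-factors into $C$, and the ceiling contributes at most a factor of $2$ (or \Cref{lem:mOne} applies if $D_{\forest}(\nu) = 1$). In the cycle case, I would apply \Cref{lem:cycleReduction} separately to the functions $D_t$ and $D_{0,t}$, verifying its hypothesis in each case using the observation that adding a vertex multiplies their denominators by at least $\del k^{b/(b-1)}$ (since $n^{1-1/b} \geq k$ forces $kn^{1/b} \geq k^{b/(b-1)}$). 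This reduces us to proving the bound for $\nu$ consisting of $p \geq 2$ complete paths of length $b$. The $\lceil\log n\rceil$ summand in $D'_t$ is handled by a subcase analysis analogous to the one in \Cref{lem:mb}: if $D'_t(\nu) \leq 40\lceil \log n \rceil$, I would bound $\lceil \log n \rceil$ by the right-hand side directly (leveraging $e \leq ab - 1$ to extract a dominant factor of $k$ from the $k^{ab}n^2$ numerator that is not cancelled by the denominator); otherwise $D'_t(\nu) \leq 40 D_{0,t}(\nu)$ and it suffices to bound $D_{0,t}(\nu)$.

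For $\nu$ consisting of $p \geq 2$ complete paths, I split into two cases based on the size of $k$. If $k \leq n^{(b-1)g/(b(pb+g))}$, then \Cref{lem:Dt} directly gives the desired bound on $D_t(\nu)$, and hence on $D'_t(\nu)$ since $D'_t(\nu) \leq D_t(\nu)$. Otherwise, I turn to \Cref{lem:D0t}: in the range where its first condition $k \leq n^{(b-1)h/(b((p-1)b+h))}$ holds, the proof of \Cref{lem:D0t} actually establishes the tighter bound with $k^{b/(b-1)}$ in the denominator, corresponding precisely to the first of the two sub-cases analyzed in its proof. The complementary range is handled by the second condition $k^{b-1-h} \geq n^{(b-1)((b-1)(pb-1)-h(ab-1))/(b(ab-1))}$; one then verifies that in this range $k$ is small enough that $kn^{(b-1)/(b(ab-1))} \geq k^{b/(b-1)}$, so that the denominator of \Cref{lem:D0t} is at least $(k^{b/(b-1)})^{e-1}$ as desired.

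The main obstacle is the arithmetic verification that the union of the ranges from \Cref{lem:Dt} and the two conditions of \Cref{lem:D0t} covers the entire interval $k_0 \leq k \leq n^{1-1/b}$, for every admissible value of $p \in \{2, \dots, a\}$ and for both parities of $b - t$. This reduces to comparing exponents such as $g(a-1) \geq p(b-1)$, which involve $f$, $g$, and $h$ as linear functions of $p$ that depend on the parity of $b - t$ (recalling the closed forms $f = p\lceil (b-t)/2\rceil$ and the corresponding expressions for $g$ and $h$). The hypothesis $a \geq 100$ is invoked exactly at this step, to make the necessary inequalities uniformly true across parity cases and values of $p$. Once these ranges are shown to interlock, the bounds on $D_t(\nu)$ and on $D_{0,t}(\nu)$ combine via the minimum that defines $D'_t$ to yield the statement.
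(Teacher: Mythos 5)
Your scaffolding matches the paper's: reduce via \Cref{lem:cycleReduction} to $\nu$ consisting of $p\ge 2$ complete paths, dispose of the $\lceil\log n\rceil$ summand as in \Cref{lem:mb}, and then cover the admissible range of $k$ by combining \Cref{lem:Dt} (small $k$) with \Cref{lem:D0t} (large $k$). However, the step you defer as ``the main obstacle'' --- verifying that these ranges actually interlock --- is the entire content of the lemma, and your sketch of it is not adequate. The paper's proof rests on a specific chain of observations that you do not supply: first, $h=2t+p\lceil(b-t)/2\rceil-b-2\ge 0$ always, and $h\ge b-1$ whenever $p\ge 5$, in which case the second condition of \Cref{lem:D0t} holds for \emph{all} $k$ and no covering argument is needed; second, for $2\le p\le 4$ one has $g+h=t+(2p-2)\lceil(b-t)/2\rceil-2$, hence $\max\{g,h\}\ge b/2-1$; third, if $h\ge b/2-1$ then $(b-1)(pb-1)-h(ab-1)\le 0$ already for $a\ge 15$, so again the second condition of \Cref{lem:D0t} is unconditional; and only in the residual case $h<b/2-1$ (forcing $g\ge b/2-1$) does one compare the threshold $k\le n^{\frac{b-1}{b}\cdot\frac{b-2}{9b-2}}$ from \Cref{lem:Dt} against the threshold from the second condition of \Cref{lem:D0t}, which reduces to $\frac{4b-1}{ab-1}\le\frac{b-2}{9b-2}$ and is where $a\ge 100$ enters. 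The inequality you name, $g(a-1)\ge p(b-1)$, is not the one that arises, and without the dichotomy on $\max\{g,h\}$ it is not clear how you would handle, say, $p=t=2$ with $b-t$ even, where $h=0$.

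That last example also undermines your proposed three-range covering. When $h=0$ the first condition of \Cref{lem:D0t} reads $k\le n^{0}=1$, so your ``middle range'' is empty and you are back to needing the \Cref{lem:Dt} range and the second-condition range of \Cref{lem:D0t} to meet directly --- precisely the computation you have not done. (The paper in fact never invokes the first condition of \Cref{lem:D0t} in this lemma.) The preliminary reductions in your write-up are fine, but as it stands the proof is incomplete at its decisive point.
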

	\begin{proof}
		By using similar reasoning as in Lemma~\ref{lem:mb}, it suffices to prove this upper bounds holds for $\min\{D_{0,t}(\nu),D_t(\nu)\}$ (i.e.\ ignoring the $\log n$ term in $D'_t(\nu)$) whenever $\nu$ consists of $p\ge 2$ paths of length $b$.  With $h$ as in Lemma~\ref{lem:D0t}, \eqref{eq:f} and \eqref{eq:g} give
		\[h=2t+p\ceil{(b-t)/2}-b-2\ge 0,\]
		\[g=(p-2)\ceil{(b-t)/2}+b-t,\]
		where $h\ge 0$ follows from $p,t\ge 2$.
		We also note that both inequalities of Lemma~\ref{lem:D0t} become  easier to satisfy for larger values of $h$  (this holds for the first inequality because the function $\f{h}{c+h}$ is increasing for any $c>0$, and it holds for the second since $n^{\f{b-1}{b}}\ge k$).
		
		We first claim that $h$ is relatively large in most cases; namely, if $p\ge 5$, then $h\ge b-1$.  Indeed, this being false is equivalent to
		\[2t+p\ceil{(b-t)/2}-b-2<b-1.\]
		If $t=b-1$ then this is equivalent to $p<2b+1-2(b-1)=3$, contradicting our assumption on $p$, so we may assume $t<b-1$.  By dropping the ceiling function, the inequality above implies
		\[2t+p(b-t)/2-b-2<b-1,\]
		which is equivalent to 
		\[p<4+\f{2}{b-t}\le 5,\]
		with the last step using $t<b-1$, again giving a contradiction to our assumption on $p$.  
		
		We conclude that $h\ge b-1$ if $p\ge 5$.  Note that the second inequality of Lemma~\ref{lem:D0t} trivially holds at $h=b-1$, and since the lemma is easier to satisfy for larger values of $h$, we conclude the result for $p\ge 5$. From now on\footnote{As an aside, it is not difficult to show that Lemma~\ref{lem:D0t} alone suffices to prove the result for $p\ge 3$ if, say, $a\ge 12$.   However, for $p=2$ it is necessary to use Lemma~\ref{lem:Dt} as well since, in particular, we can have $h=0$ in this case.  Dealing with the case $p=2$ here is the only reason the codegree functions $D_t$ are introduced.} we assume $2\le p\le 4$.

		Note that
		\[g+h=t+(2p-2)\ceil{(b-t)/2}-2,\]
		and in particular,
		\[\max\{g,h\}\ge t/2+(p-1)\ceil{(b-t)/2}-1\ge b/2-1\]
		(where the last step uses $p\geq 2$). First consider the case $h\ge b/2-1$. Note that for $p\le 4$ we have
		\[(b-1)(pb-1)-h(ab-1)\le (b-1)(4b-1)-(b/2-1)(ab-1)=(b/2)(8b-9-a(b-2))\le 0,\]
		where this last step holds for $a\ge 15\ge  \f{8b-9}{b-2}$ (and uses $b\geq 3$).  With this we either have $h\ge b-1$ (in which case we are done by the argument for $p\ge 5$), or
		\[k^{b-1-h}\ge 1\ge n^{\f{b-1}{b}\cdot \f{(b-1)(pb-1)-h(ab-1)}{ab-1}},\]
		in which case the result follows from Lemma~\ref{lem:D0t}.
		
		Now assume $h<b/2-1$, which in particular implies $g\ge b/2-1$.  By Lemma~\ref{lem:Dt}, and using $p\le 4$, we obtain the result if $k\le n^{\f{b-1}{b}\cdot \f{b/2-1}{4b+b/2-1}}=n^{\f{b-1}{b}\cdot \f{b-2}{9b-2}}$. On the other hand, using the second inequality of Lemma~\ref{lem:D0t}, which is harder to satisfy the smaller $h\ge 0$ is, we see that that for $p\le 4$ the result holds if
		\[k^{b-1}\ge n^{\f{b-1}{b}\cdot \f{(b-1)(4b-1)}{ab-1}}.\]
		Thus the result holds for all $k$ provided
		\[\f{4b-1}{ab-1}\le \f{b-2}{9b-2},\]
		or equivalently
		\[a\ge\frac{1}{b}+\f{(4b-1)(9b-2)}{b(b-2)}.\]
		This holds for $a\ge 100$, proving the result.
	\end{proof}
	
	\Cref{prop:codegrees} now follows immediately from Lemmas~ \ref{lem:mb} and \ref{lem:mt}.
	
	We note that sharper arguments can easily be used to reduce the bound $a\ge 100$ of \Cref{prop:codegrees} considerably, though the bound cannot be made arbitrarily small.  In particular, one can work out that the case $b=4$ and $p=t=2$ shows that $a\ge 9$ is needed, as $D_t'(\nu)$ does not satisfy the conclusion of \Cref{prop:codegrees} in this case.
	
	\section{Completing the Proof of \Cref{thm:main}}\label{sec:finish}
	Recall that we wish to show that for all $b\ge 2$, there exists $a_0=a_0(b)$ such that for any fixed $a\ge a_0$, w.h.p.
	\[\ex(G_{n,p},\theta_{a,b})=\begin{cases}
		\Theta\left(p^{\rec{b}}n^{1+\rec{b}}\right) & p\ge n^{-\f{b-1}{ab-1}}(\log n)^{2b},\\ 
		n^{2-\f{a(b-1)}{ab-1}}(\log n)^{O(1)} & n^{-\f{b-1}{ab-1}}(\log n)^{2b}\ge p\ge n^{-\f{a(b-1)}{ab-1}},\\ 
		(1+o(1))p{n\choose 2} &  n^{-\f{a(b-1)}{ab-1}}\gg p\gg n^{-2}.
	\end{cases}\]
	The case $b=2$ follows from Morris and Saxton~\cite{morris2016number}, so from now on we assume $b\ge 3$.  The lower bounds for $\ex(G_{n,p},\theta_{a,b})$ follow\footnote{Specifically, one applies Corollary 5.1 to the rooted tree $(T,R)$ with $T$ the path on $b$ edges and $R$ its set of leaves.  With this one can check $\rho(T)\ge \frac{b}{b-1}$ (which  is also implicitly shown in Conlon~\cite{conlon2019graphs}), and that $\theta_{a,b}\in \c{T}^a$} from \cite[Corollary 5.1]{spiro2022random}, which is proven using random polynomial graphs (similar to how Conlon~\cite{conlon2019graphs} proved $\ex(n,\theta_{a,b})=\Om(n^{1+1/b})$ whenever $a$ is sufficiently large in terms of $b$).  The upper bound for $p$ small follows from the fact that $G_{n,p}$ has at most $(1+o(1))p{n\choose 2}$ edges w.h.p., and the upper bound for $p$ in the middle range will follow from the upper bound for $p$ large due to the monotonicity of $\ex(G_{n,p},F)$ with respect to $p$.
	
	With this all in mind, it only remains to prove $\ex(G_{n,p},\theta_{a,b})=O(p^{1/b}n^{1+1/b})$ when $p\ge n^{-\f{b-1}{ab-1}}(\log n)^{2b}$.  For this we utilize the following general result showing that balanced supersaturation implies upper bounds on $\ex(G_{n,p},F)$.
	
	\begin{thm}\label{thm:containers}
		Let $F$ be a graph and $1<\al<2$ a real number satisfying the following: there exist real numbers $C,k_0>0$ such that for every $n$-vertex graph $G$ with $e(G)= k n^{\al}$ and $k\ge k_0$, there exists a hypergraph $\cH$ on $E(G)$ whose hyperedges are copies of $F$ and is such that $|\cH|\ge C^{-1} k^{e(F)}n^{v(F)-(2-\al)e(F)}$, and such that for every $\sig\sub E(G)$ with $1\le |\sig|\le e(F)-1$, we have
		\[\deg_{\cH}(\sig)\le \frac{C k^{e(F)}n^{v(F)-(2-\al)e(F)}}{k n^\al \left(\min\left\{k^{\frac{1}{2-\al}},kn^{\al-2+\frac{v(F)-2}{e(F)-1}}\right\}\right)^{|\sig|-1}}.\]
		In this case, 
		\[\ex(G_{n,p},F)=O(p^{\al-1}n^\al)\hspace{1.7em} \text{for all \ } p\ge \left(n^{2-\alpha-\frac{v(F)-2}{e(F)-1}}/\log^2n\right)^\frac{1}{\alpha-1}.\]
	\end{thm}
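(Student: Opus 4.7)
The plan is to iteratively apply the asymmetric hypergraph container method---in a form such as that of Saxton--Thomason or Morris--Saxton---to the hypergraphs supplied by the balanced supersaturation hypothesis, and then union bound against $G_{n,p}$.

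\textbf{Container step.} For any $n$-vertex graph $G$ with $e(G) = kn^\alpha \geq k_0 n^\alpha$, the hypothesis provides a hypergraph $\mathcal{H}$ on $E(G)$. Writing $M := \min\{k^{1/(2-\alpha)},\, kn^{\alpha-2+(v(F)-2)/(e(F)-1)}\}$ and $\tau := M^{-1}$, the codegree bound rewrites as $\deg_{\mathcal{H}}(\sigma) \leq C' \tau^{|\sigma|-1} |\mathcal{H}|/e(G)$, which is precisely the input required by an asymmetric container lemma. That lemma then produces, for each $F$-free $G' \subseteq G$, a fingerprint $S(G') \subseteq G'$ with $|S(G')| \leq C'' \tau \cdot e(G)$ and a container $C(S(G')) \supseteq G'$ depending only on $S(G')$, with $e(C(S(G'))) \leq (1-\eta) e(G)$ for an absolute $\eta > 0$.

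\textbf{Iteration.} Starting from $K_n$, iterate the container step; the process terminates when the current graph has at most $k_0 n^\alpha$ edges, which happens after $O(\log n)$ rounds. Concatenating fingerprints along each root-to-leaf path, every $F$-free subgraph of $K_n$ lies in some leaf container $C$ of size at most $k_0 n^\alpha$, certified by a single fingerprint $S \subseteq E(K_n)$. Because $\tau \cdot e(G)$ decays geometrically as $e(G)$ shrinks by a constant factor per level (and $\tau$ grows only polynomially in $k$), the cumulative fingerprint size is dominated by its first-level contribution, yielding $|S| \leq s$ with $s = O(n^\alpha / M_0)$, where $M_0$ denotes $M$ evaluated at $k = \Theta(n^{2-\alpha})$.

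\textbf{Union bound.} Set $T := Cp^{\alpha-1}n^\alpha$ for a large $C$. If some $F$-free subgraph of $G_{n,p}$ has more than $T$ edges, then there exist $S \subseteq G_{n,p}$ with $|S| \leq s$ and $e(G_{n,p} \cap C(S)) > T$. Conditional on $S \subseteq G_{n,p}$, the remaining edges of $G_{n,p} \cap C(S)$ are independent with success probability $p$, and have conditional mean at most $|S| + p \cdot k_0 n^\alpha \leq T/2$ (using $p \leq 1$ and $\alpha < 2$, so $pn^\alpha \leq p^{\alpha-1}n^\alpha$, together with the bound on $s$). Chernoff then bounds the conditional failure probability by $e^{-\Omega(T)}$, giving
\[
\Pr[\ex(G_{n,p},F) > T] \;\leq\; \sum_{t=0}^{s} \binom{n^2}{t} p^t \, e^{-\Omega(T)}.
\]
The hypothesis $p \geq \big(n^{2-\alpha-(v(F)-2)/(e(F)-1)}/\log^2 n\big)^{1/(\alpha-1)}$ is calibrated exactly so that $t \log(en^2 p/t) \leq T/2$ for all $t \leq s$, making the sum $o(1)$.

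\textbf{Main obstacle.} The delicate technical point is in the iteration: tracking the parameter $\tau$ across the geometric decay of $e(G)$ and verifying the sharp bound $s \log(n^2 p) \ll T$ needed for the union bound to succeed all the way down to the stated lower bound on $p$. The exponent $\frac{v(F)-2}{e(F)-1}$ and the $\log^2 n$ factor emerge precisely from this balancing.
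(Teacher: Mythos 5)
Your overall architecture (one container step from the supersaturation hypothesis, iteration down to a sparse container, union bound over fingerprints) is the same as the paper's, but the quantitative bookkeeping in the iteration and union bound has a genuine gap. First, the iteration should stop when the container reaches density $k=p^{-(2-\alpha)}$ (so that $pkn^\alpha=p^{\alpha-1}n^\alpha$), not at the absolute constant $k_0$ from the hypothesis. Second, and more importantly, the cumulative fingerprint size is dominated by the \emph{last} (sparsest) level, not the first: the per-level fingerprint bound is $\mu(j)n^\alpha$ with $\mu(j)\asymp k(j)^{-(\alpha-1)/(2-\alpha)}$, and $k(j)$ grows geometrically as one moves back toward $K_n$, so the sum telescopes to $\Theta(k^{-(\alpha-1)/(2-\alpha)}n^\alpha)=\Theta(p^{\alpha-1}n^\alpha)$, i.e.\ the fingerprints can be as large as the target $T$ itself, not $O(n^\alpha/M_0)$ with $M_0$ evaluated at the complete graph.

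With $s=\Theta(T)$, your union bound $\sum_{t\le s}\binom{n^2}{t}p^t e^{-\Omega(T)}$ fails: the term $t=s$ contributes roughly $(en^2p/T)^{T}=(ep^{2-\alpha}n^{2-\alpha})^{T}=e^{\omega(T)}$ whenever $pn\gg 1$, which the $e^{-\Omega(T)}$ cannot absorb. (Even with your smaller value of $s$, the claimed calibration $t\log(en^2p/t)\le T/2$ fails by polylogarithmic factors at the boundary $p\approx(n^{2-\alpha-\frac{v(F)-2}{e(F)-1}}/\log^2 n)^{1/(\alpha-1)}$.) The missing ingredient is the refined fingerprint count of Theorem~\ref{thm:cycle:containers:turan:refined}: each level's fingerprint lives inside the \emph{previous container} $G_{j}$, which has only $k(j)n^\alpha$ edges, so the relevant binomial is $\binom{k(j)n^\alpha}{a(j)}\le(en^\alpha/a(j))^{a(j)/(\alpha-1)}$ (using $a(j)\le k(j)^{-(\alpha-1)/(2-\alpha)}n^\alpha$); combining these across levels via the convexity bound of Lemma~\ref{obs:optimize} gives that the number of fingerprints with $s$ edges is at most $(Cn^\alpha/s)^{s/(\alpha-1)}\exp(O(k^{-(\alpha-1)/(2-\alpha)}n^\alpha))$. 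Multiplying by $p^{s}$ then yields $(Cp^{\alpha-1}n^\alpha/s)^{s/(\alpha-1)}\le e^{O(p^{\alpha-1}n^\alpha)}$ uniformly in $s$, which is exactly the saving needed for the final union bound to close.
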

	
	We note that if $F$ is 2-balanced, i.e.\ if it has $m_2(F)=\frac{e(F)-1}{v(F)-2}$, then the conclusion of \Cref{thm:containers} is exactly the upper bound predicted by \Cref{conj:main} provided $\ex(n,F)=\Theta(n^\al)$.
	
	The proof of \Cref{thm:containers} uses what is by now a fairly routine argument involving hypergraph containers, which is a powerful technique developed recently and independently by Balogh, Morris and Samotij~\cite{balogh2015independent} and Saxton and Thomason~\cite{saxton2015hypergraph}.  We defer the details to \Cref{append:Containers}.
	
	In any case, by \Cref{cor:balancedEdges}, we see that $\theta_{a,b}$ satisfies the conditions of \Cref{thm:containers} for $a\ge 100$ and $b\ge 3$, proving the desired upper bound and completing the proof.
	
	\section{Concluding Remarks}\label{sec:conclusion}
	In this paper we established upper bounds for $\ex(G_{n,p},\theta_{a,b})$ which are essentially tight whenever $a$ is sufficiently large in terms of $b$.  It would be of interest if one could extend our ideas to prove effective upper bounds on $\ex(G_{n,p},F)$ for other $F$.  In particular, one might hope to prove upper bounds for powers of rooted trees.
	
	More precisely, given a tree $T$, a set $R\sub V(T)$, and an integer $a$, we define $T_R^a$ to be the graph consisting of $a$ copies of $T$ which agree only on the set $R$.  For example, if $T$ is a path of length $b$ and $R$ consists of its two endpoints, then $T_R^a=\theta_{a,b}$, and if $T=K_{s,1}$ and $R$ is its set of leaves, then $T_R^a=K_{s,a}$.  In particular, the only bipartite graphs for which we know tight bounds for $\ex(G_{n,p},F)$, namely theta graphs and complete bipartite graphs, are examples of powers of trees.  
	\begin{quest}\label{quest:RootedTrees}
		Can one prove essentially tight bounds on $\ex(G_{n,p},T_R^a)$ for other powers of rooted trees?
	\end{quest}
	The best upper bounds for this problem come from the general bounds of Jiang and Longbreak~\cite{jiang2022balanced}, and the best lower bound comes from \cite{spiro2022random}.  We note that analogous to the situation for theta graphs prior to this paper, the lower bound of \cite{spiro2022random} depends only on the tree $T$ while the upper bound of \cite{jiang2022balanced} depends on $a$, and as such the gaps between these bounds grow large as $a$ increases.  Similar to the situation in the present paper, we suspect that the lower bound is closer to the truth, and in particular, Conjecture~\ref{conj:main} claims that in many cases, the lower bound from \cite{spiro2022random} should be the correct answer.
	
	Solving Question~\ref{quest:RootedTrees} for all rooted trees is likely impossible.  Indeed, even the $p=1$ case, namely that of determining the Tur\'an number $\ex(n,T_R^a)$, is an important open problem of Bukh and Conlon~\cite{bukh2018rational} related to the rational exponents conjecture.  That being said, there are a number of special cases where this Tur\'an number is known \cite{conlon2022rational,conlon2021more,janzer2020extremal,jiang2020negligible,kang2021rational}, and it might be possible to generalize our ideas to deal with some of these cases in the random setting.  A more detailed discussion on this problem can be found in the concluding remarks of \cite{spiro2022random}.

	To prove \Cref{thm:main}, we first proved a balanced supersaturation result, \Cref{cor:balancedEdges}, which is essentially optimal for $a\ge 100$.  It would be desirable to do this for all $a$.
	
	\begin{quest}\label{quest:allA}
		Can one extend \Cref{cor:balancedEdges} to hold for all $a\ge 3$?
	\end{quest}
	Note that the $a=2$ case is already dealt with by Morris and Saxton~\cite{morris2016number}.  Solving this question, in addition to being desirable from a philosophical standpoint,  might lead to a simpler proof of \Cref{cor:balancedEdges} which could more easily generalize to solving Question~\ref{quest:RootedTrees}.  The simplest way to resolve this question would be to resolve the following.
	
	\begin{quest}\label{quest:largeX}
		Can one extend Proposition~\ref{prop:expansion} to hold with $|X|\ge \ep m$?
	\end{quest}
	An affirmative answer here would not only give an affirmative answer to Question~\ref{quest:allA}, but also would allow one to avoid many of the messy technical details in our proof.  Namely, with this one can alter the definition of $D_{s,t}$ in such a way that the $D_t$ functions are no longer needed, and such that the computations for proving \Cref{prop:codegrees} are much simpler.
	
	\textbf{Acknowledgments}.  We thank Rob Morris for useful comments about the presentation of this paper.

	\printbibliography
	
	\appendix

	\section{Proof of \Cref{thm:containers}}\label{append:Containers}
	Throughout this section, we say that a graph $F$ is \textit{$\al$-good} with $1<\al<2$ a real number if it satisfies the following balanced supersaturation condition: there exist real numbers $C,k_0>0$ such that for every $n$-vertex graph $G$ with $e(G)= k n^{\al}$ and $k\ge k_0$, there exists a hypergraph $\cH$ on $E(G)$ whose hyperedges are copies of $F$ and is such that $|\cH|\ge C^{-1} k^{e(F)}n^{v(F)-(2-\al)e(F)}$, and such that for every $\sig\sub E(G)$ with $1\le |\sig|\le e(F)-1$, we have
	\[\deg_{\cH}(\sig)\le \frac{C k^{e(F)}n^{v(F)-(2-\al)e(F)}}{k n^\al \left(\min\left\{k^{\frac{1}{2-\al}},kn^{\al-2+\frac{v(F)-2}{e(F)-1}}\right\}\right)^{|\sig|-1}}.\]
	Here we prove \Cref{thm:containers}, i.e.\  that if $F$ is $\al$-good, then  $\ex(G_{n,p},F)=O(p^{\al-1}n^\al)$ w.h.p.\ for all $p\ge \left(n^{\alpha-2+\frac{v(F)-2}{e(F)-1}}/\log^2n\right)^\frac{-1}{\alpha-1}$.  We emphasize that our proof is nearly word-for-word the same as that of Morris and Saxton \cite{morris2016number}.  We make use the following definition from~\cite{saxton2015hypergraph}.
	
	\begin{defn}\label{def:tau}
		Given an $r$-uniform hypergraph $\HH$ and a real number $\tau$, define
		$$\delta(\HH,\tau) \, = \, \frac{1}{e(\HH)} \,\sum_{j=2}^r \,\frac{1}{\tau^{j-1}} \sum_{v \in V(\HH)} d^{(j)}(v),$$
		where
		$$d^{(j)}(v) \, = \, \max\big\{ \deg_\HH(\sigma) \, : \, v \in \sigma \subseteq V(\HH) \textup{ and }|\sigma| = j \big\}$$
		denotes the maximum degree in $\HH$ of a $j$-set containing $v$. 
	\end{defn}
	
	We remark that we have removed some extraneous constants from the definition in~\cite{saxton2015hypergraph}, since these do not affect the formulation of the theorem below.  We also note that $\delta$ is typically called a \textit{codegree function}, but we emphasize that this has no relation to the definition of codegree functions that we used throughout our paper.

	The following container theorem was proved by Balogh, Morris and Samotij~\cite[Proposition~3.1]{balogh2015independent} and by Saxton and Thomason~\cite[Theorem~6.2]{saxton2015hypergraph}\footnote{To be precise, Theorem~6.2 in~\cite{saxton2015hypergraph} is stated where~$T$ is a tuple of vertex sets rather than a single vertex set, but it is straightforward to deduce this form from the methods of~\cite{saxton2015hypergraph}.}, where here the notation $S^{(\le t)}$ denotes the collection of all subsets of $S$ of size at most $t$.
	
	\begin{thm}\label{thm:containers:turan}
		Let $r \ge 2$ and let $0 < \delta < \delta_0(r)$ be sufficiently small. Let $\HH$ be an $r$-graph with~$N$ vertices, and suppose that $\delta(\HH,\tau) \le \delta$ for some $\tau > 0$. Then there exists a collection $\C$ of subsets of $V(\HH)$, and a function $f \colon V(\HH)^{(\le \tau N / \delta)} \to \C$ such that:
		\begin{itemize}
			\item[$(a)$] For every independent set $I$, there exists $T \subset I$ with $|T| \le \tau N / \delta$ and $I \subset f(T)$, and\smallskip
			\item[$(b)$] $e\big( \HH[C] \big) \le \big(1 - \delta \big) e(\HH)$ for every $C \in \C$.
		\end{itemize}
	\end{thm}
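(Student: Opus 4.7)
The plan is to construct, via a deterministic algorithm, a map that takes any independent set $I \subseteq V(\HH)$ to a pair $(T, C)$ with $T \subseteq I$ of size at most $\tau N/\delta$ and $C \supseteq I$ satisfying $e(\HH[C]) \le (1-\delta)e(\HH)$. The algorithm will be designed so that $C$ depends only on $T$, at which point setting $f(T) := C$ and $\mathcal{C} := \{f(T) : T \in V(\HH)^{(\le \tau N/\delta)}\}$ gives the desired objects. The key conceptual point is that the algorithm ``queries'' the independent set only at a small number of adaptively chosen vertices, and the transcript of those queries is exactly the fingerprint $T$.

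The algorithm greedily peels vertices one at a time. It maintains a residual hypergraph $\HH_i$, a fingerprint $T_i$, and a candidate container $C_i$, initialized as $\HH_0 = \HH$, $T_0 = \emptyset$, $C_0 = V(\HH)$. At each step it selects the vertex $v \in C_i$ of maximum weighted relative degree $d_i(v) := \max_{j \ge 2} d^{(j)}_{\HH_i}(v) \, \tau^{-(j-1)}$ and queries whether $v \in I$. If $v \in I$, we append $v$ to the fingerprint and restrict $\HH_i$ to the link of $v$ (so subsequent work happens in an $(r-1)$-graph on the neighbors of $v$); if $v \notin I$, we remove $v$ from $C$ and delete from $\HH_i$ every hyperedge through $v$. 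The algorithm halts the first time $e(\HH_i) \le (1-\delta)e(\HH)$, and outputs the current $T$ and $C$. Crucially, the vertex queried at each step depends only on the state $(\HH_i, T_i, C_i)$, which in turn can be reconstructed from $T$ alone by running the algorithm while treating every queried vertex not in $T$ as rejected; this makes $C$ a function of $T$ and defines $f$.

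The main analytic task is to show the fingerprint size is at most $\tau N/\delta$. This is a potential argument: when a vertex $v$ of weighted degree $\alpha$ is added to $T$, the residual edge count drops roughly in proportion to $\alpha \tau^{j-1}$ for the dominant $j$, and summing these drops over all fingerprint steps gives a total controlled by $\sum_i d_i(v_i)$, which by the hypothesis $\delta(\HH,\tau) \le \delta$ is at most a constant times $\delta \cdot e(\HH)$ per vertex. Re-expressing this bound yields a fingerprint size of at most $\tau N/\delta$ before the edge count drops below $(1-\delta)e(\HH)$, at which point the algorithm terminates and the container property holds automatically.

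The principal obstacle is that $\delta(\HH, \tau)$ aggregates contributions from all uniformities $j = 2, \ldots, r$, so every step of the algorithm must make progress against a weighted \emph{sum} rather than against any individual $d^{(j)}$. Saxton and Thomason resolve this with their ``scythe'' subroutine, which coordinates the selection of vertices across uniformities by recursively peeling the link-hypergraphs of different sizes together; Balogh, Morris and Samotij instead analyze a random-deletion scheme and derandomize it via the method of conditional expectations. Either approach requires delicate bookkeeping to track the edge-count potential through the recursion and to extract the precise constant $\tau N/\delta$; this is the technical heart of the argument, and the remainder of the proof is essentially the calibration of these bounds against the definition of $\delta(\HH,\tau)$.
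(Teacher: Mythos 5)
You should first note that the paper does not prove this statement at all: it is imported verbatim from Balogh--Morris--Samotij (Proposition~3.1 of their paper) and Saxton--Thomason (Theorem~6.2), and is used as a black box in the appendix. So there is no in-paper argument to compare against, and a genuine blind proof would amount to reproving the hypergraph container theorem itself.

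Measured against that task, your proposal is an outline of the known strategy rather than a proof, and the gap sits exactly at the technical core. First, your own final paragraph concedes that extracting the fingerprint bound $\tau N/\delta$ from the aggregated quantity $\delta(\HH,\tau)$ --- which mixes the codegrees $d^{(j)}$ over all uniformities $j=2,\dots,r$ --- requires either the Saxton--Thomason scythe or the Balogh--Morris--Samotij induction on uniformity, and none of that is carried out; but that calibration \emph{is} the theorem, so deferring it to the cited papers leaves nothing proved. Second, the single-vertex greedy as you describe it has a concrete flaw: after a queried vertex $v\in I$ you replace $\HH_i$ by the link of $v$, so the residual object becomes an $(r-1)$-uniform (then $(r-2)$-uniform, etc.) hypergraph on a shrinking vertex set, and your halting rule ``stop when $e(\HH_i)\le(1-\delta)e(\HH)$'' compares edge counts of hypergraphs of different uniformities; it does not yield $e(\HH[C])\le(1-\delta)e(\HH)$ for the original $r$-graph, which is property $(b)$. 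Relating edge counts across the levels of the recursion --- with a separate, separately bounded fingerprint at each uniformity and containers for link hypergraphs assembled into a container for $\HH$ --- is precisely the bookkeeping the cited proofs perform by induction on $r$, and without it neither the potential argument for $|T|\le\tau N/\delta$ nor the container property closes. The reconstruction-from-$T$ idea (rerun the algorithm treating unqueried/rejected vertices as not in $I$) is fine and standard, but on its own it does not bridge these gaps.
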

	
	We will refer to the collection $\C$ as the \emph{containers} of $\HH$, since, by~$(a)$, every independent set is contained in some member of $\C$. The reader should think of $V(\HH)$ as being the \emph{edge} set of some underlying graph $G$, and $E(\HH)$ as encoding (some subset of) the copies of a graph $F$ in $G$. Thus every $F$-free subgraph of $G$ is an independent set of $\HH$. 
	
	Let us introduce some notation to simplify the statements which follow. Given a graph $F$ and real number $\al$, let $\I = \I(n)$ denote the collection of $F$-free graphs with $n$ vertices, and let $\G = \G(n,k)$ denote the collection of all graphs with $n$ vertices and at most $kn^\alpha$ edges. By a \emph{colored graph}, we mean a graph together with an arbitrary labelled partition of its edge set. 
	
	\begin{thm}\label{thm:cycle:containers:turan:refined}
		If $F$ is $\al$-good, then there exists a constant $C = C(F)$ such that the following holds for all sufficiently large $n,k \in \N$ with $k \le \left(n^{\alpha-2+\frac{v(F)-2}{e(F)-1}}/\log^2n\right)^\frac{2-\alpha}{\alpha-1}$. There exists a collection $\S$ of colored graphs with $n$ vertices and at most $Ck^{-\frac{\alpha-1}{2-\alpha}}\cdot n^\alpha$ edges, as well as functions 
		$$g \colon \I \to \S \qquad \text{and} \qquad h \colon \S \to \G(n,k)$$
		with the following properties:
		\begin{itemize}
			\item[$(a)$] 
			For every $s \ge 0$, the number of colored graphs in $\S$ with $s$ edges is at most
			$$\bigg( \frac{C n^\alpha}{s} \bigg)^{\frac{1}{\alpha-1}\cdot s}  \cdot \exp\Big( Ck^{-\frac{\alpha-1}{2-\alpha}}\cdot n^\alpha \Big).$$
			\item[$(b)$] $g(I) \subset I \subset g(I) \cup h(g(I))$ for every $I \in \I$.
		\end{itemize}
	\end{thm}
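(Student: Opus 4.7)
I would prove this via a standard iterated hypergraph container argument, essentially following Morris--Saxton. The idea is to peel off an $F$-free graph $I$ into a nested sequence of containers $K_n = G_0 \supset G_1 \supset \cdots \supset G_{r+1}$, recording a small fingerprint $T_i \subset I \cap E(G_i)$ at each step, until the final container lies in $\G(n,k)$. The colored graph $g(I)$ will be $\bigsqcup_i T_i$ with layer $i$ colored $i$, and $h(g(I)) := G_{r+1}$.

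\textbf{Iteration.} Given $G_i$ with $e(G_i) = k_i n^\alpha > k n^\alpha$, apply the $\alpha$-goodness of $F$ to get a hypergraph $\HH_i$ on $E(G_i)$ with balanced codegrees. Set $\beta := \frac{v(F)-2}{e(F)-1}$ and $M_i := \min\{k_i^{1/(2-\alpha)},\, k_i n^{\alpha-2+\beta}\}$, and take $\tau_i := C'/(M_i \delta)$. A direct computation using the codegree bound together with $v(\HH_i) = k_i n^\alpha$ gives
\[
\delta(\HH_i, \tau_i) \,=\, \frac{1}{|\HH_i|}\sum_{j\ge 2}\tau_i^{-(j-1)}\sum_v d^{(j)}(v) \,\le\, C\sum_{j\ge 2}(M_i \tau_i)^{-(j-1)} \,\le\, \delta
\]
once $C'$ is chosen large. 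Apply Theorem~\ref{thm:containers:turan} with the $F$-free set $I\cap E(G_i)$: we obtain a fingerprint $T_i\subset I$ with $|T_i| \le \tau_i e(G_i)/\delta$ and a new container $G_{i+1}$ with $I \subset G_{i+1}$ and $e(G_{i+1})\le (1-\delta)e(G_i)$. Stop at the first step $r$ with $e(G_{r+1}) \le kn^\alpha$. The inclusions $g(I)\subset I \subset g(I)\cup h(g(I))$ are immediate by construction.

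\textbf{Bounding $|g(I)|$.} Split the iterations into \emph{high-$k_i$} steps, where $k_i \ge k_i^* := n^{(\alpha-2+\beta)(2-\alpha)/(\alpha-1)}$ (so $M_i = k_i n^{\alpha-2+\beta}$ and $|T_i| = O(n^{2-\beta})$), and \emph{low-$k_i$} steps, where $M_i = k_i^{1/(2-\alpha)}$ (so $|T_i| = O(n^\alpha k_i^{-(\alpha-1)/(2-\alpha)})$). There are only $O(\log n)$ high-$k_i$ steps, contributing $O(n^{2-\beta}\log n)$ total; the hypothesis $k \le (n^{\alpha-2+\beta}/\log^2 n)^{(2-\alpha)/(\alpha-1)}$ is precisely what is needed to absorb this into $Cn^\alpha k^{-(\alpha-1)/(2-\alpha)}$. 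The low-$k_i$ contributions form a geometric series in $k_i^{-(\alpha-1)/(2-\alpha)}$ dominated by its final term, which is $O(n^\alpha k^{-(\alpha-1)/(2-\alpha)})$ when $k_i$ first falls below $k$. Together this yields $|g(I)|\le Cn^\alpha k^{-(\alpha-1)/(2-\alpha)}$, so $g(I)\in \S$.

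\textbf{Counting and main obstacle.} Since $G_i$ is determined by $T_0,\dots,T_{i-1}$, the number of colored fingerprints with layer sizes $(s_0,\dots,s_r)$ is at most $\prod_i \binom{e(G_i)}{s_i} \le \prod_i (e\cdot e(G_i)/s_i)^{s_i}$. In low-$k_i$ iterations the relation $s_i \le Cn^\alpha k_i^{-(\alpha-1)/(2-\alpha)}$ can be inverted to $k_i \le (Cn^\alpha/s_i)^{(2-\alpha)/(\alpha-1)}$, which after substitution gives $e(G_i)/s_i \le (Cn^\alpha/s_i)^{1/(\alpha-1)}$; this produces the $(Cn^\alpha/s)^{s/(\alpha-1)}$ factor after a convexity argument that aggregates the per-layer bounds. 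High-$k_i$ iterations and the choice of partition $(s_0,\dots,s_r)$ of $s$ into $r+1 = O(\log n)$ color classes contribute the $\exp(Cn^\alpha k^{-(\alpha-1)/(2-\alpha)})$ factor, again using the hypothesis on $k$. The main obstacle will be this last step: carefully assembling the per-iteration binomial bounds across both the high-$k_i$ and low-$k_i$ regimes and collapsing them into the single compact form required, while correctly accounting for the coloring and for the fact that each $k_i$ (and hence $e(G_i)$) is itself determined by the preceding fingerprints.
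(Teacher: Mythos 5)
Your proposal is correct and follows essentially the same route as the paper: a single application of the container theorem with $1/\tau \asymp \min\{k^{1/(2-\alpha)}, kn^{\alpha-2+(v(F)-2)/(e(F)-1)}\}$ (the paper isolates this as Proposition~\ref{prop:refined_containers_for_graph}), iterated until the container has at most $kn^\alpha$ edges, with the fingerprint sizes split into the same two regimes according to which term achieves the minimum, and the final count assembled via the same convexity lemma (Lemma~\ref{obs:optimize}) plus the observation that there are only $O(\log n)$ iterations. The "main obstacle" you flag is handled in the paper exactly as you sketch it.
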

	
	We prove Theorem~\ref{thm:cycle:containers:turan:refined} by iterating the following container result.
	
	\begin{prop} \label{prop:refined_containers_for_graph}
		If $F$ is an $\al$-good graph, then there exist $k_0 \in \N$ and $\eps > 0$ such that the following holds for every $k \ge k_0$ and every $n \in \N$. Set
		\begin{equation}\label{def:mu}
			\mu \, = \, \frac{1}{\eps} \cdot \max\Big\{ k^{-\frac{\alpha-1}{2-\alpha}}, \, n^{-\left(\alpha-2+\frac{v(F)-2}{e(F)-1}\right)} \Big\}.
		\end{equation}
		Given a graph~$G$ with~$n$ vertices and $kn^\alpha$ edges, there exists a function $f_G$ that maps subgraphs of $G$ to subgraphs of $G$, such that, for every $F$-free subgraph~$I \subset G$, \begin{itemize}
			\item[$(a)$] There exists a subgraph $T=T(I) \subset I$ with $e(T) \le \mu n^\alpha$ and $I \subset f_G(T)$, and\smallskip
			\item[$(b)$] $e\big( f_G(T(I)) \big) \le (1 - \eps) e(G)$.
		\end{itemize}
	\end{prop}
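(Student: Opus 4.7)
The plan is to apply the hypergraph container theorem (Theorem~\ref{thm:containers:turan}) to the hypergraph $\mathcal{H}$ supplied by the $\alpha$-good hypothesis, and then use the single-edge degree bound to shrink each container below a $(1-\eps)$ fraction of $e(G)$. First I would invoke the $\alpha$-good property on $G$ to produce an $e(F)$-uniform hypergraph $\mathcal{H}$ on $V(\mathcal{H})=E(G)$ with $N:=kn^\alpha$ vertices, with $e(\mathcal{H})\ge C^{-1}k^{e(F)}n^{v(F)-(2-\alpha)e(F)}$, and with the stated codegree bounds. Set $M:=\min\{k^{1/(2-\alpha)},\,kn^{\alpha-2+(v(F)-2)/(e(F)-1)}\}$ and choose $\tau:=c/M$ for a large constant $c=c(F)$. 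Using the codegree bound at level $j$ and the trivial estimate $\sum_v d^{(j)}(v)\le N\cdot Ck^{e(F)}n^{v(F)-(2-\alpha)e(F)}/(N\cdot M^{j-1})$, dividing by the lower bound on $e(\mathcal{H})$ gives
\[
\delta(\mathcal{H},\tau)\le C^2\sum_{j=2}^{e(F)}(\tau M)^{-(j-1)}=C^2\sum_{j=2}^{e(F)}c^{-(j-1)},
\]
which is at most $\delta_0(e(F))$ once $c$ is large enough.

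Now I would apply Theorem~\ref{thm:containers:turan} with $r=e(F)$ and $\delta=\delta_0$, obtaining a family $\mathcal{C}$ of containers and a map $f:V(\mathcal{H})^{(\le \tau N/\delta_0)}\to\mathcal{C}$. For an $F$-free $I\subseteq G$ the theorem supplies $T\subseteq I$ with $|T|\le\tau N/\delta_0$ and $I\subseteq f(T)$, and $e(\mathcal{H}[f(T)])\le(1-\delta_0)e(\mathcal{H})$. Define $T(I):=T$ and $f_G(T):=f(T)$ (extended arbitrarily elsewhere). To check (a), note that $k/M=\max\{k^{-(\alpha-1)/(2-\alpha)},\,n^{-(\alpha-2+(v(F)-2)/(e(F)-1))}\}=\eps\mu$, so
\[
|T|\le\frac{\tau N}{\delta_0}=\frac{c}{\delta_0}\cdot\frac{k}{M}\cdot n^\alpha=\frac{c\eps}{\delta_0}\,\mu n^\alpha,
\]
and we get $e(T)\le\mu n^\alpha$ as soon as $\eps\le\delta_0/c$.

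For (b), I would show every container $C$ satisfies $|C|\le(1-\eps)e(G)$ by contradiction. Suppose $|E(G)\setminus C|<\eps kn^\alpha$; every hyperedge of $\mathcal{H}$ missing from $\mathcal{H}[C]$ must use some edge of $E(G)\setminus C$, and the $|\sigma|=1$ case of the $\alpha$-good bound says each edge lies in at most $Ck^{e(F)-1}n^{v(F)-(2-\alpha)e(F)-\alpha}$ hyperedges. Hence
\[
e(\mathcal{H})-e(\mathcal{H}[C])\le \eps kn^\alpha\cdot Ck^{e(F)-1}n^{v(F)-(2-\alpha)e(F)-\alpha}\le \eps C^2\,e(\mathcal{H}),
\]
contradicting $e(\mathcal{H}[C])\le(1-\delta_0)e(\mathcal{H})$ once $\eps<\delta_0/C^2$. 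Taking $\eps:=\min\{\delta_0/c,\,\delta_0/(2C^2)\}$ settles both (a) and (b).

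The main obstacle is bookkeeping rather than any deep step: one must carefully track how the $\min$ defining $M$ becomes the $\max$ defining $\mu$ after dividing by $k$, and verify that the constants $c$, $\delta_0$, $\eps$ can be chosen consistently so that $\tau M$ is large, $\tau N/\delta_0\le \mu n^\alpha$, and $\eps C^2<\delta_0$ all hold simultaneously. The genuinely substantive input — balanced supersaturation with codegrees of the stated form, in particular the bound on single-edge degrees used to trim containers — is exactly what being $\alpha$-good provides.
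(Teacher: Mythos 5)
Your proposal is correct and follows essentially the same route as the paper: apply \Cref{thm:containers:turan} to the hypergraph supplied by $\al$-goodness with $\tau$ comparable to the reciprocal of $\min\{k^{1/(2-\al)},kn^{\al-2+(v(F)-2)/(e(F)-1)}\}$, verify $\delta(\cH,\tau)$ is small via the codegree bounds, and then upgrade the hyperedge-density drop in each container to a graph-edge drop using the $|\sig|=1$ degree bound. The only differences are cosmetic choices of constants (your $c,\delta_0,\eps$ versus the paper's $\delta=C^{-1}$, $\tau^{-1}=\delta^4\cdot\min\{\cdots\}$, $\eps=\delta^5$).
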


	\begin{proof}
		By definition of $F$ being $\al$-good, there exist real numbers $C,k_0>0$ and a hypergraph $\cH$ on $E(G)$ whose hyperedges are copies of $F$ and is such that 
		\begin{itemize}
			\item[(i)] $|\cH|\ge C^{-1} k^{e(F)}n^{v(F)-(2-\al)e(F)}$, and
			\item[(ii)] For every $\sig\sub E(G)$ with $1\le |\sig|\le e(F)-1$, we have
			\[\deg_{\cH}(\sig)\le \frac{C k^{e(F)}n^{v(F)-(2-\al)e(F)}}{k n^\al \left(\min\left\{k^{\frac{1}{2-\al}},kn^{\al-2+\frac{v(F)-2}{e(F)-1}}\right\}\right)^{|\sig|-1}}.\]
		\end{itemize} 
		Let $\del:=C^{-1}$, and without loss of generality we can assume $C$ is sufficiently large so that Theorem~\ref{thm:containers:turan} holds with $r = e(F)$ and this choice of $\del$. 
		We will show that if 
		$$\frac{1}{\tau} = \delta^4 k \cdot \min\Big\{ k^{\frac{\alpha-1}{2-\alpha}}, \, n^{\alpha-2+\frac{v(F)-2}{e(F)-1}}\Big\} = \delta^4 \cdot \min\Big\{ k^{\frac{1}{2-\alpha}}, \, kn^{\alpha-2+\frac{v(F)-2}{e(F)-1}}\Big\},$$ 
		then it follows from~$(i)$ and~$(ii)$ that $\delta(\HH,\tau) \le \delta$. Indeed, since $v(\HH) = e(G) = kn^\alpha$, we have
		{\footnotesize \begin{align*}
				\delta(\HH,\tau) & \, = \, \frac{1}{e(\HH)} \,\sum_{j=2}^{e(F)} \,\frac{1}{\tau^{j-1}} \cdot \sum_{v \in V(\HH)} d^{(j)}(v) \\
				& \, \le \, \frac{v(\HH)}{e(\HH)} \Bigg[ \sum_{j=2}^{e(F)} \frac{1}{\tau^{j-1}} \cdot \frac{C k^{e(F)}n^{v(F)-(2-\al)e(F)}}{k n^\al \left(\min\left\{k^{\frac{1}{2-\al}},kn^{\al-2+\frac{v(F)-2}{e(F)-1}}\right\}\right)^{j-1}} \Bigg]\\
				& \, \le \, \frac{kn^\al}{C^{-1} k^{e(F)}n^{v(F)-(2-\al)e(F)}} \Bigg[ \sum_{j=2}^{e(F)} \left(\delta^4 \cdot \min\Big\{ k^{\frac{1}{2-\alpha}}, \, kn^{\alpha-2+\frac{v(F)-2}{e(F)-1}}\Big\}\right)^{j-1} \cdot \frac{C k^{e(F)}n^{v(F)-(2-\al)e(F)}}{k n^\al \left(\min\left\{k^{\frac{1}{2-\al}},kn^{\al-2+\frac{v(F)-2}{e(F)-1}}\right\}\right)^{j-1}} \Bigg]\\				
				& \, \le \, \sum_{j=2}^{e(F)} C^2\delta^{4(j-1)} \, \le \, \delta,
		\end{align*}}
		where this last bound holds provided $\del=C^{-1}$ is sufficiently small, which we can assume to be the case without loss of generality.
		Thus, applying Theorem~\ref{thm:containers:turan} and setting $\eps = \delta^5$, we obtain a collection~$\C$ of subgraphs of $G$ and a function $f_G$ mapping subgraphs of $G$ to elements of $\C$ so that for every $F$-free subgraph $I\subset G$, there exists a subgraph $T = T(I)\subset I$ with
		
		\[
		e(T)\ \leq\ \tau N /\delta \ \leq  \frac{n^\al}{\eps}  \cdot \max\Big\{ k^{-\frac{\alpha-1}{2-\alpha}}, \, n^{-\left(\alpha-2+\frac{v(F)-2}{e(F)-1}\right)} \Big\}\ =\ \mu n^\alpha
		\]
		
		and $I\subset f_G(T)$, and also 
		
		\begin{equation}\label{eq:yourAdHere1}
			e\big( \HH[C] \big) \le \big(1 - \delta \big) e(\HH) \text{ for all } C\in\C.
		\end{equation}

		It only remains to show that this second condition implies $e(C) \le (1-\eps) e(G)$ for every $C \in \C$ (notice that the first inequality is about hyperedges and the second is about graph edges). To prove this, for each $C \in \C$ set 
		\[
		\D(C) \, = \, E(\HH) \setminus E(\HH[C]) \, = \, \big\{ e \in E(\HH) \,:\, v \in e \mbox{ for some } v \in V(\HH) \setminus C \big\},
		\]
		and recall that $\deg_\HH(v) \le e(\HH) / \big( \delta^2 kn^\alpha \big)$ for every $v \in V(\HH)$, by~$(i)$, ~$(ii)$, and $\del=C^{-1}$. Therefore,
		\[
		|\D(C)| \, \le \, \frac{e(\HH)}{\delta^2 kn^\alpha} \cdot |E(G) \setminus C|.
		\]
		On the other hand, we have $|\D(C)| = e(\HH) - e(\HH[C]) \ge \delta e(\HH)$ by condition~\eqref{eq:yourAdHere1}, and so
		$$ |E(G) \setminus C| \, \ge \, \delta^3 kn^\alpha\ge \ep e(G),$$
		as required. Hence the proposition follows with $\eps = \delta^5$.
	\end{proof}

	With this in hand, we continue on towards the proof of  Theorem~\ref{thm:cycle:containers:turan:refined}. We will need the following straightforward lemma (see, for example,~\cite[Lemma~4.3]{CM}).

	\begin{lem}\label{obs:optimize}
		Let $M > 0$, $s > 0$ and $0 < \delta < 1$. If $a_1,\ldots,a_m \in \RR$ satisfy $s = \sum_j a_j$ and $1 \le a_j \le (1-\delta)^j M$ for each $j \in [m]$, then
		$$s \log s \, \le \, \sum_{j=1}^m a_j \log a_j + O(M).$$
	\end{lem}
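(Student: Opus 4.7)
The plan is to apply the log sum inequality with the geometric upper bounds from the hypothesis as the denominator sequence, which then produces error terms that can be controlled directly by those same bounds. The conceptual picture is that $\sum a_j \log a_j$ is minimized, subject to $\sum a_j = s$, when the mass is spread as evenly as possible; the geometric caps $a_j \le (1-\delta)^j M$ obstruct this evenness, but only by an amount commensurate with $M$.

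Specifically, I would start from the log sum inequality, $\sum_j a_j \log(a_j/b_j) \ge s \log(s/B)$ with $B = \sum_j b_j$, and take $b_j = (1-\delta)^j M$. Rearranging and using $\log b_j = j \log(1-\delta) + \log M$ gives
\[
\sum_{j=1}^m a_j \log a_j \;\ge\; s \log s + \log(1-\delta)\sum_{j=1}^m j a_j + s \log M - s \log B.
\]
Since $B \le M \sum_{j\ge 1}(1-\delta)^j \le M/\delta$, the two $s \log M$ terms nearly cancel, leaving a residual of at most $s \log(1/\delta)$ from that pair.

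To finish, I would bound the two remaining error terms using the hypothesis $a_j \le (1-\delta)^j M$ twice. First, $\sum_j j a_j \le M \sum_{j\ge 1} j(1-\delta)^j = M(1-\delta)/\delta^2 = O_\delta(M)$, and since $|\log(1-\delta)| = O_\delta(1)$, the first error term contributes $-O_\delta(M)$. Second, $s = \sum_j a_j \le B \le M/\delta$, so $s \log(1/\delta) = O_\delta(M)$ as well. Combining these yields $\sum_j a_j \log a_j \ge s \log s - O_\delta(M)$, which is the desired inequality (the implicit constant depending on $\delta$, consistent with the rest of the paper).

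There is no real obstacle beyond careful bookkeeping. The structural reason the argument closes is that the geometric decay of the upper bounds forces both $s$ and $\sum j a_j$ to be $O_\delta(M)$, which is exactly what is needed to absorb the cross terms produced by the log sum inequality; the lower bound $a_j \ge 1$ is not used in this proof (it presumably plays a role where the lemma is invoked).
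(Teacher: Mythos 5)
Your proof is correct. Note that the paper does not actually prove this lemma --- it labels it ``straightforward'' and cites \cite{CM} --- so there is no in-paper argument to compare against; your write-up fills that gap with a complete, self-contained proof. The log sum inequality route is clean: taking $b_j=(1-\delta)^jM$ gives $\sum_j a_j\log a_j\ge s\log s+\log(1-\delta)\sum_j ja_j+s\log(M/B)$, and all three error terms are controlled by the geometric cap alone, since $\sum_j ja_j\le M(1-\delta)/\delta^2$, $s\le B\le M/\delta$, and $M/B\ge\delta$. The constant in the $O(M)$ depends on $\delta$, which is the intended reading ($\delta$ is fixed in the statement and, in the application, determined by $F$ and $\alpha$). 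Your observation that the hypothesis $a_j\ge 1$ is never needed is also accurate; the standard proofs of this lemma in the literature run a direct convexity/partitioning argument on the indices, but since the log sum inequality is itself just convexity of $t\log t$, your route is the same idea packaged more efficiently.
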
 
	
	We can now deduce Theorem~\ref{thm:cycle:containers:turan:refined}. 
	
	\begin{proof}[Proof of Theorem~\ref{thm:cycle:containers:turan:refined}]
		We construct the functions $g$ and $h$ and the family $\S$ as follows. Given an $F$-free graph $I \in \I$, we repeatedly apply Proposition~\ref{prop:refined_containers_for_graph}, first to the complete graph $G_0 = K_n$, then to the graph $G_1 = f_{G_0}(T_1) \setminus T_1$, where $T_1 \subset I$ is the set guaranteed to exist by part~$(a)$, then to the graph $G_2 = f_{G_1}(T_2) \setminus T_2$, where $T_2 \subset I \cap G_1 = I \setminus T_1$, and so on. We continue until we arrive at a graph $G_m$ with at most $kn^\al$ edges, and set 
		$$g(I) = (T_1, \ldots, T_m) \qquad \text{and} \qquad h\big( g(I) \big) = G_m.$$ 
		Since $G_m$ depends only on the sequence $(T_1,\ldots,T_m)$, the function $h$ is well-defined.
		
		It remains to bound the number of colored graphs in $\S$ with $s$ edges. To do so, it suffices to count the number of choices for the sequence of graphs $(T_1,\ldots,T_m)$ with $\sum_j e(T_j) = s$. For each $j \ge 1$, define $k(j)$ and $\mu(j)$ as follows:
		$$e\big( G_{m-j} \big) = k(j) n^\al \quad \text{and} \quad \mu(j) = \frac{1}{\eps} \cdot \max\Big\{ k(j)^{-\frac{\alpha-1}{2-\alpha}}, \, n^{-\left(\alpha-2+\frac{v(F)-2}{e(F)-1}\right)} \Big\},$$
		and note that
		$$k(j) \ge (1 - \eps)^{-j+1} k, \qquad T_{j+1} \subset G_j \qquad \text{and} \qquad e(T_{m-j}) \le \mu(j) n^\al.$$
		Thus, fixing $k$, $\eps$ and $s$ as above, and writing
		$$\K(m) \, = \, \Big\{ \k = (k(1),\ldots,k(m)) \, : \, (1 - \eps)^{-j+1} k \le k(j) \le n^{2-\al} \Big\}$$
		for each $m \in \N$, and
		$$\A(\k) \, = \, \Big\{ \a = (a(1),\ldots,a(m)) \, : \, a(j) \le \mu(j) n^\al \text{ and } \sum_j a(j) = s \Big\},$$
		for each $\k \in \K(m)$, it follows that the number of colored graphs in $\S$ with $s$ edges is at most 
		$$\sum_{m = 1}^\infty \sum_{\k \in \K(m)} \sum_{\a \in \A(\k)} \prod_{j=1}^m { k(j) n^\al \choose a(j)}.$$
		Given $m \in \N$, $\k \in \K(m)$ and $\a \in \A(\k)$, let us partition the product over $j$ according to whether or not $\mu(j) = \frac{1}{\eps} \cdot n^{-\left(\alpha-2+\frac{v(F)-2}{e(F)-1}\right)}$. Since $\K(m) = \emptyset$ if $m$ is at least some large constant times $\log n$,  the product of the terms for which this is the case is at most
		$$\big( n^2 \big)^{\sum_j a(j)} \, \le \, \exp\Big( O(1) \cdot n^{2-\frac{v(F)-2}{e(F)-1}} (\log n)^2 \Big) \, \le \, \exp\Big( O(1) \cdot k^{-\frac{\alpha-1}{2-\alpha}} n^\al \Big),$$
		where in the last step we used the fact that $k \le \left(n^{\alpha-2+\frac{v(F)-2}{e(F)-1}}/\log^2n\right)^\frac{2-\alpha}{\alpha-1}$. On the other hand, if $a(j) \le k(j)^{-\frac{\alpha-1}{2-\alpha}} n^\al$, i.e.\ if $k(j)\le (n^\al/a(j))^{\f{2-\al}{\al-1}}$, then  
		$${ k(j) n^\al \choose a(j)} \, \le \, \bigg( \frac{e k(j) n^\al}{a(j)} \bigg)^{a(j)} \, \le \, \bigg( \frac{en^\al}{  a(j)} \bigg)^{\frac{1}{\al-1} \cdot a(j)},$$
		and hence, by Lemma~\ref{obs:optimize}, the product over the remaining $j$ is at most 
		$$\bigg( \frac{C' n^\al}{s} \bigg)^{\frac{1}{\al-1} \cdot s} \cdot \exp\Big( C'k^{-\frac{\alpha-1}{2-\alpha}} n^\al \Big)$$
		for some $C' = C'(F)$. Noting that $\sum_{m = 1}^\infty \sum_{\k \in \K(m)} |\A(\k)| = n^{O(\log n)}$ since $\K(m)=\emptyset$ for $m$ at least some large constant $\log n$, the theorem follows.
	\end{proof}
	
	We can now easily deduce Theorem~\ref{thm:containers}.
	
	\begin{proof}[Proof of Theorem~\ref{thm:containers}] 
		Let $F$ be a graph satisfying the hypotheses of the theorem, i.e.\ a graph which is $\al$-good for some $1<\al<2$.  Recall that we wish to show that for $p \ge  \left(n^{2-\al-\frac{v(F)-2}{e(F)-1}}/\log^2n\right)^\frac{1}{\alpha-1}$, we have $\ex(G_{n,p},F)=O(p^{\al-1}n^\al)$ w.h.p.  Given such a function $p = p(n)$, define $k = p^{-(2-\al)}$.  Since $k \le \left(Cn^{\alpha-2+\frac{v(F)-2}{e(F)-1}}/\log^2n\right)^\frac{2-\alpha}{\alpha-1}$, we can apply \Cref{thm:cycle:containers:turan:refined} to get functions $g,h$. Suppose that there exists an $F$-free subgraph $I \subset G(n,p)$ with $m$ edges, and observe that $g(I) \subset G(n,p)$, and that $G(n,p)$ contains at least $m - e\big( g(I) \big)$ elements of $h(g(I))$. The probability of this event is therefore at most 
		\begin{align*}
			\sum_{S \in \S} {kn^\al \choose m - e(S)} p^m & \, \le \sum_{s = 0}^{C k^{-\frac{\alpha-1}{2-\alpha}} n^\al} \bigg( \frac{C p^{\al-1} n^\al}{s} \bigg)^{\frac{1}{\al-1} \cdot s} \exp\Big( C k^{-\frac{\alpha-1}{2-\alpha}} n^\al \Big) \bigg( \frac{3pkn^\al}{m - s} \bigg)^{m - s} \nonumber\\
			& \, \le \, \exp\bigg[ O(1) \cdot \Big( p^{\al-1} n^\al + k^{-\frac{\alpha-1}{2-\alpha}} n^\al \Big) \bigg] \bigg( \frac{4pkn^\al}{m} \bigg)^{m/2} \to 0\label{eq:finalcounttozero}
		\end{align*}
		as $n \to \infty$, as long as $m$ is a sufficiently large constant times 
		$$\max\Big\{ pk  n^\al , \, k^{-\frac{\alpha-1}{2-\alpha}} n^\al \Big\}=p^{\al-1}n^\al.$$
		We conclude that $\ex(G_{n,p},F)=O(p^{\al-1}n^\al)$ w.h.p., giving the result.
	\end{proof}

	\section{Proof of Proposition~\ref{prop:expansion}}\label{append:Expansion}

	We emphasize that almost everything in this section will be nearly identical to Morris and Saxton~\cite{morris2016number}.  We first recall the definitions and conventions introduced in Section~\ref{sec:expansion}:
	
	\begin{itemize}
		\item We fixed a sequence of rapidly decreasing constants \[1\ge \ep_b\ge \cdots \ge \ep_2\ge \ep_1>0\] which depend only on $b$.  We also fixed some $m$-vertex graph $G$ with minimum degree $\ell m^{1/b}$ with $\ell$ (and hence $m$) sufficiently large in terms of the $\ep_t$ constants.
		
		\item For $x\in V(G)$, we say that a tuple $\c{A}=(A_0,A_1,\ldots,A_t)$ of (not necessarily disjoint) subsets of $V(G)$ is a \textit{concentrated $t$-neighborhood of $x$} if $A_0=\{x\}$, $|N(v)\cap A_i|\ge \ep_t \ell m^{1/b}$ for all $v\in A_{i-1}$, and
		\[|A_t|\le \ell^{(b-t)/(b-1)}m^{t/b}.\]
		We define $t(x)$ to be the minimal $t\ge 2$ such that there exists a concentrated $t$-neighborhood of $x$ in $G$.
		
		\item Lemma~\ref{lem:XSet} says that for some $2\le t\le b$, there exists $X\sub V(G)$ of size at least $\half (4b)^{t-b}\ell^{(b-t)/(b-1)}m^{t/b}$ such that $t(x)=t$ for every $x\in X$, and such that for every $x\in X$ there exists a tuple of sets $\c{A}=(A_0,\ldots,A_t)$ such that $A_0=\{x\}$, $|A_t|\le \ell^{(b-t)/(b-1)}m^{t/b}$, $|N(y)\cap A_i|\ge \half \ep_t\ell m^{1/b}$ for all $y\in A_{i-1}$,  and every $y\in \bigcup A_i$ has $t(y)\ge t$. 
	\end{itemize}
	For the rest of this section we fix $t,X$ as in Lemma~\ref{lem:XSet}.  We also fix some $\ep>0$ sufficiently small compared to the $\ep_s$ constants, as well as a set of forests $\c{F}$ such that for every path $x_1\cdots x_r$ of $G$ which does not contain an element of $\c{F}$ as a subgraph, there are at most $\ep \ell m^{1/b}$ vertices $x_{r+1}\in N_G(x_r)$ such that the path $x_1\cdots x_{r+1}$ contains an element of $\c{F}$ as a subgraph.  As much as possible we use the notation of Morris and Saxton's original proof, and in particular, we drop our convention from the main part of the text that $u,v,w$ are used only as vertices of $\theta_{a,b}$.

	We introduce some notation and definitions that will be used for the rest of the proof.  Given a set of paths $\c{P}$, we define the \textit{$(r,v)$-branching factor} of $\c{P}$ is the maximum number $d$ such that there exist $d$ paths in $\c{P}$ with $i$th vertex $v$ and pairwise distinct $(i+1)$st vertices.  The \textit{branching factor} of $\c{P}$ is defined to be the maximum $(i,v)$-branching factor amongst all choices of $i,v$.  We define $\c{P}_{i,j}=\{u_i\cdots u_j:u_0\cdots u_t\in \c{P}\}$ and $\c{P}[u\to v]:=\{x_1\cdots x_s\in \c{P}:x_1=u,\ x_s=v\}$,  and also define $\c{P}[u\to S]=\bigcup_{v\in S} \c{P}[u\to v]$.
	
	One lemma that we will need in several places is the following.
	\begin{lem}\label{lem:RPaths}
		Let $\c{R}$ be a collection of paths of length $s\ge 2$ in $G$ from a vertex $x\in V(G)$ to a set $B\sub V(G)$.  Assume that $|B|\le \ell^{(b-s)/(b-1)}m^{s/b}$, $|\c{R}|> s \ep_s (\ell m^{1/b})^s$, and that $\c{R}$ has branching factor at most $\ell m^{1/b}$.  Then $t(x)\le s$,
	\end{lem}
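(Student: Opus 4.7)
The plan is to argue by contradiction, building candidate sets $A_0, A_1, \ldots, A_s$ whose existence either gives $t(x) \le s$ directly or contradicts the lower bound on $|\c{R}|$.

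First, I would define the $A_i$'s by backward induction starting from $A_s := B$: for each $i$ from $s-1$ down to $0$, set
\[
A_i := \{v \in V(G) : |N(v) \cap A_{i+1}| \ge \ep_s \ell m^{1/b}\}.
\]
By the hypothesis $|B| \le \ell^{(b-s)/(b-1)} m^{s/b}$, the set $A_s$ already has the required size bound, and by construction every $v \in A_i$ (for $i \le s-1$) satisfies $|N(v) \cap A_{i+1}| \ge \ep_s \ell m^{1/b}$. Consequently, if $x \in A_0$, then $(\{x\}, A_1, \ldots, A_s)$ is a concentrated $s$-neighborhood of $x$ (the branching condition at the first step becomes $|N(x) \cap A_1| \ge \ep_s \ell m^{1/b}$, which is exactly $x \in A_0$), and so $t(x) \le s$.

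The remaining case is $x \notin A_0$, where the plan is to bound $|\c{R}|$ so as to contradict the assumption $|\c{R}| > s \ep_s (\ell m^{1/b})^s$. Every path $p = (x = u_0, u_1, \ldots, u_s) \in \c{R}$ has $u_s \in B = A_s$ but $u_0 = x \notin A_0$, so there is a largest index $i(p) \in \{0, 1, \ldots, s-1\}$ with $u_{i(p)} \notin A_{i(p)}$; at this index $u_{i(p)+1} \in A_{i(p)+1}$ and, by the defining property of $A_{i(p)}$, $|N(u_{i(p)}) \cap A_{i(p)+1}| < \ep_s \ell m^{1/b}$. Next, for each fixed value of $i$, I would count the paths with $i(p) = i$ by building the sequence one vertex at a time: the branching hypothesis on $\c{R}$ gives at most $(\ell m^{1/b})^i$ prefixes $(x, u_1, \ldots, u_i)$; the defining property of $A_i$ gives fewer than $\ep_s \ell m^{1/b}$ choices for $u_{i+1} \in N(u_i) \cap A_{i+1}$; and the branching hypothesis again gives at most $(\ell m^{1/b})^{s-i-1}$ completions $(u_{i+2}, \ldots, u_s)$. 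Multiplying yields at most $\ep_s (\ell m^{1/b})^s$ paths per value of $i$, and summing over $i \in \{0, 1, \ldots, s-1\}$ gives $|\c{R}| \le s \ep_s (\ell m^{1/b})^s$, the desired contradiction.

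There is no real technical obstacle: the proof is a clean marriage of the backward-inductive definition of the $A_i$'s (which automatically enforces neighborhood concentration along the chain) and the branching hypothesis on $\c{R}$ (which bounds prefixes and completions). The only place the two ingredients interact is in the middle step of the counting, where the inequality $|N(u_i) \cap A_{i+1}| < \ep_s \ell m^{1/b}$ comes not from $\c{R}$ but from the very definition of $A_i$, and it is precisely this mismatch that produces the saving factor of $\ep_s$ needed to beat the assumed size of $\c{R}$.
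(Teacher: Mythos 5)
Your proof is correct, and it reaches the conclusion by a genuinely different construction from the paper's. The paper builds the candidate concentrated $s$-neighborhood by iteratively pruning $\c{R}$: whenever some $(i,v)$-branching factor of the current family drops below $\ep_s\ell m^{1/b}$, all paths through that pair are deleted, and the sets $A_i$ are then read off as the $i$th vertices of the surviving paths; a counting argument identical in form to yours (one ``cheap'' step worth $\ep_s\ell m^{1/b}$ and $s-1$ steps worth $\ell m^{1/b}$, for a total of $s\ep_s(\ell m^{1/b})^s$ destroyed paths) shows the pruned family is nonempty. You instead define the sets once and for all by backward induction from $A_s=B$ using the expansion condition itself, and invoke the paths only in the failure case $x\notin A_0$, classifying each path by the last index at which it leaves the chain of $A_i$'s. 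Your route avoids the iteration entirely and is arguably cleaner; the only structural difference is that your intermediate sets $A_1,\dots,A_{s-1}$ need not consist of vertices lying on paths of $\c{R}$, which is harmless because the definition of a concentrated $s$-neighborhood constrains only $|A_s|$, and $A_s=B$ satisfies the required bound by hypothesis. Both arguments extract the saving factor $\ep_s$ from exactly one step of the path and pay the full branching factor $\ell m^{1/b}$ at the remaining $s-1$ steps.
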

	\begin{proof}
		Form a subset $\c{R}'\sub \c{R}$ by starting with $\c{R}'=\c{R}$ and then iteratively choosing $i,v$ such that the $(i,v)$-branching factor of $\c{R}'$ is less than $\ep_s\ell m^{1/b}$ and then deleting any paths which contain $v$ as their $i$th vertex.  Let $A_i$ be the set of vertices used as the $i$th vertex of some path of $\c{R}'$.  If $\c{R}'$ is non-empty, then $(A_0,\ldots,A_s)$ is a concentrated $s$-neighborhood of $x$ by construction, which shows $t(x)\le s$.
		
		Thus it suffices to show that $\c{R}'$ is non-empty.  We claim that the number of paths that were destroyed is at most $s\cdot \ep_s(\ell m^{1/b})^s$.  Indeed, because $\c{R}$ has branching factor at most $\ell m^{1/b}$, every destroyed path can be identified by choosing some index $0\le i<s$, starting a path at $u_0=x$, and then iteratively choosing the next vertex of the path $u_{j+1}$ in at most $\ell m^{1/b}$ ways for each $j\ne i$ and in at most $\ep_s\ell m^{1/b}$ ways when $j=i$, proving the claim.  Since $|\c{R}|$ is strictly greater than the number of destroyed paths, $\c{R}'$ is non-empty and the result follows.
	\end{proof}
	
	The following definition will almost be strong enough to prove Proposition~\ref{prop:expansion}.
	\begin{defn}\label{def:balanced}
		Let $\c{A}=(A_0,\ldots,A_t)$ be a collection of (not necessarily disjoint) Sets of vertices of $G$ with $A_0=\{x\}$ and let $\c{P}$ be a collection of paths of the form $xu_1\cdots u_t$ with $u_i\in A_i$ for all $i$.  We say that $(\c{A},\c{P})$ is a \textit{balanced $t$-neighborhood} of $x$ if the following conditions hold:
		\begin{enumerate}
			\item[(i)] We have $|A_1|\le \ell m^{1/b}$ and $|A_t|\le \ell^{(b-t)/(b-1)}m^{t/b}$.
			\item[(ii)] For every $0\le i<j\le t$ with $(i,j)\ne (0,t)$ and every $u\in A_i,v\in A_j$, we have $|\c{P}_{i,j}[u,v]|\le \ell^{(j-i-1)b/(b-1)}$.
			\item[(iii)] The branching factor of $\c{P}$ is at most $\ep_t\ell m^{1/b}$.
		\end{enumerate}
	\end{defn}
	For the next lemma we recall that $\c{F}$ is a set of forests satisfying a property that depends on $\ep$.
	
	\begin{lem}\label{lem:balanced}
		If $x\in X$, then there exists a balanced $t$-neighborhood $(\c{A},\c{P})$ of $x$ with $|\c{P}|\ge \half(\quart \ep_t \ell m^{1/b})^t$ such that every subgraph of each $P\in \c{P}$ does not lie in $\c{F}$ provided $\ep$ is sufficiently small.
	\end{lem}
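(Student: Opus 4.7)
The plan is to start from the tuple $\c{A}=(A_0,\ldots,A_t)$ guaranteed by \Cref{lem:XSet}, take $\c{P}$ to be (essentially) the set of all paths $xu_1\cdots u_t$ with $u_i\in A_i$ and consecutive vertices adjacent in $G$, and then prune in successive stages to meet conditions (i)--(iii) while simultaneously killing every path containing a forest of $\c{F}$. At each stage we lose only a small constant fraction of the paths, so starting from $\gtrsim (\tfrac12\ep_t\ell m^{1/b})^t$ paths we end with at least $\tfrac12(\tfrac14\ep_t\ell m^{1/b})^t$, as required.

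\textbf{Setup.} I would first replace $A_1$ by an arbitrary subset of $N(x)\cap A_1$ of size between $\tfrac12\ep_t\ell m^{1/b}$ and $\ell m^{1/b}$ (possible since $|N(x)\cap A_1|\ge \tfrac12\ep_t\ell m^{1/b}$ by \Cref{lem:XSet}); condition (i) then holds because the bound on $|A_t|$ is provided by \Cref{lem:XSet}. Then I would let $\c{P}_0$ consist of all non-self-intersecting paths $xu_1\cdots u_t$ with $u_i\in A_i$ and $u_{i-1}u_i\in E(G)$; using $|N(v)\cap A_i|\ge \tfrac12\ep_t\ell m^{1/b}$ at each step and discarding the lower-order number of paths with repeated vertices, we get $|\c{P}_0|\ge (1-o(1))(\tfrac12\ep_t\ell m^{1/b})^t$.

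\textbf{Forest-killing and branching factor.} By the hypothesis in \Cref{prop:expansion}(g), at each extension step at most $\ep\ell m^{1/b}$ of the choices introduce a subgraph of $\c{F}$, so a routine product bound shows that removing every such path costs at most $t\ep(\ell m^{1/b})^t$ paths, which is negligible for $\ep$ small relative to $\ep_t^t$. Next, I would enforce (iii) by iteratively locating a pair $(i,v)$ whose $(i,v)$-branching factor exceeds $\ep_t\ell m^{1/b}$ and trimming paths so that at most $\ep_t\ell m^{1/b}$ distinct $(i+1)$st vertices remain at $v$; a standard Morris--Saxton accounting loses only a constant fraction of the surviving paths.

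\textbf{Enforcing (ii): the main obstacle.} The heart of the argument is ensuring $|\c{P}_{i,j}[u,v]|\le \ell^{(j-i-1)b/(b-1)}$ for every $(i,j)\ne(0,t)$ and every $u\in A_i$, $v\in A_j$. The essential leverage is that by \Cref{lem:XSet}, every vertex $u$ appearing in $\bigcup A_i$ satisfies $t(u)\ge t$, so $u$ cannot be the endpoint of too many length-$s$ walks into a small target for any $s<t$; this is precisely the content of \Cref{lem:RPaths}. For $s=j-i<t$, each offending pair $(u,v)$ contributes more than $\ell^{(s-1)b/(b-1)}$ sub-paths; if we let $B_u\sub A_j$ be the set of offending $v$'s and set $\c{R}=\c{P}_{i,j}[u\to B_u]$, then $\c{R}$ has branching factor at most $\ep_t\ell m^{1/b}\le \ell m^{1/b}$ by (iii), and \Cref{lem:RPaths} applied to $\c{R}$ would force $t(u)\le s<t$, a contradiction, provided $|B_u|$ fits the size hypothesis of \Cref{lem:RPaths}. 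Thus the total number of offending sub-paths emanating from any fixed $u$ is at most $s\ep_s(\ell m^{1/b})^s$; deleting these (together with at most $(\ep_t\ell m^{1/b})^{i+t-j}$ path-extensions of each, by (iii)) removes only a constant fraction of $|\c{P}_0|$. The main delicacy, which I expect to be the principal obstacle, is verifying the size hypothesis $|B_u|\le \ell^{(b-s)/(b-1)}m^{s/b}$ of \Cref{lem:RPaths}; this will require an auxiliary trimming of the intermediate $A_i$'s (analogous to the trimming of $A_1$) and a split into cases based on whether $|B_u|$ falls below the \Cref{lem:RPaths} threshold or one must first apply the lemma to a carefully chosen sub-collection. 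After this cleanup, the final family $\c{P}$ satisfies (i)--(iii), avoids every forest in $\c{F}$, and retains the required cardinality.
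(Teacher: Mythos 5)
Your overall strategy is the paper's: start from the tuple $\c{A}$ of \Cref{lem:XSet}, build a family of paths through the $A_i$'s, and use \Cref{lem:RPaths} together with the fact that every vertex of $\bigcup A_i$ has $t(y)\ge t$ to rule out unbalanced pairs. But there are two concrete problems with the execution. First, the order of operations in your construction does not work. You take $\c{P}_0$ to be \emph{all} paths $xu_1\cdots u_t$ with $u_i\in A_i$, and only afterwards try to cap the branching factor at $\ep_t\ell m^{1/b}$ by "a standard Morris--Saxton accounting" that "loses only a constant fraction." That accounting is designed for deleting $(i,v)$-pairs whose branching factor is \emph{below} a threshold (so that each deleted prefix has few continuations); it gives nothing when you are trimming branching from \emph{above}, since \Cref{lem:XSet} gives no upper bound on $|N(v)\cap A_i|$ and a single high-branching vertex could carry almost all of $\c{P}_0$. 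The same unbounded branching also invalidates your product bound $t\ep(\ell m^{1/b})^t$ for the forest-containing paths, which tacitly assumes at most $\ell m^{1/b}$ choices per step. The paper sidesteps both issues by fixing, for each $v\in A_{i-1}$, an arbitrary subset $Q_i(v)\sub N(v)\cap A_i$ of size exactly $\half\ep_t\ell m^{1/b}$ and building $\c{Q}$ greedily inside these sets, avoiding repeated vertices and $\c{F}$ as it goes; this enforces condition (iii) by construction and makes each step cost at most $t+\ep\ell m^{1/b}$ choices, giving $|\c{Q}|\ge(\quart\ep_t\ell m^{1/b})^t$ with every path already $\c{F}$-free.

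Second, the step you defer as "the principal obstacle" --- verifying $|B_u|\le \ell^{(b-s)/(b-1)}m^{s/b}$ before applying \Cref{lem:RPaths} --- is not an obstacle and needs no auxiliary trimming or case split, but you have not actually closed it, so as written the proof is incomplete precisely where the contradiction must be derived. Once the branching factor of the path family is at most $\ep_t\ell m^{1/b}$, the total number of length-$s$ sub-paths starting at $u$ is at most $(\ep_t\ell m^{1/b})^s$, and each offending endpoint $v\in B_u$ absorbs more than $\ell^{(s-1)b/(b-1)}$ of them, so $|B_u|\le \ep_t^s\ell^{(b-s)/(b-1)}m^{s/b}\le \ell^{(b-s)/(b-1)}m^{s/b}$, which is exactly the hypothesis of \Cref{lem:RPaths}. (You must also take care, as the paper does, to define the unbalanced pairs with respect to the original family and to account for the backward extensions from $u$ to $x$ via the total count $|\c{Q}_{0,i}|\le(\ep_t\ell m^{1/b})^i$ rather than a per-prefix bound.) With the construction reordered as above and this one-line count supplied, your argument becomes the paper's proof.
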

	
	\begin{proof}

		Let $\c{A}$ be the tuple of sets guaranteed by Lemma~\ref{lem:XSet}.  We may assume that $|A_1|\le \ell m^{1/b}$, as otherwise we can just remove vertices from $A_1$ while maintaining all the properties guaranteed by Lemma~\ref{lem:XSet}.  For each $v\in A_{i-1}$, let $Q_i(v)$ be an arbitrary subset of $N(v)\cap A_i$ of size $\half \ep_t\ell m^{1/b}$.  Let $\c{Q}$ be the set of paths $xu_1\cdots u_t$ generated as follows.  Given $xu_1\cdots u_{i-1}$, select any $u_i\in \c{Q}_i(u_{i-1})$ such that $u_i\notin \{x,u_1,\ldots,u_{i-1}\}$ and such no subgraph of $xu_1\cdots u_i$ is contained in $\c{F}$.  Note that the number of choices at each step is at least \[\half \ep_t\ell m^{1/b}-t-\ep \ell m^{1/b}\ge \quart \ep_t \ell m^{1/b},\]
		with the last step holding if $\ep$ is sufficiently small (which also implies $\ell m^{1/b}$ is sufficiently large compared to $\ep_t^{-1} t$).  This means \begin{equation}|\c{Q}|\ge \left(\quart \ep_t \ell m^{1/b}\right)^t.\label{eq:QPaths}\end{equation}  Note that by construction, every path in $\c{Q}$ avoids $\c{F}$.
		
		We now remove some paths from $\c{Q}$ to produce $\c{P}$.  If there exists $0\le i<j\le t$ with $(i,j)\ne (0,t)$ and vertices $u\in A_i,v\in A_j$ and $|\c{Q}_{i,j}[u\to v]|>\ell^{(j-i-1)b/(b-1)}$, then choose a path $xu_1\cdots u_t\in \c{Q}$ with $u_i=u$ and $u_j=v$ and delete this path from $\c{Q}$.  Repeat this until no such paths remain in $\c{Q}$, and let $\c{P}$ be the resulting set of paths.  By construction $(\c{A},\c{P})$ is a balanced neighborhood, so it suffices to show $|\c{P}|$ is large.
		
		We say that a pair of vertices $(u,v)$ is $(i,j)$-unbalanced if $u\in A_i,v\in A_j$ and $|\c{Q}_{i,j}[u\to v]|>\ell^{(j-i-1)b/(b-1)}$ (we emphasize that this condition involves  the original family $\c{Q}$ before any paths are deleted).  Let $\c{R}(i,j)=\{xu_1\cdots u_j\in \c{Q}_{0,j}:(u_i,u_j)\tr{ is }(i,j)\tr{-unbalanced}\}$.  We claim that \begin{equation}|\c{R}(i,j)|\le t^{-2}4^{-b-1}(\ep_t\ell m^{1/b})^j\label{eq:RPaths}\end{equation} for all $0\le i<j\le t$ with $(i,j)\ne (0,t)$.  Assuming this is true, this fact together with the branching factor of $\c{Q}$ implies that the number of paths removed is at most
		\[\sum_{i,j}|\c{R}(i,j)|(\ep_t\ell m^{1/b})^{t-j}\le \half  \left(\quart \ep_t\ell m^{1/b}\right)^t,\]
		with this last step holding if $\ell m^{1/b}$ is sufficiently large (which holds if $\ep$ is sufficiently small).
		From this and $\eqref{eq:QPaths}$, we conclude that the remaining set of paths $\c{P}$ has the desired properties and is as large as claimed.  It thus remains to prove \eqref{eq:RPaths}.
		
		Fix $(i,j)\ne (0,t)$ and let $s:=j-i$.  If $s=1$ then $\c{R}(i,j)=\emptyset$ (since no pair of vertices can be $(i,i+1)$-unbalanced), so we may assume $s\ge 2$.  Observe that
		\[|\c{R}(i,j)|\le \sum_{u\in A_i} |\c{R}(i,j)_{0,i}[x\to u]|\cdot |\c{R}(i,j)_{i,j}[u\to A_j]|\le (\ep_t \ell m^{1/b})^i\cdot \max_{u\in A_i}|\c{R}(i,j)_{i,j}[u\to A_j]|,\]
		where the second inequality used $\c{R}(i,j)_{0,i}\sub \c{Q}_{0,i}$ which has branching factor at most $\ep_t \ell m^{1/b}$.  Thus if we assume for contradiction that \eqref{eq:RPaths} does not hold, then there must exist some $u\in A_i$ such that
		\begin{equation}|\c{R}(i,j)_{i,j}[u\to A_j]|> t^{-2}4^{-b-1}(\ep_t\ell m^{1/b})^s \ge  s \ep_s(\ell m^{1/b})^s,\label{eq:Rij}\end{equation}
		with this last step holding if the $\ep_{s'}$ constants decrease sufficiently quickly.  Let \[B:=\{u_j\in A_j:\exists xu_1\cdots u_j\in \c{R}(i,j),\ u_i=u\}.\]  
		Note that $|\c{R}(i,j)[u,A_j]|\le (\ep_t\ell m^{1/b})^{j-i}$ because $\c{Q}$ has branching factor at most $\ep_t\ell m^{1/b}$, and that each $v\in B$ is the last vertex of more than $\ell^{(j-i-1)b/(b-1)}$ paths of $\c{R}(i,j)[u,A_j]$ (since by definition of $\c{R}(i,j)$, such a pair $(u,v)$ must be $(i,j)$-unbalanced).  Using $s=j-i$ gives
		\[|B|\le \f{(\ep_t\ell m^{1/b})^{s}}{\ell^{(s-1)b/(b-1)}}= \ep_t^s \ell^{(b-s)/(b-1)}m^{s/b}\le \ell^{(b-s)/(b-1)}m^{s/b}.\]
		With this and \eqref{eq:Rij}, we can apply Lemma~\ref{lem:RPaths} to $\c{R}(i,j)_{i,j}[u\to A_j]$ to conclude $t(u)\le s<t(x)$.  This gives a contradiction to $u\in A_i$ and the properties of $\c{A}$ guaranteed by Lemma~\ref{lem:XSet}.  
		
	\end{proof}

	A key fact about balanced neighorhoods is the following.
	\begin{lem}\label{lem:pathsAvoid}
		Let $(\c{A},\c{P})$ be a balanced $t$-neighborhood of $x\in V(G)$.  If $\ep$ is sufficiently small, then for any  $y\in A_t$ and non-empty set of vertices $S\sub V(G)\sm \{x,y\}$, there are at most $\ep^{-1}\ell^{(t-1-|S|)b/(b-1)}$ paths in $\c{P}[x\to y]$ containing $S$.
	\end{lem}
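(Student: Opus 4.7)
The plan is to apply condition (ii) of \Cref{def:balanced} to segments of each path, telescoping the exponents. Set $s=|S|$. Note first that each path in $\c{P}[x\to y]$ has exactly $t-1$ internal vertices (other than $x$ and $y$), so if $s\ge t$ then no path of $\c{P}[x\to y]$ can contain $S$ and the bound holds vacuously. Hence assume $1\le s\le t-1$.

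Each path $P = xu_1\cdots u_{t-1} y \in \c{P}[x\to y]$ containing $S$ has the vertices of $S$ appearing at some positions $1\le i_1<i_2<\cdots<i_s\le t-1$, say with $u_{i_k} = v_k$ for a bijection from $\{1,\ldots,s\}$ to $S$ that lists the elements of $S$ in the order they appear on $P$. First I would fix a choice of positions $i_1<\cdots<i_s$ and an ordering $(v_1,\ldots,v_s)$ of $S$, and count the number of paths of $\c{P}[x\to y]$ that realize this configuration. Such a path decomposes into $s+1$ segments: a piece of $\c{P}_{0,i_1}[x\to v_1]$, pieces of $\c{P}_{i_k,i_{k+1}}[v_k\to v_{k+1}]$ for $1\le k<s$, and a piece of $\c{P}_{i_s,t}[v_s\to y]$.

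The key observation is that none of these segments is of type $(0,t)$, since $i_1\ge 1$ and $i_s\le t-1$ ensure the endpoints of the first and last segment pairs are not simultaneously $(0,t)$, while internal segments have both endpoints in $\{1,\ldots,t-1\}$. Therefore condition (ii) of \Cref{def:balanced} applies to every segment, and the number of paths realizing the fixed configuration is at most
\[
\ell^{(i_1-1)b/(b-1)} \cdot \prod_{k=1}^{s-1}\ell^{(i_{k+1}-i_k-1)b/(b-1)} \cdot \ell^{(t-i_s-1)b/(b-1)} \;=\; \ell^{(t-1-s)b/(b-1)},
\]
since the exponents telescope to $(i_1-1)+(i_s-i_1-(s-1))+(t-i_s-1)=t-1-s$. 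Summing over at most $\binom{t-1}{s}s! \le (t-1)^s \le b^b$ configurations gives a total of at most $b^b\cdot \ell^{(t-1-s)b/(b-1)}$, which is bounded by $\ep^{-1}\ell^{(t-1-s)b/(b-1)}$ provided $\ep$ is sufficiently small (depending only on $b$).

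There is no real obstacle here; the lemma is essentially a bookkeeping exercise built directly on property (ii) of a balanced $t$-neighborhood. The only substantive point to verify is the edge-case check that the telescoping never encounters the ``forbidden'' pair $(0,t)$, which is precisely why the hypothesis $S\sub V(G)\sm\{x,y\}$ (and hence $i_1\ge 1$, $i_s\le t-1$) is imposed. All other steps are routine.
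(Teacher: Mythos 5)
Your proof is correct and follows essentially the same route as the paper's: fix the positions and ordering of $S$ along the path, bound each of the $s+1$ segments via \Cref{def:balanced}(ii), telescope the exponents to $(t-1-|S|)b/(b-1)$, and absorb the $O_b(1)$ count of configurations into $\ep^{-1}$. Your explicit check that no segment is of the forbidden type $(0,t)$ is a point the paper leaves implicit, but otherwise the arguments coincide.
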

	\begin{proof}
		Let $S=\{z_1,\ldots,z_r\}$, and for ease of notation let $z_0=x$ and $z_{r+1}=y$.  Given a sequence $0=i_0<i_1<\cdots<i_r<i_{r+1}=t$, the number of paths $xu_1\cdots u_{t-1}y\in \c{P}[x\to y]$ with $u_{i_j}=z_j$ is at most
		\[\prod_{j=0}^{r}|\c{P}[z_j\to z_{j+1}]|\le \prod_{i=0}^r \ell^{(i_{j+1}-i_j-1)b/(b-1)}= \ell^{(i_{r+1}-i_0-(r+1))b/(b-1)}=\ell^{(t-1-|S|)b/(b-1)}.\]
		Every path containing $S$ can be formed in this way, possibly by reordering the elements of $S$ and by choosing different indices $i_j$.  As the number of ways of doing this is some finite number depending only on $b$, we conclude the result.
	\end{proof}
	
	We now move onto the last notion of neighborhoods that we need for this proof.
	\begin{defn}\label{def:refined}
		Let $(\c{B},\c{Q})$ be a balanced $t$-neighborhood of $x$.  We say that $(\c{B},\c{Q})$ is a \textit{refined $t$-neighborhood} of $x$ if the following conditions also hold: 
		\begin{enumerate}
			\item For every $i\in \{0,1,\ldots,t-1\}$ and every $u\in B_i$,
			\[|N(u)\cap B_{i+1}|\ge t^{-1} 4^{-2t} \ep_t \ell m^{1/b}.\]
			\item For every $v\in B_t$,
			\[|N(v)\cap B_{t-1}|\ge 4^{-2t}\ep_t^2 \ell^{(t-1)b/(b-1)}.\]
			\item For every $v\in B_t$,
			\[|\c{Q}[x\to v]|\ge 4^{-2t}\ep_t^t \ell^{(t-1)b/(b-1)}.\]
		\end{enumerate}
	\end{defn}
	\begin{lem}\label{lem:refined}
		If $(\c{A},\c{P})$ is a balanced $t$-neighborhood of a vertex $x\in X$ with $|\c{P}|\ge \half(\quart \ep_t \ell m^{1/b})^t$, then there exists a refined $t$-neighborhood $(\c{B},\c{Q})$ of $x$ with $B_i\sub A_i$ for all $i$ and $\c{Q}\sub \c{P}$ such that
		\[|\c{Q}|\ge \quart (\quart \ep_t \ell m^{1/b})^t.\]
	\end{lem}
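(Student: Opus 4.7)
The plan is to obtain $(\c{B},\c{Q})$ by iteratively pruning $(\c{A},\c{P})$. Initialize $B_i := A_i$ for each $0 \le i \le t$ and $\c{Q} := \c{P}$. Then, as long as some vertex $v$ in some $B_j$ violates one of the three conditions of Definition~\ref{def:refined}, remove $v$ from $B_j$ and delete from $\c{Q}$ every path that passes through $v$ (at any position). This process is clearly monotone and terminates with a pair satisfying every clause of Definition~\ref{def:refined}; the branching factor condition and the upper bounds on $|B_1|, |B_t|$ are inherited from $(\c{A},\c{P})$, so $(\c{B},\c{Q})$ will automatically remain a balanced $t$-neighborhood. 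The entire content of the lemma is thus the quantitative claim that we have not thrown away too many paths, i.e.\ that $|\c{Q}| \ge \quart(\quart\ep_t \ell m^{1/b})^t$.

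To bound the total paths removed I would sum over the three violation types separately, using the structural bounds guaranteed by $(\c{A},\c{P})$ being balanced. For condition~(3), each removed $v \in B_t$ costs at most $4^{-2t}\ep_t^t \ell^{(t-1)b/(b-1)}$ paths, and the total over $|A_t| \le \ell^{(b-t)/(b-1)}m^{t/b}$ vertices telescopes to at most $4^{-2t}(\ep_t \ell m^{1/b})^t$. For condition~(2), each path ending at the removed $v$ is determined by its neighbor $u \in N(v) \cap B_{t-1}$ at position $t-1$ together with a subpath $x\to u$; by Definition~\ref{def:balanced}(ii) (applied with $(i,j)=(0,t-1)$, which is allowed since $(0,t-1)\ne (0,t)$) there are at most $\ell^{(t-2)b/(b-1)}$ such subpaths, and multiplying by the $N(v) \cap B_{t-1}$ threshold and by $|A_t|$ again yields $O(\ep_t^2 4^{-2t}(\ell m^{1/b})^t)$. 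For condition~(1), when $u \in B_i$ is removed the paths through $u$ at position $i$ factor through a choice of position-$i{+}1$ neighbor of $u$ in $B_{i+1}$ (fewer than the threshold $t^{-1}4^{-2t}\ep_t \ell m^{1/b}$), together with at most $\ell^{(i-1)b/(b-1)}$ prefixes $x \to u$ from Definition~\ref{def:balanced}(ii) and at most $(\ep_t \ell m^{1/b})^{t-i-1}$ suffixes from the branching factor bound; summing over $|A_i| \le (\ep_t \ell m^{1/b})^i$ and over the $t$ values of $i$ yields another term of the form $O(t \cdot 4^{-2t}(\ep_t \ell m^{1/b})^t)$.

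Adding these three contributions and ensuring the cumulative loss is at most $\quart(\quart \ep_t \ell m^{1/b})^t$ follows by choosing the factor $4^{-2t}$ in the thresholds to dominate the universal constants from the three computations, together with the observation that $(\quart \ep_t \ell m^{1/b})^t = 4^{-t}(\ep_t \ell m^{1/b})^t$ and we are comparing to $\half (\quart \ep_t \ell m^{1/b})^t = 2\cdot 4^{-t-1}(\ep_t \ell m^{1/b})^t \gg 4^{-2t}(\ep_t \ell m^{1/b})^t$. Thus at termination $|\c{Q}| \ge |\c{P}| - \tfrac{1}{4}(\tfrac{1}{4}\ep_t \ell m^{1/b})^t \ge \tfrac{1}{4}(\tfrac{1}{4}\ep_t \ell m^{1/b})^t$, as required.

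The main obstacle is that the pruning is genuinely iterative rather than a three-pass procedure: removing some $v \in B_{i+1}$ may trigger later failures of condition~(1) at vertices $u \in B_i$, which may cascade downward through the tuple. The counting argument must therefore bound the paths discarded across the entire cascade, not a single sweep. The key point that makes the argument go through is that each vertex of $\bigcup_i A_i$ is deleted at most once, and the cost of a single deletion is controlled uniformly (using only the structural bounds from the balanced neighborhood, which are preserved during the shrinking process), so the total loss is no worse than the one-shot sum estimated above.
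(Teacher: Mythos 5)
Your overall architecture (iterative pruning plus a per-condition accounting of destroyed paths) matches the paper's, and your counts for conditions (1) and (3) are essentially the paper's bounds \eqref{eq:Step1} and \eqref{eq:Step3} (though in (1) you should not multiply the per-vertex prefix count $\ell^{(i-1)b/(b-1)}$ by $|A_i|$ as well — the total number of prefixes over all of $A_i$ is at most $(\ep_t\ell m^{1/b})^i$ by the branching factor, and using both overcounts; this is repairable). The genuine gap is in your treatment of condition (2). Your product bound gives at most
\[
4^{-2t}\ep_t^2\,\ell^{b/(b-1)}\cdot \ell^{(t-2)b/(b-1)}\cdot |A_t| \;\le\; 4^{-2t}\,\ep_t^{2}\,(\ell m^{1/b})^t,
\]
and you then compare this to the budget $\quart(\quart\ep_t\ell m^{1/b})^t = 4^{-t-1}\ep_t^{t}(\ell m^{1/b})^t$ as if the two differed only by powers of $4$. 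They differ by $\ep_t^{2}$ versus $\ep_t^{t}$: the constants $\ep_t$ are chosen arbitrarily small (subject only to the hierarchy $\ep_t\ge 16(b+1)\ep_{t-1}$), so for every $t\ge 3$ one has $\ep_t^{2}\gg \ep_t^{t}$ and your condition-(2) contribution can exceed the entire budget. No adjustment of the $4^{-2t}$ prefactor fixes this, because the deficit is a power of $\ep_t$, not a power of $4$.

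This is exactly the point where the paper's proof does something genuinely different, and it is also why the hypothesis $x\in X$ appears in the lemma — your proposal never uses it, which is the tell. The paper isolates the set $Y\sub A_{t-1}$ of vertices at which the deleted paths have large $(t-1,\cdot)$-branching factor, shows by double counting the pairs $W$ that $|Y|\le 4^{-t+2}\ep_t\ell^{(b-t+1)/(b-1)}m^{(t-1)/b}$, and then argues that if too many deleted paths pass through $Y$, the family $\c{P}(Z)[x\to Y]$ satisfies the hypotheses of \Cref{lem:RPaths} with $s=t-1$, forcing $t(x)\le t-1$ and contradicting $t(x)=t$ for $x\in X$ (the case $t=2$ is handled directly from the bound on $|Y|$). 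In other words, the condition-(2) losses are not controlled by a local product bound at all; they are controlled by the global fact that $x$ has no concentrated $(t-1)$-neighborhood. You would need to incorporate this contradiction argument (or something equivalent) to close the proof.
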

	\begin{proof}
		Repeatedly delete vertices using the following three steps until no further vertices can be removed:
		\begin{enumerate}
			\item[Step 1] If there exists $i\in \{1,\ldots,t-1\}$ and $v\in A_i$ with 
			\[|N(v)\cap A_{i+1}|<t^{-1} 4^{-2t} \ep_t \ell m^{1/b},\]
			then remove $v$ from $A_i$ and remove all paths $P=xu_1\cdots u_t\in \c{P}$ with $u_i=v$.
			\item[Step 2] If there exists $v\in A_t$ with 
			\[|N(v)\cap A_{t-1}|<4^{-2t}\ep_t^2 \ell^{b/(b-1)},\]
			then remove $v$ from $A_t$ and remove all paths $P=xu_1\cdots u_t\in \c{P}$ with $u_t=v$.
			\item[Step 3] If there exists $v\in A_t$ with 
			\[|\c{P}[x\to v]|<4^{-2t}\ep_t^t \ell^{(t-1)b/(b-1)},\]
			then remove $v$ from $A_t$ and remove all paths $P=xu_1\cdots u_t\in \c{P}$ with $u_t=v$.
		\end{enumerate}
		Let $B_i\sub A_i$ and $\c{Q}\sub \c{P}$ be the set of vertices and paths that remain at the end of this process and let $\c{B}=(B_0,\ldots,B_t)$.   Note that with this, $(\c{B},\c{Q})$ automatically satisfies every condition for a refined $t$-neighborhood except possibly $|N(x)\cap B_1|\ge t^{-1} 4^{-2t} \ep_t \ell m^{1/b}$.  This will follow from having $\c{Q}$ large, which we prove below by arguing that few paths are destroyed in the process above.
		
		Because $\c{P}$ is a balanced $t$-neighborhood, its branching factor is at most $\ep_t \ell m^{1/b}$.  As such the number of paths removed in Step 1 is at most
		\begin{equation}t\cdot t^{-1} 4^{-2t}\cdot\ep_t \ell m^{1/b}\cdot (\ep_t \ell m^{1/b})^{t-1}= 4^{-t} \left(\rec{4}\ep_t \ell m^{1/b}\right)^t,\label{eq:Step1}\end{equation}
		and in Step 3 we remove at most
		\begin{equation}4^{-2t}\ep_t^t \ell^{(t-1)b/(b-1)}|A_t|\le 4^{-t}\left(\quart \ep_t \ell m^{1/b}\right)^t,\label{eq:Step3}\end{equation}
		where this last step uses the definition of balanced $t$-neighborhoods.
		
		For Step 2, we aim to show that the number of destroyed paths is at most
		\begin{equation}\label{eq:Step2}
			\rec{8} \left(\quart \ep_t \ell m^{1/b}\right)^t.
		\end{equation}
		Let $Z\sub A_t$ and $\c{P}(Z)$ denote the collection of vertices and paths removed in Step 2, and let
		\[Y=\{u\in A_{t-1}:\c{P}(Z)\tr{ has }(t-1,u)\textrm{-branching factor at least } 4^{-t-2}\ep_t \ell m^{1/b}\}.\]
		Note that by the definition of $Y$ and the bound on the branching factor of $\c{P}$,
		\begin{equation}|\{xu_1\cdots u_t\in \c{P}(Z):u_{t-1}\in A_{t-1}\sm Y\}|\le 4^{-t-2}(\ep_t \ell m^{1/b})^{t},\label{eq:notY}\end{equation}
		so it remains to show that there are few paths which use a vertex of $Y$ as the second to last vertex.  For this, let
		\[W=\{(u,v):u\in Y,\ v\in Z,\exists xw_1\cdots w_t\in \c{P}(Z),w_{t-1}=u,w_t=v\}.\]
		Note first that by definition of $Y$, $|W|\ge |Y| 4^{-t-2}\ep_t \ell m^{1/b}$.  On the other hand, we have that $|W|\le |Z| 4^{-2t}\ep_t^2 \ell^{b/(b-1)}$ since at the time each vertex $v\in Z$ is deleted, $v$ has at most $\ep_t^2 \ell^{b/(b-1)}$ neighbors in $A_{t-1}\supseteq Y$.  In total then we find
		\begin{equation}|Y|\le \f{|Z|4^{-2t}\ep_t^2 \ell^{b/(b-1)}}{4^{-t-2}\ep_t \ell m^{1/b}}\le 4^{-t+2}\ep_t \ell^{(b-t+1)/(b-1)}m^{(t-1)/b},\label{eq:YUpper}\end{equation}
		where this last step used $|Z|\le |A_t|\le \ell^{(b-t)/(b-1)}m^{t/b}$ by definition of balanced neighborhoods. 
		
		Observe that if the number of paths in $\c{P}(Z)$ using a vertex of $Y$ as the second to last vertex is at most $4^{-t-2} \left(\ep_t \ell m^{1/b}\right)^t$ then \eqref{eq:notY} implies that the number of paths removed is at most \eqref{eq:Step2}, so we may assume this is not the case. Letting $S:=\c{P}(Z)[x\to Y]$, this assumption together with the branching factor of $\c{P}$ implies $|S|\cdot \ep_t\ell m^{1/b}\le 4^{-t-2} \left(\ep_t \ell m^{1/b}\right)^t$, or equivalently
		\begin{equation}|S|\ge \f{4^{-t-2}(\ep_t \ell m^{1/b})^{t}}{\ep_t \ell m^{1/b}}> \ep_{t-1} (\ell m^{1/b})^{t-1},\label{eq:pathLower}\end{equation}
		With the last step holding if the $\ep_{s}$ constants decrease sufficiently quickly.  If $t-1\ge 2$, then \eqref{eq:YUpper} and \eqref{eq:pathLower} together with Lemma~\ref{lem:RPaths} imply $t(x)\le t-1$, contradicting $x\in X$.  If $t=2$ then \eqref{eq:YUpper} gives $|Y|\le 4^{-t-2} \ep_t \ell m^{1/b}$, so the fact that $\c{P}$ has branching factor at most $\ep_t \ell m^{1/b}$ implies that there are at most $4^{-t-2}(\ep_t\ell m^{1/b})^2$ paths in $\c{P}$ whose second to last vertex is in $Y$.  In either case, this bound together with \eqref{eq:notY} implies the number of paths removed is at most \eqref{eq:Step2}.
		
		As $t\ge 2$, \eqref{eq:Step1}, \eqref{eq:Step2}, \eqref{eq:Step3} imply that the total number of paths destroyed is at most $\quart (\quart \ep_t \ell m^{1/b})^t$, so $\c{Q}$ has the desired size.  To prove that $(\c{B},\c{Q})$ is a refined $t$-neighborhood, it remains to show $|N(x)\cap B_1|\ge t^{-1}4^{-2t}\ep_t \ell m^{1/b}$.  Since $\c{Q}\sub \c{P}$ has branching factor at most $\ep_t \ell m^{1/b}$, we have that $|\c{Q}|\le |N(x)\cap B_1|\cdot (\ep_t \ell m^{1/b})^{t-1}$.  Our bound on $|\c{Q}|$ then implies
		\[|N(x)\cap B_1|\ge \quart (\quart \ep_t \ell m^{1/b})^t\cdot  (\ep_t \ell m^{1/b})^{1-t}=4^{-t-1}\ep_t\ell m^{1/b}.\]
		This gives the desired bound, proving the result.
	\end{proof}
	
	\begin{proof}[Proof of Proposition~\ref{prop:expansion}]
		Let $t,X$ be as in Lemma~\ref{lem:XSet}.  For each $x\in X$, let $(\c{A},\c{P})$ be the balanced $t$-neighborhood guaranteed by Lemma~\ref{lem:balanced} and $(\c{B},\c{Q})$ the refined $t$-neighborhood guaranteed by Lemma~\ref{lem:refined} from $(\c{A},\c{P})$.  Most of the properties of Proposition~\ref{prop:expansion} follow immediately from Definitions~\ref{def:balanced} and \ref{def:refined}, as well as Lemmas~\ref{lem:balanced} and \ref{lem:pathsAvoid} (with the last lemma using that $(\c{B},\c{Q})$ is in particular a balanced $t$-neighborhood).  The only conditions which are not immediate are the bounds $|B_{t-1}|,|B_t|\ge \ep \ell^{(b-t+1)/(b-1)}m^{(t-1)/b}$.  If this bound did not hold for $B_{t-1}$, then the tuple $(B_0,B_1,\ldots,B_{t-1})$ would be a concentrated $(t-1)$-neighborhood of $x$ (assuming $t^{-1} 4^{-2t} \ep_t\ge \ep_{t-1}$), contradicting every $x\in X$ having $t(x)=t$.
		
		To prove the bound on $B_t$, we use Lemma~\ref{lem:pathsAvoid} to find
		\[|\c{Q}|=\sum_{u\in B_1,v\in B_t} |\c{Q}[u\to v]|\le \ep^{-1} \ell^{(t-2)b/(b-1)}\cdot |B_1|\cdot |B_t|\le \ep^{-1} \ell^{(t-2)b/(b-1)}\cdot \ell m^{1/b}\cdot |B_t|,\]
		where this last step used Definition~\ref{def:balanced}(i).  As $|\c{Q}|\ge \ep \ell^t m^{t/b}$, this gives $|B_t|\ge \ep^2  \ell^{(b-t+1)/(b-1)}m^{(t-1)/b}$. This gives the desired result after replacing $\ep$ in the proposition statement with $\ep^2$ (which easily implies the result after replacing $\ep$ with $\sqrt{\ep}$ throughout).
	\end{proof}
	
\end{document}